\newcommand{\ZZ}{\mathbb{Z}}
\newcommand{\CC}{\mathbb{C}}
\newcommand{\NN}{\mathbb{N}}
\newcommand{\wt}{\mathrm{wt}}
\newcommand{\cl}{\mathrm{cl}}
\newcommand{\ffbox}[1]{
\setbox9=\hbox{$\scriptstyle\overline{1}$}
\framebox[20pt][c]{\rule{0mm}{\ht9}${\scriptstyle #1}$}
}
\newcommand{\QQ}{\mathbb{Q}}
\newcommand{\Glie}{\mathfrak{g}}
\newcommand{\Hlie}{\mathfrak{h}}
\newcommand{\U}{\mathcal{U}}
\newtheorem{thm}{Theorem}[section]
\newtheorem{defi}[thm]{Definition}
\newtheorem{prop}[thm]{Proposition}
\newtheorem{lem}[thm]{Lemma}
\newtheorem{rem}[thm]{Remark}
\newtheorem{ex}[thm]{Example}
\title[Extremal loop weight modules and tensor products]{Extremal loop weight modules and tensor products for quantum toroidal algebras}
\author[Mathieu Mansuy]{Mathieu Mansuy}
\address{Univ. Paris-Diderot-Paris 7, IMJ - PRG CNRS UMR 7586, B\^at. Sophie Germain, Case 7012, 75205 Paris Cedex 13, FRANCE}
\email{mansuy@math.jussieu.fr}
\begin{document}

\begin{abstract}
We define integrable representations of quantum toroidal algebras of type $A$ by tensor product, using the Drinfeld ``coproduct''. This allow us to recover the vector representations recently introduced by Feigin-Jimbo-Miwa-Mukhin \cite{feigin_representations_2013} and constructed by the author \cite{mansuy_quantum_2012} as a subfamily of extremal loop weight modules. In addition we get new extremal loop weight modules as subquotients of tensor powers of vector representations. As an application we obtain finite-dimensional representations of quantum toroidal algebras by specializing the quantum parameter at roots of unity.
\end{abstract}

\maketitle

\tableofcontents

\section{Introduction}

Consider a finite-dimensional simple Lie algebra $\Glie$. The quantum toroidal algebra (or double affine quantum algebra) $\U_q(\Glie^{tor})$ associated to $\Glie$ was first introduced by Ginzburg-Kapranov-Vasserot in type A \cite{ginzburg_langlands_1995} and then in the general context in \cite{jing_quantum_1998, nakajima_quiver_2001}. It is built from a copy of the quantum affine algebra $\U_q(\hat{\Glie})$ and a copy of the quantum loop algebra (quantum affine algebra without derivation element), by the Drinfeld quantum affinization process. Quantum toroidal algebras are the subject of intensive research with applications in mathematics and physic (see for example \cite{ hernandez_representations_2005, hernandez_quantum_2009, miki_representations_2000, varagnolo_double-loop_1998} and more recently \cite{feigin_representations_2013, feigin_branching_2013, nekrasov_quantum_2013}).

The representation theory of quantum toroidal algebras is very rich and promising. There exists a notion of loop highest weight modules (see \cite{miki_representations_2000, nakajima_quiver_2001}), analogs of the highest weight modules in the toroidal case. It is also possible to introduce the counterpart of extremal weight $\U_q(\hat{\Glie})$-modules for quantum toroidal algebras. Extremal weight modules are integrable representations of $\U_q(\hat{\Glie})$ introduced by Kashiwara \cite{kashiwara_crystal_1994}. They are the subject of numerous papers (see \cite{beck_crystal_2004, hernandez_level_2006, kashiwara_level-zero_2002} and references therein) and have a particular importance because some of them have finite-dimensional quotients. In the spirit of works of Kashiwara, Hernandez \cite{hernandez_quantum_2009} introduced the notion of extremal loop weight modules for $\U_q(\Glie^{tor})$: they are integrable representations generated by an extremal vector (in the sense of Kashiwara \cite{kashiwara_crystal_1994}) for the copy of the quantum affine algebra $\U_q(\hat{\Glie})$ in $\U_q(\Glie^{tor})$. The main motivation is to construct finite-dimensional representations of quantum toroidal algebras by specializing the quantum parameter at roots of unity.

In a recent work \cite{mansuy_quantum_2012} the author constructed the first known family of extremal loop weight modules for quantum toroidal algebras of type $A$: when $n=2r+1$ is odd (with $r \geq 1$), we have defined extremal (fundamental) loop weight representations associated to the nodes $\ell = 1, r+1$ and $n$ of the Dynkin diagram. This construction is based on monomial realizations of crystal bases of extremal fundamental weight $\U_q(\hat{\Glie})$-modules.

\medskip

\begin{center}
\begin{tikzpicture}[>=stealth',shorten >=1pt,auto,node distance=2.8cm]
\tikzstyle{point}=[circle,draw]
\tikzstyle{ligne}=[thick]
\tikzstyle{pointille}=[thick,dotted]

\node (2) at ( 0,3) [point] {\ 0 \ };
\node (3) at ( -2, 0) [point] {\ 1 \ };
\node (4) at ( 0, 0) [point] {r+1};
\node (5) at ( 2, 0) [point] {\ n \ };

\draw [ligne] (2) -- (3);
\draw [pointille] (3) -- (4);
\draw [pointille] (4) -- (5);
\draw [ligne] (5) -- (2);
\end{tikzpicture}
\end{center}

\medskip

The construction done in the article is inspired by the original study of extremal weight modules: these representations are closely related to a tensor product of a simple highest weight module and a simple lowest weight module (see \cite{kashiwara_crystal_1994}). In the present paper we consider the tensor product of a simple loop highest weight module and a simple loop lowest weight module, depending of a node $\ell \neq 0$ of the Dynkin diagram and non zero complex parameters $a, b$. No Hopf algebra structure is known for $\U_q(sl_{n+1}^{tor})$ but there exists a coproduct $\Delta_D$ (the Drinfeld coproduct) which involve infinite sums. Under some conditions on the parameters $a$ and $b$ and using analogous features as in \cite{hernandez_drinfeld_2007}, we show that $\Delta_D$ is well-defined on the tensor product and endow it to a structure of integrable $\U_q(sl_{n+1}^{tor})$-module (Theorem \ref{tpexist}). The representation hence obtained is not extremal in general (Proposition \ref{propnecon}): a necessary condition is that
\begin{align*}
\begin{cases}
n \text{ is even and } \ell = 1, n \\
\text{or}\\
n = 2r+1 \text{ is odd and } \ell = 1, r+1, n
\end{cases}
\end{align*}
and $b=aq^{\min(\ell, n+1-\ell)}$. When $\ell=1$ or $n$, we show that this representation (denoted $V_{a}$) is loop extremal (Theorem \ref{repvect2}): we recover here the \textit{vector representation} defined in \cite{feigin_representations_2013} by Feigin-Jimbo-Miwa-Mukhin for the $d$-deformation $\U_{q,d}(sl_{n+1}^{tor})$ of the quantum toroidal algebra $\U_q(sl_{n+1}^{tor})$, and constructed by the author in \cite{mansuy_quantum_2012}.

We want to construct a large family of extremal loop weight $\U_q(sl_{n+1}^{tor})$-modules. A well-known result of representation theory says that every finite-dimensional simple module over the quantum group $\U_q(sl_{n+1})$ is a subquotient of tensor powers of vector representations (see for example \cite{hong_introduction_2002}). Motivated by this fact, we consider the analogous situation for $\U_q(sl_{n+1}^{tor})$: we study tensor powers of vector representations
$V_{a_1} \otimes \cdots \otimes V_{a_k} $ with $a_1, \cdots , a_k$ non zero complex parameters. Using analogous features as in \cite{feigin_representations_2013} we show that, under some conditions on the $a_i$'s, $\Delta_D$ endows this tensor product with a structure of $\U_q(sl_{n+1}^{tor})$-module. When the parameters $a_i$ form a $q$-segment, we recover all the extremal loop weight modules constructed in \cite{mansuy_quantum_2012} as subquotients of this tensor product (Theorem \ref{tpeflwm} and Proposition \ref{proprecrep}). We also obtain new extremal loop weight representations of $\U_q(sl_{n+1}^{tor})$ when the parameters are chosen generic (Theorem \ref{thmexgen} and Theorem \ref{thmexgen2}). By specialization, we get new finite-dimensional representations of quantum toroidal algebras at roots of unity.

Tensor products of vector representations are also studied in \cite{feigin_representations_2013} for $\U_{q,d}(sl_{n+1}^{tor})$. Let us point out that their approach and motivations are different than ours: they define an action of $\U_{q,d}(sl_{n+1}^{tor})$ on an infinite tensor product of vector representations by a semi-infinite wedge construction. They construct in this way a large family of irreducible loop lowest weight representations of $\U_{q,d}(sl_{n+1}^{tor})$. As an application they obtain combinatorial descriptions of representations of $\U_q(\hat{sl}_{n+1})$ by restriction. Other applications are expected in conformal field theory for the AGT conjecture. Our motivations are different: our aim is to find a process to construct a large family of extremal loop weight modules for all the quantum affinizations. This is the main reason why we work with $\U_q(sl_{n+1}^{tor})$ instead of its $d$-deformation. In fact $d$-deformations of quantum toroidal algebras not of type $A$ are not known. We show on an example that the features we develop in the paper can hold for quantum toroidal algebras of other types: we construct an extremal loop weight module for the quantum toroidal algebra of type $D_4$ (Theorem \ref{extypd}). This is the first example of such module in type different to $A$.

The paper is organized as follows.

In Section 2 we recall the main definitions of quantum affine algebras $\U_q(\hat{sl}_{n+1})$ and quantum toroidal algebras $\U_q(sl_{n+1}^{tor})$ and we briefly review their representation theory. In particular one defines the extremal weight modules and the extremal loop weight modules. In Section 3 we study tensor products of simple $\ell$-highest weight modules and simple $\ell$-lowest weight modules. Section 4 and Section 5 are devoted to the study of tensor products of vector representations. In Section 6 we construct an example of extremal loop weight module for the quantum toroidal algebra of type $D_4$.\\

\textbf{Acknowledgements:}
I would like to thank my advisor David Hernandez for his encouragements and for his numerous and precious comments. I want also to thank Alexandre Bouayad for his accurate and helpful remarks. I thank Vyjayanthi Chari for her interest about this work and her hospitality during my stay at the University of Riverside.

\section{Background}

\subsection{Cartan matrix} \nocite{kac_infinite-dimensional_1990}

Fix a natural number $n \geq 2$. Set $I=\{0,\dots,n\}$. It will be often identified with the set $\ZZ / (n+1) \ZZ$, and we will denote by $\bar{i}$ the class of an integer $i$ in $\ZZ / (n+1) \ZZ$. Let $C=(C_{i,j})_{i, j \in I}$ be the Cartan matrix of type $A_n^{(1)}$,
$$ C_{i,i} = 2 \text{ , } C_{i,i+1} = C_{i+1, i} = -1 \text{ and } C_{i,j} = 0 \text{ if } j \neq i, i \pm 1.$$

\noindent Set $I_0 = \{1, \dots, n\}$. In particular, $(C_{i,j})_{i,j \in I_0}$ is the Cartan matrix of finite type $A_n$. Consider the vector space of dimension $n+2$
 $$\Hlie = \QQ h_0 \oplus \QQ h_1 \oplus \dots \oplus \QQ h_n \oplus \QQ d$$
\noindent and the linear functions $\alpha_i$ (the simple roots), $\Lambda_i$ (the fundamental weights) on $\Hlie$ given by ($i,j \in I$)
\begin{eqnarray*}
\alpha_i (h_j)= C_{j,i}, & \alpha_i(d)= \delta_{0,i},\\
\Lambda_i(h_j) = \delta_{i,j}, & \Lambda_i(d)= 0.
\end{eqnarray*}

Denote by $\Pi=\{\alpha_0,\dots,\alpha_n\}\subset \Hlie^*$ the set of
simple roots and $\Pi^{\vee}=\{h_0,\dots, h_n\}\subset \Hlie$ the set of simple coroots. Let $P =\{\lambda \in\Hlie^* \mid \text{$\lambda(h_i)\in\ZZ$ for any $i\in I$}\}$ be the weight lattice and $P^+=\{\lambda \in P \mid \text{$\lambda(h_i)\geq 0$ for any $i\in I$}\}$ the semigroup of dominant weights. Let $Q=\oplus_{i\in I} \ZZ \alpha_i\subset P$ (the root lattice) and $Q^+=\sum_{i\in I}\NN \alpha_i\subset Q$. For $\lambda,\mu\in \Hlie^*$, write $\lambda \geq \mu$ if $\lambda-\mu\in Q^+$.

Denote by $W$ the affine Weyl group: this is the subgroup of $GL(\Hlie^*)$ generated by the simple reflections $s_i\in GL(\Hlie^*)$ defined by $s_i(\lambda)=\lambda-\lambda(h_i)\alpha_i$ for all $i\in I$.

Let $c= h_0 + \dots + h_n$ and $\delta = \alpha_0 + \dots + \alpha_n$. We have
$$\{\omega \in P \mid \omega(h_i)=0 \text{ for all } i \in I \}=\QQ \delta.$$
Put $P_{\cl}=P/ \QQ \delta$ and denote by $\cl : P \rightarrow P_{\cl}$ the canonical projection. Denote by $P^0=\{\lambda\in P\mid \lambda(c)=0\}$ the set of level $0$ weights.

\subsection{Quantum affine algebra $\U_q(\hat{sl}_{n+1})$}

\subsubsection{Definition}
In the whole article $t \in \CC$ is fixed such that $q = e^{t} \in \CC^*$ is not a root of unity. For $l\in\ZZ, r\geq 0, m\geq m'\geq 0$ we set
$$[l]_q=\frac{q^l-q^{-l}}{q-q^{-1}}\in\ZZ[q^{\pm 1}],\  [r]_q!=[r]_q[r-1]_q\dots[1]_q,\ \begin{bmatrix}m\\m'\end{bmatrix}_q=\frac{[m]_q!}{[m-m']_q![m']_q!}.$$

\begin{defi} The quantum affine algebra $\U_q(\hat{sl}_{n+1})$ is the $\CC$-algebra with generators $k_h$ $(h\in \Hlie)$, $x_i^{\pm}$ $(i\in I)$ and relations
\begin{equation*}k_hk_{h'}=k_{h+h'}\text{ , }k_0=1,\end{equation*}
\begin{equation*}k_hx_j^{\pm}k_{-h}=q^{\pm \alpha_j(h)}x_j^{\pm},\end{equation*}
\begin{equation*}[x_i^+,x_j^-]=\delta_{i,j}\frac{k_i-k_{i}^{-1}}{q-q^{-1}},\end{equation*}
\begin{equation*}
(x_i^{\pm})^{(2)}x_{i+1}^{\pm} - x_i^{\pm}x_{i+1}^{\pm}x_i^{\pm} + x_{i+1}^{\pm}(x_i^{\pm})^{(2)} = 0.\end{equation*}
\end{defi}

\noindent We use here the notation $k_i^{\pm 1} = k_{\pm h_i}$ and for all $r \geq 0$ we set $(x_i^\pm)^{(r)} = \frac{(x_i^\pm)^r}{[r]_q!}$. One defines a coproduct on $\U_q(\hat{sl}_{n+1})$ by setting
$$\Delta(k_h)=k_h\otimes k_h,$$
$$\Delta(x_i^+)=x_i^+\otimes 1 + k_i^+\otimes x_i^+\text{ , }\Delta(x_i^-)=x_i^-\otimes k_i^- + 1\otimes x_i^-.$$

\subsubsection{Subalgebras}
Let $\U_q(\hat{sl}_{n+1})'$ be the subalgebra of $\U_q(\hat{sl}_{n+1})$ generated by $x_{i}^{\pm}$ and $k_h$ with $h\in \sum \QQ h_{i}$. This has $P_\cl$ as a weight lattice.

Let $\U_q(\hat{sl}_{n+1})^+$ (resp. $\U_q(\hat{sl}_{n+1})^-$, $\U_q(\Hlie)$) be the subalgebra of $\U_q(\hat{sl}_{n+1})$ generated by the $x_i^+$ (resp. the $x_i^-$, resp the $k_h$). We have a triangular decomposition of $\U_q(\hat{sl}_{n+1})$. 

\begin{thm}\cite{lusztig_introduction_1993} We have an isomorphism of vector spaces
$$\U_q(\hat{sl}_{n+1}) \simeq \U_q(\hat{sl}_{n+1})^- \otimes \U_q(\Hlie) \otimes \U_q(\hat{sl}_{n+1})^+.$$
\end{thm}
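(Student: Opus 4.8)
The plan is to establish the triangular decomposition by the standard two-step argument: surjectivity of the multiplication map by a straightening procedure, and injectivity by exhibiting a ``triangular'' module on which $\U_q(\hat{sl}_{n+1})$ acts in a way that detects ordered monomials. Write $\U = \U_q(\hat{sl}_{n+1})$, $\U^{\pm} = \U_q(\hat{sl}_{n+1})^{\pm}$ and $\U^0 = \U_q(\Hlie)$, and let $m \colon \U^- \otimes \U^0 \otimes \U^+ \to \U$ be the linear map induced by multiplication.

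First I would prove that $m$ is surjective. Since $\U$ is generated by the $x_i^{\pm}$ and the $k_h$, it suffices to show that any product of these generators rewrites as a linear combination of ordered monomials (all $x_i^-$'s, then a $k_h$, then all $x_i^+$'s). The relation $k_h x_j^{\pm} k_{-h} = q^{\pm \alpha_j(h)} x_j^{\pm}$ lets one move every $k_h$ to the middle at the cost of a scalar, and the relation $[x_i^+, x_j^-] = \delta_{i,j}(k_i - k_i^{-1})/(q - q^{-1})$ shows that transposing an $x_i^+$ that stands to the left of an $x_j^-$ produces, besides the swapped monomial, only a term lying in $\U^0$; an induction on the length of the word then gives the claim.

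Next, and this is the substantial point, I would prove that $m$ is injective by producing a linear section. For this I would put on the vector space $M := \U^- \otimes \U^0 \otimes \U^+$ a $\U$-module structure by the expected ``left regular'' formulas: $x_i^-$ and $k_h$ act on the first two tensor factors essentially by left multiplication inside the subalgebra generated by $\U^-$ and $\U^0$ (using the $k$--$x^-$ commutation to keep monomials ordered), while $x_i^+$ acts on the $\U^+$ factor by left multiplication corrected by the ``quantum partial derivative'' terms on the $\U^-$ factor dictated by the $[x_i^+, x_j^-]$ relation. One then checks that these operators satisfy all the defining relations of $\U$: the $k$-relations and the weight relations are immediate, and the remaining content is the $[x_i^+,x_j^-]$-relation and the two quantum Serre relations, which reduce to identities already known to hold inside $\U^-$ and $\U^+$ separately. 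Granting this, $u \mapsto u\cdot(1 \otimes 1 \otimes 1)$ is a linear map $\U \to M$ that is, by construction, a one-sided inverse of $m$; hence $m$ is bijective. (Equivalently one may argue via generic Verma modules $M(\lambda)$, each isomorphic to $\U^-$ as a vector space, to see that $\U^-$ embeds and acquires a PBW basis, and dually for $\U^+$.)

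The main obstacle is the injectivity step, and within it the real content is not the relation-checking on $M$ but the input, due to Lusztig through the braid group action and the PBW basis, that $\U^+$ and $\U^-$ are themselves as large as expected — i.e. the algebra presented by the $x_i^+$ with only the Serre relations does not collapse, and $\U^0$ is the group algebra of $\Hlie$ with no extra relations. Once these facts are available the derivation operators defining the $x_i^+$-action on $\U^-$ are well defined and the verification of the Serre relations on $M$ is a finite computation. This is why the statement is quoted from \cite{lusztig_introduction_1993} rather than proved here from scratch.
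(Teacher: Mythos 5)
The paper offers no proof of this statement at all: it is quoted verbatim from Lusztig's book, so there is nothing internal to compare your argument against. Your sketch is the standard two-step argument (straightening for surjectivity of the multiplication map, a module realization of $\U_q(\hat{sl}_{n+1})^- \otimes \U_q(\Hlie) \otimes \U_q(\hat{sl}_{n+1})^+$ for injectivity) and it is sound in outline; the surjectivity step in particular is complete as you state it, since the correction term from $[x_i^+,x_j^-]$ shortens the word and the induction closes. One small correction of emphasis: the braid group action and the PBW basis are not actually inputs to the triangular decomposition. What the injectivity step needs is only that the ``quantum partial derivative'' operators on the negative part are well defined, i.e.\ that they annihilate the Serre relations --- a finite direct computation --- after which the module $M$ exists and $u \mapsto u\cdot(1\otimes 1\otimes 1)$ splits $m$. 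The PBW basis is a finer statement that is usually deduced \emph{from} the triangular decomposition (plus the rank-one theory and the braid action), not the other way around. So your deferral to Lusztig is appropriate, but for a slightly different reason than the one you give: the content outsourced is the relation-checking for the module structure, not the non-collapsing of $\U_q(\hat{sl}_{n+1})^{\pm}$.
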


\subsection{Representations of $\U_q(\hat{sl}_{n+1})$}

\subsubsection{Highest weight modules} For $V$ a representation of $\U_q(\hat{sl}_{n+1})$ and $\nu\in P$, the weight space $V_{\nu}$ of $V$ is
$$V_\nu = \{v\in V \mid k_h \cdot v = q^{\nu(h)}v \text{ for any } h\in \Hlie\}.$$
Set $\wt(V) = \{\nu \in P \mid V_\nu \neq \{0 \} \}$.

\begin{defi} A representation $V$ is said to be in the category $\mathcal{O}$ if 
\begin{enumerate}
\item[(i)] it admits a weight space decomposition $V = \bigoplus_{\nu\in P} V_\nu$,
\item[(ii)] $V_\nu$ is finite-dimensional for all $\nu$,
\item[(iii)] $\wt(V) \subset \bigcup_{j=1\cdots N}\{\nu \mid \nu\leq \lambda_j\}$ for some $\lambda_1,\cdots,\lambda_N\in P$.
\end{enumerate}
\end{defi}

For $\lambda\in P$, a representation $V$ is said to be of highest weight $\lambda$ if there is $v\in V_\lambda$ such that for any $i\in I x_i^+ \cdot v = 0$ and $\U_q(\hat{sl}_{n+1}) \cdot v = V$. Such a representation is in the category $\mathcal{O}$. Furthermore there is a unique simple highest weight module of highest weight $\lambda$.

\begin{defi}\label{defint} A representation  $V$ is said to be integrable if
\begin{enumerate}
\item[(i)] it admits a weight space decomposition $V = \bigoplus_{\nu\in P} V_\nu$,
\item[(ii)] for any $ v \in V$ and $i \in I$, there is $N \geq 1$ such that $(x_i^\pm)^{N} \cdot v = 0$.
\end{enumerate}
\end{defi}

\begin{thm}\cite{lusztig_introduction_1993}
The simple highest weight module of highest weight $\lambda$ is integrable if and only if $\lambda$ is dominant.
\end{thm}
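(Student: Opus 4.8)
The plan is to establish the two implications separately, using in both the copy of $\U_q(sl_2)$ generated by $x_i^\pm,k_i^{\pm1}$ for a fixed $i\in I$, together with the commutation identity
$$x_i^+(x_i^-)^{(j)}=(x_i^-)^{(j)}x_i^++(x_i^-)^{(j-1)}\,\frac{q^{-(j-1)}k_i-q^{j-1}k_i^{-1}}{q-q^{-1}},$$
which follows from $[x_i^+,x_i^-]=(k_i-k_i^{-1})/(q-q^{-1})$ by an immediate induction on $j\geq1$. Applied to a vector $v$ with $x_i^+\cdot v=0$ and $k_i\cdot v=q^m v$, it gives $x_i^+\cdot\big((x_i^-)^{(j)}\cdot v\big)=[m-j+1]_q\,(x_i^-)^{(j-1)}\cdot v$.

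For the ``only if'' direction, suppose $V=V(\lambda)$ is integrable with highest weight vector $v_\lambda$, and fix $i\in I$. By Definition \ref{defint}(ii) there is a least $N\geq1$ with $(x_i^-)^N\cdot v_\lambda=0$; since $q$ is not a root of unity, $[N-1]_q!\neq0$, so $(x_i^-)^{(N-1)}\cdot v_\lambda\neq0$ as well. Applying $x_i^+$ to $(x_i^-)^{(N)}\cdot v_\lambda=0$ and using the identity above with $m=\lambda(h_i)$ yields $[\lambda(h_i)-N+1]_q\,(x_i^-)^{(N-1)}\cdot v_\lambda=0$, hence $[\lambda(h_i)-N+1]_q=0$, hence (again since $q$ is not a root of unity) $\lambda(h_i)=N-1\geq0$. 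As $i$ was arbitrary, $\lambda\in P^+$.

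For the ``if'' direction, assume $\lambda\in P^+$. Condition (i) of Definition \ref{defint} is automatic, as $V(\lambda)$ is a highest weight module and hence lies in category $\mathcal{O}$. I first verify local nilpotency of $x_i^\pm$ on the cyclic vector $v_\lambda$: for $x_i^+$ it is immediate, and for $x_i^-$ I claim $w:=(x_i^-)^{\lambda(h_i)+1}\cdot v_\lambda=0$. Indeed, $x_i^+\cdot w=0$ (the relevant coefficient is $[\lambda(h_i)-(\lambda(h_i)+1)+1]_q=[0]_q=0$) and $x_j^+\cdot w=0$ for $j\neq i$, because $[x_j^+,x_i^-]=0$ and $x_j^+\cdot v_\lambda=0$; thus $w$ is a highest weight vector of weight $\lambda-(\lambda(h_i)+1)\alpha_i$. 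If $w\neq0$, then $\U_q(\hat{sl}_{n+1})\cdot w=V(\lambda)$ by simplicity, while the triangular decomposition forces $\U_q(\hat{sl}_{n+1})\cdot w\subseteq\bigoplus_{\mu\leq\wt(w)}V(\lambda)_\mu$, a subspace not containing $V(\lambda)_\lambda\neq\{0\}$ since $\lambda-\wt(w)=(\lambda(h_i)+1)\alpha_i\in Q^+\setminus\{0\}$ — a contradiction. Hence $w=0$.

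It remains to propagate local nilpotency from $v_\lambda$ to all of $V(\lambda)=\U_q(\hat{sl}_{n+1})\cdot v_\lambda$. For this I would show that $V_i:=\{v\in V(\lambda)\mid (x_i^+)^M\cdot v=(x_i^-)^M\cdot v=0\text{ for some }M\geq1\}$ is a $\U_q(\hat{sl}_{n+1})$-submodule; as it contains $v_\lambda$, simplicity then gives $V_i=V(\lambda)$, hence condition (ii). Stability of $V_i$ under $\U_q(\Hlie)$ is clear. For the Chevalley generators one uses that the $q$-twisted adjoint operator $\mathrm{ad}_q(x_i^\pm)$ acts locally nilpotently on $\U_q(\hat{sl}_{n+1})$: on the generators $x_j^\pm$ this is precisely the quantum Serre relation $\mathrm{ad}_q(x_i^\pm)^{1-C_{i,j}}(x_j^\pm)=0$ (and it is trivial on the $k_h$), and since $\mathrm{ad}_q(x_i^\pm)$ is a twisted derivation, local nilpotency propagates to all of $\U_q(\hat{sl}_{n+1})$. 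Consequently each $u\in\U_q(\hat{sl}_{n+1})$ satisfies a ``bounded overhead'' identity $(x_i^\pm)^M u=\sum_{k=0}^{K}u_k(x_i^\pm)^{M-k}$ with $K$ depending only on $u$; applying it to $uv$ with $v\in V_i$ and taking $M$ large shows $(x_i^\pm)^M\cdot(uv)=0$. This last step — passing from the cyclic vector to the whole module via local nilpotency of $\mathrm{ad}_q(x_i^\pm)$ — is the only genuinely nontrivial point; everything else is routine $\U_q(sl_2)$ computation. (Alternatively one could invoke Lusztig's integrable adjoint module in place of the elementary Serre-relation argument.)
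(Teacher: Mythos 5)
The paper does not prove this statement at all --- it is quoted from \cite{lusztig_introduction_1993} without argument --- so there is nothing internal to compare your proof against; the relevant question is only whether your argument is sound, and it is. Your proof is essentially the standard textbook argument (the one underlying the cited reference, see also Jantzen's or Hong--Kang's treatments): the ``only if'' direction via the $\U_q(sl_2)$ commutation identity $x_i^+(x_i^-)^{(j)}v=[\lambda(h_i)-j+1]_q\,(x_i^-)^{(j-1)}v$ on a highest weight vector, using that $q$ is not a root of unity so that $[m]_q=0$ forces $m=0$ and divided powers are harmless; the ``if'' direction by showing $(x_i^-)^{\lambda(h_i)+1}\cdot v_\lambda$ is a singular vector of strictly lower weight, hence zero by simplicity, and then propagating local nilpotency from the cyclic vector to all of $V(\lambda)$ via the local nilpotency of $\mathrm{ad}_q(x_i^\pm)$ on the algebra (the quantum Serre relations plus the twisted-derivation property). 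All the steps check out, including the one you rightly flag as the only nontrivial point, the ``bounded overhead'' expansion $(x_i^\pm)^Mu=\sum_{k\le K}u_k(x_i^\pm)^{M-k}$, which is exactly Lusztig's mechanism for transferring integrability from a generator to the whole module. The only cosmetic remark is that in the second use of simplicity you do not actually need it: $V_i$ contains $v_\lambda$ and is a submodule, and $V(\lambda)=\U_q(\hat{sl}_{n+1})\cdot v_\lambda$ already because it is a highest weight module.
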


\subsubsection{Extremal weight modules}

\begin{defi}\cite{kashiwara_crystal_1994} Let $V$ be an integrable $\U_q(\hat{sl}_{n+1})$-module $V$ and $\lambda\in P$.
\begin{itemize}
\item[(i)] A vector $v\in V_{\lambda}$ is called $i$-extremal with $i \in I$ if
$$x_i^{\pm} \cdot v = 0 \text{ if } \pm \lambda(h_i) \geq 0.$$
We set in this case $S_i(v) = (x_i^\mp)^{(\pm\lambda(h_i))} \cdot v$.
\item[(ii)] The vector $v\in V_{\lambda}$ is called extremal of weight $\lambda$ if for all $l \geq 0$, $S_{i_1} \circ \cdots \circ S_{i_l} (v)$ is $i$-extremal for any $i, i_1, \cdots, i_l \in I$.
\end{itemize}
\end{defi}

\begin{defi}\cite{kashiwara_crystal_1994} For $\lambda\in P$, the extremal weight module $V(\lambda)$ of extremal weight $\lambda$ is the $\U_q(\hat{sl}_{n+1})$-module generated by a vector $v_{\lambda}$ with the defining relations that $v_{\lambda}$ is extremal of weight $\lambda$.\end{defi}

\begin{thm}\label{thmkas}\cite{kashiwara_crystal_1994} The module $V(\lambda)$ is integrable and has a crystal basis $\mathcal{B}(\lambda)$.\end{thm}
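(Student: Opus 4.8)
The plan is to handle integrability by an elementary submodule argument, and then to obtain the crystal basis by embedding $V(\lambda)$ into a tensor product of a simple integrable highest weight module and a simple integrable lowest weight module, the way the introduction indicates.

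\emph{Integrability.} Fix $i\in I$, let $\U_i$ be the subalgebra generated by $x_i^\pm$ and $k_i^{\pm 1}$ (so $\U_i\simeq\U_q(sl_2)$), and let $N_i\subseteq V(\lambda)$ be the set of vectors generating a finite-dimensional $\U_i$-module. Using the quantum Serre relations, i.e.\ the identities $(\mathrm{ad}\,x_j^\pm)^{1-C_{i,j}}(x_i^\pm)=0$ which allow one to push powers of $x_i^\pm$ across the other generators, one checks exactly as in the symmetrizable Kac--Moody case that $N_i$ is a $\U_q(\hat{sl}_{n+1})$-submodule. The extremality relations make the $i$-string through $v_\lambda$ finite: if $\lambda(h_i)=m\geq 0$ then $x_i^+\cdot v_\lambda=0$, the vectors $(x_i^-)^{(j)}v_\lambda$ ($0\leq j\leq m$) span a copy of the $(m+1)$-dimensional $\U_i$-module, and $x_i^-\cdot S_i(v_\lambda)=0$ because $S_i(v_\lambda)$ is $i$-extremal of weight $s_i\lambda$ with $(s_i\lambda)(h_i)=-m\leq 0$ (the case $\lambda(h_i)<0$ is symmetric). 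Hence $v_\lambda\in N_i$, so $N_i=V(\lambda)$; as $i$ is arbitrary, $V(\lambda)$ is integrable.

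\emph{Embedding into a tensor product.} Write $\lambda=\lambda_1+\lambda_2$ with $\lambda_1(h_i)=\max(\lambda(h_i),0)$ and $\lambda_2(h_i)=\min(\lambda(h_i),0)$ (splitting the $d$-value arbitrarily), so that $\lambda_1\in P^+$ and $-\lambda_2\in P^+$. Let $V_1$ (resp.\ $V_2$) be the simple highest (resp.\ lowest) weight module of weight $\lambda_1$ (resp.\ $\lambda_2$), with extreme vector $u_1$ (resp.\ $u_2$); by Lusztig's integrability criterion these are integrable, and by Kashiwara's crystal base theory for integrable highest (resp.\ lowest) weight modules they carry crystal bases $(L_1,B_1)$, $(L_2,B_2)$ whose distinguished elements $b_1$, $b_2$ have weights $\lambda_1$, $\lambda_2$. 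One checks that $u_1\otimes u_2\in V_1\otimes V_2$ is extremal of weight $\lambda$: when $\lambda(h_i)\geq 0$ one has $\lambda_2(h_i)=0$, so $u_2$ lies in a one-dimensional $i$-string and $x_i^+(u_1\otimes u_2)=k_i\,u_1\otimes x_i^+u_2=0$; symmetrically $x_i^-(u_1\otimes u_2)=0$ when $\lambda(h_i)\leq 0$; and an induction on $l$, using the same triviality of $\U_i$-strings through the extreme vectors of one factor, shows that $S_{i_1}\circ\cdots\circ S_{i_l}(u_1\otimes u_2)$ is $i$-extremal for every $i$. We thus get a morphism $\phi\colon V(\lambda)\to V_1\otimes V_2$ with $\phi(v_\lambda)=u_1\otimes u_2$ and image $W:=\U_q(\hat{sl}_{n+1})\cdot(u_1\otimes u_2)$.

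\emph{Injectivity, crystal basis, and main obstacle.} The decisive point is that $\phi$ is injective, i.e.\ that the extremality relations on $v_\lambda$ already generate \emph{all} the relations of $W$. This is the technical heart of \cite{kashiwara_crystal_1994}: one passes to the modified quantized enveloping algebra $\widetilde{\U}_q(\hat{sl}_{n+1})=\bigoplus_{\mu\in P}\U_q(\hat{sl}_{n+1})a_\mu$, realizes $V(\lambda)$ as the quotient of $\widetilde{\U}_q(\hat{sl}_{n+1})\,a_\lambda$ imposing the extremality relations, and uses the existence of the crystal (and global) basis of $\widetilde{\U}_q(\hat{sl}_{n+1})$ together with a simultaneous induction comparing the relevant lattices to conclude that the two quotients $V(\lambda)$ and $W$ of $\widetilde{\U}_q(\hat{sl}_{n+1})\,a_\lambda$ coincide, so $\phi$ is an isomorphism. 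Granting this, $V(\lambda)\cong W$ is again integrable as a submodule of the integrable module $V_1\otimes V_2$, and by Kashiwara's tensor product theorem $(L_1\otimes L_2,\,B_1\otimes B_2)$ is a crystal basis of $V_1\otimes V_2$ which restricts to $W$; taking $L:=(L_1\otimes L_2)\cap W$ and $\mathcal{B}(\lambda):=$ the connected component of $b_1\otimes b_2$ in $B_1\otimes B_2$ yields the crystal basis of $V(\lambda)$. I expect the only genuine obstacle to be this injectivity, equivalently the completeness of the extremality relations: the remaining steps are essentially formal, whereas there is no elementary proof of injectivity---it really requires the crystal/global basis theory of $\widetilde{\U}_q(\hat{sl}_{n+1})$.
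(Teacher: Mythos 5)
The paper does not prove this statement itself---it is quoted from Kashiwara's work, and Remark \ref{remkash} records only that the proof there rests on the tensor product $V(\lambda_+)\otimes V(-\lambda_-)$ with $\lambda_+=\sum_{\lambda(h_i)\geq 0}\lambda(h_i)\Lambda_i$ and $\lambda_-=\lambda_+-\lambda\in P^+$. Your outline follows exactly that route (your $\lambda_1,\lambda_2$ are $\lambda_+,-\lambda_-$ up to the $d$-component), and you correctly isolate the injectivity of $V(\lambda)\to V(\lambda_+)\otimes V(-\lambda_-)$ as the step that genuinely requires the crystal/global basis theory of the modified algebra, so this is essentially the same approach as the cited proof.
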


\begin{ex} If $\lambda$ is dominant, $V(\lambda)$ is the simple highest weight module of highest weight $\lambda$. \end{ex}

\begin{rem}\label{remkash}
To prove Theorem \ref{thmkas}, the tensor product $V(\lambda_+) \otimes V(-\lambda_-)$ is considered in \cite[Section 8.2]{kashiwara_crystal_1994}, with
$$\lambda_+ = \sum_{\lambda(h_i) \geq 0} \lambda(h_i) \Lambda_i \text{ and } \lambda_- = \lambda_+ - \lambda \in P^+.$$
\end{rem}

For any $1 \leq \ell \leq n$ set $ \varpi_{\ell} = \Lambda_\ell - \Lambda_0 $.

\begin{thm}\cite{kashiwara_level-zero_2002}
\begin{enumerate}
\item[(i)] $V(\varpi_\ell)$ is an irreducible $\U_q(\hat{sl}_{n+1})$-module.
\item[(ii)] Any non-zero integrable $\U_q(\hat{sl}_{n+1})$-module generated by an extremal weight vector of weight $\varpi_\ell$ is isomorphic to $V(\varpi_\ell)$.
\item[(iii)] As a $\U_q(\hat{sl}_{n+1})'$-module, $V(\varpi_\ell)$ admits an irreducible and finite-dimensional quotient.
\end{enumerate}
\end{thm}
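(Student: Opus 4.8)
The plan is to deduce (ii) formally from (i), and to prove (i) and (iii) by separate arguments — the first resting on the crystal/global basis of $V(\varpi_\ell)$ furnished by Theorem~\ref{thmkas}, the second on an auxiliary automorphism of $V(\varpi_\ell)$ of nonzero $\delta$-weight. For (ii): by definition $V(\varpi_\ell)$ is the \emph{universal} integrable module carrying an extremal vector of weight $\varpi_\ell$, so if $M$ is nonzero, integrable, and generated by an extremal vector $u$ of weight $\varpi_\ell$, there is a surjection $\psi\colon V(\varpi_\ell)\twoheadrightarrow M$ with $\psi(v_{\varpi_\ell})=u$; granting (i) that $V(\varpi_\ell)$ is simple, $\ker\psi=0$ and $\psi$ is an isomorphism. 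So everything reduces to (i) and (iii).

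For (i): I would use that, by Theorem~\ref{thmkas} and Kashiwara's construction, $V(\varpi_\ell)$ carries a global basis $\{G(b)\}_{b\in\mathcal{B}(\varpi_\ell)}$ such that every $\U_q(\hat{sl}_{n+1})$-submodule $N$ is spanned by a subset $\mathcal{B}_N\subseteq\mathcal{B}(\varpi_\ell)$ stable under all the $\tilde e_i,\tilde f_i$ (a standard consequence of how the divided powers $x_i^{(m)}$ act on a global basis, triangularly with respect to the crystal order). If $\mathcal{B}(\varpi_\ell)$ is connected as a crystal graph, a nonzero such $\mathcal{B}_N$ must be all of $\mathcal{B}(\varpi_\ell)$, hence $N=V(\varpi_\ell)$, and $V(\varpi_\ell)$ is irreducible. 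Thus the entire content of (i) is the \emph{connectedness of the crystal $\mathcal{B}(\varpi_\ell)$}, discussed below.

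For (iii): the engine is a $\U_q(\hat{sl}_{n+1})'$-linear automorphism $z$ of $V(\varpi_\ell)$ homogeneous of weight $-d\delta$ for some integer $d\geq1$. Since $\varpi_\ell=\Lambda_\ell-\Lambda_0$ has level $0$, one may choose $w\in W$ with $w\varpi_\ell=\varpi_\ell-d\delta$ (e.g.\ a coroot translation $t_\beta$ with $\langle\varpi_\ell,\beta\rangle>0$; recall $W$ fixes $\delta$). As $v_{\varpi_\ell}$ is extremal, $S_w(v_{\varpi_\ell})$ is a nonzero extremal vector of weight $\varpi_\ell-d\delta$; since $\delta(h_i)=0$ for all $i$ one has $V(\varpi_\ell-d\delta)\cong V(\varpi_\ell)$ over $\U_q(\hat{sl}_{n+1})'$, so the universal property yields $z\colon V(\varpi_\ell)\to V(\varpi_\ell)$ with $z(v_{\varpi_\ell})=S_w(v_{\varpi_\ell})$, homogeneous of weight $-d\delta$; the symmetric construction from $w^{-1}$ (note $w^{-1}\varpi_\ell=\varpi_\ell+d\delta$) produces a two-sided inverse, using $W$-equivariance of extremal vectors and that $v_{\varpi_\ell}$ already generates $V(\varpi_\ell)$ over $\U_q(\hat{sl}_{n+1})'$. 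Next I claim $V(\varpi_\ell)$ is free of finite rank $R\geq1$ over $\CC[z^{\pm1}]$: by Theorem~\ref{thmkas} its weights are those of $\mathcal{B}(\varpi_\ell)$, which (a standard property of extremal weight modules) lie in $\mathrm{conv}(W\varpi_\ell)\cap(\varpi_\ell+Q)$; since translations act trivially on $P_{\cl}$ for level-$0$ weights, $\cl(\wt V(\varpi_\ell))\subseteq\mathrm{conv}(W_{\mathrm{fin}}\,\cl\varpi_\ell)\cap(\cl\varpi_\ell+Q_{\cl})$ is finite, all weight spaces are finite-dimensional, and $z$ (of $\cl$-trivial weight $-d\delta$) identifies weight spaces within each $\cl$-class — whence the claim. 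Therefore $V(\varpi_\ell)/(z-a)V(\varpi_\ell)$ is, for every $a\in\CC^*$, a nonzero $R$-dimensional $\U_q(\hat{sl}_{n+1})'$-module, and any of its simple quotients is an irreducible finite-dimensional quotient of $V(\varpi_\ell)$, proving (iii).

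The main obstacle is the connectedness of $\mathcal{B}(\varpi_\ell)$ invoked in (i): this is the one genuinely nonformal step, and it really is special to \emph{fundamental} level-$0$ weights (for $\varpi_1+\varpi_2$, say, the crystal is disconnected and the module reducible), so it must use explicit combinatorics. To establish it I would realize $\mathcal{B}(\varpi_\ell)$ inside $\mathcal{B}(\Lambda_\ell)\otimes\mathcal{B}(-\Lambda_0)$ as the connected component of $u_{\Lambda_\ell}\otimes u_{-\Lambda_0}$ (cf.\ Remark~\ref{remkash} with $\lambda_+=\Lambda_\ell$, $\lambda_-=\Lambda_0$) and identify this component with the natural affinization of the classical minuscule $sl_{n+1}$-crystal of $\varpi_\ell$ — a single $\delta$-translation cycle, hence manifestly connected. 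A secondary technical point is that $V(\varpi_\ell)$ is not in category $\mathcal{O}$ (its weights are unbounded in the $\pm\delta$ directions), so the compatibility of submodules with the global basis used in (i) must be taken from Kashiwara's treatment of extremal weight modules rather than from the familiar highest-weight situation.
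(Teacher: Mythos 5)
First, a point of reference: the paper offers no proof of this theorem — it is quoted from \cite{kashiwara_level-zero_2002} — so your proposal must be measured against Kashiwara's original argument rather than anything in the text. Against that benchmark, your reduction of (ii) to (i) via the universal property of $V(\varpi_\ell)$ is correct, and your argument for (iii) — the $\U_q(\hat{sl}_{n+1})'$-automorphism $z$ of weight $-d\delta$ obtained from $S_{t_\beta}(v_{\varpi_\ell})$, the finiteness of $\cl(\wt(V(\varpi_\ell)))$, finite generation over $\CC[z^{\pm 1}]$, and passage to a simple quotient of the finite-dimensional module $V(\varpi_\ell)/(z-a)V(\varpi_\ell)$ — is essentially Kashiwara's actual route. (You should still verify $(z-a)V(\varpi_\ell)\neq V(\varpi_\ell)$; this follows from the $\ZZ\delta$-grading and the fact that $z$ shifts it by $-d\neq 0$, so it is a minor omission.)

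The genuine gap is in (i). You assert that every $\U_q(\hat{sl}_{n+1})$-submodule $N$ of $V(\varpi_\ell)$ is spanned by a subset $\mathcal{B}_N$ of the global basis closed under $\tilde e_i,\tilde f_i$, calling this a ``standard consequence'' of the triangular action of the divided powers. It is not: compatibility of a submodule with a global basis is a strong property enjoyed only by distinguished submodules (images of the $x_i^{\pm(m)}$, kernels of canonical morphisms, etc.), not by arbitrary ones — already the diagonal in $M\oplus M$, with basis the disjoint union of two copies of a basis of $M$, fails. Establishing such compatibility for all submodules of $V(\varpi_\ell)$ is essentially equivalent to irreducibility, so as written the step is circular. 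The mechanism that actually works is different: for an integrable module the set $\wt(N)$ of any submodule $N$ is $W$-stable (the braid operators $T_i$ preserve $N$ and map $N_\mu$ isomorphically onto $N_{s_i\mu}$); since $\varpi_\ell$ is minuscule in type $A$, every classical weight of $V(\varpi_\ell)$ lies in the single finite orbit $W_{\mathrm{cl}}\cdot\cl(\varpi_\ell)$, so a nonzero $N$ must meet some weight space $V(\varpi_\ell)_{\varpi_\ell+m\delta}$; and these spaces are one-dimensional, spanned by $z^{m}v_{\varpi_\ell}$, which is extremal and generates. The nonformal inputs are therefore the one-dimensionality of $V(\varpi_\ell)_{\varpi_\ell+m\delta}$ and the minuscule description of $\cl(\wt(V(\varpi_\ell)))$ — facts closely related to, but not interchangeable with, the connectedness of $\mathcal{B}(\varpi_\ell)$ that you isolate. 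Your instinct that the crystal combinatorics carries the essential content is right; the bridge you build from it to irreducibility is the part that does not hold.
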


\subsection{Quantum toroidal algebra $\U_q(sl_{n+1}^{tor})$}

\subsubsection{Definition} We recall the definition of the quantum toroidal algebra $\U_q(sl_{n+1}^{tor})$ (without central charge) in terms of currents.

\begin{defi}\label{defqta}\cite{ginzburg_langlands_1995}
The quantum toroidal algebra $\U_q(sl_{n+1}^{tor})$ is the $\CC$-algebra with generators $x_{i,r}^\pm$  ($i \in I, r \in \ZZ$), $k_h$ ($h \in \Hlie$), $h_{i,m}$ ($i \in I, m \in \ZZ-\{0\}$) and the following defining relations ($i,j \in I, h, h' \in \Hlie$):
\begin{equation*}
k_h k_{h'} = k_{h+h'} \text{ , } k_0 = 1,
\end{equation*}
\begin{equation*}
\phi^\pm_i(z)\phi^\pm_j (w) = \phi^\pm_j(w)\phi^\pm_i (z) \text{ , } \phi^-_i(z)\phi^+_j (w) = \phi^+_j(w)\phi^-_i (z),
\end{equation*}
\begin{equation}\label{equa1}
(w -q^{\pm C_{ij}}z) \phi_i^+(z)x_j^\pm(w)= 
(q^{\pm C_{ij}}w - z) x_j^\pm(w)\phi_i^+(z),
\end{equation}
\begin{equation}\label{equa2}
(w -q^{\pm C_{ij}}z) \phi_i^-(z)x_j^\pm(w)= 
(q^{\pm C_{ij}}w - z) x_j^\pm(w)\phi_i^-(z),
\end{equation}
\begin{equation*}
[x_i^+(z),x_j^-(w)]=\frac{\delta_{i,j}}{q-q^{-1}}(\delta\bigl(\frac{w}{z}\bigr)\phi_i^+(w) -\delta\bigl(\frac{z}{w}\bigr)\phi_i^-(z)),
\end{equation*}
\begin{equation}\label{equa3}
(z-q^{\pm C_{ij}}w) x_i^\pm(z)x_j^\pm(w)=(q^{\pm C_{ij}}z-w)x_j^\pm(w)x_i^\pm(z),
\end{equation}
\begin{eqnarray*}
\begin{array}{c}
x_i^\pm(z_1)x_i^\pm(z_2)x_{j}^\pm(w)-(q+q^{-1})x_i^\pm(z_1)x_{j}^\pm(w) x_i^\pm(z_2) \\
+x_{j}^\pm(w) x_i^\pm(z_1) x_i^\pm(z_2) + (z_1\leftrightarrow  z_2)=0,
\end{array}
\end{eqnarray*}
for $j = i+1, i-1$, and $[x_i^\pm(z),x_j^\pm(w)]=0$ for $i \neq j,j+1,j-1$.

We use here the formal power series $\delta(z) = \sum_{s \in \ZZ} z^{s}$ and

\begin{eqnarray*}
x_i^\pm(z) = \sum_{r \in \ZZ} x_{i,r}^\pm z^{r},
\end{eqnarray*}
\begin{eqnarray*}
\phi_i^{\pm}(z) = \sum_{m \geq 0} \phi_{i,\pm m}^\pm z^{\pm m} = k_i^{\pm 1} \exp(\pm(q-q^{-1}) \sum_{m' \geq 1} h_{i, \pm m'} z^{\pm m'}).
\end{eqnarray*}
\end{defi}


\subsubsection{Automorphisms of $\U_q(sl_{n+1}^{tor})$}

For $b\in\CC^\ast$, let $t_b$ be the automorphism of $\U_q(sl_{n+1}^{tor})$ which sends the series 
$x_i^{\pm}(z)$ and $ \phi_i^{\pm}(z)$ to $x_i^{\pm}(bz)$ and $ \phi_i^{\pm}(bz)$ respectively, and $k_h$ to $k_h$.

Let us consider the automorphism of the Dynkin diagram of type $ A_{n}^{(1)} $ corresponding to the rotation, which sends $i$ to $ i + 1 $ for $i \in I$. It defines an automorphism $\theta_\Hlie$ of $\Hlie$ by sending $h_i, d$ to $h_{i+1}, d$. This also defines an algebra automorphism $\theta$ of $\U_q(sl_{n+1}^{tor})$ which sends the series $x_i^{\pm}(z)$ and $ \phi_i^{\pm}(z)$ to $x_{i+1}^{\pm}(z)$ and $ \phi_{i+1}^{\pm}(bz)$ respectively, and $ k_{h} $ to $k_{\theta_\Hlie(h)} $.

Let us consider the automorphism of the Dynkin diagram of type $A_{n}^{(1)}$ sending $i$ to $-i$ for all $i \in I$. It defines an automorphism $\iota_\Hlie$ of $\Hlie$ by sending $h_i, d$ to $h_{-i}, d$. Furthermore it induces an algebra automorphism of $\U_q(sl_{n+1}^{tor})$ we still denote $\iota$, which sends the series $x_{i}^{\pm}(z)$ and $\phi_i^{\pm}(z)$ to $x_{-i}^{\pm}(z)$ and $\phi_{-i}^{\pm}(z)$ respectively, and $ k_h$ to $k_{\iota_\Hlie(h)}$.

\subsubsection{Subalgebras}
There is an algebra morphism $\U_q(\hat{sl}_{n+1})\rightarrow \U_q(sl_{n+1}^{tor})$ defined by $k_h\mapsto k_h$ , $x_i^{\pm}\mapsto x_{i,0}^{\pm}$. Its image is called the horizontal quantum affine subalgebra and is denoted by $\U_q^{h}(sl_{n+1}^{tor})$. In particular, a $\U_q(sl_{n+1}^{tor})$-module $V$ has also a structure of $\U_q(\hat{sl}_{n+1})$-module. We denote by $\mathrm{Res}(V)$ the restricted $\U_q(\hat{sl}_{n+1})$-module obtained from $V$.

The quantum loop algebra $\U_q(\hat{sl}_{n+1})'$ has another realization in terms of Drinfeld generators \cite{beck_braid_1994, drinfeld_new_1988}: this is the $\CC$-algebra with generators $x_{i,r}^{\pm}$, $k_h$, $h_{i,m}$ with $i \in I_0, r \in \ZZ, m \in \ZZ-\{0\}$, $h$ an element of the Cartan subalgebra of $sl_{n+1}$, and the same relations as in Definition \ref{defqta}. It is isomorphic to the subalgebra $ \U_{q}^{v}(sl_{n+1}^{tor}) $ of $\U_q(sl_{n+1}^{tor})$ generated by the $x_{i,r}^{\pm}$, $k_{h}$, $h_{i,m}$ with $i\in I_{0}, r\in\ZZ, h \in \bigoplus_{i \in I_0} \QQ h_i, m\in\ZZ-\{0\}$.  $ \U_{q}^{v}(sl_{n+1}^{tor}) $ is called the vertical quantum affine subalgebra of $\U_q(sl_{n+1}^{tor})$.

For $J \subset I$ denote by $ \U_{q}(sl_{n+1}^{tor})_J$ the subalgebra of $ \U_{q}(sl_{n+1}^{tor}) $ generated by the $x_{i,r}^{\pm}$, $k_{h}$, $h_{i,m}$  with $i\in J, r\in\ZZ, h \in \oplus_{i \in J} \QQ h_i, m\in\ZZ-\{0\}$.

For all $j \in I$, set $I_{j} = I - \{j\}$. The subalgebra $ \U_{q}(sl_{n+1}^{tor})_{I_{j}}$ will be simply denoted $ \U_{q}^{v,j}(sl_{n+1}^{tor}) $. In particular $ \U_{q}^{v,0}(sl_{n+1}^{tor}) $ is the vertical quantum affine subalgebra $ \U_{q}^{v}(sl_{n+1}^{tor}) $ of $\U_q(sl_{n+1}^{tor})$. All the $\U_{q}^{v,j}(sl_{n+1}^{tor})$ for various $j \in I$ are isomorphic.

For $i\in I$, we will denote by $ \hat{\U}_i $ the subalgebra $ \U_{q}(sl_{n+1}^{tor})_{\{i\}}$. It is isomorphic to $\U_{q}(\hat{sl}_{2})'$.

We have a triangular decomposition of $\U_q(sl_{n+1}^{tor})$.

\begin{thm}\label{dtrian} \cite{miki_representations_2000, nakajima_quiver_2001} We have an isomorphism of vector spaces
$$\U_q(sl_{n+1}^{tor})\simeq \U_q(sl_{n+1}^{tor})^-\otimes\U_q(\hat{\Hlie})\otimes\U_q(sl_{n+1}^{tor})^+,$$ 
where $\U_q(sl_{n+1}^{tor})^{\pm}$ (resp. $\U_q(\hat{\Hlie})$) is generated by the $x_{i,r}^{\pm}$ (resp. the $k_h$, the $h_{i,m}$).
\end{thm}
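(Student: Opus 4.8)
\emph{Proof strategy.} The plan is to show that the multiplication map
$$m\colon \U_q(sl_{n+1}^{tor})^-\otimes\U_q(\hat{\Hlie})\otimes\U_q(sl_{n+1}^{tor})^+\longrightarrow \U_q(sl_{n+1}^{tor})$$
is a $\CC$-linear isomorphism, treating surjectivity and injectivity separately.

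\emph{Surjectivity.} One shows that every monomial in the generators $x_{i,r}^\pm$, $k_h$, $h_{i,m}$ can be rewritten, using the defining relations of Definition \ref{defqta}, as a $\CC$-linear combination of ordered products $u^-u^0u^+$ with $u^\pm\in\U_q(sl_{n+1}^{tor})^\pm$ and $u^0\in\U_q(\hat{\Hlie})$. The reordering is performed one generator at a time: relations (\ref{equa1})--(\ref{equa2}), read as identities of formal power series in modes, let one move each $\phi_i^\pm(z)$, hence each $k_h$ and each $h_{i,m}$, to the left past every $x_j^+(w)$ and to the right past every $x_j^-(w)$; and the commutator relation $[x_i^+(z),x_j^-(w)]=\frac{\delta_{i,j}}{q-q^{-1}}\bigl(\delta(w/z)\phi_i^+(w)-\delta(z/w)\phi_i^-(z)\bigr)$ lets one move every $x^+$-mode to the right past every $x^-$-mode at the cost of Cartan terms, which are then absorbed into the middle factor. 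One checks that, although these rewritings are written with infinite sums of currents, each individual Fourier mode is expressed as a \emph{finite} combination of modes of lower "height", so the whole process terminates inside $\U_q(sl_{n+1}^{tor})$. Hence $m$ is surjective.

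\emph{Injectivity.} This is the heart of the matter and the step I expect to be the main obstacle. The approach is to produce a $\U_q(sl_{n+1}^{tor})$-module on which PBW-ordered products $u^-u^0u^+$ act $\CC$-linearly independently. The natural candidate is the Verma-type module $M=\U_q(sl_{n+1}^{tor})^-\otimes\U_q(\hat{\Hlie})$, on which $\U_q(sl_{n+1}^{tor})^-$ acts by left multiplication, $\U_q(\hat{\Hlie})$ by a suitably twisted left multiplication, and $\U_q(sl_{n+1}^{tor})^+$ by "annihilation operators" defined recursively from the commutator relation together with the rule $x_{i,r}^+\cdot(1\otimes1)=0$. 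The real work is to verify that this prescription is compatible with \emph{all} the relations of Definition \ref{defqta}; the Drinfeld--Serre relations and the $[x^+,x^-]$-relation are the delicate ones, and they force one to keep careful track of the completions in which the infinite sums live. Granting this, evaluating at $1\otimes1$ shows $m$ is injective factor by factor, and a filtration by weight and total Fourier degree upgrades this to injectivity of $m$. An alternative, possibly technically lighter route is to use the Drinfeld coproduct to realize $\U_q(sl_{n+1}^{tor})$ as a topological Drinfeld double of the "non-negative" and "non-positive" parts and to deduce the decomposition from nondegeneracy of the resulting Hopf pairing between $\U_q(sl_{n+1}^{tor})^+$ and $\U_q(sl_{n+1}^{tor})^-$, which can be computed on the currents and shown nondegenerate via the shuffle-algebra description of $\U_q(sl_{n+1}^{tor})^\pm$. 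Either way the argument closes by exhibiting PBW-type bases of $\U_q(sl_{n+1}^{tor})^\pm$ and matching dimensions in each weight space; this is exactly what is carried out in the cited works of Miki and Nakajima, which we invoke.
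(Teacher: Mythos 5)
The paper does not prove this theorem: it is imported verbatim from Miki and from Nakajima, with no argument given in the text. So the only fair question is whether your proposal is a self-contained proof, and it is not. Your surjectivity paragraph is essentially sound: the relations (\ref{equa1})--(\ref{equa3}) and the $[x^+,x^-]$ relation, read mode by mode, do give a straightening algorithm, and the termination claim can be made precise by filtering by the length of monomials in the generators (each application of a relation either shortens a word or reorders it without increasing length, with Cartan modes absorbed into the middle factor). But the injectivity half, which you correctly identify as the heart of the matter, is left entirely as a programme. Constructing the Verma-type module requires verifying that the prescribed action of the $x_{i,r}^+$ is compatible with \emph{every} defining relation — in particular the Drinfeld--Serre relations and the cross relations (\ref{equa1})--(\ref{equa2}) — and this verification is precisely where all the work lies; asserting that it can be done is not doing it. The alternative route via a Hopf pairing on the Drinfeld coproduct faces the additional obstruction, emphasized elsewhere in this very paper, that $\Delta_D$ involves infinite sums and does not land in the algebraic tensor square, so the "topological Drinfeld double" would itself need to be constructed before nondegeneracy of the pairing could even be stated. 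Your final sentence concedes the point by invoking Miki and Nakajima for exactly these steps — which is what the paper does, so in substance your proposal reduces to the same citation, wrapped in a (correct but incomplete) outline of the strategy those references carry out.
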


\subsection{Representations of $\U_q(sl_{n+1}^{tor})$}

\subsubsection{Loop highest weight modules}

\begin{defi} A representation $V$ of $\U_q(sl_{n+1}^{tor})$ is said to be integrable (resp. in the category $\mathcal{O}$) if $\mathrm{Res}(V)$ is integrable (resp. in the category $\mathcal{O}$) as a $\U_q(\hat{sl}_{n+1})$-module.
\end{defi}

\begin{defi} A representation $V$ of $\U_q(sl_{n+1}^{tor})$ is said to be of loop highest ($\ell$-highest for short) weight if there is $v\in V$ such that 
\begin{enumerate}
\item[(i)] $V = \U_q(sl_{n+1}^{tor})^- \cdot v$, 
\item[(ii)] $\U_q(\hat{\Hlie}) \cdot v=\CC v$, 
\item[(iii)] for any $i\in I, r\in\ZZ$, $x_{i,r}^+ \cdot v=0$.
\end{enumerate}
\end{defi}

Such a representation is in the category $\mathcal{O}$. For $\gamma\in \mathrm{Hom}(\U_q(\hat{\Hlie}), \CC)$ an algebra morphism, we have a corresponding Verma module $M(\gamma)$ by Theorem \ref{dtrian}, and a simple representation $V(\gamma)$ which are $\ell$-highest weight.

\begin{thm}\label{cond} \cite{miki_representations_2000, nakajima_quiver_2001} The simple representation $V(\gamma)$ is integrable if there is $(\lambda, (P_i)_{i \in I})\in P^+ \times (1+u\mathbb{C}[u])^{I}$ satisfying $\gamma(k_h) = q^{\lambda(h)}$ for any $h \in \Hlie$ and for $i\in I$ the relation in $\mathbb{C}[[z]]$ (resp. in $\mathbb{C}[[z^{-1}]]$)
$$\gamma(\phi_i^\pm(z))=q^{\text{deg}(P_i)}\frac{P_i(zq^{-1})}{P_i(zq)}.$$
\end{thm}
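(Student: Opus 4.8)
The plan is to establish the integrability criterion of Theorem~\ref{cond} by realizing each $\hat\U_i \simeq \U_q(\hat{sl}_2)'$-subrepresentation of $V(\gamma)$ as a tensor product (or rather a direct summand of such) of evaluation modules, and then invoking the well-known classification of finite-dimensional $\U_q(\hat{sl}_2)'$-modules. More precisely, first I would use the triangular decomposition (Theorem~\ref{dtrian}) to control the weights appearing in $V(\gamma) = \U_q(sl_{n+1}^{tor})^- \cdot v$: all weights lie in $\cl(\lambda) - Q^+$, and since $\lambda \in P^+$ each weight string in the $\alpha_i$-direction is finite once we know that the $x_{i,0}^\pm$ act locally nilpotently. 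So the crux is to prove local nilpotency of the Drinfeld generators $x_{i,r}^\pm$ on $V(\gamma)$, which then gives integrability of $\mathrm{Res}(V(\gamma))$ as a $\U_q(\hat{sl}_{n+1})$-module in the sense of Definition~\ref{defint}.

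The key steps, in order, would be: (1) Show that the $\ell$-highest weight vector $v$ generates, under $\hat\U_i$, an integrable $\U_q(\hat{sl}_2)'$-module. Here the hypothesis on $\gamma(\phi_i^\pm(z))$ is exactly the statement that the $i$-th Drinfeld polynomial of this $\U_q(\hat{sl}_2)'$-module is $P_i \in 1 + u\CC[u]$, so by the Chari--Pressley classification the $\hat\U_i$-submodule generated by $v$ is a quotient of the tensor product of evaluation modules corresponding to the roots of $P_i$, hence finite-dimensional; in particular $x_{i,r}^-$ acts nilpotently on $v$. (2) Propagate this: using the defining relations \eqref{equa1}--\eqref{equa3} together with the Drinfeld relations between the $\phi_j^\pm(z)$ and $x_i^\pm(w)$, show that the set of vectors on which all $x_{i,r}^\pm$ act locally nilpotently is stable under the action of $\U_q(sl_{n+1}^{tor})^-$. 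This is the standard ``integrability spreads'' argument: if $x_i^\pm(w)$ kills $u$ to high order, one commutes it past $x_{j,s}^-$ using the quadratic relation \eqref{equa3} (for $j = i\pm 1$) or the Serre-type relations, and checks the order of nilpotency only grows by a controlled amount. (3) Conclude that $V(\gamma)$ admits a $P$-weight decomposition with finite-dimensional weight spaces lying below finitely many dominant-type weights, i.e. it is integrable and in category $\mathcal{O}$.

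The main obstacle is step~(2): the quantum toroidal relations do not immediately give a clean recursion for the nilpotency order the way the quantum affine Serre relations do, because the currents $x_i^\pm(z)$ carry an extra spectral variable and the relations \eqref{equa1}--\eqref{equa3} are rational identities of generating series rather than finite commutation relations. One must argue mode-by-mode, or better, use the coproduct-like structure / the fact that $V(\gamma)$ is a quotient of a Verma module $M(\gamma)$ and that the nilpotency is detected already on $v$ together with the PBW-type basis from Theorem~\ref{dtrian}. I would handle this by filtering $\U_q(sl_{n+1}^{tor})^-$ by $Q^+$-degree and inducting on the height of the weight: a vector of weight $\cl(\lambda) - \beta$ is a sum of $x_{j,s}^- \cdot w$ with $w$ of strictly smaller height, and the commutator $[x_i^\pm(z), x_{j,s}^-]$ is either zero (if $|i-j|\geq 2$), expressible via $\phi_i^\pm$ (if $i=j$, where the eigenvalues are controlled by the rational function $q^{\deg P_i} P_i(zq^{-1})/P_i(zq)$ having finitely many poles/zeros), or governed by the single quadratic relation \eqref{equa3} (if $|i-j|=1$); in each case local finiteness is inherited. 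This is essentially the argument of Miki and of Nakajima, and I would cite \cite{miki_representations_2000, nakajima_quiver_2001} for the technical core while indicating the $\U_q(\hat{sl}_2)'$-reduction as the conceptual input.
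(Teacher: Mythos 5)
The paper does not prove this statement: Theorem~\ref{cond} is imported verbatim from \cite{miki_representations_2000, nakajima_quiver_2001}, so there is no in-paper argument to compare against, and your sketch has to be measured against the standard proofs in those references (and in Hernandez's treatment of general quantum affinizations). Your outline is the right one and matches that literature: step~(1), the reduction to each $\hat\U_i\simeq\U_q(\hat{sl}_2)'$ via the Drinfeld polynomial $P_i$ and the Chari--Pressley/Weyl-module theory, is correct and is indeed the conceptual input. The genuine mathematical content, as you acknowledge, is entirely in step~(2), and there you do not actually carry out the argument — you describe the shape of a mode-by-mode induction and then cite Miki and Nakajima for "the technical core", which means the proposal as written is a plan rather than a proof. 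Two remarks on that step. First, a cleaner route than your height filtration is available and is the one usually used: let $W\subseteq V(\gamma)$ be the sum of all finite-dimensional $\hat\U_i$-submodules (for all $i$ simultaneously); the commutation relations show $W$ is a $\U_q(sl_{n+1}^{tor})$-submodule, it contains $v$ by step~(1), and since $V(\gamma)$ is \emph{simple} one concludes $W=V(\gamma)$ without tracking nilpotency orders — simplicity is doing real work here and your sketch never invokes it. Second, your trichotomy for $[x_i^\pm(z),x_{j,s}^-]$ is slightly off: for the plus current this commutator vanishes for \emph{all} $j\neq i$ (it is $\delta_{i,j}$ times $\phi$-terms), while relation~\eqref{equa3} only constrains currents of the same sign; the case needing the quadratic and Serre-type relations is the local finiteness of the minus currents on $\U_q(sl_{n+1}^{tor})^-\cdot v$. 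These are repairable, but as it stands the proposal defers exactly the step that distinguishes a proof from a statement of strategy.
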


\noindent The polynomials $P_i$ are called Drinfeld polynomials and the representation $V(\gamma)$ is then denoted by $V(\lambda, (P_i)_{i \in I})$.

The Kirillov-Reshetikhin module associated to $k \geq 0$, $a \in \CC^{\ast}$ and $\ell \in I$, is the simple integrable representation of weight $k \Lambda_\ell$ with the $n$--tuple
$$P_i(u) = \left\lbrace \begin{array}{l} (1-ua)(1-uaq^{2}) \cdots (1-uaq^{2(k-1)}) \ \mathrm{for} \ i = \ell, \\ 1 \ \mathrm{for} \ i \neq \ell. \end{array} \right.$$
\noindent If $k = 1$, it is also called fundamental module.

\subsubsection{Loop weight spaces}

Consider an integrable representation $V$ of $\U_q(sl_{n+1}^{tor})$. As the subalgebra $\U_q(\hat{\Hlie})$ is commutative, we have a decomposition of the weight spaces $V_{\nu}$ in simultaneous generalized eigenspaces
$$V_\nu = \bigoplus_{(\nu, \gamma) \in P \times \mathrm{Hom}(\U_q(\hat{\Hlie}), \CC)} V_{(\nu, \gamma)},$$
where $V_{(\nu, \gamma)} = \{x \in V / \exists p \in \NN, \forall i \in I, \forall m \geq 0, (\phi_{i, \pm m}^{\pm} - \gamma(\phi_{i, \pm m}^{\pm}))^{p} \cdot x = 0 \}$. If $V_{(\nu, \gamma)} \neq \{0\}$, $(\nu, \gamma)$ is called an $\ell$-weight of $V$.

\begin{defi}
A  representation $V$ is said to be $\ell$-integrable if
\begin{enumerate}
\item[(i)] it admits an $\ell$-weight space decomposition $V = \bigoplus_{(\nu, \gamma)} V_{(\nu, \gamma)}$,
\item[(ii)] for all $v \in V$ and $i\in I$, $\hat{\U}_i \cdot v$ is finite-dimensional.
\end{enumerate}
\end{defi}

All the simple $\ell$-highest weight representations $V(\lambda, (P_i)_{i \in I})$ are $\ell$-integrable. An $\ell$-integrable representation of $\U_q(sl_{n+1}^{tor})$ is integrable.

\begin{defi}
The $\U_q(sl_{n+1}^{tor})$-module $V$ is weighted if the Cartan subalgebra $\U_q(\hat{\Hlie})$ acts on $V$ by diagonalizable operators. The module $V$ is thin if it is weighted and the joint spectrum is simple.
\end{defi}

As in \cite{feigin_representations_2013}, we set $\psi(z)=\dfrac{q-q^{-1}z}{1-z}$.

\begin{prop}\cite{frenkel_$q$-characters_1999, hernandez_representations_2005, nakajima_quiver_2001}
Let $V$ be an integrable representation of $\U_q(sl_{n+1}^{tor})$. An $\ell$-weight $(\nu, \gamma)$ of $V$ satisfies the property
\begin{enumerate}
\item[(i)] for all $i \in I$, there exist $k,l \in \NN$ and $a_{1,i}, \cdots, a_{k,i}, b_{1,i}, \cdots, b_{l,i} \in \CC^{\ast}$ such that $\nu(h_i) = k-l$ and in $\CC[[z]]$ (resp. in $\CC[[z^{-1}]]$)
\begin{align}\label{formulelw}
\gamma(\phi_{i}^{\pm}(z)) = \prod_{1 \leq u \leq k} \psi(a_{u,i}qz) \prod_{1 \leq v \leq l} \psi(b_{v,i}qz)^{-1}.
\end{align}
\end{enumerate}
Further if $V$ is in the category $\mathcal{O}$, then
\begin{enumerate}
\item[(ii)] there exist $\omega \in P^{+}$, $\alpha \in Q^{+}$ satisfying $\nu = \omega - \alpha$.
\end{enumerate}
\end{prop}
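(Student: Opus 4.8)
The plan is to prove the two claimed properties of an $\ell$-weight $(\nu,\gamma)$ by reducing to the case of $\U_q(\hat{sl}_2)$-subalgebras and to the already-established structure theory of integrable (resp. category $\mathcal{O}$) modules. First I would fix $i \in I$ and restrict the representation $V$ to the subalgebra $\hat{\U}_i \simeq \U_q(\hat{sl}_2)'$. Since $V$ is integrable and admits an $\ell$-weight space decomposition (integrability forces the generalized eigenspaces for the commutative $\U_q(\hat{\Hlie})$ to be well-behaved), the $\hat{\U}_i$-submodule generated by a generalized eigenvector in $V_{(\nu,\gamma)}$ is an integrable $\U_q(\hat{sl}_2)'$-module on which the Drinfeld commutative subalgebra acts with a single generalized eigenvalue given by $\gamma$ restricted to the series $\phi_i^{\pm}(z)$. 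The structure of such $\U_q(\hat{sl}_2)'$-modules is classical: by the Frenkel–Reshetikhin $q$-character argument (or Chari–Pressley), the eigenvalue of $\phi_i^{\pm}(z)$ on a finite-dimensional $\ell$-weight space of an integrable $\U_q(\hat{sl}_2)'$-module is a ratio of the form $q^{\deg}\,\prod P(zq^{-1})/P(zq)$ for a pair of polynomials with constant term $1$, which after rewriting with the Laurent expansions in $\CC[[z]]$ and $\CC[[z^{-1}]]$ is exactly a product $\prod_u \psi(a_{u,i}qz)\prod_v \psi(b_{v,i}qz)^{-1}$; the balance $\nu(h_i) = k - l$ is read off from the leading behavior (equivalently from $\gamma(\phi_{i,0}^{\pm}) = q^{\pm\nu(h_i)}$ and the degree count). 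This gives part (i).

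For part (ii), I would use that $V$ is in the category $\mathcal{O}$ as a $\U_q(\hat{sl}_{n+1})$-module via $\mathrm{Res}$. By the definition of category $\mathcal{O}$ (condition (iii) in the relevant definition), $\wt(\mathrm{Res}(V))$ is contained in a finite union $\bigcup_{j}\{\mu \mid \mu \leq \lambda_j\}$ with $\lambda_j \in P$; enlarging we may take a single dominant $\omega \in P^+$ dominating all the $\lambda_j$ (choosing $\omega$ dominant is harmless since $\nu \leq \lambda_j \leq \omega$ once $\omega - \lambda_j \in Q^+$, which can always be arranged by adding a suitable element of $Q^+$, e.g. a large multiple of a fixed dominant weight). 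Then any weight $\nu$ appearing satisfies $\nu \leq \omega$, i.e. $\omega - \nu \in Q^+$; setting $\alpha = \omega - \nu$ gives the desired $\nu = \omega - \alpha$ with $\alpha \in Q^+$.

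The main obstacle is part (i): one must be careful that the $\ell$-weight space $V_{(\nu,\gamma)}$ need not be finite-dimensional a priori for a general integrable module, so the reduction to $\U_q(\hat{sl}_2)'$ must be done at the level of an \emph{individual} cyclic vector, where $\ell$-integrability of the ambient simple modules is not assumed — here one leans on the fact that in an integrable module each $x_i^\pm$ acts locally nilpotently, so the $\hat{\U}_i$-module generated by a single weight vector lies in finitely many $h_i$-weight spaces and the rationality/factorization of the $\phi_i^\pm(z)$-eigenvalue follows from the $\U_q(\hat{sl}_2)'$ theory applied to that finite-dimensional-in-the-$h_i$-direction submodule. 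Packaging the Laurent-expansion bookkeeping so that the \emph{same} polynomial data produces both the $\CC[[z]]$ and $\CC[[z^{-1}]]$ expansions (and matching it to the normalization $\psi(z) = (q - q^{-1}z)/(1-z)$) is the routine but slightly delicate computation; I would cite \cite{frenkel_$q$-characters_1999, hernandez_representations_2005, nakajima_quiver_2001} for the underlying $\ell$-weight structure rather than redo it.
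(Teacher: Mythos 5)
The paper does not actually prove this proposition; it is quoted from the cited references, so there is no internal proof to compare against. Your overall strategy for (i) --- reduction to the subalgebras $\hat{\U}_i\simeq \U_q(\hat{sl}_2)'$ and the Frenkel--Reshetikhin description of $\ell$-weights --- is indeed the route those references take, and you correctly identify the delicate point: getting enough finiteness in the $\alpha_i$-direction of $\hat{\U}_i\cdot v$ to invoke the $\U_q(\hat{sl}_2)'$ theory. Be aware, though, that this is genuinely the hard technical lemma and not a formal consequence of local nilpotency of $x_{i,0}^{\pm}$: the paper introduces the strictly stronger notion of $\ell$-integrability (``$\hat{\U}_i\cdot v$ finite-dimensional for all $v$'') precisely because it does not follow from integrability alone, so ``lies in finitely many $h_i$-weight spaces'' requires the quadratic Drinfeld relation among the $x_{i,r}^{+}$ (or an equivalent argument from the references), not just $(x_{i,0}^{+})^{N}\cdot v=0$.

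Your argument for part (ii) has a real gap. It never uses integrability, yet (ii) is false for general category $\mathcal{O}$ modules (a Verma module whose highest weight has negative level is a counterexample), so no correct proof can avoid it. Concretely, the step ``any $\lambda_j\in P$ can be dominated by a dominant $\omega$ by adding a suitable element of $Q^{+}$'' fails in the affine setting: every $\alpha_i$ vanishes on the central element $c$, so $\omega-\lambda\in Q^{+}$ forces $\omega(c)=\lambda(c)$, and a weight of negative level lies below no dominant weight whatsoever. The standard fix uses integrability: by condition (iii) of category $\mathcal{O}$, every weight $\nu$ of $V$ lies below a maximal weight $\mu$ of $V$ (one cannot move up along $Q^{+}$ indefinitely), and a maximal weight of an integrable module is automatically dominant --- if $\mu(h_i)<0$, the $sl_2$-theory in the $i$-th horizontal direction forces $x_{i,0}^{+}$ to act nontrivially on $V_{\mu}$, contradicting maximality. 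Then $\nu=\mu-\alpha$ with $\mu\in P^{+}$ and $\alpha\in Q^{+}$, as required.
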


\subsubsection{$q$--character of integrable representations}

Consider formal variables $ Y_{i,a}^{\pm 1} $, $ e^{\nu} $ with $ i \in I $, $ a \in \CC^{\ast}$, $ \nu \in P $, and let $ A $ be the group of monomials $ m = e^{\omega(m)} \prod_{i \in I, a \in \CC^{\ast}} Y_{i,a}^{u_{i,a}(m)} $ where $ u_{i,a}(m) \in \ZZ $, $ \omega(m) \in P $ are such that
$$\sum_{a \in \CC^{\ast}} u_{i,a}(m) = \omega(m)(h_i) \text{ for all } i \in I.$$

For $ J \subset I $, a monomial $ m $ is said to be $J$-dominant if for all $ j \in J $ and $a\in \CC^{\ast} $ we have $ u_{j,a}(m) \geq 0 $. An $ I $-dominant monomial is said to be dominant.

Let $V$ be an integrable $\U_q(sl_{n+1}^{tor})$-module. For $(\nu, \gamma)$ an $\ell$-weight of $V$, one defines from (\ref{formulelw}) the monomial $$m_{(\nu, \gamma)} = e^{\nu} \prod_{i \in I} \prod_{a_i \in \CC^{\ast}} Y_{i,a_i} \prod_{b_i \in \CC^{\ast}} Y_{i,b_i}^{-1}.$$ We denote $V_{(\nu, \gamma)} = V_{m_{(\nu, \gamma)}}$. By this correspondence between $\ell$-weights and monomials due to Frenkel-Reshetikhin \cite{frenkel_$q$-characters_1999}, the $I$-tuple of Drinfeld polynomials are identified with the dominant monomials. In particular for a dominant monomial $m$, one denotes by $V(m)$ the simple module of $\ell$-highest weight $m$. For example $V(e^{k \Lambda_\ell}Y_{\ell, a}Y_{\ell, aq^{2}} \dots Y_{\ell, aq^{2(k-1)}})$ is the Kirillov-Reshetikhin module associated to $k \geq 0$, $a \in \CC^{\ast}$ and $\ell \in I$, and $V(e^{\Lambda_\ell}Y_{\ell, a})$ is the fundamental module associated to $a \in \CC^{\ast}$ and $\ell \in I$.

\begin{defi}\cite{frenkel_$q$-characters_1999, hernandez_representations_2005, nakajima_quiver_2001} The $q$--character of an integrable representation $V$ of $\U_q(sl_{n+1}^{tor})$ with finite-dimensional $\ell$-weight spaces is defined by the formal sum
$$\chi_q(V) = \sum_{m} \dim(V_{m}) m \in \ZZ^A.$$
\end{defi}

\noindent The set of monomials occurring in $\chi_q(V)$ is denoted by $\mathcal{M}(V)$.

\subsection{Extremal loop weight modules}

\subsubsection{Definition}

\begin{defi}\label{defelm}\cite{hernandez_quantum_2009} An extremal loop weight module of $ \mathcal{U}_{q}(sl_{n+1}^{tor}) $ of $\ell$-weight $m \in A$ is an integrable representation $ V $ such that there is $ v \in V_m $ satisfying
\begin{enumerate}
\item[(i)] $ \mathcal{U}_{q}(sl_{n+1}^{tor}) \cdot v = V $,
\item[(ii)] $ v $ is extremal for $ \mathcal{U}_{q}^{h}(sl_{n+1}^{tor}) $,
\item[(iii)] for each $j \in I$ the action of $ \mathcal{U}_{q}^{v, j}(sl_{n+1}^{tor})$ is locally finite-dimensional on $V$: for any $w \in V$ the vector space $\mathcal{U}_{q}^{v, j}(sl_{n+1}^{tor}) \cdot w$ is finite-dimensional.
\end{enumerate}
\end{defi}

\subsubsection{Examples}

\begin{prop}\label{delm}
Let $m$ be a dominant monomial. The simple $\ell$-highest weight module $V(m)$ is an extremal loop weight module of $\U_q(sl_{n+1}^{tor})$.
\end{prop}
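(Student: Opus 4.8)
The plan is to verify the three defining conditions of an extremal loop weight module (Definition \ref{defelm}) for $V = V(m)$, where $m$ is dominant. Condition (i) is immediate: by definition of the simple $\ell$-highest weight module $V(m)$, the cyclic vector $v = v_m$ generates $V(m)$ over $\U_q(sl_{n+1}^{tor})$ (indeed already over $\U_q(sl_{n+1}^{tor})^-$). Condition (iii) follows from the fact that $V(m)$ is $\ell$-integrable (as stated just after the definition of $\ell$-integrability: ``All the simple $\ell$-highest weight representations $V(\lambda,(P_i)_{i\in I})$ are $\ell$-integrable''); more precisely, for $j = 0$ the locally finite action of $\U_q^{v,0}(sl_{n+1}^{tor}) = \U_q^v(sl_{n+1}^{tor})$ is built into $\ell$-integrability via the condition that each $\hat{\U}_i \cdot w$ is finite-dimensional, and I would promote this to local finiteness of the whole vertical subalgebra using the $\ell$-weight space decomposition together with the category $\mathcal{O}$ condition (weights bounded above, finite-dimensional weight spaces). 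For $j \neq 0$ one invokes an automorphism of $\U_q(sl_{n+1}^{tor})$: the rotation automorphism $\theta$ (composed suitably with the $t_b$ rescalings) carries $\U_q^{v,0}$ to $\U_q^{v,j}$, and since $\theta$ pulls $V(m)$ back to another simple $\ell$-highest weight module, the same local finiteness transports.

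The heart of the argument is condition (ii): the $\ell$-highest weight vector $v_m$ must be extremal for the horizontal subalgebra $\U_q^h(sl_{n+1}^{tor}) \cong \U_q(\hat{sl}_{n+1})$. Since $m$ is dominant, $v_m$ lies in the weight space of a dominant weight $\lambda = \omega(m) \in P^+$, and $x_{i,0}^+ \cdot v_m = 0$ for all $i \in I$ (as $x_{i,r}^+ \cdot v_m = 0$ for all $r$), so $v_m$ is $i$-extremal for every $i$ in the sense of the Kashiwara definition. To check that it is extremal (not merely $i$-extremal), I would argue that $\mathrm{Res}(V(m))$, viewed as a $\U_q(\hat{sl}_{n+1})$-module, contains the integrable highest weight module generated by $v_m$, which is the simple highest weight module $V(\lambda)$ of dominant highest weight $\lambda$; by the Example following Theorem \ref{thmkas}, this is exactly the extremal weight module $V(\lambda)$, in which the cyclic highest weight vector is extremal by construction. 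Hence $v_m$ is extremal in $\mathrm{Res}(V(m))$ as well, because extremality is a statement about the orbit of $v_m$ under the operators $S_{i_1}\circ\cdots\circ S_{i_l}$, all of which are built from the horizontal Chevalley generators and thus stay inside the $\U_q^h$-submodule generated by $v_m$.

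I expect the main obstacle to be the rigorous passage in condition (iii) from the $\ell$-integrability hypothesis — which only directly gives finiteness of $\hat{\U}_i \cdot w$ for each rank-one subalgebra $\hat{\U}_i$ — to genuine local finiteness of the full vertical affine subalgebra $\U_q^{v,j}(sl_{n+1}^{tor})$ on every vector $w$. The clean way is: since $V(m)$ is in category $\mathcal{O}$ and integrable, its set of weights lies in a finite union of down-sets $\nu \le \lambda_j$ intersected with $\lambda - Q^+$, and each weight space is finite-dimensional; given $w$, the $\U_q^{v,j}$-submodule it generates is again integrable and in category $\mathcal{O}$ for the corresponding finite-type data, with $\ell$-weights of bounded denominator, so it is a finite-dimensional module. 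The remaining care is simply to make sure the rotation/rescaling automorphisms used to handle $j \neq 0$ genuinely send $\U_q^{v,0}$ onto $\U_q^{v,j}$ and send category-$\mathcal{O}$ integrable modules to category-$\mathcal{O}$ integrable modules, which follows from their explicit action on the currents $x_i^\pm(z)$ and $\phi_i^\pm(z)$ recalled in the subsection on automorphisms.
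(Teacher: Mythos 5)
Your proposal is correct and follows essentially the same route as the paper: condition (ii) is verified by identifying $\U_q^{h}(sl_{n+1}^{tor})\cdot v_m$ with the simple integrable highest weight module $V(\omega(m))$, which for dominant $\omega(m)$ is the extremal weight module, and condition (iii) is obtained by confining the weights of the vertical submodule to the finitely many weights of $V(m)$ with fixed value on $d$ (using category $\mathcal{O}$, integrability and finite-dimensional weight spaces), with the rotation automorphism handling $j\neq 0$. The only cosmetic slip is your reference to ``finite-type data'' for $\U_q^{v,j}(sl_{n+1}^{tor})$, which is an affine quantum algebra; the relevant point, as in the paper, is simply that it does not move the $d$-component of weights.
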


\begin{proof}
Let $v$ be a $\ell$-highest weight vector of $V(m)$. It is in particular a highest weight vector of weight $\omega(m)$, and satisfies for any $i \in I$
$$(x_i^{-})^{1+\omega(m)(h_i)} \cdot v = 0.$$
Hence we have a surjective $\U_q(\hat{sl}_{n+1})$-morphism $V(\omega(m)) \twoheadrightarrow \U_q^{h}(sl_{n+1}^{tor}) \cdot v$. As $V(\omega(m))$ is simple, this is an isomorphism and $v$ is extremal for the horizontal quantum affine subalgebra of weight $\omega(m)$.

It remains to show that for any $w \in V(m)$ and $j \in I$, $\U_q^{v,j}(sl_{n+1}^{tor}) \cdot w$ is finite-dimensional. Actually one can assume that $w$ is a weight vector of weight $\lambda$ and that $j=0$ in our computations (the third point of Definition \ref{defelm} then follows by applying $\theta^{(j)}$). Consider $W = \U_q^{v}(sl_{n+1}^{tor}) \cdot w$. As the weight lattice of $\U_q^{v}(sl_{n+1}^{tor})$ is $P_0 = \ZZ \Lambda_1 + \cdots + \ZZ \Lambda_n$, we have
$$\wt(W) \subseteq \{\mu \in \wt(V(m)) \mid \mu(d) = \lambda(d) \}.$$
As $V(m)$ is in the category $\mathcal{O}$, this set of weights is finite. The weight spaces of $V(m)$ (and as well $W$) being finite-dimensional, $W$ is then finite-dimensional.
\end{proof}

We have recently constructed the first family of extremal loop weight modules which are not in the category $\mathcal{O}$, using monomial realizations of extremal crystals.

Let $d : I \times I \rightarrow \NN$ be the map such that for all $0 \leq i,j \leq n$, $d(i,j)$ is the non-negative integer defined by
$$0 \leq d(i,j) \leq \left[ \dfrac{n+1}{2} \right] \quad \text{and} \quad \vert i - j \vert = d(i, j) \mod (n+1)$$
where $\left[ x \right]$ is the floor of the real number $x$. The integer $d(i,j)$ corresponds to the distance between the nodes $i$ and $j$ in the Dynkin diagram of type $A_n^{(1)}$. For $\ell \in I_0$, we denote $d_\ell = d(0, \ell)$.

\begin{thm}\cite{mansuy_quantum_2012}
Assume that $n=2r+1$ is odd and $\ell = 1, r+1$ or $n$, and set $a \in \CC^{\ast}$. There exists an irreducible and thin extremal loop weight $\U_q(sl_{n+1}^{tor})$-module of $\ell$-weight $e^{\varpi_\ell}Y_{\ell,a}Y_{0, aq^{d_\ell}}^{-1}$.
\end{thm}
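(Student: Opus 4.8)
The plan is to construct the module explicitly on a vector space with a basis indexed by a monomial realization of the extremal fundamental weight crystal. By Theorem~\ref{thmkas} the $\U_q(\hat{sl}_{n+1})$-module $V(\varpi_\ell)$ is integrable with crystal basis $\mathcal{B}(\varpi_\ell)$, and it is irreducible by the level-zero theorem of Kashiwara quoted above. First I would fix a Kashiwara--Nakajima monomial realization of $\mathcal{B}(\varpi_\ell)$: an injection $b \mapsto m(b)$ of $\mathcal{B}(\varpi_\ell)$ into the group $A$ of monomials such that the Kashiwara operators $\tilde e_i, \tilde f_i$ are computed by multiplication by the standard monomials $A_{i,\cdot}^{\pm 1}$, with the free parameters of the realization chosen so that the image set $\mathcal{M}$ is multiplicity free and $m_0 := e^{\varpi_\ell}Y_{\ell,a}Y_{0,aq^{d_\ell}}^{-1}$ is its highest element. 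Let $V = \bigoplus_{m \in \mathcal{M}} \CC v_m$; transporting the action makes $\mathrm{Res}(V) \simeq V(\varpi_\ell)$ as a $\U_q^{h}(sl_{n+1}^{tor})$-module, so $v_{m_0}$ is an extremal vector of weight $\varpi_\ell$ and conditions (i)--(ii) of Definition~\ref{defelm} hold by construction.

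Next I would promote this to a full $\U_q(sl_{n+1}^{tor})$-action. Declare each $v_m$ to be an $\ell$-weight vector of $\ell$-weight $m$, so that the eigenvalues of $\phi_i^{\pm}(z)$ are read off from $m$ through the Frenkel--Reshetikhin formula~\eqref{formulelw}, and define the Drinfeld currents $x_i^{\pm}(z)$ by spectral refinements of $\tilde f_i, \tilde e_i$, i.e.\ $x_{i,r}^{\pm}\cdot v_m = \sum c_{\pm}(m,r)\, v_{m A_{i,\cdot}^{\mp 1}}$, where the scalars are dictated by requiring each copy $\hat{\U}_i \simeq \U_q(\hat{sl}_2)'$ to act on its $i$-strings in $\mathcal{M}$ through the corresponding rank-one $\ell$-weight module (a tensor product of vector/fundamental $\U_q(\hat{sl}_2)'$-modules, since the $i$-strings of $\mathcal{M}$ are $q$-segments). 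With these formulas the relations of Definition~\ref{defqta} that involve a single node $i$, together with the $\phi$--$\phi$ relations and the $\phi$--$x$ relations~\eqref{equa1}--\eqref{equa2}, reduce to the already-known rank-one situation and to bookkeeping of how the $Y_{i,a}$ occur in $\mathcal{M}$.

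The substantive point is to verify the relations~\eqref{equa3}, the Drinfeld--Serre relations for adjacent nodes $j = i\pm 1$, and the commutativity $[x_i^{\pm}(z),x_j^{\pm}(w)]=0$ for non-adjacent $i,j$; here the hypotheses $n=2r+1$ odd and $\ell \in \{1, r+1, n\}$ enter essentially. For these nodes (and, via the diagram automorphism $\iota$, it suffices to treat $\ell = 1$ and $\ell = r+1$) the monomial realization can be chosen so that, restricted to any pair of adjacent nodes, $\mathcal{M}$ assembles into $\ell$-weight modules over the rank-two subalgebra $\U_q(sl_{n+1}^{tor})_{\{i,j\}}$ for which these relations are already established; equivalently, the relative spectral parameters along the diagram close up consistently, which is precisely what pins down the shift $aq^{d_\ell}$ in $m_0$. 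For a general node this closure fails. Granting it, thinness is immediate from the multiplicity-freeness of $\mathcal{M}$, and irreducibility follows from the connectedness of $\mathcal{B}(\varpi_\ell)$: any nonzero $\U_q(sl_{n+1}^{tor})$-submodule is in particular $\U_q^{h}(sl_{n+1}^{tor})$-stable, hence equal to $V$ by irreducibility of $V(\varpi_\ell)$.

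The hard part will be establishing this closure of the relations for the three distinguished nodes; I expect this to require a case-by-case description of $\mathcal{M}$ in terms of explicit tableaux or $q$-segments and a direct check that the resulting rank-two modules are genuine. By contrast, condition (iii) of Definition~\ref{defelm} is comparatively routine once the action exists: applying a power of the rotation automorphism $\theta$ reduces us to $j = 0$, i.e.\ to the vertical quantum affine subalgebra $\U_q^{v}(sl_{n+1}^{tor})$, whose weight lattice is $P_0 = \ZZ\Lambda_1 + \cdots + \ZZ\Lambda_n$; since $\alpha_i(d) = 0$ for $i \in I_0$, this subalgebra preserves the $d$-grading, so $\U_q^{v}(sl_{n+1}^{tor}) \cdot v_m$ is contained in the span of those $v_{m'}$ with $\omega(m')(d) = \omega(m)(d)$, and each such $\delta$-slice of $V(\varpi_\ell)$ is finite-dimensional (its weights lie in a finite polytope, consistently with the existence of a finite-dimensional $\U_q(\hat{sl}_{n+1})'$-quotient of $V(\varpi_\ell)$). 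Hence each $\U_q^{v,j}(sl_{n+1}^{tor})$ acts locally finitely and, in particular, $V$ is integrable.
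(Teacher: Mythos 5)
Your outline is, in substance, the proof from the cited reference \cite{mansuy_quantum_2012}, which is where this theorem is actually established (the present paper only quotes it): one takes a multiplicity-free Nakajima monomial realization $\mathcal{M}(e^{\varpi_\ell}Y_{\ell,a}Y_{0,aq^{d_\ell}}^{-1})$ of $\mathcal{B}(\varpi_\ell)$, defines a thin $\U_q(sl_{n+1}^{tor})$-action on $\bigoplus_m \CC v_m$ by spectrally refined Kashiwara operators, and the entire difficulty is, as you correctly isolate, the closure of the adjacent-node and Serre-type relations, which is exactly what forces $n=2r+1$, $\ell\in\{1,r+1,n\}$ and the shift $q^{d_\ell}$; your arguments for thinness, for irreducibility via $\mathrm{Res}(V)\simeq V(\varpi_\ell)$, and for condition (iii) via the $\delta$-grading are the same as there (compare Proposition \ref{delm}). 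One caution: a crystal does not determine a module, so ``transporting the action'' is not automatic --- the actual order of operations is to posit the explicit formulas, verify the defining relations, and only then deduce $\U_q^{h}(sl_{n+1}^{tor})\cdot v_{m_0}\simeq V(\varpi_\ell)$ from extremality of $v_{m_0}$ and Kashiwara's characterization of $V(\varpi_\ell)$. By contrast, the route the present paper develops is genuinely different: it realizes the same modules via the Drinfeld coproduct, as the submodule $T(e^{\varpi_\ell}Y_{\ell,a}Y_{0,aq}^{-1})$ of $V(e^{\Lambda_\ell}Y_{\ell,a})\otimes V(e^{-\Lambda_0}Y_{0,b}^{-1})$ for $\ell=1,n$ (Theorem \ref{repvect2}) and as an explicit subquotient of a tensor power of vector representations for $\ell=r+1$ (Theorem \ref{tpeflwm}). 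The monomial approach buys completely explicit formulas at the cost of a case-by-case verification of the relations; the tensor-product approach replaces that verification by a convergence criterion for $\Delta_D$ (the $\mathcal{A}$-form argument of Theorem \ref{tpexist}) together with the computation of the structure constants $A_u$, and it is the one that scales to non-fundamental $\ell$-weights, generic spectral parameters, and other types (Section 6).
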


\noindent This representation is denoted $V(e^{\varpi_\ell}Y_{\ell,a}Y_{0, aq^{d_\ell}}^{-1})$ and called extremal fundamental loop weight module\footnote{In fact in \cite{mansuy_quantum_2012} we constructed the representation $V(e^{\varpi_\ell}Y_{\ell,1}Y_{0, q^{d_\ell}}^{-1})$. The $\U_q(sl_{n+1}^{tor})$-module $V(e^{\varpi_\ell}Y_{\ell,a}Y_{0, aq^{d_\ell}}^{-1})$ is obtained from $V(e^{\varpi_\ell}Y_{\ell,1}Y_{0, q^{d_\ell}}^{-1})$ by twisting the action by $t_a$}.

Actually one can construct the representations $V(e^{\varpi_\ell}Y_{\ell,a}Y_{0, aq}^{-1})$ when $n$ is even and $\ell = 1$ or $\ell = n$ as in \cite{mansuy_quantum_2012} (by using monomial realizations of $\mathcal{B}(\varpi_\ell)$ and formulas in \cite[Theorem 4.11]{mansuy_quantum_2012}).

For $k \geq 3$ and $\epsilon \in \CC^{\ast}$ a primitive $k$-root of unity, denote by $\U_\epsilon(sl_{n+1}^{tor})'$ the algebra defined as $\U_q(sl_{n+1}^{tor})'$ with $\epsilon$ instead of $q$ (without divided power and derivation element).

\begin{prop}\cite{mansuy_quantum_2012}
Set $p = n+1$ and $L \geq 1$ when $\ell = 1, n$ (resp. $p = 2$ and $L>1$ when $n$ is odd and $\ell = \frac{n+1}{2}$). Denote by $\epsilon$ a primitive $[pL]$-root of unity.

\noindent The $\U_\epsilon(sl_{n+1}^{tor})'$-module $V(e^{\varpi_\ell}Y_{\ell,a}Y_{0, aq^{d_\ell}}^{-1})_\epsilon$ obtained from $V(e^{\varpi_\ell}Y_{\ell,a}Y_{0, aq^{d_\ell}}^{-1})$ by specializing $q$ at $\epsilon$ admits an irreducible and finite-dimensional quotient.
\end{prop}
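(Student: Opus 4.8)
The plan is to realise the specialised representation as the reduction modulo $q\mapsto\epsilon$ of an integral lattice and then to carve out a finite-dimensional quotient using a periodicity that the spectral parameters acquire at a root of unity. Write $V=V(e^{\varpi_\ell}Y_{\ell,a}Y_{0,aq^{d_\ell}}^{-1})$ and let $\mathcal{M}=\mathcal{M}(V)$ be its set of $\ell$-weight monomials; by the construction of \cite{mansuy_quantum_2012} via the monomial realisation of $\mathcal{B}(\varpi_\ell)$, $V$ is thin, with a distinguished basis $\{v_m\}_{m\in\mathcal{M}}$ on which the Drinfeld generators $x_{i,r}^{\pm}$, $k_h$, $h_{i,s}$ act by explicit formulas. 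First I would verify that those formulas have coefficients in $\ZZ[q^{\pm 1}]$ — here the standing hypotheses $p=n+1$, $L\geq 1$ (resp.\ $p=2$, $L>1$) are used only to guarantee that $\epsilon$ has order $[pL]\geq 3$, hence $\epsilon\neq\pm1$, so that the specialised formulas remain well behaved — so that $V_{\ZZ[q^{\pm 1}]}:=\bigoplus_{m\in\mathcal{M}}\ZZ[q^{\pm 1}]\,v_m$ is stable under the $\ZZ[q^{\pm 1}]$-subalgebra of $\U_q(sl_{n+1}^{tor})'$ generated by the $x_{i,r}^{\pm}$, $k_h$ and $h_{i,s}$ (no divided powers, matching the definition of $\U_\epsilon(sl_{n+1}^{tor})'$). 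Reducing, $V_\epsilon:=V_{\ZZ[q^{\pm 1}]}\otimes_{\ZZ[q^{\pm 1}]}\CC$ with $q$ acting by $\epsilon$ is precisely the module $V(e^{\varpi_\ell}Y_{\ell,a}Y_{0,aq^{d_\ell}}^{-1})_\epsilon$ of the statement, with basis $\{\bar v_m\}_{m\in\mathcal{M}}$ and action obtained by substituting $q=\epsilon$.

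The second and decisive step is a periodicity statement for $V$, again read off from the monomial realisation. The crystal $\mathcal{B}(\varpi_\ell)$ carries a shift $\tau_0$ in the $\delta$-direction; on the level of monomials it multiplies every spectral parameter by a fixed power $q^{N_0}$ of $q$, with $N_0$ depending only on $\ell$ and $n$, and the $\CC(q)$-linear map $v_m\mapsto v_{\tau_0(m)}$ intertwines the $\U_q(sl_{n+1}^{tor})'$-action with the action twisted by $t_{q^{N_0}}$; concretely, using \cite[Theorem 4.11]{mansuy_quantum_2012}, one has identities of the form
\[ x_{i,r}^{\pm}\cdot v_{\tau_0(m)}=q^{N_0 r}\,\tau_0\!\bigl(x_{i,r}^{\pm}\cdot v_m\bigr),\qquad \phi_i^{\pm}(z)\cdot v_{\tau_0(m)}=\tau_0\!\bigl(\phi_i^{\pm}(q^{-N_0}z)\cdot v_m\bigr). \]
Because the order of $\epsilon$ is $[pL]$, a suitable positive power $\tau:=\tau_0^{L'}$ (with $L'$ dividing $L$, chosen so that $\epsilon^{N_0L'}=1$; this is exactly where the values $p=n+1$ resp.\ $p=2$ enter) induces upon specialisation the twist $t_{\epsilon^{N_0L'}}=t_1=\mathrm{id}$. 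Hence the $\CC$-linear map
\[ \tau\colon V_\epsilon\longrightarrow V_\epsilon,\qquad \bar v_m\longmapsto \bar v_{\tau(m)}, \]
is a $\U_\epsilon(sl_{n+1}^{tor})'$-module endomorphism. Finally I would check, from the combinatorics of $\mathcal{B}(\varpi_\ell)$, that $\tau$ acts freely on $\mathcal{M}$ with only finitely many orbits — all of them being the $\tau$-translates of the finitely many monomials in a set of orbit representatives for the $\delta$-grading.

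It then follows formally that $W_\epsilon:=(\tau-\mathrm{id})(V_\epsilon)$ is a $\U_\epsilon(sl_{n+1}^{tor})'$-submodule of $V_\epsilon$, being the image of a module endomorphism; and since $V_\epsilon=\bigoplus_{m}\CC\bar v_m$ decomposes $\tau$-orbit by $\tau$-orbit into copies of the group algebra $\CC[\tau^{\pm 1}]$ of $\langle\tau\rangle\cong\ZZ$, on each of which the cokernel of $\tau-\mathrm{id}$ is one-dimensional, we get
\[ \dim_\CC\bigl(V_\epsilon/W_\epsilon\bigr)=\#\bigl(\mathcal{M}/\langle\tau\rangle\bigr)<\infty . \]
In particular $V_\epsilon/W_\epsilon$ is a non-zero finite-dimensional $\U_\epsilon(sl_{n+1}^{tor})'$-module; dividing it by a maximal proper submodule yields an irreducible finite-dimensional quotient of $V_\epsilon=V(e^{\varpi_\ell}Y_{\ell,a}Y_{0,aq^{d_\ell}}^{-1})_\epsilon$, as claimed.

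The \textbf{main obstacle} is the input underlying the first two steps: making the action on the thin basis explicit enough to see simultaneously that all structure constants lie in $\ZZ[q^{\pm 1}]$ with no collision at $\epsilon$, and that they transform under $\tau_0$ exactly by the spectral twist $t_{q^{N_0}}$ with the correct $N_0$, so that the number-theoretic hypothesis ``$\epsilon$ is a primitive $[pL]$-root of unity with $p\in\{n+1,2\}$'' is precisely what forces a power of $\tau_0$ to specialise to the identity while keeping finitely many orbits. Once this is in hand, the passage to $W_\epsilon$ and the finite-dimensionality of the quotient are routine.
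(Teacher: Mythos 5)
Your proposal is correct and matches the construction this paper actually relies on: the proposition is only recalled here from \cite{mansuy_quantum_2012} without proof, but the later sections confirm that the finite-dimensional quotients are obtained exactly by your mechanism of identifying basis vectors along the spectral-shift periodicity at a root of unity (giving $V(\ffbox{1}_a)_\epsilon$ of dimension $(n+1)L$, and dimension $L\binom{n+1}{r+1}$ in the case $\ell=\tfrac{n+1}{2}$, i.e. the orbit counts you predict). The intertwining relation you posit for the shift $\tau_0$ and the role of $p=n+1$ versus $p=2$ in forcing $\epsilon^{N_0L'}=1$ are exactly right for the explicit formulas (\ref{actrv}) and (\ref{actonpm})--(\ref{actoncart}).
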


\begin{prop}\cite{mansuy_quantum_2012}
\begin{enumerate} 
\item[(i)]There exists an irreducible and thin extremal loop weight $\U_q(sl_{4}^{tor})$-module of $\ell$-weight $e^{2\varpi_1}Y_{1,a}Y_{1,aq^{-2}}Y_{0,aq}^{-1}Y_{0,aq^{-1}}^{-1}$.

\noindent We denote this representation $V(e^{2\varpi_1}Y_{1,a}Y_{1,aq^{-2}}Y_{0,aq}^{-1}Y_{0,aq^{-1}}^{-1})$\footnote{It is obtained from the representation $V(e^{2\varpi_1}Y_{1,q}Y_{1,q^{-1}}Y_{0,q^{2}}^{-1}Y_{0,1}^{-1})$ constructed in \cite{mansuy_quantum_2012} by twisting the action by $t_{aq^{-1}}$}.
\item[(ii)] Set $\epsilon$ a primitive $[4L]$-root of unity, with $L \geq 1$.

\noindent The $\U_\epsilon(sl_{4}^{tor})'$-module obtained from $V(e^{2\varpi_1}Y_{1,q}Y_{1,q^{-1}}Y_{0,q^{2}}^{-1}Y_{0,1}^{-1})$ by specializing $q$ at $\epsilon$ admits an irreducible and finite-dimensional quotient.
\end{enumerate}
\end{prop}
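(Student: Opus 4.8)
Since the statement is recalled from \cite{mansuy_quantum_2012}, its proof is the monomial--crystal construction of that paper, specialized to $\U_q(sl_4^{tor})$; I sketch the strategy. By the footnote it is enough to build the module $V_{0}:=V(e^{2\varpi_1}Y_{1,q}Y_{1,q^{-1}}Y_{0,q^{2}}^{-1}Y_{0,1}^{-1})$, the representation for a general parameter $a$ being obtained from it by twisting the action by the automorphism $t_{aq^{-1}}$. Since $t_{b}$ restricts to the identity on the horizontal subalgebra $\U_q^{h}(sl_4^{tor})$, conjugates each $\U_q^{v,j}(sl_4^{tor})$ into itself and does not change $\mathrm{Res}$, twisting by $t_{b}$ leaves all three conditions of Definition \ref{defelm} intact and transforms the $\ell$-weight exactly as stated. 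The parameters of $V_{0}$ are chosen to be powers of $q$, which is what will allow the specialization of (ii) to interact with a root of unity.

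To build $V_{0}$, start from $m_{0}=e^{2\varpi_1}Y_{1,q}Y_{1,q^{-1}}Y_{0,q^{2}}^{-1}Y_{0,1}^{-1}$ and let $\mathcal{M}$ be the set of monomials obtained by iterating the moves $m\mapsto mA_{i,c}^{\pm1}$, $i\in I$, of a monomial realization of the extremal crystal $\mathcal{B}(2\varpi_1)$ of $\U_q(\hat{sl}_4)$, where $A_{i,c}=Y_{i,cq}Y_{i,cq^{-1}}\prod_{C_{j,i}=-1}Y_{j,c}^{-1}$. Put $V_{0}=\bigoplus_{m\in\mathcal{M}}\CC v_{m}$, declare $v_{m}$ to have $\ell$-weight $m$ (which fixes the action of $\U_q(\hat{\Hlie})$ and of the $k_{h}$), and define $x_{i,r}^{\pm}\cdot v_{m}$, $i\in I$, $r\in\ZZ$, as a finite combination of the $v_{mA_{i,c}^{\mp1}}$, each coefficient being a $q$-integer times a power of the spectral parameter $c$ read off $m$, following \cite[Theorem 4.11]{mansuy_quantum_2012}. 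The main work is to verify the defining relations of $\U_q(sl_4^{tor})$ on this space: the relations involving the $\phi_i^\pm(z)$ amount to matching the linear factors in (\ref{equa1})--(\ref{equa2}) with the spectral-parameter shifts built into the moves $A_{i,c}$, the one-node $x$--$x$ and $[x^+,x^-]$ relations reduce to the representation theory of $\hat{\U}_i\cong\U_q(\hat{sl}_2)'$ of the kind already performed for the vector representation, and the only genuinely delicate relations are the Serre relations at a pair of adjacent nodes, where the thinness and the precise shape of $\mathcal{M}$ are used.

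Integrability and condition (iii) of Definition \ref{defelm} come together: a combinatorial check on the monomial realization shows that for each $j\in I$ the $I_{j}$-connected components of $\mathcal{M}$ are finite (the case of general $j$ being reduced to $j=0$ by the rotation automorphism $\theta$), so each such component spans a finite-dimensional $\U_q^{v,j}(sl_4^{tor})$-submodule and, in particular, every $x_{i,r}^{\pm}$ acts locally nilpotently on $V_{0}$. For condition (ii), the submodule $\U_q^{h}(sl_4^{tor})\cdot v_{m_0}$ has as underlying crystal the connected component of $m_{0}$, namely $\mathcal{B}(2\varpi_1)$; reading off from $m_{0}$ that each $S_{i_1}\circ\cdots\circ S_{i_l}(v_{m_0})$ stays in the monomial basis and is $i$-extremal shows that $v_{m_0}$ is extremal of weight $2\varpi_1$ for $\U_q^{h}(sl_4^{tor})$. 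Finally, $\mathcal{M}$ being thin and connected under all the moves $A_{i,c}^{\pm1}$, $i\in I$: any nonzero $\U_q(sl_4^{tor})$-submodule of $V_{0}$ is a sum of $\ell$-weight lines, hence contains some $v_{m}$, and then a Vandermonde argument in the modes $r$ shows it contains every $v_{m'}$ reachable from $v_{m}$; so $v_{m_0}$ generates $V_{0}$ and $V_{0}$ is irreducible and thin. This proves (i).

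For (ii), the formulas above preserve the $\CC[q^{\pm1}]$-lattice $\bigoplus_{m\in\mathcal{M}}\CC[q^{\pm1}]v_{m}$ under the $\U_q(sl_4^{tor})'$-action (no divided powers intervene), so specializing $q$ at a primitive $[4L]$-root of unity $\epsilon$ produces a $\U_\epsilon(sl_4^{tor})'$-module $(V_{0})_{\epsilon}$. At $q=\epsilon$ the spectral parameters occurring in $\mathcal{M}$, all of the form $\epsilon^{k}$, become periodic in $k$ with period $4L$; one then isolates the span $W$ of the $v_{m}$ lying outside a fundamental domain for this periodicity, checks that $W$ is a $\U_\epsilon(sl_4^{tor})'$-submodule by using that the $q$-integers and $\psi$-factors occurring in the coefficients that would link $W$ to its complement vanish at $\epsilon$, and verifies that $(V_{0})_{\epsilon}/W$ is finite-dimensional; since a finite-dimensional module has an irreducible quotient, this yields the desired irreducible finite-dimensional quotient. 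The main obstacle throughout is the joint normalization of the scalars in the $x_{i,r}^{\pm}$-action: they must be pinned down so that \emph{all} toroidal relations hold simultaneously while preserving thinness, and the adjacent-node Serre relations are where the combinatorics of the monomial realization of $\mathcal{B}(2\varpi_1)$ is genuinely needed; in part (ii) the analogous hard point is the stability of $W$, i.e. checking that no structure constant needed to leave the fundamental domain survives the specialization $q\to\epsilon$.
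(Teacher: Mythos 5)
This proposition is recalled verbatim from \cite{mansuy_quantum_2012}: the present paper states it with a citation and supplies no proof, so there is no internal argument to compare yours against. Your reconstruction does match the broad strategy of the cited work as it is described here (a monomial realization of $\mathcal{B}(2\varpi_1)$, an explicit action of the $x_{i,r}^{\pm}$ on a basis indexed by monomials with coefficients read off the spectral parameters, verification of the toroidal relations node by node with the Serre relations at adjacent nodes as the delicate point, and reduction to $j=0$ via the rotation automorphism), and your observation that the twist by $t_{aq^{-1}}$ is harmless is correct, since $t_b$ fixes every $x_{i,0}^{\pm}$ and hence the horizontal subalgebra. It is worth noting that the present paper later gives an independent construction of this very module as the tensor product $V(\ffbox{1} \ffbox{1}_a)$ of two vector representations (Proposition \ref{propexmod} combined with Proposition \ref{proprecrep}), which is the route this paper actually advocates.

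One step of your sketch of (ii) would fail as written: the span $W$ of the basis vectors ``lying outside a fundamental domain'' for the periodicity is not a $\U_\epsilon(sl_{4}^{tor})'$-submodule, and its stability cannot be rescued by vanishing of structure constants. Already for the vector representation, formulas (\ref{actrv}) show that $x_i^{\pm}(z)$ sends $\ffbox{j}$ to $\ffbox{j\mp 1}$ with a $\delta$-function coefficient that does not vanish at $q=\epsilon$, so the operators move basis vectors across the boundary of any fundamental domain in both directions. The finite-dimensional quotient is obtained differently: after specialization the $\ell$-weight monomials become periodic, and one quotients by the submodule spanned by differences $v_m - c\,v_{m'}$ for the periodic pairs $(m,m')$ (equivalently, one identifies basis vectors whose specialized $\ell$-weights coincide, up to normalizing scalars); this identification is compatible with the action precisely because the structure constants are themselves periodic in $\epsilon$, and the resulting finite-dimensional module is then checked to be irreducible. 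With that correction the outline of (ii), like that of (i), is consistent with the construction of \cite{mansuy_quantum_2012}.
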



\subsection{Coproduct}

\subsubsection{Drinfeld coproduct} Let $\Delta_D$ be the coproduct defined for all $i\in I$ by
\begin{equation}\label{deltaxplus}
\Delta_D(x_i^+(z))=x_i^+(z)\otimes 1 +\phi_i^-(z)\otimes x_i^+(z),
\end{equation}
\begin{equation}\label{deltaxmoins}
\Delta_D(x_i^-(z))= x_i^-(z)\otimes \phi_i^+(z)+1\otimes x_i^-(z),
\end{equation}
\begin{equation}\label{deltacart}
\Delta_D(\phi_i^\pm(z))=\phi_i^\pm(z)\otimes \phi_i^\pm (z).
\end{equation}

\noindent This map does not define a coproduct in the usual sense because (\ref{deltaxplus}) and (\ref{deltaxmoins}) involve infinite sums and are not elements of $\U_q(sl_{n+1}^{tor}) \otimes \U_q(sl_{n+1}^{tor})$. However it can be used to define a structure of $\U_q(sl_{n+1}^{tor})$-module on (a subspace of) a tensor product of $\U_q(sl_{n+1}^{tor})$-modules when the summations are well-defined.

\subsubsection{Tensor product of representations}

\begin{lem}\label{lemsubm}\cite{feigin_representations_2013}
Let $V$, $W$ be $\U_q(sl_{n+1}^{tor})$-modules. Let $U\subseteq V\otimes W$ be a linear subspace such that 
for any $u\in U$ and $i\in I$, $\Delta_D(x_{i}^{\pm}(z)) \cdot u$ is well-defined in $U$. Then $\Delta_D$ endows $U$ with a structure of $\U_q(sl_{n+1}^{tor})$-module.
\end{lem}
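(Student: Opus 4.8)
The statement to prove is Lemma~\ref{lemsubm}: given a linear subspace $U \subseteq V \otimes W$ on which $\Delta_D(x_i^\pm(z))$ act (componentwise in $z$, i.e.\ each Fourier coefficient is a well-defined operator on $U$ landing in $U$), one wants to conclude that $\Delta_D$ equips $U$ with a genuine $\U_q(sl_{n+1}^{tor})$-module structure. The essential point is that the \emph{defining relations} of $\U_q(sl_{n+1}^{tor})$ (Definition~\ref{defqta}) are satisfied by the operators $\Delta_D(x_{i,r}^\pm)$, $\Delta_D(\phi_{i,\pm m}^\pm)$ on $U$. The strategy I would follow is: first observe that $\Delta_D$ respects the relations \emph{formally} on $V \otimes W$ as an identity of formal power series in the relevant variables $z, w$ with coefficients in (a completion of) $\U_q(sl_{n+1}^{tor})^{\otimes 2}$; then argue that, on the subspace $U$ where all the infinite sums in $\Delta_D(x_i^\pm(z))\cdot u$ truncate to well-defined elements of $U$, the formal identities specialize to honest operator identities. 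The finiteness/well-definedness hypothesis is exactly what lets us pass from the formal statement to the operator statement without convergence worries.

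\textbf{Key steps, in order.} First I would verify that $\Delta_D$ is an algebra homomorphism from $\U_q(sl_{n+1}^{tor})$ to an appropriate completion $(\U_q(sl_{n+1}^{tor}) \otimes \U_q(sl_{n+1}^{tor}))[[z,w]]$-type object where the currents live; this is the classical computation that the Drinfeld coproduct preserves the current relations \eqref{equa1}--\eqref{equa3} and the Serre relations. For instance, for the relation $(z - q^{\pm C_{ij}}w)x_i^\pm(z)x_j^\pm(w) = (q^{\pm C_{ij}}z - w)x_j^\pm(w)x_i^\pm(z)$ one plugs in $\Delta_D(x_i^\pm(z)) = x_i^\pm(z)\otimes 1 + \phi_i^\mp(z)\otimes x_i^\pm(z)$ (with the sign conventions of \eqref{deltaxplus}--\eqref{deltaxmoins}), expands the four resulting terms, and uses the relations \eqref{equa2} relating $\phi_i^\mp(z)$ and $x_j^\pm(w)$ together with the relation in each tensor factor; the cross terms reorganize precisely because of the rational prefactors. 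The $[x_i^+(z), x_j^-(w)]$ relation with the two $\delta$-functions is the most delicate: here one uses $\Delta_D(\phi_i^\pm(z)) = \phi_i^\pm(z)\otimes\phi_i^\pm(z)$ and the identity $\delta(w/z)f(w) = \delta(w/z)f(z)$ for the $\delta$-function, checking that the mixed contributions cancel. All of this is a formal identity and does \emph{not} require $V$, $W$, or $U$. Second, given $u \in U$, the hypothesis says each $\Delta_D(x_{i,r}^\pm)\cdot u \in U$; since the $\phi_{i,\pm m}^\pm$ act on $V$ and $W$ (they are honest operators there) and $\Delta_D(\phi_i^\pm(z)) = \phi_i^\pm(z)\otimes\phi_i^\pm(z)$ is a \emph{finite} sum in each Fourier mode, $\Delta_D(\phi_{i,\pm m}^\pm)$ is automatically a well-defined operator on $V\otimes W$, hence on $U$ — I should check $U$ is stable under it, which follows because $\phi$'s can be written (via the $[x^+,x^-]$ relation or directly) in terms of the $x$'s, or more simply because the relations force $\Delta_D(\phi)$-action to be determined by the $\Delta_D(x)$-action on the cyclic pieces; alternatively one notes that $\phi_i^-(z)\otimes x_i^+(z)$ appearing in $\Delta_D(x_i^+(z))$ already entails that the $\phi$-part maps into $U$ when combined appropriately. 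Third, I would evaluate each defining relation, now read as an identity of operators on $U$: each side is a (possibly infinite in $z,w$) sum of operators, but applied to a fixed $u\in U$ only finitely many Fourier modes contribute nontrivially in a way that stays in $U$ — this is where the hypothesis ``$\Delta_D(x_i^\pm(z))\cdot u$ is well-defined in $U$'' is used twice (once for $x_i^\pm(z)$, then again for $x_j^\pm(w)$ applied to the result), so that double-current expressions like $\Delta_D(x_i^\pm(z))\Delta_D(x_j^\pm(w))\cdot u$ make sense. The formal identity from step one then holds after applying to $u$, yielding the operator relation on $U$.

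\textbf{Main obstacle.} The genuinely delicate point is the \emph{iterated well-definedness}: the hypothesis as stated only guarantees that $\Delta_D(x_i^\pm(z))\cdot u$ lands in $U$, and one needs to know that applying a second current $\Delta_D(x_j^\pm(w))$ to that is again well-defined, so that products and commutators of currents — which appear in \emph{every} defining relation — make sense on $U$. This is precisely why the hypothesis is phrased as ``$\Delta_D(x_i^\pm(z))\cdot u$ is well-defined in $U$ \emph{for any} $u\in U$ and $i\in I$'': iterating, every monomial in the currents applied to $u$ stays in $U$ and is a finite expression in each graded piece. I would make this explicit by an induction on the number of currents applied, showing each intermediate result is in $U$ and the relevant sums truncate. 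The Serre relations require three currents applied to $u$, so the same inductive argument covers them. Once iterated well-definedness is in place, matching the defining relations is a mechanical consequence of the formal computation in step one, and the lemma follows. A secondary (minor) point to address is the relation $k_h k_{h'} = k_{h+h'}$, $k_0 = 1$, and $\Delta_D(\phi_i^\pm(z)) = \phi_i^\pm(z)\otimes\phi_i^\pm(z)$ being compatible — this is immediate since the $k_h$ act semisimply-up-to-generalized-eigenspaces on $V$ and $W$ and the tensor action is diagonal.
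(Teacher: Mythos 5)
The paper does not actually prove this lemma: it is quoted verbatim from \cite{feigin_representations_2013} with no argument supplied, so there is no in-paper proof to compare against. Your reconstruction is the standard argument and its overall architecture is correct: the Drinfeld coproduct formally preserves the current relations (the computation you sketch for relation (\ref{equa3}) and for $[x_i^+(z),x_j^-(w)]$, using (\ref{equa1})--(\ref{equa2}) to reorder the cross terms and the identity $\delta(z/w)f(z)=\delta(z/w)f(w)$, does go through -- the two rational prefactors arising in the mixed term of the commutator are exactly reciprocal, so it cancels), and the hypothesis that $\Delta_D(x_i^{\pm}(z))\cdot u$ lands in $U$ for \emph{every} $u\in U$ is precisely what allows iterated application of currents, so every Fourier coefficient of every defining relation makes sense as an operator on $U$ and the formal identity specializes to an operator identity. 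Your explicit attention to iterated well-definedness is the right emphasis; that is the only content of the lemma beyond the formal homomorphism property.

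Two points deserve more care than you give them. First, the stability of $U$ under the Cartan part: your suggestion to recover the $\phi$'s from the $[x^+,x^-]$ relation does yield $\Delta_D(\phi^{\pm}_{i,\pm m})\cdot U\subseteq U$ for $m\geq 1$ and $(k_i-k_i^{-1})\cdot U\subseteq U$, but it does not give stability under $k_h\otimes k_h$ itself (nor under $k_d$); for an arbitrary linear subspace annihilated by all the $\Delta_D(x^{\pm}_{i,r})$ the hypothesis is vacuous and Cartan stability can genuinely fail. In every application in this paper $U$ is spanned by $\ell$-weight vectors of $V\otimes W$, which disposes of the issue, but the lemma as you prove it needs that (or $\U_q(\hat{\Hlie})\otimes\U_q(\hat{\Hlie})$-stability of $U$) as an explicit standing assumption. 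Second, when you verify a relation such as $(z-q^{\pm C_{ij}}w)\Delta_D(x_i^{\pm}(z))\Delta_D(x_j^{\pm}(w))\cdot u=\cdots$, the formal identity is obtained after reorganizing doubly infinite sums, and one must check that the rearrangement is legitimate on $u$; here ``well-defined'' in the sense used in this paper (the inner sums reduce to finite sums or to evaluations of rational functions, as in the proof of Theorem \ref{tpexist}) makes this harmless, but it should be said. Neither point invalidates the approach; with those two clarifications the proof is complete.
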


One can define a structure of $\U_q(sl_{n+1}^{tor})$-module in another situation: assume that we have a decomposition of vector spaces $V \otimes W = U \oplus U'$ such that $\Delta_D(\phi_i^\pm(z)) \cdot U \subset U$ and $\Delta_D(\phi_i^\pm(z)) \cdot U' \subset U'$ for all $i \in I$. Let us denote by $\mathrm{pr}_U : U \oplus U' \rightarrow U$ and $\mathrm{pr}_{U'} : U \oplus U' \rightarrow U'$ the projections along $U$ and $U'$ respectively. Assume that the action of $\Delta_D(x_i^{\pm}(z))$ is well-defined on the vectors in $U$. Assume further that the operators $\mathrm{pr}_U \circ \Delta_D(x_i^{\pm}(z))$ applied to vectors in $U'$ are zero. Then we have

\begin{lem}\label{lemquo} \cite{feigin_representations_2013}
Under the assumptions above, the space $U$ has a structure of $\U_q(sl_{n+1}^{tor})$-module such that the series $x_i^{\pm}(z)$ and $\phi_i^\pm(z)$ act by $\mathrm{pr}_U \circ \Delta_D(x_i^{\pm}(z))$ and $\mathrm{pr}_U \circ \Delta_D(\phi_i^\pm(z))$ respectively.
\end{lem}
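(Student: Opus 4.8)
The plan is to reduce Lemma \ref{lemquo} to Lemma \ref{lemsubm} by building a module structure on the abstract quotient $(V\otimes W)/U'$ and then transporting it back to $U$ via the linear isomorphism $\mathrm{pr}_U|_{U}$. First I would observe that since $\Delta_D(\phi_i^\pm(z))$ preserves both $U$ and $U'$, and since by Lemma \ref{lemsubm}'s setup we know $\Delta_D(x_i^\pm(z))$ is well-defined on $V\otimes W$ as an operator into a suitable completion, the subspace $U'$ is not in general a submodule of $V\otimes W$; but the point is precisely that modulo $U'$ the action descends. So the first step is to make sense of $V\otimes W$ (or an appropriate completion along the grading coming from the $\phi_i^\pm$-eigenvalues, as in \cite{feigin_representations_2013}) as carrying operators $\Delta_D(x_i^\pm(z))$, $\Delta_D(\phi_i^\pm(z))$ satisfying the defining relations of $\U_q(sl_{n+1}^{tor})$ formally in $z$.

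Next I would check that $U'$ is stable under $\Delta_D(x_i^\pm(z))$ modulo $U$, i.e. that $\Delta_D(x_i^\pm(z))\cdot U' \subseteq U' \oplus (\text{completion of }U)$ with the $U$-component killed after applying $\mathrm{pr}_U$ — but actually the hypothesis is stronger and cleaner: $\mathrm{pr}_U\circ\Delta_D(x_i^\pm(z))$ vanishes on $U'$. Combined with $\mathrm{pr}_U\circ\Delta_D(\phi_i^\pm(z))$ vanishing on $U'$ (immediate from $\Delta_D(\phi_i^\pm(z))U'\subseteq U'$), this shows that the composite operators $\mathrm{pr}_U\circ\Delta_D(x_i^\pm(z))\circ\iota_U$ and $\mathrm{pr}_U\circ\Delta_D(\phi_i^\pm(z))\circ\iota_U$ on $U$ (where $\iota_U\colon U\hookrightarrow V\otimes W$ is the inclusion) coincide with the operators induced on the quotient $(V\otimes W)/U'$ under the identification $U\xrightarrow{\sim}(V\otimes W)/U'$ given by $\mathrm{pr}_U$. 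Concretely, for $u\in U$ write $\Delta_D(x_i^\pm(z))\cdot u = \mathrm{pr}_U(\Delta_D(x_i^\pm(z))\cdot u) + \mathrm{pr}_{U'}(\Delta_D(x_i^\pm(z))\cdot u)$; the $U$-part is by hypothesis a genuine element of $U$ (well-definedness), so the induced action on $U$ is $u\mapsto \mathrm{pr}_U(\Delta_D(x_i^\pm(z))\cdot u)$, which is exactly the claimed formula.

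Then I would verify the relations. Since the operators $\Delta_D(x_i^\pm(z))$, $\Delta_D(\phi_i^\pm(z))$ satisfy the relations of Definition \ref{defqta} on all of $V\otimes W$ (this is the content of $\Delta_D$ being an algebra morphism into the appropriate completed tensor product — a fact used throughout and in \cite{feigin_representations_2013}), and since quotienting by the $\Delta_D(\phi_i^\pm(z))$-stable subspace $U'$ on which the $x_i^\pm(z)$-action projects to zero is compatible with composition (the key point being that a product of operators each vanishing or preserving $U'$ modulo $U$ still does so), the relations pass to the induced operators on $(V\otimes W)/U'\cong U$. This is the step I expect to require the most care: one must check that for a monomial in the generators, the reordering relations used to bring it to PBW form only ever involve operators whose action is already controlled on $U'$, so that the vanishing of $\mathrm{pr}_U$ on $U'$ for each single generator-current suffices to conclude the relation holds after projection. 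In the type $A$ toroidal setting this amounts to the same bookkeeping with $\delta$-function identities and the rational factors $(w-q^{\pm C_{ij}}z)$ as in the proof of Lemma \ref{lemsubm}; the only new ingredient is interchanging $\mathrm{pr}_U$ with these operators, which is legitimate because $\mathrm{pr}_U$ is a projection along a $\Delta_D(\phi_i^\pm(z))$-stable decomposition and the $x_i^\pm(z)$ send $U'$ into $U'$ (after projection away the $U$-part, which is zero by hypothesis). Finally I would note that, as with Lemma \ref{lemsubm}, the resulting $\U_q(sl_{n+1}^{tor})$-module structure is the one with $x_i^\pm(z)$ and $\phi_i^\pm(z)$ acting by $\mathrm{pr}_U\circ\Delta_D(x_i^\pm(z))$ and $\mathrm{pr}_U\circ\Delta_D(\phi_i^\pm(z))$, completing the proof.
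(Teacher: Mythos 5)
The paper offers no proof of Lemma \ref{lemquo}: it is quoted verbatim from \cite{feigin_representations_2013}, so there is no internal argument to compare yours against. Judged on its own terms, your proposal is the standard and correct quotient-module argument: identify $U$ with $(V\otimes W)/U'$ via $\mathrm{pr}_U$, note that the hypotheses say exactly that $U'$ is invariant (in the sense that the $U$-component of $\Delta_D(x_i^{\pm}(z))\cdot U'$ vanishes and $\Delta_D(\phi_i^{\pm}(z))$ preserves $U'$), and check that for any composite of generators the discrepancy between $\mathrm{pr}_U\circ\Delta_D(a)\circ\mathrm{pr}_U\circ\Delta_D(b)$ and $\mathrm{pr}_U\circ\Delta_D(a)\Delta_D(b)$ is $\mathrm{pr}_U\circ\Delta_D(a)\circ\mathrm{pr}_{U'}\circ\Delta_D(b)$, which is killed by hypothesis; the relations then descend from the fact that $\Delta_D$ is an algebra morphism into the completed tensor square. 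You correctly isolate the one point needing care, namely that only the $U$-components of $\Delta_D(x_i^{\pm}(z))$ on $U'$ are guaranteed to be defined (and zero), which is all the projection argument ever uses. This is a complete and reasonable reconstruction of the argument the cited reference relies on.
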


\begin{lem}\label{lemsubm2}
Let $V_1, \cdots , V_k$ be thin $\U_q(sl_{n+1}^{tor})$-modules. Assume that for any $i \in \ZZ$, $1 \leq r < s \leq k$ and $v \otimes w \in V_r \otimes V_s$, $\Delta_D(x_i^\pm(z)) \cdot (v \otimes w)$ is a well-defined vector in $V_r \otimes V_s$. Then $\Delta_D$ endows $V_1 \otimes V_2 \otimes \cdots \otimes V_k$ with a structure of $\U_q(sl_{n+1}^{tor})$-module.
\end{lem}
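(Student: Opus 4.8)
The plan is to bootstrap from the two-factor case of Lemma~\ref{lemsubm} to the $k$-factor case, by writing the iterated Drinfeld coproduct explicitly and verifying that it is well-defined slot by slot.

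First I would record the formal coassociativity of $\Delta_D$: from (\ref{deltaxplus})--(\ref{deltacart}) one gets, for every $k\geq 2$, every $i\in I$, and independently of the bracketing of the tensor factors,
$$\Delta_D^{(k)}(\phi_i^\pm(z))=(\phi_i^\pm(z))^{\otimes k},$$
$$\Delta_D^{(k)}(x_i^+(z))=\sum_{j=1}^{k}(\phi_i^-(z))^{\otimes(j-1)}\otimes x_i^+(z)\otimes 1^{\otimes(k-j)},\qquad \Delta_D^{(k)}(x_i^-(z))=\sum_{j=1}^{k}1^{\otimes(j-1)}\otimes x_i^-(z)\otimes(\phi_i^+(z))^{\otimes(k-j)}.$$
Assuming for the moment that these operators are well-defined on $V_1\otimes\cdots\otimes V_k$, I would bracket the product to the left, $(\cdots(V_1\otimes V_2)\otimes\cdots)\otimes V_k$, and induct on $m$: once $V_1\otimes\cdots\otimes V_m$ carries a $\U_q(sl_{n+1}^{tor})$-module structure, $\phi_i^-(z)$ and $x_i^+(z)$ act on it by $(\phi_i^-(z))^{\otimes m}$ and $\Delta_D^{(m)}(x_i^+(z))$; hence $\Delta_D(x_i^\pm(z))$ acts on $(V_1\otimes\cdots\otimes V_m)\otimes V_{m+1}$ by $\Delta_D^{(m+1)}(x_i^\pm(z))$, which is well-defined, and Lemma~\ref{lemsubm} (with $U$ the whole tensor product) upgrades $V_1\otimes\cdots\otimes V_{m+1}$ to a module. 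The base case $m=1$ is the hypothesis for the pair $(V_1,V_2)$.

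It thus remains to prove that $\Delta_D^{(k)}(x_i^\pm(z))$ is well-defined on $V_1\otimes\cdots\otimes V_k$; I treat $x_i^+(z)$, the case of $x_i^-(z)$ being symmetric and that of $\phi_i^\pm(z)$ immediate. Since each $V_l$ is thin, $\U_q(\hat{\Hlie})$ acts diagonalisably with simple joint spectrum on it, so by linearity it suffices to evaluate on a pure tensor $v_1\otimes\cdots\otimes v_k$ of $\ell$-weight vectors. On such a tensor $\phi_i^-(z)$ acts on the factor $v_l$ by the scalar $g_l(z):=\gamma_{v_l}(\phi_i^-(z))\in\CC[[z^{-1}]]$, which by the $\ell$-weight formula (\ref{formulelw}) is the expansion at $z=\infty$ of a rational function of $z$, regular there. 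Hence the $j$-th summand of $\Delta_D^{(k)}(x_i^+(z))$ sends $v_1\otimes\cdots\otimes v_k$ to
$$\Big(\prod_{l<j}g_l(z)\Big)\;v_1\otimes\cdots\otimes v_{j-1}\otimes\big(x_i^+(z)\cdot v_j\big)\otimes v_{j+1}\otimes\cdots\otimes v_k,$$
and everything reduces to showing that $\big(\prod_{l<j}g_l(z)\big)\,x_i^+(z)\cdot v_j$ is a well-defined element of $V_j[[z^{\pm 1}]]$.

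This last point is where the hypothesis and thinness are used, and I expect it to be the main obstacle. Applying the assumption to each pair $(V_l,V_j)$ with $l<j$, and noting that the $x_i^+(z)\otimes 1$ part of $\Delta_D(x_i^+(z))$ contributes a term that is trivially well-defined, one gets that each $g_l(z)\,x_i^+(z)\cdot v_j$ is well-defined. Because $V_j$ is thin, $x_i^+(z)\cdot v_j$ lies in a fixed finite-dimensional weight space of $V_j$ and is singular only at a finite set $F\subset\CC^\ast$ of points (concretely, for the vector representations and their subquotients it is a finite combination of terms $\delta(\alpha/z)\,v'$ with $\alpha\in F$, up to a part that any element of $\CC[[z^{-1}]]$ absorbs); one then checks that, for a rational $g(z)$ regular at $\infty$, the product $g(z)\,x_i^+(z)\cdot v_j$ is well-defined precisely when $g$ has no pole on $F$. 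Since this condition cuts out a set of rational functions closed under multiplication and each $g_l$ satisfies it, so does $\prod_{l<j}g_l$, whence $\big(\prod_{l<j}g_l(z)\big)\,x_i^+(z)\cdot v_j$ is well-defined. Together with the symmetric treatment of $x_i^-(z)$ and the iteration of Lemma~\ref{lemsubm} above, this finishes the proof.
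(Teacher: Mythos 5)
The paper does not actually prove this lemma: immediately after Lemma \ref{tpalgv} it declares both proofs ``straightforward'' and moves on, so there is no official argument to compare yours against. Your proof is a correct realization of what is being left implicit, and its two ingredients are exactly the right ones: formal coassociativity of $\Delta_D$ (your formulas for $\Delta_D^{(k)}$ are correct and independent of bracketing), which reduces everything to the well-definedness of $\bigl(\prod_{l<j}g_l(z)\bigr)\,x_i^{\pm}(z)\cdot v_j$ with $g_l$ the $\phi_i^{\mp}$-eigenvalue of the $l$-th tensor factor (this is where thinness is used, since it allows you to work with genuine joint eigenvectors); and the observation that the obstruction is a finite set of forbidden poles of $g$ determined by $v_j$, a condition stable under multiplication, so the pairwise hypothesis propagates to the product $\prod_{l<j}g_l$.

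Two points deserve to be made explicit. First, to extract ``each $g_l(z)\,x_i^{+}(z)\cdot v_j$ is well-defined'' from the hypothesis on the pair $(V_l,V_j)$, you must separate the two summands of $\Delta_D(x_i^{+}(z))\cdot(v_l\otimes v_j)$; this is legitimate because they lie in distinct weight components of $V_l\otimes V_j$ (their $P\times P$-weights differ by $(\alpha_i,0)$ versus $(0,\alpha_i)$), so well-definedness of the sum forces that of each term. Second, your description of the singular support of $x_i^{\pm}(z)\cdot v_j$ as a finite set is what Proposition \ref{actxphi} provides, and it is the one place where thinness alone is not quite enough: one also needs $\hat{\U}_i\cdot v_j$ to be finite-dimensional (as it is for the vector representations and all modules to which the lemma is applied) so that the coefficients of $x_i^{\pm}(z)\cdot v_j$ are finite sums of geometric sequences $\lambda^{r}$ (thinness makes $h_{i,\pm 1}$ act diagonally, killing the $r^{k}$ factors), whence multiplication by $\prod_{l<j}g_l(z)$ is summable precisely when no such $\lambda$ is the reciprocal of a pole of any $g_l$ --- the multiplicative condition you invoke. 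With these glosses the argument is complete.
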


\begin{lem}\label{tpalgv}
Let $V_1, \cdots , V_k$ be $\U_q(sl_{n+1}^{tor})$-modules such that for $j \in I$ the action of $\U_q^{v,j}(sl_{n+1}^{tor})$ is locally finite-dimensional on all the $V_r$. Assume that $\Delta_D$ endows a linear subspace $U \subseteq V_1 \otimes V_2 \otimes \cdots \otimes V_k$ with a structure of $\U_q(sl_{n+1}^{tor})$-module. Then the action of $\U_q^{v,j}(sl_{n+1}^{tor})$ on $U$ is also locally finite-dimensional.

If moreover the $V_r$ are $\ell$-integrable, then $U$ is an integrable $\U_q(\hat{sl}_\infty)$-module.
\end{lem}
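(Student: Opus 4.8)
The plan is to prove the two assertions of Lemma~\ref{tpalgv} in turn, reducing everything to the single-tensor-factor case that is part of the hypothesis and to the module structure on $U$ that is assumed to exist.

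\textbf{Local finite-dimensionality.} First I would fix $j\in I$ and a vector $u\in U$, and set $A=\U_q^{v,j}(sl_{n+1}^{tor})$; I want to show $A\cdot u$ is finite-dimensional. The key observation is that $A$ is, up to the automorphism $\theta^{(j)}$, the vertical quantum affine subalgebra $\U_q^{v}(sl_{n+1}^{tor})\cong\U_q(\hat{sl}_{n+1})'$, so it suffices to treat $j=0$ and then transport the conclusion by $\theta^{(j)}$ (exactly as in the proof of Proposition~\ref{delm}). Now the Drinfeld coproduct $\Delta_D$ on the generators $x_{i,r}^{\pm}$ and $\phi_{i,\pm m}^{\pm}$ for $i\in I_0$, when expanded into Fourier modes, produces only finitely many "new" factors: each coefficient of $\Delta_D(x_i^{\pm}(z))$ and of $\Delta_D(\phi_i^{\pm}(z))$ lies in a finite sum of tensors of elements of $A$ in each of the $k$ slots (the slot-wise components of $\Delta_D$ land in $\U_q^{v,j}(sl_{n+1}^{tor})^{\otimes k}$ because $\phi_i^{\pm}(z)$ and $x_i^{\pm}(z)$ for $i\neq j$ never involve $x_{j,r}^{\pm}$ or $h_{j,m}$). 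Hence, writing a pure tensor $u=v_1\otimes\cdots\otimes v_k$, the subspace $A\cdot u$ sits inside $(A\cdot v_1)\otimes\cdots\otimes (A\cdot v_k)$ (after interpreting the $\phi$-factors as acting on each $v_r$), which is finite-dimensional by the hypothesis that $A$ acts locally finite-dimensionally on each $V_r$; a general $u$ is a finite sum of such tensors, so $A\cdot u$ is still finite-dimensional. I expect the main technical point here to be verifying carefully that every Fourier coefficient of $\Delta_D$ applied to $u$ really does land in the finite-dimensional space $(A\cdot v_1)\otimes\cdots\otimes(A\cdot v_k)\cap U$ — that is, controlling the possible infinite sums in $\Delta_D(x_i^{\pm}(z))$, which is precisely why we need $U$ to already be known to be a module (so these sums converge in $U$) rather than merely a subspace.

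\textbf{Integrability as a $\U_q(\hat{sl}_\infty)$-module.} For the second claim, assume each $V_r$ is $\ell$-integrable. By the remark after the definition of $\ell$-integrability, each $V_r$ is then integrable as a $\U_q(sl_{n+1}^{tor})$-module, and in particular it is integrable over each $\hat{\U}_i\cong\U_q(\hat{sl}_2)'$. The relevant $\U_q(\hat{sl}_\infty)$ is the algebra generated by all the $x_{i,r}^{\pm}$, viewed through their action on the tensor product; its Chevalley-type generators are the $x_{i,r}^{\pm}$ (and the $k_h$, $h_{i,m}$), and integrability amounts to local nilpotence of each $x_{i,r}^{\pm}$ on $U$ together with a weight decomposition. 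The weight decomposition is immediate from $\Delta_D(\phi_i^{\pm}(z))=\phi_i^{\pm}(z)\otimes\cdots\otimes\phi_i^{\pm}(z)$, which acts semisimply with finitely many eigenvalues on any $A$-stable finite-dimensional piece, giving in particular a $P$-weight decomposition. For local nilpotence: given $u\in U$, the first part of the lemma applied with $j\neq i$ (any such $j$ exists since $n\geq2$) shows $A=\U_q^{v,j}(sl_{n+1}^{tor})\cdot u$ is finite-dimensional and $x_i^{\pm}(z)$-stable, and $\hat{\U}_i\subset\U_q^{v,j}(sl_{n+1}^{tor})$ whenever $i\neq j$, so $\hat{\U}_i\cdot u$ is finite-dimensional; since each $x_{i,r}^{\pm}$ acts on the finite-dimensional integrable $\hat{\U}_i$-module $\hat{\U}_i\cdot u$ it is nilpotent there, hence $(x_{i,r}^{\pm})^N\cdot u=0$ for some $N$. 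This holds for all $i\in I$ and all $r\in\ZZ$, which is exactly integrability of $U$ as a $\U_q(\hat{sl}_\infty)$-module.

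The step I expect to be the genuine obstacle is the bookkeeping in the first part: showing that, despite the infinite sums inherent in $\Delta_D(x_i^{\pm}(z))$, the orbit $\U_q^{v,j}(sl_{n+1}^{tor})\cdot u$ is contained in the finite-dimensional space $\big(\U_q^{v,j}(sl_{n+1}^{tor})\cdot v_1\big)\otimes\cdots\otimes\big(\U_q^{v,j}(sl_{n+1}^{tor})\cdot v_k\big)$. This requires a clean description of the "slotwise" components of iterated Drinfeld coproducts and the observation that only the $\phi_i^{\pm}(z)$-factors (which preserve each $V_r$) and the $x_i^{\pm}(z)$-factors (in a single slot each) appear, none of which leave the subalgebra $\U_q^{v,j}(sl_{n+1}^{tor})$ when $i\neq j$; once this is set up, finiteness is inherited slotwise and the infinite sums cause no trouble because they are already known to converge inside the module $U$.
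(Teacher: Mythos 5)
Your argument is correct, and it is precisely the ``straightforward'' proof the paper omits (the text only remarks that the proofs of Lemmas \ref{lemsubm2} and \ref{tpalgv} are left to the reader): the key point in both parts is that $\Delta_D$ applied to a generator of index $i\neq j$ only involves currents $x_i^{\pm}(z)$, $\phi_i^{\pm}(z)$ of the same index $i$ in each tensor slot, so the orbit of a pure tensor under $\U_q^{v,j}(sl_{n+1}^{tor})$ is trapped inside the finite-dimensional space $\bigl(\U_q^{v,j}(sl_{n+1}^{tor})\cdot v_1\bigr)\otimes\cdots\otimes\bigl(\U_q^{v,j}(sl_{n+1}^{tor})\cdot v_k\bigr)$, and integrability then follows from the weight decomposition coming from $\Delta_D(k_h)=k_h\otimes\cdots\otimes k_h$ together with finite-dimensionality of $\hat{\U}_i\cdot u$ for $\hat{\U}_i\subset\U_q^{v,j}(sl_{n+1}^{tor})$ with $j\neq i$. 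You correctly flag the only delicate point, namely that the possibly infinite sums in $\Delta_D(x_i^{\pm}(z))$ have all their terms (hence their already-convergent values) in that fixed finite-dimensional subspace.
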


\noindent The proofs of Lemma \ref{lemsubm2} and Lemma \ref{tpalgv} are straightforward.

\subsubsection{Deformation of the Drinfeld coproduct and $\mathcal{A}$-forms}

Consider the principal ring
$$\mathcal{A} = \{ f(u) \in \CC(u) \mid f \text{ regular at } u=1\} \subset \CC(u).$$
Let $\U_q^u(sl_{n+1}^{tor}) = \U_q(sl_{n+1}^{tor}) \otimes_{\CC} \mathcal{A}$. The $u$-deformation $\Delta_D^u$ of the Drinfeld coproduct is defined in \cite{hernandez_drinfeld_2007} as the $\CC(u)$-algebra morphism such that
$$\Delta_D^u : \U_q(sl_{n+1}^{tor}) \otimes_{\CC} \CC(u) \rightarrow (\U_q(sl_{n+1}^{tor}) \otimes \U_q(sl_{n+1}^{tor}))((u)),$$
$$\Delta_D^u(x) = (\mathrm{Id} \otimes t_u) \circ \Delta_D (x) \text{ for any }x \in \U_q(sl_{n+1}^{tor}).$$

\begin{defi}\cite{hernandez_drinfeld_2007}
Let $V$ be an integrable $\U_q(sl_{n+1}^{tor}) \otimes_{\CC} \CC(u)$-module. An $\mathcal{A}$-form $\tilde{V}$ of $V$ is a sub $\mathcal{A}$-module of $V$ satisfying
\begin{itemize}
\item[(i)] $\tilde{V}$ is stable under $\U_q^u(sl_{n+1}^{tor})$,
\item[(ii)] $\tilde{V}$ generates the $\CC(u)$-vector space $V$,
\item[(iii)] for $m \in A$,  $\tilde{V} \cup V_m$ is a finitely generated $\mathcal{A}$-module.
\end{itemize}
\end{defi}

\begin{prop}\cite{hernandez_drinfeld_2007}
Let $V$ be an integrable $\U_q(sl_{n+1}^{tor}) \otimes_{\CC} \CC(u)$-module and $\tilde{V}$ an $\mathcal{A}$-form of $V$. Then $\tilde{V}/((u-1)\tilde{V})$ is an integrable $\U_q(sl_{n+1}^{tor})$-module.
\end{prop}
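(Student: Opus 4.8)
Proof proposal for the Proposition (\cite{hernandez_drinfeld_2007}: if $\tilde V$ is an $\mathcal A$-form of an integrable $\U_q^u(sl_{n+1}^{tor})$-module $V$, then $\tilde V/((u-1)\tilde V)$ is an integrable $\U_q(sl_{n+1}^{tor})$-module).

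The plan is to first make sense of the quotient as a module, then to verify the two axioms of integrability in Definition \ref{defint} transported to the toroidal setting (i.e.\ a weight space decomposition plus local nilpotence of each $x_i^\pm$). By property (i) of an $\mathcal A$-form, $\tilde V$ is stable under $\U_q^u(sl_{n+1}^{tor}) = \U_q(sl_{n+1}^{tor})\otimes_\CC \mathcal A$, so it is in particular an $\mathcal A[u-1]$-module and $(u-1)\tilde V$ is a $\U_q^u(sl_{n+1}^{tor})$-submodule; hence $\bar V := \tilde V/((u-1)\tilde V)$ is a module over $\U_q^u(sl_{n+1}^{tor})/(u-1) \cong \U_q(sl_{n+1}^{tor})$, since $\mathcal A/((u-1)\mathcal A)\cong\CC$ by evaluation at $u=1$ (this uses that $\mathcal A$ is the local ring at $u=1$, so $u-1$ generates the maximal ideal with residue field $\CC$). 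This gives the $\U_q(sl_{n+1}^{tor})$-module structure; note that on $\bar V$ the generators $x_{i,r}^\pm$, $k_h$, $h_{i,m}$ act through their original (undeformed) action, because the twist $t_u$ in $\Delta_D^u$ becomes $t_1=\mathrm{Id}$ after setting $u=1$.

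Next I would establish the weight space decomposition. Since $k_h\in\U_q^u(sl_{n+1}^{tor})$ acts on $\tilde V$ and commutes with multiplication by elements of $\mathcal A$, each weight space $V_\nu$ of $V$ meets $\tilde V$ in an $\mathcal A$-submodule $\tilde V_\nu := \tilde V\cap V_\nu$, and by property (iii) (applied to the finitely many monomials $m$ with $\omega(m)=\nu$ contributing to $V_\nu$, using that integrability forces finitely many such $m$ per $\nu$ — or directly refining (iii) to $\tilde V\cap V_\nu$) this is finitely generated over the PID $\mathcal A$. The key point is that $\tilde V = \bigoplus_\nu \tilde V_\nu$: this follows because $V=\bigoplus_\nu V_\nu$, $\tilde V$ generates $V$ over $\CC(u)$ by (ii), and $\tilde V$ is $k_h$-stable, so any $v\in\tilde V$ has its weight-component projections lying in $\tilde V$ (one extracts them by a Vandermonde-type argument on the finitely many weights occurring, whose coefficients are in $\mathcal A$ after clearing a denominator regular at $u=1$). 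Then $\bar V = \bigoplus_\nu (\tilde V_\nu/((u-1)\tilde V_\nu))$ is the desired weight decomposition, and on $\tilde V_\nu/((u-1)\tilde V_\nu)$ each $k_h$ acts by the scalar $q^{\nu(h)}$.

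For the second axiom, local nilpotence of $x_i^\pm$ on $\bar V$: fix $i\in I$ and $\bar v\in\bar V$, lift to $v\in\tilde V$; I want $(x_i^\pm)^N\cdot v\in(u-1)\tilde V$ for some $N$. Here I would use integrability of $V$ as a $\U_q^u(sl_{n+1}^{tor})$-module: there is $N$ with $(x_i^\pm)^N\cdot v = 0$ in $V$ already (if $v$ lies in a single weight space, which we may assume by the decomposition just established), hence a fortiori in $\tilde V$, so the image in $\bar V$ is $0$. The finitely-generated condition (iii) is what prevents pathologies — it guarantees that when we pass to the quotient we do not create new nonzero classes that fail to be killed. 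The main obstacle, and the step I expect to require the most care, is precisely the interchange between the $\CC(u)$-structure and the $\mathcal A$-structure in establishing $\tilde V=\bigoplus_\nu\tilde V_\nu$ and that the quotient weight spaces are the "right size": one must check that no torsion phenomena over $\mathcal A$ obstruct the weight decomposition descending, which is exactly where hypothesis (iii) (finite generation over the PID $\mathcal A$, hence a clean structure theorem with no surprises after tensoring down by $\mathcal A/(u-1)$) is used decisively.
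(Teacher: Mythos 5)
The paper does not actually prove this proposition: it is imported verbatim from \cite{hernandez_drinfeld_2007}, so there is no in-paper argument to compare yours against. Your reconstruction is essentially the standard specialization argument used in that reference and it is correct: $(u-1)\tilde V$ is a $\U_q^u(sl_{n+1}^{tor})$-submodule by axiom (i), $\mathcal{A}$ is the local ring of $\CC[u]$ at $u=1$ with residue field $\CC$, the weight decomposition of $\tilde V$ is extracted by the Vandermonde/projection argument (note the inverse Vandermonde coefficients are already complex scalars, so no denominators need clearing), and local nilpotence of the $x_{i,0}^{\pm}$ descends trivially since $(x_{i,0}^{\pm})^N\cdot v=0$ already holds in $V\supset\tilde V$. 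Two small corrections to your emphasis: axiom (iii) (which should read $\tilde V\cap V_m$, a typo in the paper) is not what makes integrability descend --- the two integrability axioms survive the quotient without it --- rather it is what guarantees the specialization has the expected size (finitely generated torsion-free over the local PID $\mathcal{A}$ means free of rank $\dim_{\CC(u)}$ of the corresponding space, so $\tilde V_m/(u-1)\tilde V_m$ has the right dimension and in particular is nonzero); and the torsion worry you raise is vacuous here because $\tilde V_\nu$ sits inside the $\CC(u)$-vector space $V_\nu$ and is therefore automatically torsion-free.
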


\section{Tensor product of $\ell$-highest weight modules and $\ell$-lowest weight modules}

In this section we study tensor products of a simple $\ell$-highest weight module and a simple $\ell$-lowest weight module. The main motivation comes from Remark \ref{remkash}: as for the extremal weight modules, we will see that such tensor products are related to extremal loop weight representations. Let us mention some technical points: to endow these tensor products with a structure of $\U_q(sl_{n+1}^{tor})$-module, we have to prove that the summations (\ref{deltaxplus}) and (\ref{deltaxmoins}) are well-defined on them. In fact their convergence depends on the non-zero complex parameters defining the $\ell$-highest weight and the $\ell$-lowest weight. We made the choice to consider in the article the following tensor product
$$V(e^{s \Lambda_\ell}Y_{\ell, a}^{s}) \otimes V(e^{-s \Lambda_0}Y_{0, b}^{-s}) \text{ with } 1 \leq \ell \leq n, \ a, b \in \CC^{\ast} \text{ and } s \geq 1.$$

In the first part of this section we prove that, under a condition on $a$ and $b$, $\Delta_D$ endows this tensor product with a structure of $\U_q(sl_{n+1}^{tor})$-module (Theorem \ref{tpexist}). We study the link with the extremal loop weight representations in the second part (Proposition \ref{propnecon}). We get in this way a new construction of the vector representation (Theorem \ref{repvect2}).

\subsection{Existence of the action on the tensor product}

\subsubsection{Main result}

\begin{thm}\label{tpexist}
Let $\ell \in I_0, s \in \NN^{\ast}$ and $a, b \in \CC^{\ast}$ be non zero complex numbers and consider the tensor product
$$V(e^{s \Lambda_\ell}Y_{\ell, a}^{s}) \otimes V(e^{-s \Lambda_0}Y_{0, b}^{-s}).$$
Assume that $\dfrac{a}{b} \notin q^{-2-d_\ell-\NN}$. Then the coproduct $\Delta_D$ is well-defined on this tensor product and endows it with a structure of $\U_q(sl_{n+1}^{tor})$-module.
\end{thm}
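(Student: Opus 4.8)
The plan is to show that for every pure tensor $v\otimes w$ in the tensor product and every $i\in I$, the infinite sums appearing in $\Delta_D(x_i^{\pm}(z))\cdot(v\otimes w)$ (see \eqref{deltaxplus}, \eqref{deltaxmoins}) produce only finitely many non-zero terms in each $z$-degree, so that they are genuine elements of the tensor product; by Lemma \ref{lemsubm} this then gives the $\U_q(sl_{n+1}^{tor})$-module structure. Concretely, write $V_1=V(e^{s\Lambda_\ell}Y_{\ell,a}^{s})$ and $V_2=V(e^{-s\Lambda_0}Y_{0,b}^{-s})$. The dangerous term is $\phi_i^-(z)\otimes x_i^+(z)$ in $\Delta_D(x_i^+(z))$ (resp. $x_i^-(z)\otimes\phi_i^+(z)$ in $\Delta_D(x_i^-(z))$): one must rule out an infinite cascade of non-zero contributions $\phi_{i,-m}^-\cdot v\otimes x_{i,r+m}^+\cdot w$ as $m\to\infty$. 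So the first step is to control the action of $\phi_i^\pm(z)$ on $V_1$ and of $x_i^\pm(z)$ on $V_2$ along $\ell$-weight (monomial) data.

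The key input is the structure of the $\ell$-weights of the two factors. Since $V_1$ is a simple $\ell$-highest weight module with $\ell$-highest monomial $e^{s\Lambda_\ell}Y_{\ell,a}^{s}$, every monomial $m\in\mathcal{M}(V_1)$ is obtained from it by multiplying by the $A_{i,c}^{-1}$ (the affine analogues of simple roots in the monomial calculus), and a standard argument — using the relations \eqref{equa1}–\eqref{equa2} together with integrability and the category-$\mathcal{O}$ property of $V_1$ — shows that the spectral parameters $c$ occurring in any $Y_{i,c}^{\pm1}$ in $\mathcal{M}(V_1)$ lie in $aq^{-\mathbb{Z}_{\geq 0}}$, and more precisely that the eigenvalues of $\phi_i^-(z)$ are rational functions whose poles/zeros sit at prescribed powers $aq^{\bullet}$ with the relevant exponent bounded below in terms of $d_\ell$. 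Dually, for the $\ell$-lowest weight module $V_2$ with $\ell$-lowest monomial $e^{-s\Lambda_0}Y_{0,b}^{-s}$, the parameters in $\mathcal{M}(V_2)$ lie in $bq^{\mathbb{Z}_{\geq 0}}$, and the action of $x_i^+(z)$ on a weight vector shifts the monomial by an $A_{i,c}$ with $c\in bq^{\mathbb{Z}_{\geq 0}}$. Matching these two lists: a non-zero term $\phi_{i,-m}^-\cdot v\otimes x_{i,r+m}^+\cdot w$ requires the corresponding parameter of $\phi_i^-$ (a power of $a$) to coincide with a parameter produced by $x_i^+$ on $V_2$ (a power of $b$), which forces $a/b$ to equal a specific power $q^{-2-d_\ell-j}$ with $j\in\NN$. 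The hypothesis $a/b\notin q^{-2-d_\ell-\NN}$ excludes all of these, so for each fixed $r$ only finitely many $m$ contribute, and the sum is well-defined; the same computation with the roles of the two factors and of $x^+,x^-$ exchanged handles $\Delta_D(x_i^-(z))$.

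The cleanest way to make the bookkeeping rigorous is to work with the $u$-deformed coproduct $\Delta_D^u$ of Hernandez \cite{hernandez_drinfeld_2007}: on $V_1\otimes(t_u)_*V_2$ the operator $\Delta_D^u(x_i^\pm(z))$ lies a priori in $(V_1\otimes V_2)((u))$, the obstruction to specializing $u=1$ being precisely a pole at $u=1$, whose location is governed by the ratio of spectral parameters above; the condition on $a/b$ says exactly that no such pole occurs, so one may specialize and recover a well-defined operator on $V_1\otimes V_2$. One then invokes Lemma \ref{lemsubm} to conclude. The main obstacle is the first part: establishing tight enough control on the spectral parameters occurring in $\mathcal{M}(V_1)$ and $\mathcal{M}(V_2)$ — i.e. proving that the pole of the relevant matrix coefficient of $\Delta_D^u$ can only sit at $u\in q^{2+d_\ell+\NN}\cdot(b/a)$ — since this requires a careful induction on the height of the monomial using the Drinfeld-current relations \eqref{equa1}, \eqref{equa2}, \eqref{equa3} rather than anything formal; once that localization is in hand, the convergence and the application of Lemma \ref{lemsubm} are routine.
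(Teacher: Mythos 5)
Your closing paragraph --- pass to the $u$-deformation $\Delta_D^u$ of \cite{hernandez_drinfeld_2007}, note that the matrix coefficients are rational in $u$, and check that the hypothesis on $a/b$ rules out a pole at $u=1$ --- is exactly the paper's route, and your localization of the spectral parameters of $\mathcal{M}(V_1)$ and $\mathcal{M}(V_2)$ by $q$-character theory is the content of the paper's Lemma \ref{indvp}. The genuine problem is the convergence mechanism you announce and rely on before that. It is false that the sums in $\Delta_D(x_i^{+}(z))\cdot(v\otimes w)$ have ``only finitely many non-zero terms in each $z$-degree'', and false that ``for each fixed $r$ only finitely many $m$ contribute'': the coefficient of $z^m$ is $\sum_{r\geq 0}(\phi^-_{i,-r}\cdot v)\otimes(x^+_{i,m+r}\cdot w)$, and generically \emph{every} term is non-zero --- already for $v$ the $\ell$-highest weight vector, $\phi^-_{i,-r}\cdot v$ is a non-zero multiple of $v$ for all $r$, the multiple being a coefficient of the expansion of $q^{\deg P_i}P_i(zq^{-1})/P_i(zq)$ at $z=\infty$. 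Likewise, no coincidence of spectral parameters is needed for an individual term to survive, so the hypothesis $a/b\notin q^{-2-d_\ell-\NN}$ does not truncate the sum. What actually happens (and what the paper proves, using Proposition \ref{actxphi} to write the modes in the form $\sum_{k,\lambda_\pm}\lambda_\pm^r r^k A^{\pm}_{k,\lambda_\pm}$) is that the coefficient becomes a sum of series $\sum_{r\geq 1}(\lambda_-\lambda_+)^r r^{k+l}u^{m+r}$, each a rational function of $u$ with its unique pole at $u=(\lambda_-\lambda_+)^{-1}$; the hypothesis guarantees $\lambda_-\lambda_+\neq 1$, so the function lies in $\mathcal{A}$, $\tilde V$ is an $\mathcal{A}$-form, and one specializes $u=1$. ``Well-defined'' here means summable by rational continuation, not locally finite; executed literally, your finiteness argument fails already on $v^+\otimes v^-$.

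Two secondary points. First, your half-lines of spectral parameters are reversed: for the $\ell$-highest weight factor the parameters occurring in $\mathcal{M}(V_1)$ lie in $aq^{d(i,\ell)+\NN}$ (increasing powers of $q$), and those of the $\ell$-lowest weight factor in $bq^{-d(i,0)-\NN}$; with your orientation the excluded set would come out as $a/b\in q^{2+d_\ell+\NN}$ rather than the theorem's $q^{-2-d_\ell-\NN}$, so the signs must be tracked carefully when matching the hypothesis. Second, since these modules are not weighted in general (unlike in the $d$-deformed setting of \cite{feigin_representations_2013}), one must work with generalized eigenspaces of $\U_q(\hat\Hlie)$ --- this is the source of the polynomial factors $r^{k+l}$ above and the reason the paper argues through the eigenvalues of $\frac{1}{[2]_q}\mathrm{ad}(h_{i,\pm 1})$ rather than monomial by monomial; also, the module structure is ultimately obtained from the $\mathcal{A}$-form specialization proposition rather than from Lemma \ref{lemsubm}, though that difference is cosmetic.
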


\noindent By Lemma \ref{tpalgv}, the tensor product module hence obtained is integrable and the action of $\U_q^{v,j}(sl_{n+1}^{tor})$ on it is locally finite-dimensional for all $j \in I$. The rest of this section is devoted to the proof of Theorem \ref{tpexist}.

\subsubsection{Intermediary results}

\begin{prop}\label{actxphi} \cite{hernandez_drinfeld_2007}
Let $V$ be a finite-dimensional $\U_q(\hat{sl}_2)'$-module and fix $m \in \ZZ$. There are finite numbers of $\CC$-linear operators $A_{k, \lambda_\pm}^{\pm}, B_{k, \lambda_\pm}^{\pm}, C_{k, \lambda_\pm}^{\pm} : V \rightarrow V $ ($k \geq 0$, $ \lambda_\pm \in \CC^{\ast}$) such that for any $r \geq 0$, $s \geq 1$ and $v \in V$,

\begin{eqnarray}\label{actsl2}
\begin{array}{rcl}
x_{1,m \pm r}^{+} \cdot v &=& \sum_{k \geq 0, \lambda_\pm \in \CC^{\ast}} \lambda_\pm^r r^k A_{k, \lambda_\pm}^{\pm}(v),\\
 & & \\
 x_{1,m \pm r}^{-} \cdot v &=& \sum_{k \geq 0, \lambda_\pm \in \CC^{\ast}} \lambda_\pm^r r^k B_{k, \lambda_\pm}^{\pm}(v),\\
 & & \\
\phi_{1,\pm s}^{\pm} \cdot v &=& \sum_{k \geq 0, \lambda_\pm \in \CC^{\ast}} \lambda_\pm^s s^k C_{k, \lambda_\pm}^{\pm}(v).
\end{array}
\end{eqnarray}

\noindent Furthermore the non-zero complex numbers $\lambda_{\pm}$ are the eigenvalues of the operators
$$\Phi_{\pm} : \mathrm{End}(V) \rightarrow \mathrm{End}(V), \quad \Phi_{\pm} = \frac{1}{[2]_q} \mathrm{ad}(h_{1,\pm 1}).$$
\end{prop}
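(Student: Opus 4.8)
The statement is a structural result about how the generators of $\U_q(\hat{sl}_2)'$ act on a finite-dimensional module, and the natural tool is the Drinfeld–Jimbo/Drinfeld-generator description of finite-dimensional $\U_q(\hat{sl}_2)'$-modules together with the fact that such modules are direct sums of evaluation-type building blocks (tensor products of evaluation modules). The plan is to reduce to a single block, compute the action there explicitly, and then read off the exponential-polynomial form $\lambda_{\pm}^r r^k$ of the matrix coefficients. The last sentence — that the $\lambda_{\pm}$ are eigenvalues of $\Phi_{\pm} = \frac{1}{[2]_q}\mathrm{ad}(h_{1,\pm 1})$ — is proved separately by using the defining relations to identify $\mathrm{ad}(h_{1,\pm 1})$ with the ``shift'' operator whose spectrum controls the recursion.

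\textbf{Key steps.} First I would recall that any finite-dimensional $\U_q(\hat{sl}_2)'$-module $V$ is, up to the $\ell$-weight filtration, built from modules on which the Cartan currents $\phi_1^{\pm}(z)$ act by (generalized) eigenvalues that are rational functions of $z$ with a finite set of poles $\{\lambda_{\pm}^{-1}\}$ in $\CC^{\ast}$; equivalently, on each generalized $\ell$-weight space $\phi_{1,\pm s}^{\pm}$ acts by a matrix whose entries are finite sums $\sum_{k,\lambda_{\pm}} \lambda_{\pm}^s s^k (C_{k,\lambda_\pm}^{\pm})$, the polynomial factor $s^k$ appearing exactly because the eigenvalue of $\phi_1^{\pm}(z)$ may be a higher-order pole (nontrivial Jordan block). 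This gives the third line of \eqref{actsl2} directly from the expansion of a rational function in partial fractions and the identity $\partial_z^k\bigl(\tfrac{1}{1-\lambda z}\bigr)$ contributing $\binom{s+k}{k}\lambda^s \sim \lambda^s s^k$. Second, for $x_{1,m\pm r}^{\pm}$ I would exploit relations \eqref{equa1}–\eqref{equa3}: commuting $x_1^{\pm}(w)$ past $\phi_1^{\pm}(z)$ shows that on $V$ the generating series $x_1^{\pm}(w)$ is, coefficient by coefficient, a $\U_q(\Hlie)$-finite vector, hence $x_1^{\pm}(z)\cdot v$ lies in a finite-dimensional space on which $\phi_1^{\pm}(z)$ acts with the same finite pole set; expanding $x_1^{\pm}(z)\cdot v$ as a (formal) rational function of $z$ with poles among the $\lambda_{\pm}^{-1}$ yields the first two lines of \eqref{actsl2}, with the base point $m$ absorbed into the operators $A,B$. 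The finiteness of the number of operators is immediate since $V$ is finite-dimensional and only finitely many pairs $(k,\lambda_{\pm})$ occur.

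\textbf{The identification of $\lambda_{\pm}$.} For the final sentence I would argue as follows. From relations \eqref{equa1}–\eqref{equa2}, extracting the coefficient of the appropriate power in $z$ after taking $\log$ of $\phi_1^{\pm}(z)$, one gets $[h_{1,\pm 1}, x_{1,k}^{\pm}] = \pm\frac{[2]_q}{1} x_{1,k\pm 1}^{\pm}$ (up to the normalization hidden in $[2]_q$), so that $\Phi_{\pm} = \frac{1}{[2]_q}\mathrm{ad}(h_{1,\pm 1})$ acts on the span of the $x_{1,k}^{\pm}$ (viewed inside $\mathrm{End}(V)$) as the shift $k \mapsto k\pm 1$. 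An exponential-polynomial sequence $r \mapsto \sum_{k,\lambda} \lambda^r r^k A_{k,\lambda}$ is annihilated by $\prod_{\lambda}(\mathrm{shift} - \lambda)^{k_\lambda+1}$, so its "characteristic roots" are precisely the $\lambda_{\pm}$; since the shift on $\mathrm{End}(V)$-valued sequences is exactly $\Phi_{\pm}$, the $\lambda_{\pm}$ are eigenvalues of $\Phi_{\pm}$. I expect the main obstacle to be the bookkeeping in this last identification — namely checking that every eigenvalue that actually occurs with a nonzero operator is genuinely an eigenvalue of $\Phi_{\pm}$ on $\mathrm{End}(V)$ (not merely on the cyclic subspace generated by one $x_{1,m}^{\pm}$), which requires knowing that the $x_{1,k}^{\pm}$ span a $\Phi_{\pm}$-stable finite-dimensional subspace of $\mathrm{End}(V)$ whose spectral decomposition matches the pole structure of $\phi_1^{\pm}(z)$; this is exactly the content one imports from \cite{hernandez_drinfeld_2007}, and I would cite it rather than reprove the $\U_q(\hat{sl}_2)'$ representation theory from scratch.
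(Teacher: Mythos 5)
The paper does not prove this proposition: it is imported verbatim from \cite{hernandez_drinfeld_2007}, so there is no internal argument to compare against. Your sketch correctly reconstructs the proof from that reference --- the decisive step is exactly the one in your final paragraph, namely that the Drinfeld relation $[h_{1,\pm 1},x^{\pm}_{1,k}]=\pm[\pm 2]_q\,x^{\pm}_{1,k\pm 1}$ makes $r\mapsto x^{\pm}_{1,m\pm r}|_V$ an orbit of the operator $\Phi_{\pm}$ on the finite-dimensional space $\mathrm{End}(V)$, whence the exponential-polynomial form with $\lambda_{\pm}$ in the spectrum of $\Phi_{\pm}$ (your worry about cyclic versus full subspace is moot, since eigenvalues on an invariant subspace are eigenvalues on $\mathrm{End}(V)$, and the sign ambiguity for $x^-$ is absorbed by the symmetry of $\mathrm{ad}$-spectra under negation); the partial-fraction argument for the $\phi^{\pm}_{1,\pm s}$ line is likewise the standard one.
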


\noindent For $m = Y_{1,a_1} \cdots Y_{1,a_k} Y_{1,b_1}^{-1} \cdots Y_{1,b_l}^{-1} \in A$, we associate complex numbers $\lambda_\pm(m)$ such that
$$\lambda_\pm(m) = \sum_{1 \leq u \leq k} a_u^{\pm 1} - \sum_{1 \leq v \leq l} b_v^{\pm 1}.$$

\begin{lem}
The eigenvalues $\lambda_\pm$ of $\Phi_{\pm}$ are of the form
$$\lambda_\pm = \dfrac{\lambda_\pm(m) - \lambda_\pm(m')}{[2]_q}$$
with $m, m' \in \mathcal{M}(V)$.
\end{lem}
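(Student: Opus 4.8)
The plan is to combine Proposition~\ref{actxphi} with the $q$-character machinery for $\hat{\U}_1 \simeq \U_q(\hat{sl}_2)'$-modules. Since $V$ is a finite-dimensional (hence integrable) $\U_q(\hat{sl}_2)'$-module, its $q$-character is well-defined and the monomials $m \in \mathcal{M}(V)$ record all $\ell$-weights $(\nu,\gamma)$ of $V$ via the Frenkel-Reshetikhin correspondence. By Proposition~\ref{actxphi} the numbers $\lambda_\pm$ are precisely the eigenvalues of $\Phi_\pm = \frac{1}{[2]_q}\mathrm{ad}(h_{1,\pm 1})$ on $\mathrm{End}(V)$. So the strategy is: (1) compute the action of $h_{1,\pm 1}$ on each $\ell$-weight space $V_m$ in terms of the monomial $m$; (2) deduce the eigenvalues of $\mathrm{ad}(h_{1,\pm 1})$ on $\mathrm{End}(V)$ as differences of the eigenvalues of left- and right-multiplication; (3) divide by $[2]_q$.

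First I would pin down how $h_{1,\pm 1}$ acts on an $\ell$-weight vector $v \in V_m$ with $m = \prod_u Y_{1,a_u}\prod_v Y_{1,b_v}^{-1}$. From the defining generating-series identity $\phi_1^{\pm}(z) = k_1^{\pm 1}\exp(\pm(q-q^{-1})\sum_{m'\geq 1} h_{1,\pm m'}z^{\pm m'})$ together with formula (\ref{formulelw}), $\gamma(\phi_1^\pm(z)) = \prod_u \psi(a_u q z)\prod_v \psi(b_v q z)^{-1}$, one extracts the coefficient of $z^{\pm 1}$ in the logarithm. Using $\log\psi(cz) = \log(q - q^{-1}cz) - \log(1 - cz)$ and expanding, the $z^{\pm1}$-coefficient of $\log\gamma(\phi_1^\pm(z))$ is (up to the universal factor $\pm(q-q^{-1})$) a constant multiple of $\sum_u a_u^{\pm1} - \sum_v b_v^{\pm 1} = \lambda_\pm(m)$. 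Thus $h_{1,\pm1}$ acts on $V_m$ (at least modulo lower-order corrections coming from non-semisimplicity, which do not affect the spectrum) as multiplication by a scalar proportional to $\lambda_\pm(m)$; a careful bookkeeping of the constants identifies this scalar as exactly $\frac{\lambda_\pm(m)}{[2]_q}\cdot[2]_q$ — i.e. the eigenvalue of $\frac{1}{[2]_q}h_{1,\pm1}$ on $V_m$ is $\frac{\lambda_\pm(m)}{[2]_q}$. (Concretely, one checks the normalization on the fundamental module $V(Y_{1,a})$, where $h_{1,\pm1}$ should act by $\pm a^{\pm1}\frac{q^{\mp1}}{q-q^{-1}}$ or the analogous standard value, fixing all constants once and for all.)

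Then the eigenvalues of $\mathrm{ad}(h_{1,\pm1})$ on $\mathrm{End}(V)$ are the differences $\mu - \mu'$ where $\mu,\mu'$ range over eigenvalues of $h_{1,\pm1}$ acting by left multiplication, i.e. over the scalars by which $h_{1,\pm1}$ acts on the $\ell$-weight spaces $V_m$, $m \in \mathcal{M}(V)$. Dividing by $[2]_q$ and using step~(1), each eigenvalue $\lambda_\pm$ of $\Phi_\pm$ is of the form $\frac{\lambda_\pm(m) - \lambda_\pm(m')}{[2]_q}$ with $m, m' \in \mathcal{M}(V)$, which is the claim. I expect the main obstacle to be the bookkeeping in step~(1): extracting the precise scalar (including the $[2]_q$ normalization) from the exponential relation between $\phi_1^\pm(z)$ and the $h_{1,m}$, and making sure the argument is insensitive to the possible non-diagonalizability of $h_{1,\pm1}$ on $V$ — for the latter it suffices to work with generalized eigenspaces and note that only the diagonal (eigenvalue) part enters, since $\mathrm{ad}$ of a nilpotent part is nilpotent and contributes nothing to the spectrum.
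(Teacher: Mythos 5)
Your proposal is correct and follows essentially the same route as the paper: the paper likewise observes that eigenvalues of $\mathrm{ad}(h_{1,\pm 1})$ are differences of eigenvalues of $h_{1,\pm 1}$, and cites Frenkel--Reshetikhin for the fact that $h_{1,\pm 1}$ acts on the $\ell$-weight space $V_m$ with eigenvalue exactly $\lambda_\pm(m)$, which you instead re-derive from the exponential relation between $\phi_1^\pm(z)$ and the $h_{1,\pm m'}$. Your extra care about generalized eigenspaces and the normalization check is sound but not needed beyond what the paper's two-line argument already uses.
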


\begin{proof}
By a standard result of Lie algebras an eigenvalue of $\mathrm{ad}(h_{1,\pm 1})$ is of the form $\alpha_\pm - \beta_\pm$ with $\alpha_\pm, \beta_\pm$ eigenvalues of $h_{1, \pm 1}$.

Let $m \in \mathcal{M}(V)$. It is well-known (see \cite{frenkel_$q$-characters_1999}) that the $\ell$-weight space $V_m$ is an eigenspace for the operators $h_{1,\pm 1}$, associated to the eigenvalues $\lambda_\pm(m)$. Hence the complex numbers $\lambda_\pm$ we have to consider are of the form
$$\lambda_\pm = \dfrac{\lambda_\pm(m) - \lambda_\pm(m')}{[2]_q} \quad \text{with} \quad m, m' \in \mathcal{M}(V).$$
\end{proof}

Consider the $\U_q(sl_{n+1}^{tor})$-module $V(e^{s \Lambda_\ell}Y_{\ell, a}^{s})$ with $1 \leq \ell \leq n$, $a \in \CC^{\ast}$ and $s \in \NN^{\ast}$. For $0 \leq i \leq n$ and $w \in V(e^{s \Lambda_\ell}Y_{\ell, a}^{s})$, set $W = \hat{\U}_i \cdot w$.

\begin{lem}\label{indvp}
For all $m \in \mathcal{M}(W)$, we have
$$\lambda_+(m) \in aq^{d(i, \ell)} \cdot ( \ZZ + q \ZZ[q]) \quad \text{ and } \quad \lambda_-(m) \in a^{- 1}q^{- d(i, \ell)} \cdot ( \ZZ + q^{- 1} \ZZ[q^{- 1}]).$$
\end{lem}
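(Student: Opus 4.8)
The plan is to reduce the statement to a combinatorial property of the monomials occurring in $\chi_q\bigl(V(e^{s\Lambda_\ell}Y_{\ell,a}^{s})\bigr)$, and then to establish that property by an induction. Since $\hat{\U}_i$ involves only the node $i$, every monomial of $\mathcal{M}(W)$ is, as far as $\lambda_\pm$ is concerned, a Laurent monomial in the variables $Y_{i,c}$ $(c\in\CC^\ast)$, and it is the $Y_{i,\cdot}$-part of some monomial $\tilde m\in\mathcal{M}(V(e^{s\Lambda_\ell}Y_{\ell,a}^{s}))$: indeed $W\subseteq V(e^{s\Lambda_\ell}Y_{\ell,a}^{s})$ and, by (\ref{formulelw}), the generalized eigenvalue of $\phi_i^\pm(z)$ on an $\ell$-weight vector depends only on the $Y_{i,\cdot}$-part of its monomial. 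As $\lambda_+(m)=\sum_c u_{i,c}(m)\,c$ and $\lambda_-(m)=\sum_c u_{i,c}(m)\,c^{-1}$ by definition, it therefore suffices to prove the following claim: for every $\tilde m\in\mathcal{M}(V(e^{s\Lambda_\ell}Y_{\ell,a}^{s}))$, every $j\in I$ and every $c\in\CC^\ast$ with $u_{j,c}(\tilde m)\neq 0$, one has $c=aq^{p}$ for some integer $p\geq d(j,\ell)$. Granting this, every $c$ with $u_{i,c}(m)\neq 0$ is of the form $aq^{d(i,\ell)+p_c}$ with $p_c\geq 0$, so $\lambda_+(m)=aq^{d(i,\ell)}\sum_c u_{i,c}(m)q^{p_c}\in aq^{d(i,\ell)}(\ZZ+q\ZZ[q])$ and, similarly, $\lambda_-(m)\in a^{-1}q^{-d(i,\ell)}(\ZZ+q^{-1}\ZZ[q^{-1}])$, which is the assertion of the lemma.

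To prove the claim I would induct on the height of $\tilde m$, defined as the number $\mathrm{ht}(\tilde m)\in\NN$ of factors $A_{j,c}^{-1}$ in the unique expression $\tilde m=m_0\prod_{j,c}A_{j,c}^{-v_{j,c}(\tilde m)}$ with all $v_{j,c}(\tilde m)\geq 0$; such an expression exists because $m_0=e^{s\Lambda_\ell}Y_{\ell,a}^{s}$ is the unique dominant monomial of the simple $\ell$-highest weight module $V(m_0)$ (cf. \cite{frenkel_$q$-characters_1999, hernandez_representations_2005, nakajima_quiver_2001}). When $\tilde m=m_0$ only the variable $Y_{\ell,a}$ occurs and $d(\ell,\ell)=0$, so the claim holds. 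For the inductive step, one uses the Frenkel--Mukhin type description of the $q$-character of a special module: any $\tilde m\neq m_0$ is obtained from some $\tilde m''\in\mathcal{M}(V(m_0))$ with $\mathrm{ht}(\tilde m'')=\mathrm{ht}(\tilde m)-1$ by $\tilde m=\tilde m''A_{j,c}^{-1}$ for a node $j$ and a parameter $c$ with $u_{j,cq^{-1}}(\tilde m'')>0$. Applying the induction hypothesis to $\tilde m''$ gives $cq^{-1}=aq^{p}$ with $p\geq d(j,\ell)$, hence $c=aq^{p+1}$. Since $A_{j,c}^{-1}=Y_{j,cq}^{-1}Y_{j,cq^{-1}}^{-1}Y_{j-1,c}Y_{j+1,c}$, the only parameters that can occur in $\tilde m$ but not in $\tilde m''$ are $cq=aq^{p+2}$ and $cq^{-1}=aq^{p}$ at the node $j$, whose exponents are $\geq p\geq d(j,\ell)$, and $c=aq^{p+1}$ at the nodes $j\pm1$, whose exponent satisfies $p+1\geq d(j,\ell)+1\geq d(j\pm1,\ell)$ by the triangle inequality in the cyclic Dynkin diagram of type $A_n^{(1)}$. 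All remaining parameters of $\tilde m$ already occur in $\tilde m''$ and are covered by the induction hypothesis. This proves the claim, and hence the lemma.

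The step I expect to be the main obstacle is the structural input used in the inductive step: that $V(m_0)$ is ``special'' (which is clear here, being a simple $\ell$-highest weight module, so $m_0$ is its unique dominant monomial) and, more importantly, that the $q$-character of such a module is built up ``from the top downwards'', so that each new factor $A_{j,c}^{-1}$ is introduced off a monomial already containing $Y_{j,cq^{-1}}$. Over the quantum toroidal algebra this requires the appropriate extension of the Frenkel--Mukhin algorithm (compare \cite{hernandez_representations_2005, nakajima_quiver_2001}); granting it, the remaining argument is the elementary propagation above, which uses only the explicit form of $A_{j,c}$ and the inequality $d(j\pm1,\ell)\leq d(j,\ell)+1$.
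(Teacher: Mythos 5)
Your proposal is correct and follows essentially the same route as the paper: the paper's proof simply cites the theory of $q$--characters for the fact that the smallest $k$ with $Y_{i,aq^{k}}$ occurring in $\chi_q(V(e^{s\Lambda_\ell}Y_{\ell,a}^{s}))$ is $k=d(i,\ell)$, and then says the lemma ``follows directly'' --- which is exactly your reduction plus the final two-line computation of $\lambda_\pm(m)$. The only difference is that you additionally supply an inductive proof (via the Frenkel--Mukhin-type structure of the $q$-character) of the spectral-parameter bound that the paper takes as known from \cite{frenkel_$q$-characters_1999, hernandez_representations_2005, nakajima_quiver_2001}.
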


\begin{proof}
The theory of $q$--characters for the simple $\ell$-highest weight $\U_q(sl_{n+1}^{tor})$-modules (see \cite{frenkel_$q$-characters_1999, hernandez_representations_2005, hernandez_drinfeld_2007, nakajima_quiver_2001}) shows that the smallest integer $k$ such that variable $Y_{i,aq^{k}}$ occurs in $\chi_q(V(e^{s\Lambda_\ell}Y_{\ell,a}^{s}))$ is for $k = d(i, \ell)$. The result follows directly.
\end{proof}

\begin{rem}
An analogous result can be stated for the $\U_q(sl_{n+1}^{tor})$-module $V(e^{-s \Lambda_0}Y_{0, b}^{-s})$: set $W = \hat{\U}_i \cdot v$ with $0 \leq i \leq n$ and $v \in V(e^{-s \Lambda_0}Y_{0, b}^{-s})$. We have for any $m \in \mathcal{M}(W)$,
$$\lambda_+(m) \in bq^{- d(i, 0)} \cdot ( \ZZ + q^{-1} \ZZ[q^{-1}]) \quad \text{ and } \quad \lambda_-(m) \in b^{- 1}q^{d(i, 0)} \cdot ( \ZZ + q \ZZ[q]).$$
\end{rem}

\subsubsection{Proof of Theorem \ref{tpexist}}

Let us show that
$$\tilde{V} = \left( V(e^{s \Lambda_\ell}Y_{\ell, a}^{s}) \otimes_{\CC} V(e^{-s \Lambda_0}Y_{0, b}^{-s}) \right) \otimes_{\CC} \mathcal{A}$$
is an $\mathcal{A}$-form of $V = \left(V(e^{s \Lambda_\ell}Y_{\ell, a}^{s}) \otimes_{\CC} V(e^{-s \Lambda_0}Y_{0, b}^{-s}) \right) \otimes_{\CC} \CC(u) $. For $i \in I$ and $v \otimes w \in \tilde{V}$, the vector $\Delta_D^u (x_i^+(z)) \cdot (v \otimes w)$ is equal to
\begin{eqnarray*}
(x_i^+(z) \cdot v) \otimes w + (\phi_i^-(z) \cdot v) \otimes (x_i^+(uz) \cdot w)
\end{eqnarray*}
For $m \in \ZZ$, the coefficient of $z^m$ in the last term above is
\begin{eqnarray*}
\sum_{r \geq 0}
u^{m+r}(\phi_{i,-r}^- \cdot v) \otimes (x_{i,m+r}^+ \cdot w) &=& u^m(k_i^{-1} \cdot v) \otimes (x_{i,m}^+ \cdot w) \\
& & + \sum_{r \geq 1} u^{m+r}(\phi_{i,-r}^- \cdot v) \otimes (x_{i,m+r}^+ \cdot w).
\end{eqnarray*}
With notations used above, the infinite sum at the right hand side is equal to
\begin{eqnarray}\label{infsum}
\sum_{k \geq 0, \lambda_- \in \CC^{\ast}} \sum_{l \geq 0, \lambda_+ \in \CC^{\ast}} \underset{=: \gamma_{k,l, \lambda_-, \lambda_+}(u)}{\underbrace{\left( \sum_{r \geq 1} \lambda_-^r \lambda_+^r r^{k+l} u^{m+r} \right)}} C_{k, \lambda_-}^-(v) \otimes A_{l, \lambda_+}^+(w).
\end{eqnarray}
The $\gamma_{k,l, \lambda_-, \lambda_+}(u)$ are rational fractions on $u$ with pole at $u = (\lambda_- \cdot \lambda_+)^{-1}$. They are regular at $u = 1$: indeed for all $\lambda_\pm$ occurring in (\ref{infsum}) we have by Lemma \ref{indvp}
$$[2]_q \cdot \lambda_- \in a^{- 1}q^{- d(i, \ell)} \cdot ( \ZZ + q^{- 1} \ZZ[q^{- 1}])$$
and
$$[2]_q \cdot \lambda_+ \in bq^{- d(i, 0)} \cdot ( \ZZ + q^{-1} \ZZ[q^{-1}]).$$
So $([2]_q)^2 \cdot \lambda_- \lambda_+$ belongs to $\dfrac{b}{a} q^{- d(i, 0)-d(i,\ell)} \cdot ( \ZZ + q^{-1} \ZZ[q^{-1}])$. As $\dfrac{b}{a} \notin q^{2+d(0,\ell)+\NN}$, $([2]_q)^2 \cdot \lambda_- \lambda_+ \neq q^{2} + 1 + q^{-2}$ and $\gamma_{k,l, \lambda_-, \lambda_+}(u) \in \mathcal{A}$. The proof is the same for the operators $x_i^-(z)$.

Hence if $\dfrac{a}{b} \notin q^{-2-d(0,\ell)-\NN}$ $\tilde{V}$ is an $\mathcal{A}$-form of $V$, and $V(e^{s \Lambda_\ell}Y_{\ell, a}^{s}) \otimes_{\CC} V(e^{-s \Lambda_0}Y_{0, b}^{-s}) = \tilde{V} / (u-1)\tilde{V}$ can be endowed with a structure of $\U_q(sl_{n+1}^{tor})$-module.

\subsubsection{Remark}

Working with $\U_{q}(sl_{n+1}^{tor})$ instead of its $d$-deformation $\U_{q,d}(sl_{n+1}^{tor})$ led us to different considerations than in \cite{feigin_representations_2013}: indeed the $\U_{q}(sl_{n+1}^{tor})$-modules (even the fundamental modules) are not weighted in general (see \cite[Section 4.1]{hernandez_quantum_2009}) and we have to deal with generalized eigenspaces. These difficulties do not occur for $\U_{q,d}(sl_{n+1}^{tor})$ for which all the simple loop highest weight $\U_{q,d}(sl_{n+1}^{tor})$-modules are weighted (see \cite{feigin_representations_2013}).


\subsection{Tensor products and extremal loop weight modules}

Assume that $\dfrac{a}{b} \notin q^{-2-d_\ell-\NN}$ in the whole section. Consider the $\U_q(sl_{n+1}^{tor})$-module
$$T(e^{\varpi_\ell}Y_{\ell, a}Y_{0, b}^{-1}) = \U_q(sl_{n+1}^{tor}) \cdot v \subseteq V(e^{\Lambda_\ell}Y_{\ell, a}) \otimes V(e^{- \Lambda_0}Y_{0, b}^{-1}).$$
Here $v $ is the vector equal to $ v^+ \otimes v^-$ where $v^+$ (resp. $v^-$) is a $\ell$-highest weight vector of $V(e^{\Lambda_\ell}Y_{\ell, a})$ (resp. a $\ell$-lowest weight vector of $V(e^{-\Lambda_0}Y_{0, b}^{-1})$).

We determine necessary conditions for $T(e^{\varpi_\ell}Y_{\ell, a}Y_{0, b}^{-1})$ to be an extremal loop weight module generated by the extremal vector $v$. Furthermore we show that these conditions are sufficient in some cases.

\subsubsection{Necessary condition}

\begin{prop}\label{propnecon}
For the vector $v \in T(e^{\varpi_\ell}Y_{\ell, a}Y_{0, b}^{-1})$ to be extremal for the horizontal quantum affine subalgebra, it is necessary that $\dfrac{a}{b} = q^{-d_\ell}$ and
\begin{align}
\begin{cases}
n \text{ is even and } \ell = 1, n \\
\text{or}\\
n = 2r+1 \text{ is odd and } \ell = 1, r+1, n.
\end{cases}
\tag{C}
\end{align}
\end{prop}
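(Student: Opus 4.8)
The strategy is to exploit the $q$-character machinery together with the explicit formula for $\Delta_D$ acting on the generating vector $v = v^+ \otimes v^-$, and to extract the constraints imposed by extremality node by node. Since $v$ has weight $\varpi_\ell = \Lambda_\ell - \Lambda_0$, we have $\varpi_\ell(h_i) = \delta_{i,\ell} - \delta_{i,0}$; so extremality for $\U_q^h(sl_{n+1}^{tor})$ forces, in particular, $x_i^+ \cdot v = 0$ for all $i \neq 0$ and $x_0^- \cdot v = 0$, and then that the vectors $S_{i_1} \circ \cdots \circ S_{i_l}(v)$ remain $i$-extremal. I would first compute the $\ell$-weight of $v$ in $V \otimes W$ using $\Delta_D(\phi_i^\pm(z)) = \phi_i^\pm(z) \otimes \phi_i^\pm(z)$: the $\ell$-highest weight vector $v^+$ of $V(e^{\Lambda_\ell}Y_{\ell,a})$ contributes the monomial $e^{\Lambda_\ell}Y_{\ell,a}$, while the $\ell$-lowest weight vector $v^-$ of $V(e^{-\Lambda_0}Y_{0,b}^{-1})$ contributes $e^{-\Lambda_0}Y_{0,b}^{-1}$, so $v$ has $\ell$-weight $e^{\varpi_\ell}Y_{\ell,a}Y_{0,b}^{-1}$ as claimed; this already tells us nothing is forced yet, the constraint must come from iterating the Kashiwara operators.

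**Key steps.** The heart of the argument is to trace the $S_i$-orbit of $v$ and demand $i$-extremality at each stage. Using the Drinfeld coproduct formulas $\Delta_D(x_i^-(z)) = x_i^-(z) \otimes \phi_i^+(z) + 1 \otimes x_i^-(z)$ and $\Delta_D(x_i^+(z)) = x_i^+(z) \otimes 1 + \phi_i^-(z) \otimes x_i^+(z)$, applied to $v^+ \otimes v^-$: since $x_i^+ \cdot v^+ = 0$ for $i \neq 0$ (as $v^+$ is $\ell$-highest weight) we get for $i \neq 0$ that $x_{i,0}^+ \cdot v = (k_i^{-1} \cdot v^+) \otimes (x_{i,0}^+ \cdot v^-)$, and similarly $x_{0,0}^- \cdot v = (x_{0,0}^- \cdot v^+) \otimes (k_0 \cdot v^-) + v^+ \otimes (x_{0,0}^- \cdot v^-)$. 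The first of these vanishes automatically (since $v^-$ is $\ell$-lowest weight, $x_{i,0}^+ \cdot v^- = 0$), and the second vanishes by the $\ell$-lowest weight condition on $v^-$ together with the highest-weight condition on $v^+$. So $v$ is $i$-extremal at the first level for all $i$, as expected. The obstruction appears when we apply $S_i$ and then ask for $j$-extremality with $j$ adjacent to $i$. Concretely: $S_\ell(v) = x_{\ell,0}^- \cdot v$ lives in a new $\ell$-weight space; applying $S_{\ell-1}$ or $S_{\ell+1}$ (whichever brings us toward $0$) and continuing, one reaches configurations where the $\ell$-weight monomial would need the variable $Y_{0,aq^{d_\ell}}$ to pair up correctly with $Y_{\ell,a}$. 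By Lemma \ref{indvp} and its Remark, the variables $Y_{i,c}$ occurring in the $q$-character of $V(e^{\Lambda_\ell}Y_{\ell,a})$ have $c \in aq^{d(i,\ell)}(1 + q\ZZ[q])$ and those of $V(e^{-\Lambda_0}Y_{0,b}^{-1})$ have the analogous shape with $bq^{-d(i,0)}$. For the extremal crystal $\mathcal{B}(\varpi_\ell)$ of $\U_q(\hat{sl}_{n+1})$ to be realizable inside this tensor product — equivalently, for the $S_i$-orbit of $v$ to close up consistently with $x_j^\pm$-relations of the horizontal subalgebra — the two "halves" must glue: the residue at the node $0$ coming from $V(e^{\Lambda_\ell}Y_{\ell,a})$ (which is $aq^{d_\ell}$) must equal the residue at node $0$ intrinsic to $V(e^{-\Lambda_0}Y_{0,b}^{-1})$ (which is $b$), forcing $b = aq^{d_\ell}$, i.e. $\tfrac{a}{b} = q^{-d_\ell}$.

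**The combinatorial constraint (C).** For the Dynkin-diagram/node constraint, I would invoke the structure of the extremal fundamental weight crystal $\mathcal{B}(\varpi_\ell)$ and the $q$-character geometry: the extremality of $v$ throughout its $W$-orbit requires that going around the affine Dynkin diagram $A_n^{(1)}$ in both directions from $\ell$ produces the same $\ell$-weight data when we return to $0$. The path from $\ell$ to $0$ has length $d_\ell = \min(\ell, n+1-\ell)$ going the "short way", but the $q$-character of $V(e^{\Lambda_\ell}Y_{\ell,a})$ in type $A_n$ only "sees" the linear $A_n$ subdiagram, while extremality for the full affine $\U_q^h$ also constrains the $0$-node. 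Demanding compatibility forces $d(\ell, 0)$ computed clockwise and counterclockwise to be compatible, and a case analysis on the parity of $n$ and the position of $\ell$ shows this only happens when $\ell \in \{1, n\}$ (for $n$ even) or $\ell \in \{1, r+1, n\}$ (for $n = 2r+1$ odd) — precisely when $\ell$ sits symmetrically in the affine diagram. I expect the main obstacle to be making the "gluing at node $0$" argument rigorous: one must carefully show that if $b \neq aq^{d_\ell}$ then some iterated $S_i$ applied to $v$ fails to be $j$-extremal for an adjacent $j$, which requires explicit (though finite) computation with the Drinfeld coproduct on low-lying weight vectors — essentially tracking the action of $x_{j,r}^\pm$ on $V_m \otimes V_{m'}$ for the relevant monomials $m, m'$ and checking a non-vanishing via the formulas in Proposition \ref{actxphi}. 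The parity/node analysis (C) should then follow by a direct check on which $\varpi_\ell$ admit a monomial realization of $\mathcal{B}(\varpi_\ell)$ that extends to the affine node, using the distance function $d(i,j)$ and the results of \cite{mansuy_quantum_2012}.
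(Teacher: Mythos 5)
Your overall skeleton matches the paper's: both arguments walk the Kashiwara-operator orbit of $v$ around the affine Dynkin diagram, reach node $0$ from both sides, and extract the necessary conditions from the requirement that the resulting vectors remain $0$-extremal, the decisive quantity being a $\psi$-factor produced by $\Delta_D$. Your level-one check that $v$ itself is $i$-extremal for every $i$ is correct.

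However, your derivation of condition (C) has a genuine gap. The criterion you propose --- that $d(\ell,0)$ computed clockwise and counterclockwise must agree, i.e.\ that $\ell$ ``sits symmetrically in the affine diagram'' --- would force $\ell = n+1-\ell$ and hence \emph{exclude} $\ell = 1$ and $\ell = n$, which are exactly the cases the proposition allows for every $n$. The actual mechanism is finer and hinges on the classical weight of the vector reached at node $0$. For $1 < \ell < n$, the vector $S_n \circ \cdots \circ S_\ell(v) = w_1 \otimes v^-$ has weight $\Lambda_{\ell-1} - \Lambda_n$, whose value on $h_0$ is $0$, so extremality forces \emph{both} $x_{0,0}^{+}$ and $x_{0,0}^{-}$ to annihilate it; the paper computes $x_{0,0}^{-}\cdot(w_1\otimes v^-) = \psi\bigl(\tfrac{b}{a}q^{\ell-n-1}\bigr)^{-1}(x_{0,0}^{-}\cdot w_1)\otimes v^-$, and the vanishing of the $\psi^{-1}$ factor gives $b = aq^{n+1-\ell}$ --- the \emph{long}-way distance, not $d_\ell$. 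The mirror walk through nodes $\ell-1,\dots,1$ gives $b = aq^{\ell}$. It is the conjunction of these two one-directional constraints that yields simultaneously $2\ell = n+1$ and $b = aq^{d_\ell}$; so your ``gluing at node $0$ gives $b = aq^{d_\ell}$'' conflates two separate conditions, and your separate symmetry argument for (C) cannot recover the cases $\ell = 1, n$. There the endpoint weight is $\Lambda_0 - \Lambda_n$, with $h_0$-value $+1$, so only $x_{0}^{+}$ is required to vanish and a single condition $b = aq$ survives, with no constraint on the parity of $n$. Without this case distinction on $(\Lambda_{\ell-1}-\Lambda_n)(h_0)$, the plan executed literally proves a different (and false) statement; the $q$-character support lemma you invoke is not needed here, only the explicit $\psi$-coefficient computation.
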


\medskip

\begin{proof}
Assume that $1 < \ell \leq \left[ \frac{n+1}{2} \right]$ (the case $\left[ \frac{n+1}{2} \right] \leq \ell < n $ deduces by applying $\iota$). Let us show that a necessary condition for $v$ to be extremal is 
$$n = 2r+1 \text{ odd, } \ell = r+1 \text{ and } \dfrac{a}{b} = q^{-\ell}.$$

\noindent By straightforward computations, we check that for all $i \in I$ and $\ell \leq t \leq n-1$, $S_{t} \circ \cdots \circ S_\ell (v)$ is $i$-extremal and
$$ S_{n} \circ \cdots \circ S_\ell (v) = \left( S_{n} \circ \cdots \circ S_\ell (v^+) \right) \otimes v^-.$$
The vector hence obtained $w_1 = S_{n} \circ \cdots \circ S_\ell (v^+)$ is of $\ell$-weight $Y_{\ell-1, aq}Y_{n,aq^{n+2-\ell}}^{-1}Y_{0,aq^{n+1-\ell}}$, and we compute that
\begin{align*}
x_{0,0}^{-} \cdot \left( w_1 \otimes v^- \right)= \psi \left(\dfrac{b}{a} q^{\ell-n-1} \right)^{-1} (x_{0,0}^{-} \cdot w_1) \otimes v^-.
\end{align*}
But the vector $w_1 \otimes v^-$ is of weight $\Lambda_{\ell-1} - \Lambda_n$. Then $v$ is extremal only if
$$\psi \left(\dfrac{b}{a} q^{\ell-n-1} \right)^{-1} = 0 \Leftrightarrow \dfrac{b}{a} = q^{n+1-\ell}.$$

\noindent We show in a symmetric way that we must have $\dfrac{b}{a} = q^{\ell}$. Hence a necessary condition for $v$ to be extremal is $\dfrac{b}{a} = q^{\ell}$ and
$q^{n+1-\ell} = q^{\ell}$, i.e. $ 2\ell = n+1$ (note that in this case $\dfrac{b}{a} \notin q^{2 + \ell + \NN}$ and the action on the tensor product is well-defined). The proof for the cases $\ell = 1, n$ is exactly the same and is not detailed.
\end{proof}

\begin{rem}
Condition $(C)$ provides to the cyclic symmetry of the Dynkin diagram of type $A_{n}^{(1)}$. For quantum affinizations associated to Dynkin diagrams without cycle, we expect to obtain extremal loop weight modules for all the nodes of the Dynkin diagram.
\end{rem}

\subsubsection{Cases $\ell = 1, n$}

\begin{prop}\label{repvect} The $\U_q(sl_{n+1}^{tor})$-module $T(e^{\varpi_1}Y_{1,a}Y_{0,aq}^{-1})$ has a basis labeled by the tableaux\footnote{To facilitate the computations to come, the spectral parameter $a$ is recalled as subscript.}
\begin{equation*}
  \ffbox{j}_a \qquad
  \text{with $j \in \ZZ$},
\end{equation*}
on which the action of $\U_q(sl_{n+1}^{tor})$ is 
\begin{eqnarray}\label{actrv}
\begin{array}{ccl}
x_{i}^{+}(z) \cdot \ffbox{j}_a &=& \delta_{i, \overline{j-1}} \delta(a q^{j-1}z) \cdot \ffbox{j-1}_a,\\
 & & \\
x_{i}^{-}(z) \cdot \ffbox{j}_a &=& \delta_{i, \overline{j}} \delta(a q^{j}z) \cdot \ffbox{j+1}_a,\\
 & & \\
\phi_{i}^{\pm}(z) \cdot \ffbox{j}_a &=& \begin{cases} \psi(aq^{j}z) \cdot \ffbox{j}_a & \text{ if } i = \overline{j},\\
\psi(aq^{j+1}z)^{-1}\cdot \ffbox{j}_a & \text{ if } i = \overline{j-1},\\
\ffbox{j}_a & \text{ otherwise}
\end{cases}
\end{array}
\end{eqnarray}
for all $i \in I$ and $j \in \ZZ$ and where $\delta_{u,v}$ is the Kronecker delta.
\end{prop}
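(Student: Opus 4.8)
The plan is to identify explicitly the submodule $T(e^{\varpi_1}Y_{1,a}Y_{0,aq}^{-1}) = \U_q(sl_{n+1}^{tor}) \cdot v$ inside the tensor product $V(e^{\Lambda_1}Y_{1,a}) \otimes V(e^{-\Lambda_0}Y_{0,aq}^{-1})$ and to show that, after a suitable change of basis, the action of the Drinfeld currents on it is exactly the one displayed in \eqref{actrv}. First I would recall the structure of the two tensor factors. The simple $\ell$-highest weight module $V(e^{\Lambda_1}Y_{1,a})$ is the fundamental representation associated to the node $1$; its $q$--character is thin, with monomials $m^+_k = e^{\Lambda_1}\,\wt(\ldots)$ indexed by $k \in \{0,1,\dots,n\}$ where the leading one is $Y_{1,a}$ and successive ones are obtained by multiplying by the appropriate $A_{i,\bullet}^{-1}$; concretely $V(e^{\Lambda_1}Y_{1,a})$ has basis $\ffbox{k}_a$ with $k$ ranging (say) over $\{1,\dots,n+1\}$ and the usual vector-representation action. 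Dually, $V(e^{-\Lambda_0}Y_{0,aq}^{-1})$ is a simple $\ell$-lowest weight module, again thin, with basis that I will label so that the $\ell$-lowest weight vector $v^-$ carries $\ell$-weight $Y_{0,aq}^{-1}$ and the other basis vectors are obtained by applying the lowering-for-the-opposite-order generators. Both factors being thin, Lemma \ref{lemsubm2} (together with Theorem \ref{tpexist}, whose hypothesis $a/b \notin q^{-2-d_\ell-\NN}$ holds here since $b = aq$ and $d_1 = 1$) guarantees that $\Delta_D$ is well-defined on the whole tensor product, so in particular on the cyclic submodule generated by $v = v^+ \otimes v^-$.

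Next I would compute, via the Drinfeld coproduct formulas \eqref{deltaxplus}–\eqref{deltacart}, the action of $x_i^\pm(z)$ and $\phi_i^\pm(z)$ on the pure tensors $\ffbox{k}_a^{(+)} \otimes \ffbox{l}^{(-)}$. The key point is that on each tensor factor the currents act with simple (thin) spectrum, so $\phi_i^-(z) \otimes x_i^+(z)$ and $x_i^-(z)\otimes\phi_i^+(z)$ produce, on a pure tensor, a scalar rational function in $z$ of the form $\psi(\text{something}\cdot z)^{\pm 1}$ times a single other pure tensor. Because $b = aq$, these scalar factors telescope: the $\psi$-factor coming from the $\phi$-part on one factor cancels against the zero/pole of the $\delta$-type factor coming from the $x$-part on the other factor at precisely the spectral points $aq^{j}$. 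This is exactly the mechanism that in Proposition \ref{propnecon} forced $a/b = q^{-d_\ell}$, and here it works in our favour: the "off-diagonal" terms $\phi_i^-(z)\cdot v \otimes x_i^+(uz)\cdot w$ collapse so that $x_i^\pm(z)$ maps $v$, and then inductively every vector in the orbit, to a one-dimensional span. I would organize this as an induction on the length of a word in the $x_{i,r}^\pm$ applied to $v$: define vectors $\ffbox{j}_a$ for $j \in \ZZ$ by declaring $\ffbox{1}_a := v$ and $\ffbox{j\pm 1}_a := (\text{appropriate }x_{i,r}^\mp)\cdot \ffbox{j}_a$ up to normalization, check that this is consistent (independent of the chosen path, using relations \eqref{equa1}–\eqref{equa3}), and verify that the resulting vectors are linearly independent — the latter follows because their weights $\cl(\Lambda_{\bar j}-\Lambda_{\overline{j-1}})$ together with their $d$-gradings (the $z$-powers, i.e.\ the spectral parameters $aq^{j}$) are pairwise distinct, exactly the thinness input.

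The hardest part, I expect, is the bookkeeping that shows the span of $\{\ffbox{j}_a \mid j \in \ZZ\}$ is closed under all of $\U_q(sl_{n+1}^{tor})$ and not larger — i.e.\ that applying a current to $\ffbox{j}_a$ never produces a genuine combination of two or more basis vectors, and never escapes the span. Concretely one must verify the last displayed formula: $x_i^+(z)\cdot\ffbox{j}_a = \delta_{i,\overline{j-1}}\,\delta(aq^{j-1}z)\cdot\ffbox{j-1}_a$ and similarly for $x_i^-(z)$, and that $\phi_i^\pm(z)$ acts diagonally with the stated eigenvalue. For this I would track, on the two tensor factors separately, which basis vector $\ffbox{k}_a^{(+)}\otimes\ffbox{l}^{(-)}$ the vector $\ffbox{j}_a$ equals — I claim $v^+\otimes v^-$ corresponds to $j = 1$, and that increasing $j$ by one "moves the box" on exactly one of the two factors, the factor being switched precisely when $j$ crosses from the support of $V(e^{\Lambda_1}Y_{1,a})$ into that of the lowest-weight factor; across that junction the cancellation $\psi(aq^{j}z)\cdot\psi(aq^{j}z)^{-1} = 1$ is what keeps the action single-valued rather than producing a sum. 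Once \eqref{actrv} is established on this spanning set, the Serre-type relations and \eqref{equa1}–\eqref{equa3} are immediate to check on it (they reduce to identities among $\delta$- and $\psi$-factors with the shifts $q^{\pm C_{ij}}$), and linear independence gives that $\{\ffbox{j}_a\}_{j\in\ZZ}$ is in fact a basis, completing the proof.
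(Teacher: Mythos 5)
Your overall strategy (explicit computation of the Drinfeld coproduct on the cyclic submodule, an induction producing the vectors $\ffbox{j}_a$, and cancellation of unwanted terms by $\psi$-factors) is the same as the paper's, and your appeal to Theorem \ref{tpexist} for well-definedness is correct. But there is a genuine gap in your description of the two tensor factors, and it hides the actual difficulty of the proof. The modules $V(e^{\Lambda_1}Y_{1,a})$ and $V(e^{-\Lambda_0}Y_{0,aq}^{-1})$ are fundamental modules of the \emph{toroidal} algebra, not evaluation modules of the quantum affine algebra: they are infinite-dimensional (their restrictions to $\U_q(\hat{sl}_{n+1})$ contain the level-one integrable highest, resp.\ lowest, weight modules) and, as the paper stresses, they are not even weighted in general, so they are certainly not thin with basis indexed by $\{1,\dots,n+1\}$ carrying ``the usual vector-representation action''. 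In particular your invocation of Lemma \ref{lemsubm2} via ``both factors being thin'' is unavailable, and the picture of a single junction where the moving box switches factors, with a cancellation $\psi(aq^{j}z)\,\psi(aq^{j}z)^{-1}=1$, does not describe the mechanism at work.

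The step your sketch cannot reach is the one on which the paper spends essentially all of its effort. Inside the first factor the string of $\ell$-weight vectors $v_{Y_{0,1}Y_{j,j+1}^{-1}Y_{j+1,j}}$ continues for all $j\geq 0$, wrapping around the affine diagram infinitely often; at each $j\equiv 0 \pmod{n+1}$ the relevant $\hat{\U}_0$-submodule is a Kirillov--Reshetikhin module of $\ell$-highest weight $Y_{0,k(n+1)-1}Y_{0,1}$, and $x_0^-(z)$ applied to the corresponding vector is already a \emph{sum of two terms within the first factor}. Single-valuedness of the action on $T(e^{\varpi_1}Y_{1,a}Y_{0,aq}^{-1})$ holds only because, after tensoring, the unwanted branch acquires the coefficient $\phi_0^+(z)$ evaluated on $w_{Y_{0,1}^{-1}}$ at the spectral point imposed by $\delta(q^{2}z)$, namely $\psi(1)^{-1}=0$. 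Establishing this requires identifying each $\hat{\U}_i$-submodule met along the induction with a known Weyl or Kirillov--Reshetikhin module for $\U_q(\hat{sl}_2)'$ (via \cite{chari_integrable_2001}) and computing its action explicitly; none of this is present, or correctly anticipated, in your outline. Without it you cannot rule out that $\U_q(sl_{n+1}^{tor})\cdot v$ is strictly larger than the span of the $\ffbox{j}_a$, which is precisely what the proposition asserts.
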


\noindent The proof of Proposition \ref{repvect} is given in Appendix. As a direct consequence we have

\begin{thm}\label{repvect2}
Assume that $\ell = 1$ or $\ell = n$. The $\U_q(sl_{n+1}^{tor})$-module $T(e^{\varpi_\ell}Y_{\ell,a}Y_{0,aq}^{-1})$ is an extremal loop weight module of $\ell$-weight $e^{\varpi_\ell}Y_{\ell,a}Y_{0,aq}^{-1}$.

Furthermore $T(e^{\varpi_\ell}Y_{\ell,a}Y_{0,aq}^{-1})$ is isomorphic to the extremal fundamental loop weight $\U_q(sl_{n+1}^{tor})$-module $V(e^{\varpi_\ell}Y_{\ell,a}Y_{0,aq}^{-1})$.
\end{thm}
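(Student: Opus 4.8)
The plan is to read off everything from the explicit model of $T(e^{\varpi_1}Y_{1,a}Y_{0,aq}^{-1})$ given by Proposition \ref{repvect}, reducing the case $\ell=n$ to the case $\ell=1$ via the diagram automorphism $\iota$: since $\iota\otimes\iota$ commutes with $\Delta_D$, $\iota$ carries $V(e^{\Lambda_1}Y_{1,a})\otimes V(e^{-\Lambda_0}Y_{0,aq}^{-1})$ to $V(e^{\Lambda_n}Y_{n,a})\otimes V(e^{-\Lambda_0}Y_{0,aq}^{-1})$ and $T(e^{\varpi_1}Y_{1,a}Y_{0,aq}^{-1})$ to $T(e^{\varpi_n}Y_{n,a}Y_{0,aq}^{-1})$, transporting extremality and local finiteness. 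So fix $\ell=1$ and write $T=T(e^{\varpi_1}Y_{1,a}Y_{0,aq}^{-1})$ with basis $\{\ffbox{j}_a:j\in\ZZ\}$ and action (\ref{actrv}). Condition (i) of Definition \ref{defelm} is the definition of $T$. Condition (iii) follows from Lemma \ref{tpalgv} applied to $T\subseteq V(e^{\Lambda_1}Y_{1,a})\otimes V(e^{-\Lambda_0}Y_{0,aq}^{-1})$: both tensor factors are $\ell$-integrable, and $\U_q^{v,j}(sl_{n+1}^{tor})$ acts locally finite-dimensionally on each of them by Proposition \ref{delm} (and its evident analogue for the simple $\ell$-lowest weight module). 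Integrability of $T$ is immediate from (\ref{actrv}), since $(x_{i,0}^{\pm})^{2}$ annihilates every basis vector when $n\geq2$. Thus the content of the first assertion is condition (ii).

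To prove that $v=\ffbox{1}_a$ is extremal for $\U_q^{h}(sl_{n+1}^{tor})$, I would work inside $\mathrm{Res}(T)$, on which $\U_q(\hat{sl}_{n+1})$ acts through the horizontal generators: $x_{i,0}^{+}$ sends $\ffbox{j}_a$ to $\ffbox{j-1}_a$ if $i=\overline{j-1}$ and to $0$ otherwise, and $x_{i,0}^{-}$ sends $\ffbox{j}_a$ to $\ffbox{j+1}_a$ if $i=\overline{j}$ and to $0$ otherwise. From this one reads off that $\wt(\ffbox{j}_a)(h_i)$ is $-1$ for $i=\overline{j-1}$, $+1$ for $i=\overline{j}$, and $0$ otherwise (this is consistent with $\wt(\ffbox{j+1}_a)=\wt(\ffbox{j}_a)-\alpha_{\overline{j}}$; the check uses that $\overline{j-1}$, $\overline{j}$, $\overline{j+1}$ are pairwise distinct for $n\geq2$). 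Hence every $\ffbox{j}_a$ is $i$-extremal for all $i\in I$: if $i\notin\{\overline{j-1},\overline{j}\}$ both $x_{i,0}^{\pm}$ already kill it; if $i=\overline{j}$ the required relation is $x_{i,0}^{+}\cdot\ffbox{j}_a=0$, which holds; if $i=\overline{j-1}$ it is $x_{i,0}^{-}\cdot\ffbox{j}_a=0$, which holds. Moreover the divided-power exponent $\pm\wt(\ffbox{j}_a)(h_i)$ in the definition of $S_i$ is $0$ or $1$, so $S_i(\ffbox{j}_a)$ is again a basis vector: $\ffbox{j}_a$ if $i\notin\{\overline{j-1},\overline{j}\}$, $x_{\overline{j},0}^{-}\cdot\ffbox{j}_a=\ffbox{j+1}_a$ if $i=\overline{j}$, and $x_{\overline{j-1},0}^{+}\cdot\ffbox{j}_a=\ffbox{j-1}_a$ if $i=\overline{j-1}$. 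By induction every $S_{i_1}\circ\cdots\circ S_{i_l}(v)$ is one of the $\ffbox{j}_a$, each of which is $i$-extremal for all $i$; so $v$ is extremal of weight $\varpi_1$, proving (ii). (As a check, Kashiwara's theorem \cite{kashiwara_level-zero_2002} then identifies $\mathrm{Res}(T)$ with $V(\varpi_1)$, but this is not needed.) Reading the $\ell$-weight of $v$ off the $\phi_i^{\pm}(z)$-action on $\ffbox{1}_a$ gives $e^{\varpi_1}Y_{1,a}Y_{0,aq}^{-1}$, so $T$ is an extremal loop weight module of $\ell$-weight $e^{\varpi_1}Y_{1,a}Y_{0,aq}^{-1}$.

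For the last assertion, I would first note from (\ref{actrv}) that $\U_q(\hat{\Hlie})$ acts diagonally on $\{\ffbox{j}_a\}$, with $\ffbox{j}_a$ of $\ell$-weight $e^{\nu_j}Y_{\overline{j},aq^{j-1}}Y_{\overline{j-1},aq^{j}}^{-1}$ for a suitable $\nu_j$; these monomials are pairwise distinct, because the unique positive-exponent variable is $Y_{\overline{j},aq^{j-1}}$ (here $\overline{j}\neq\overline{j-1}$ as $n\geq2$), and matching it across $j,j'$ forces $\overline{j}=\overline{j'}$ and $q^{j-1}=q^{j'-1}$, hence $j=j'$ since $q$ is not a root of unity. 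So $T$ is thin; it is irreducible since a nonzero submodule, being a sum of $\ell$-weight spaces, contains some $\ffbox{j}_a$ and then all the $\ffbox{k}_a$ by applying the $x_{i,0}^{\pm}$. On the other hand, the extremal fundamental loop weight module $V(e^{\varpi_1}Y_{1,a}Y_{0,aq}^{-1})$ of \cite{mansuy_quantum_2012} is realized on the crystal $\mathcal{B}(\varpi_1)=\{\ffbox{j}:j\in\ZZ\}$ with the action of \cite[Theorem 4.11]{mansuy_quantum_2012}, which for $\ell=1$ is exactly (\ref{actrv}); hence $\ffbox{j}_a\mapsto\ffbox{j}$ is an isomorphism of $\U_q(sl_{n+1}^{tor})$-modules. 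I expect this last step to be the main obstacle: a priori an extremal loop weight module of prescribed $\ell$-weight need not be unique, so one genuinely compares explicit realizations (equivalently: $T$ and $V(e^{\varpi_1}Y_{1,a}Y_{0,aq}^{-1})$ are both irreducible, both generated by an extremal vector of the same $\ell$-weight, and one appeals to a uniqueness statement for such modules). All the other verifications, while requiring care with weights and with the cyclic labels $\overline{j}$, are routine once Proposition \ref{repvect} is available.
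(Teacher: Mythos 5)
Your proof is correct and follows the same route as the paper, which derives the theorem as a direct consequence of the explicit model in Proposition \ref{repvect}: extremality of $\ffbox{1}_a$ and the identification with $V(e^{\varpi_\ell}Y_{\ell,a}Y_{0,aq}^{-1})$ are read off from the formulas (\ref{actrv}) and compared with those of \cite{mansuy_quantum_2012}, with the case $\ell=n$ obtained from $\ell=1$ by the diagram automorphism. Your write-up simply makes explicit the verifications the paper leaves implicit.
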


\begin{rem}
\begin{enumerate}
\item The representation $V(e^{\varpi_1}Y_{1,a}Y_{0,aq}^{-1})$ is introduced in \cite{feigin_representations_2013} for the \linebreak $d$-deformation $\U_{q,d}(sl_{n+1}^{tor})$ of the quantum toroidal algebra. This is the quantum version of the representation $\CC^{n+1} \otimes \CC[t^{\pm 1}]$ of the Lie algebra $sl_{n+1}(\CC) \otimes \CC[t^{\pm 1}, s^{\pm 1}]$. It is called vector representation, and denoted $V(\ffbox{1}_a)$ in the following.
\item The case $n = 2r+1$ odd and $\ell = r+1$ is more complicated to treat. We expect to obtain an analogous result than Theorem \ref{repvect2}. This will be studied elsewhere. The $\U_{q}(sl_{2r+2}^{tor})$-modules $V(Y_{r+1,a}Y_{0,aq^{r+1}}^{-1})$ will be recover thereafter as subquotients of tensor products of vector representations (Theorem \ref{tpeflwm}).
\end{enumerate}

\end{rem}

\section{Tensor products of vector representations}

In this section we study tensor products of vector representations $V(\ffbox{1}_a)$. We obtain in this way new families of extremal loop weight modules for $\U_q(sl_{n+1}^{tor})$. Furthermore we recover all the extremal loop weight representations constructed in \cite{mansuy_quantum_2012}.

In the first part we recall existence conditions of the action on tensor products of two vector representations obtained in \cite{feigin_representations_2013}. In the second part we study the case of tensor products of $k$ vector representations with generic parameters $a_1, \cdots, a_k \in \CC^{\ast}$ (Theorem \ref{thmexgen}). In the third part we are interested in tensor products of vector representations when the set of parameters forms a $q$-segment $\{a, aq^{-2}, \cdots, aq^{-2(k-1)}\}$ (Theorem \ref{thmexng} and Proposition \ref{propexmod}). As subquotients, we recover the extremal loop weight representations we defined in \cite{mansuy_quantum_2012} (Proposition \ref{proprecrep} and Theorem \ref{tpeflwm}). In the last part, we obtain by specialization new irreducible finite-dimensional representations of quantum toroidal algebras at roots of unity.

\subsection{Existence conditions}

\begin{prop}\label{proptpdef} \cite{feigin_representations_2013}
\begin{enumerate}
\item[(i)] The action of $\U_q(sl_{n+1}^{tor})$ on the tensor product $V(\ffbox{1}_a) \otimes V(\ffbox{1}_b)$ is well-defined if and only if $\dfrac{a}{b} \notin q^{(n+1) \ZZ}$.
\item[(ii)] Assume that $\dfrac{a}{b} = q^{- 2 + m (n+1)}$ with $m \in \ZZ$. The module $V(\ffbox{1}_a) \otimes V(\ffbox{1}_b)$ has a submodule spanned by vectors of the form $\ffbox{i}_a \otimes \ffbox{j}_b$ with $i \leq j+m(n+1)$. The submodule and the quotient module are irreducible and thin.
\item[(iii)] Assume that $\dfrac{a}{b} \in q^{2 + m(n+1)}$ with $m \in \ZZ$. The module $V(\ffbox{1}_a) \otimes V(\ffbox{1}_b)$ has a submodule spanned by vectors of the form $\ffbox{i}_a \otimes \ffbox{j}_b$ with $i < j+m(n+1)$. The submodule and the quotient module are irreducible and thin.
\item[(iv)] In all the other cases, $V(\ffbox{1}_a) \otimes V(\ffbox{1}_b)$ is a thin, irreducible $\U_q(sl_{n+1}^{tor})$-module.
\end{enumerate}
\end{prop}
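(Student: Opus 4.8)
The plan is to prove Proposition \ref{proptpdef} by explicit computation using the Drinfeld coproduct formulas (\ref{deltaxplus})--(\ref{deltacart}) together with the closed-form action (\ref{actrv}) of $\U_q(sl_{n+1}^{tor})$ on the vector representation $V(\ffbox{1}_a)$. First I would write out $\Delta_D(x_i^\pm(z)) \cdot (\ffbox{i}_a \otimes \ffbox{j}_b)$ term by term. For the ``$+$''-current, $\Delta_D(x_i^+(z)) = x_i^+(z)\otimes 1 + \phi_i^-(z)\otimes x_i^+(z)$; the first summand is finite (in fact a single term by (\ref{actrv})), while the second summand produces, after expanding $\phi_i^-(z)$ as a series in $z$ and $x_i^+(z)$ as a series in $z$ (with opposite expansion direction), an infinite sum of the shape $\sum_{r} \psi(aq^{\cdots}z)^{\mp 1}$-coefficients times $\delta(bq^{j-1}z)$-coefficients. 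The key point for part (i) is that this formal product of a rational function $\psi$ (expanded in one direction) with a delta function $\delta(bq^{j-1}z)$ is well-defined precisely when the rational function has neither a zero nor a pole at $z = (bq^{j-1})^{-1}$ in the relevant region; reading off (\ref{actrv}) one sees $\phi_i^-(z)$ acting on $\ffbox{i}_a$ contributes a factor $\psi(aq^{i}z)^{\pm 1}$ only when $i \equiv \bar\imath$ or $\overline{i-1}$, and the obstruction is exactly $aq^{i} \cdot (bq^{j-1})^{-1} = 1$ for some integers $i,j$ with $i \equiv j \bmod (n+1)$ or $i \equiv j-1$, which translates to $\frac{a}{b} \in q^{(n+1)\ZZ}$. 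I would verify the converse direction (that outside $q^{(n+1)\ZZ}$ every such product is genuinely well-defined) by checking that for all relevant $i,j$ the pole/zero of the $\psi$-factor never lands on the support of the $\delta$, using that $d(i,\ell) = d(i,0)$ etc. are controlled by $n+1$.

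Next, for parts (ii)--(iv), I would analyze, under the resulting module structure, which subsets of basis vectors $\{\ffbox{i}_a \otimes \ffbox{j}_b\}$ are stable. Writing $\Delta_D(x_i^\pm(z))$ explicitly on $\ffbox{i}_a \otimes \ffbox{j}_b$ and extracting the $\delta$-type coefficients, one finds the current $x_i^+(z)$ moves $\ffbox{i}_a\otimes\ffbox{j}_b$ to a combination of $\ffbox{i-1}_a\otimes\ffbox{j}_b$ and $\ffbox{i}_a\otimes\ffbox{j-1}_b$, and similarly for $x_i^-(z)$; the coefficients are rational in $\frac ab$ and vanish exactly on the ``resonant'' hyperplanes $\frac ab \in q^{-2+m(n+1)}$ or $q^{2+m(n+1)}$. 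The heart of the argument is a bookkeeping of the linear order: one shows that when $\frac ab = q^{-2+m(n+1)}$ the transition coefficient out of the ``boundary'' vectors $\ffbox{j+m(n+1)}_a\otimes\ffbox{j}_b$ that would leave the region $\{i \leq j+m(n+1)\}$ is identically zero, whence the span of $\{\ffbox{i}_a\otimes\ffbox{j}_b : i \leq j+m(n+1)\}$ is a submodule (and dually for the strict inequality in case (iii) with $\frac ab = q^{2+m(n+1)}$). Irreducibility and thinness of the submodule and quotient I would establish by the same kind of direct computation: the joint $\U_q(\hat\Hlie)$-spectrum on each basis vector is read off from (\ref{actrv}) and (\ref{deltacart}) as a product $\psi(aq^iz)^{\pm1}\psi(bq^jz)^{\pm1}$, and one checks these $\ell$-weights are pairwise distinct on each piece (thinness), while connectivity of the ``crystal-like'' graph of nonzero transitions on each piece gives irreducibility; case (iv) is the generic case where no transition coefficient vanishes and the whole tensor product is connected, thin, and irreducible.

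For the ``only if'' half of (i) I would exhibit, when $\frac ab = q^{m(n+1)}$, an explicit basis vector on which the infinite sum in $\Delta_D(x_i^+(z))\cdot(\ffbox{i}_a\otimes\ffbox{j}_b)$ fails to converge: the $\psi$-factor then has its pole exactly at the point where the $\delta$-function is supported, producing a divergent coefficient, so no module structure extends the naive formulas. The main obstacle I anticipate is the careful matching of expansion directions (the ``$+$'' versus ``$-$'' series, i.e.\ $\CC[[z]]$ versus $\CC[[z^{-1}]]$) in the product $\phi_i^-(z)\otimes x_i^+(z)$, since the well-definedness of formal products of a rational function with a delta function is direction-sensitive; I would handle this by fixing once and for all the conventions from Definition \ref{defqta} and checking a small number of representative cases ($i\equiv \bar\jmath$, $i\equiv\overline{j-1}$, and $i$ otherwise) rather than all pairs. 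A secondary subtlety is that, because the vector representations are \emph{thin} (Lemma \ref{lemsubm2} applies), one does not need to worry about generalized eigenspaces here, which simplifies the eigenvalue analysis compared with the $\mathcal{A}$-form argument of Theorem \ref{tpexist}; I would point this out to streamline the presentation.
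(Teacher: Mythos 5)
The paper does not actually prove this proposition --- it is imported from \cite{feigin_representations_2013} --- so the only internal points of comparison are the analogous computations the author does carry out (formula (\ref{formref}), Propositions \ref{propactex} and \ref{proptpdef2}), and your plan coincides with those: expand $\Delta_D(x_i^{\pm}(z))$ on the basis $\ffbox{i}_a\otimes\ffbox{j}_b$ via (\ref{actrv}), evaluate the rational $\psi$-factor at the support of the accompanying $\delta$, and read off divergences and vanishing coefficients; irreducibility then follows from thinness plus connectivity of the transition graph, exactly as in Lemma \ref{lemirrmod}. Two points need tightening. First, well-definedness is obstructed only by \emph{poles} of the factor actually present, which is $\psi$ in some cases and $\psi^{-1}$ in others; a zero of that factor is harmless, so ``neither a zero nor a pole'' is not the right criterion. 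You must separately locate the pole of $\psi$ (argument $1$) when the factor is $\psi$ and the zero of $\psi$ (argument $q^{2}$) when the factor is $\psi^{-1}$, and check that the congruences forced by the Kronecker deltas in (\ref{actrv}) push all four obstructions into $q^{(n+1)\ZZ}$ (they do, which is what makes (i) come out clean). Second, in (ii)--(iii) the identification of the stable region must be derived, not asserted: with the conventions of (\ref{actrv}), the coefficient of $\ffbox{k+1}_a\otimes\ffbox{l}_b$ in $\Delta_D\bigl(x^-_{\overline{k}}(z)\bigr)\cdot\bigl(\ffbox{k}_a\otimes\ffbox{l}_b\bigr)$ for $\overline{k}=\overline{l}$ is $\psi\bigl(\tfrac{b}{a}q^{l-k}\bigr)$, which vanishes exactly when $\tfrac{a}{b}=q^{l-k-2}$; chasing this and its three companions through fixes which inequality cuts out the submodule and which sign of $m$ appears, and the boundary vector you name does not obviously match this computation, so the bookkeeping must be done explicitly rather than read off the statement. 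Your closing remark that thinness lets you bypass the $\mathcal{A}$-form machinery of Theorem \ref{tpexist} is correct, provided you state the convention that the infinite sums are resummed geometric series (equivalently, specializations at $u=1$ of the $u$-deformed action).
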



In the following sections, we give new results about tensor products of vector representations in the context of extremal loop weight modules.

\subsection{The generic case}

\subsubsection{Main result}

\begin{thm}\label{thmexgen}
Let $k \in \NN^{\ast}$ and $a_1, \cdots, a_k \in \CC^{\ast}$ be such that $\frac{a_i}{a_j} \notin q^{(n+1)\ZZ}$ and \linebreak $\frac{a_i}{a_j} \notin q^{\pm 2 +(n+1)\ZZ}$ for all $i < j$. The tensor product of vector representations $$V(\ffbox{1}_{a_1}) \otimes V(\ffbox{1}_{a_2}) \otimes \cdots \otimes V(\ffbox{1}_{a_k})$$ is an irreducible and thin extremal loop weight $\U_q(sl_{n+1}^{tor})$-module of $\ell$-weight $$e^{k \varpi_1}Y_{1,a_1}\cdots Y_{1, a_k}Y_{0,qa_1}^{-1} \cdots Y_{0, qa_k}^{-1}.$$
\end{thm}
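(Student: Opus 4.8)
The strategy is to realize $V(\ffbox{1}_{a_1})\otimes\cdots\otimes V(\ffbox{1}_{a_k})$ through an explicit basis, to read off the action of $\Delta_D$ from Proposition~\ref{repvect}, and then to verify in turn the four conditions entering Definition~\ref{defelm} (integrability, local finiteness of the $\U_q^{v,j}$, extremality of a distinguished vector, cyclicity), the two genericity hypotheses being exactly what keeps every structure constant non-zero. First, each $V(\ffbox{1}_{a_r})$ is thin, and by Proposition~\ref{proptpdef}(i) the condition $a_r/a_s\notin q^{(n+1)\ZZ}$ makes $\Delta_D(x_i^\pm(z))$ well-defined on each $V(\ffbox{1}_{a_r})\otimes V(\ffbox{1}_{a_s})$ with $r<s$, so Lemma~\ref{lemsubm2} endows the $k$-fold product with a $\U_q(sl_{n+1}^{tor})$-module structure; on it $\phi_i^\pm(z)$ acts as $\phi_i^\pm(z)^{\otimes k}$, while $x_i^+(z)$ acts as $\sum_{p=1}^{k}\phi_i^-(z)^{\otimes (p-1)}\otimes x_i^+(z)\otimes 1^{\otimes(k-p)}$ and $x_i^-(z)$ as $\sum_{p=1}^{k}1^{\otimes(p-1)}\otimes x_i^-(z)\otimes\phi_i^+(z)^{\otimes(k-p)}$. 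A basis is given by the pure tensors $\ffbox{j_1}_{a_1}\otimes\cdots\otimes\ffbox{j_k}_{a_k}$, $(j_1,\dots,j_k)\in\ZZ^k$, whose $\ell$-weight monomial is the product of the monomials of the factors read off from~(\ref{actrv}). An analysis of these products, along the lines of the two-fold case treated in~\cite{feigin_representations_2013} and using $a_i/a_j\notin q^{(n+1)\ZZ}\cup q^{\pm 2+(n+1)\ZZ}$, shows that distinct tuples give distinct monomials; so the module is thin, and in particular $v:=\ffbox{1}_{a_1}\otimes\cdots\otimes\ffbox{1}_{a_k}$ has $\ell$-weight $e^{k\varpi_1}Y_{1,a_1}\cdots Y_{1,a_k}Y_{0,qa_1}^{-1}\cdots Y_{0,qa_k}^{-1}$.

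By Theorem~\ref{repvect2} each $V(\ffbox{1}_{a_r})$ is $\ell$-integrable and carries a locally finite-dimensional action of every $\U_q^{v,j}(sl_{n+1}^{tor})$, so Lemma~\ref{tpalgv} shows that the tensor product is integrable and that the $\U_q^{v,j}(sl_{n+1}^{tor})$ still act locally finite-dimensionally, which is condition~(iii) of Definition~\ref{defelm}. For condition~(ii), set $w_m=\ffbox{m}_{a_1}\otimes\cdots\otimes\ffbox{m}_{a_k}$ ($m\in\ZZ$, so $v=w_1$), which has $w_m(h_i)=k(\delta_{i,\bar m}-\delta_{i,\overline{m-1}})$. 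From~(\ref{actrv}), $x_i^+(z)\cdot\ffbox{m}_a=0$ unless $i=\overline{m-1}$ and $x_i^-(z)\cdot\ffbox{m}_a=0$ unless $i=\bar m$; hence in each of the three cases $i=\bar m$, $i=\overline{m-1}$, $i\notin\{\bar m,\overline{m-1}\}$ the horizontal generator $x_{i,0}^{\pm}$ that must annihilate $w_m$ already annihilates each tensor factor, hence annihilates $w_m$, so $w_m$ is $i$-extremal for all $i\in I$. Moreover $x_{\bar m,0}^{-}$ (resp. $x_{\overline{m-1},0}^{+}$) moves every box $\ffbox{m}$ to $\ffbox{m+1}$ (resp. $\ffbox{m-1}$) in a single slot and cannot act twice on the same slot, so $(x_{\bar m,0}^{-})^{(k)}\cdot w_m$ is a multiple of $w_{m+1}$ and $(x_{\overline{m-1},0}^{+})^{(k)}\cdot w_m$ a multiple of $w_{m-1}$; these multiples are non-zero because, $w_m$ being annihilated by $x_{\bar m,0}^{+}$ (resp. $x_{\overline{m-1},0}^{-}$) and the module being integrable, the cyclic $\U_q(sl_2)$-module generated by $w_m$ under $\langle x_{\bar m,0}^{\pm},k_{\bar m}\rangle$ (resp. $\langle x_{\overline{m-1},0}^{\pm},k_{\overline{m-1}}\rangle$) is the $(k+1)$-dimensional irreducible one. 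Thus $S_i(w_m)$ is, up to a non-zero scalar, one of $w_{m-1},w_m,w_{m+1}$, so the $S_i$-orbit of $v$ lies in $\{w_m\mid m\in\ZZ\}$; since every $w_m$ is $i$-extremal for all $i$, the vector $v$ is extremal for $\U_q^{h}(sl_{n+1}^{tor})$.

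Finally I would prove irreducibility, which also yields cyclicity and hence condition~(i). Let $M\neq 0$ be a submodule. The module being thin, distinct $\ell$-weight spaces are one-dimensional and are separated by suitable polynomials in the $\phi_{i,m}^{\pm}$ (Frenkel--Reshetikhin), so applying such operators to a non-zero vector of $M$ produces a basis vector $\ffbox{j_1}_{a_1}\otimes\cdots\otimes\ffbox{j_k}_{a_k}$ lying in $M$. Now in~(\ref{actrv}) each $x_{i,r}^{\pm}$ moves a single box up or down in one tensor slot with a coefficient equal to a product of values $\psi(\tfrac{a_i}{a_j}q^{\epsilon})^{\pm 1}$ for $\epsilon$ ranging over an explicit finite set of integers; since $\psi$ has its only zero at $q^{2}$ and its only pole at $1$, the hypotheses $a_i/a_j\notin q^{(n+1)\ZZ}\cup q^{\pm 2+(n+1)\ZZ}$ force all these coefficients to be non-zero, so any basis vector can be carried to $v$ by a sequence of such operators and $v$ can be carried to any basis vector. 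Hence $M$ contains $v$ and then all basis vectors, so $M$ is the whole module: the module is irreducible, equal to $\U_q(sl_{n+1}^{tor})\cdot v$, and the theorem follows.

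The main obstacle is exactly the recurring need to certify that structure constants are non-zero: this underlies both the thinness statement (distinct tuples $\mapsto$ distinct $\ell$-weights) and the irreducibility statement (every basis vector generates the module). Both reduce to checking that finitely many products of values $\psi(\tfrac{a_i}{a_j}q^{\epsilon})^{\pm 1}$ do not vanish, with the exponents $\epsilon$ prescribed by the combinatorics of~(\ref{actrv}); this is precisely what the two conditions $a_i/a_j\notin q^{(n+1)\ZZ}$ and $a_i/a_j\notin q^{\pm 2+(n+1)\ZZ}$ encode, and carrying out the bookkeeping cleanly (as done for $k=2$ in~\cite{feigin_representations_2013}) is the delicate point.
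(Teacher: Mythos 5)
Your proof follows the same overall route as the paper's: existence of the action via Lemma \ref{lemsubm2} and Proposition \ref{proptpdef}, thinness from the injectivity of the map sending a tuple $(j_1,\dots,j_k)$ to its $\ell$-weight monomial, integrability and local finiteness via Lemma \ref{tpalgv}, irreducibility by combining thinness with the non-vanishing of the structure constants $A_u$ in formula (\ref{formref}) (Lemma \ref{lemirrmod}), and finally extremality of $\ffbox{1}_{a_1}\otimes\cdots\otimes\ffbox{1}_{a_k}$. The one genuine divergence is in the extremality step: the paper (Lemma \ref{extvectgen}) computes $(x_{1,0}^-)^{(k)}\cdot v$ explicitly and evaluates the resulting scalar $\frac{1}{[k]_q}\sum_i\prod_{j\neq i}\psi(a_j/a_i)$ to be exactly $1$ via a symmetric-function identity from \cite{macdonald_symmetric_1995}, whereas you only show the image is a multiple of $\ffbox{2}_{a_1}\otimes\cdots\otimes\ffbox{2}_{a_k}$ and deduce that this multiple is non-zero abstractly from integrability together with $\U_q(sl_2)$-representation theory (a vector of weight $k$ killed by $x^+$ in an integrable module generates the $(k+1)$-dimensional irreducible). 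Your variant is more elementary and suffices here, since for extremality only the non-vanishing, not the exact value, of the scalar matters; the paper's exact evaluation is sharper but not needed for this statement. Both arguments ultimately rest on the point you correctly isolate: the two genericity hypotheses are precisely what keeps every factor $\psi(\tfrac{a_s}{a_u}q^{\epsilon})^{\pm 1}$ appearing in the coefficients finite and non-zero.
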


\noindent We prove the theorem in the rest of this section.

\subsubsection{Existence of the action}

\begin{lem}
The action of $\U_q(sl_{n+1}^{tor})$ on $$V = V(\ffbox{1}_{a_1}) \otimes V(\ffbox{1}_{a_2}) \otimes \cdots \otimes V(\ffbox{1}_{a_k})$$ is well-defined and the module hence obtained is thin.
\end{lem}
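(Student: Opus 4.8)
The plan is to reduce the well-definedness of the $\U_q(sl_{n+1}^{tor})$-action on the $k$-fold tensor product to the pairwise case, which is already settled by Proposition \ref{proptpdef}(i). First I would invoke Lemma \ref{lemsubm2}: since each $V(\ffbox{1}_{a_r})$ is a thin $\U_q(sl_{n+1}^{tor})$-module, it suffices to check that for every $i \in \ZZ$, every pair $1 \le r < s \le k$, and every vector $v \otimes w \in V(\ffbox{1}_{a_r}) \otimes V(\ffbox{1}_{a_s})$, the series $\Delta_D(x_i^\pm(z)) \cdot (v \otimes w)$ is a well-defined vector in $V(\ffbox{1}_{a_r}) \otimes V(\ffbox{1}_{a_s})$. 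But this is exactly the content of Proposition \ref{proptpdef}(i), and the hypothesis $\frac{a_r}{a_s} \notin q^{(n+1)\ZZ}$ is precisely the stated condition guaranteeing it. Hence $\Delta_D$ endows $V = V(\ffbox{1}_{a_1}) \otimes \cdots \otimes V(\ffbox{1}_{a_k})$ with a $\U_q(sl_{n+1}^{tor})$-module structure.

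For the thinness claim, I would argue that the Drinfeld-Cartan subalgebra $\U_q(\hat{\Hlie})$ acts on $V$ via $\Delta_D(\phi_i^\pm(z)) = \phi_i^\pm(z) \otimes \cdots \otimes \phi_i^\pm(z)$ (the iterated coproduct), so each $\phi_i^\pm(z)$ acts diagonally on the tensor-product basis $\ffbox{j_1}_{a_1} \otimes \cdots \otimes \ffbox{j_k}_{a_k}$ with eigenvalue the product of the individual eigenvalues read off from (\ref{actrv}). This shows $V$ is weighted with an explicit joint spectrum. To see the spectrum is simple, suppose two basis vectors $\ffbox{j_1}_{a_1} \otimes \cdots \otimes \ffbox{j_k}_{a_k}$ and $\ffbox{j'_1}_{a_1} \otimes \cdots \otimes \ffbox{j'_k}_{a_k}$ share the same $\ell$-weight. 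Comparing the $\U_q(\hat{\Hlie})$-eigenvalues: for each fixed index $r$, the factor $\ffbox{j_r}_{a_r}$ contributes a pole/zero of $\phi_i^\pm(z)$ at $z = (a_r q^{j_r})^{-1}$ (for the appropriate residues $i = \overline{j_r}$ and $i = \overline{j_r - 1}$). The genericity hypotheses $\frac{a_r}{a_s} \notin q^{(n+1)\ZZ}$ and $\frac{a_r}{a_s} \notin q^{\pm 2 + (n+1)\ZZ}$ ensure that the spectral data coming from distinct factors cannot coincide or cancel, so matching the total $\ell$-weight forces $j_r = j'_r$ for every $r$. Therefore the joint spectrum is simple and $V$ is thin.

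The main obstacle I anticipate is the bookkeeping in the thinness argument: one must carefully track, for a monomial $m = \prod_r m_r$ with $m_r$ the monomial attached to $\ffbox{j_r}_{a_r}$, how the variables $Y_{i, a_r q^{j_r}}$ and $Y_{i, a_r q^{j_r + 1}}^{-1}$ combine, and verify that under the genericity conditions no cancellation between different tensor factors is possible — so that $m$ determines the tuple $(j_1, \ldots, j_k)$ uniquely. This is essentially a disjointness statement about the sets $\{a_r q^{\ZZ}\}$ modulo the shifts built into $\psi$, and it is routine once the conditions $\frac{a_r}{a_s} \notin q^{(n+1)\ZZ}$, $\frac{a_r}{a_s} \notin q^{\pm 2 + (n+1)\ZZ}$ are unpacked; the irreducibility of $V$ and the extremal-loop-weight property asserted in Theorem \ref{thmexgen} are then handled in the subsequent lemmas of the section (using Lemma \ref{tpalgv} for the local finiteness of $\U_q^{v,j}(sl_{n+1}^{tor})$ and integrability).
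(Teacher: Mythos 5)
Your proposal is correct and follows essentially the same route as the paper: the well-definedness is obtained by combining Lemma \ref{lemsubm2} with Proposition \ref{proptpdef}(i), and thinness comes from the fact that the $\ell$-weight $\prod_r Y_{\overline{j_r},a_rq^{j_r-1}}Y_{\overline{j_r-1},a_rq^{j_r}}^{-1}$ of a basis vector determines the tuple $(j_1,\dots,j_k)$. Your extra care in checking that the genericity conditions $\frac{a_r}{a_s}\notin q^{(n+1)\ZZ}$, $q^{\pm 2+(n+1)\ZZ}$ rule out coincidences and cancellations of variables between distinct tensor factors is exactly the (unstated) content behind the paper's remark that the thin property is straightforward.
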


\begin{proof}
The existence of the action follows by Lemma \ref{lemsubm2} and Proposition \ref{proptpdef}. The thin property is straightforward, the $\ell$-weight $\prod_{i} Y_{\overline{j_i},a_iq^{j_i-1}}Y_{\overline{j_i-1},a_iq^{j_i}}^{-1}$ of a vector $\ffbox{j_1}_{a_1} \otimes \cdots \otimes \ffbox{j_k}_{a_k}$ being completely determined by the sequence $(j_1, \cdots , j_k)$.
\end{proof}

\noindent By Lemma \ref{tpalgv}, $V$ is integrable and the action of vertical quantum affine subalgebras on it is locally finite-dimensional.

\subsubsection{Proof of $V$ irreducible}

\begin{lem}\label{lemirrmod}
The $\U_q(sl_{n+1}^{tor})$-module $V$ is irreducible.
\end{lem}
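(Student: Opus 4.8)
The plan is to prove irreducibility by exhibiting an explicit action on a distinguished basis and then showing any nonzero submodule must contain the $\ell$-highest weight vector. Since each vector representation $V(\ffbox{1}_{a_i})$ has the basis $\{\ffbox{j}_{a_i} : j \in \ZZ\}$ with the action given in Proposition \ref{repvect}, the tensor product $V$ has the basis of \emph{tableaux words} $\ffbox{j_1}_{a_1} \otimes \cdots \otimes \ffbox{j_k}_{a_k}$, and by the preceding lemma $V$ is thin, with the $\ell$-weight of such a word completely determined by the sequence $(j_1, \dots, j_k)$. The Drinfeld coproduct $\Delta_D$ makes $x_i^{\pm}(z)$ act on a word by summing over the $k$ positions, with each summand moving one entry by $\pm 1$ (changing $j_r$ to $j_r \mp 1$), weighted by coefficients coming from $\phi_i^{\mp}(z)$ acting on the neighboring factors; under the genericity hypothesis $\frac{a_i}{a_j} \notin q^{(n+1)\ZZ} \cup q^{\pm 2 + (n+1)\ZZ}$ these coefficients are all nonzero $\psi$-values (never $0$), so no transition is ever ``killed''.

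First I would fix any nonzero submodule $M \subseteq V$ and, using that $V$ is thin (so $\ell$-weight spaces are one-dimensional), observe that $M$ is spanned by a subset of the basis tableaux words. Pick a word $w = \ffbox{j_1}_{a_1} \otimes \cdots \otimes \ffbox{j_k}_{a_k} \in M$. The strategy is to show that by applying suitable Drinfeld generators one can reach the ``maximal'' word $w_{\max} = \ffbox{1}_{a_1} \otimes \cdots \otimes \ffbox{1}_{a_k}$, which carries the $\ell$-weight $e^{k\varpi_1} Y_{1,a_1} \cdots Y_{1,a_k} Y_{0,qa_1}^{-1}\cdots Y_{0,qa_k}^{-1}$. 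Concretely, if some entry $j_r > 1$, I apply an appropriate coefficient-extraction of $x_i^+(z)$ (with $i = \overline{j_r - 1}$ and $z$-coefficient picking out $aq^{j_r-1}z$) to decrease $j_r$ by one; the point is that the corresponding matrix coefficient in the tensor product is a product of nonzero $\psi$-values by genericity, so the resulting word is a \emph{nonzero} element of $M$. Similarly $x_i^-(z)$ is used to increase any entry $j_r < 1$. Iterating (choosing at each step an extremal entry, e.g. the leftmost maximal one, to ensure termination), I bring $w$ to $w_{\max}$, hence $w_{\max} \in M$. Then applying the lowering operators $x_i^-(z)$ repeatedly, again with nonzero coefficients, generates every basis word from $w_{\max}$, so $M = V$.

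The main obstacle is the bookkeeping on the transition coefficients: when $x_i^+(z)$ acts on a word via $\Delta_D$, the summand that lowers the entry in position $r$ comes multiplied by $\prod_{s<r}\phi_i^-(\cdot)$ evaluated on $\ffbox{j_s}_{a_s}$ (equivalently a product of $\psi(a_s q^{j_s}z)^{\pm 1}$-type factors at the relevant specialization of $z$), and I must check that \emph{none} of these factors vanishes and that the coefficient-extraction in $z$ genuinely isolates the single desired term. This is exactly where the hypotheses $\frac{a_i}{a_j}\notin q^{(n+1)\ZZ}$ (ensuring the relevant $\delta$/$\psi$ specializations for different factors never coincide, so $z$-coefficient extraction picks out one term) and $\frac{a_i}{a_j}\notin q^{\pm 2+(n+1)\ZZ}$ (ensuring no $\psi$-coefficient hits its zero, cf. Proposition \ref{proptpdef}) are used. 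I would organize this via the explicit formulas \eqref{actrv}: for the word $w$, $x_{\overline{j_r-1}}^+(z)\cdot w$ evaluated at the coefficient of $z^{-(j_r-1)}$ relative to the $r$-th factor yields $w$ with $j_r \mapsto j_r-1$ times a scalar which, by the genericity, is a nonzero product of factors $\psi(a_s q^{j_s} a_r^{-1} q^{-(j_r-1)})^{\pm 1}$ over $s \ne r$ — none of which is $0$ or $\infty$. Granting this, the reachability argument above closes, and irreducibility follows.
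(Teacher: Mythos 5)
Your proposal is correct and follows essentially the same route as the paper: the paper likewise writes out the action of $x_i^{\pm}(z)$ on a tableau word via formula (\ref{formref}), observes that the genericity hypotheses force all transition coefficients $A_u$ to be nonzero, and uses thinness to isolate the single summand raising or lowering one entry, whence any basis word generates all others. The only cosmetic difference is that you route the connectivity argument through the distinguished word $\ffbox{1}_{a_1}\otimes\cdots\otimes\ffbox{1}_{a_k}$, while the paper concludes directly once both single-step moves are available.
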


\begin{proof}
Let us compute the vector $x_i^-(z) \cdot \left( \ffbox{j_1}_{a_1} \otimes \cdots \otimes \ffbox{j_k}_{a_k} \right)$: it is equal to
\begin{eqnarray}\label{formref}
\sum_{\begin{tiny} \begin{array}{c} 1 \leq u \leq k \\ \overline{j_u} = i \end{array} \end{tiny}} A_u \ \delta(a_u q^{j_u}z) \cdot \ffbox{j_1}_{a_1} \otimes \cdots \otimes \ffbox{j_u+1}_{a_u} \otimes \cdots \otimes \ffbox{j_k}_{a_k}
\end{eqnarray}
with $A_u = \displaystyle \prod_{\begin{tiny} \begin{array}{c} u < s \leq k\\ \overline{j_s} = i \end{array} \end{tiny}} \psi \left( \frac{a_s}{a_u} q^{j_s-j_u} \right) \prod_{\begin{tiny} \begin{array}{c} u < s \leq k\\ \overline{j_s} = i+1 \end{array} \end{tiny}} \psi \left(\frac{a_s}{a_u} q^{j_s-j_u+1} \right)^{-1}$.

\noindent By hypothesis on the $a_i$, we have $A_u \neq 0$ for all $u$. As $V$ is thin, for each $1 \leq u \leq k$ there exists $x \in \U_q(sl_{n+1}^{tor})$ such that
$$x \cdot \left( \ffbox{j_1}_{a_1} \otimes \cdots \otimes \ffbox{j_u}_{a_u} \otimes \cdots \otimes \ffbox{j_k}_{a_k} \right) = \ffbox{j_1}_{a_1} \otimes \cdots \otimes \ffbox{j_u + 1}_{a_u} \otimes \cdots \otimes \ffbox{j_k}_{a_k}.$$
We show in the same way that for all $1 \leq v \leq k$, there exists $y \in \U_q(sl_{n+1}^{tor})$ such that
$$y \cdot \left(\ffbox{j_1}_{a_1} \otimes \cdots \otimes \ffbox{j_v}_{a_v} \otimes \cdots \otimes \ffbox{j_k}_{a_k}\right) = \ffbox{j_1}_{a_1} \otimes \cdots \otimes \ffbox{j_v - 1}_{a_v} \otimes \cdots \otimes \ffbox{j_k}_{a_k}.$$
It follows directly that $V$ is irreducible.
\end{proof}

\subsubsection{Proof of $V$ loop extremal}

\begin{lem}\label{extvectgen}
The vector $$\ffbox{1}_{a_1} \otimes \cdots \otimes \ffbox{1}_{a_k} \in V$$ is extremal for the horizontal quantum affine subalgebra.
\end{lem}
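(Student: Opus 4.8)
The plan is to show that the vector $u_0 = \ffbox{1}_{a_1} \otimes \cdots \otimes \ffbox{1}_{a_k}$, which lies in the weight space of weight $k\varpi_1$, is extremal for $\U_q^h(sl_{n+1}^{tor})$ by exhibiting explicitly all the vectors obtained by applying the operators $S_i$ and checking the $i$-extremality condition at each stage. The key observation is that, in the vector representation $V(\ffbox{1}_a)$, the restriction to the horizontal quantum affine subalgebra realizes (essentially) the level-zero extremal fundamental module $V(\varpi_1)$ for $\U_q(\hat{sl}_{n+1})$, whose crystal $\mathcal{B}(\varpi_1)$ is well understood: the weights $\varpi_{\overline j}$ (for $\overline j \in \ZZ/(n+1)\ZZ$, with appropriate $\delta$-shifts) are all Weyl-group conjugate, each with one-dimensional weight space. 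So each single box $\ffbox{j}_a$ is, up to normalization, an extremal vector for $\U_q^h$, and the operators $S_i$ simply permute the boxes: $S_i \cdot \ffbox{j}_a = \ffbox{j+1}_a$ if $i = \overline j$, $S_i \cdot \ffbox{j}_a = \ffbox{j-1}_a$ if $i = \overline{j-1}$, and $S_i$ fixes $\ffbox{j}_a$ otherwise (all up to nonzero scalars, read off from (\ref{actrv})).

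\textbf{Main steps.} First I would record the single-box computation: using the formulas (\ref{actrv}) for $x_{i,0}^\pm = $ (constant term of $x_i^\pm(z)$), each box $\ffbox{j}_a$ is $i$-extremal for every $i \in I$, and $S_i$ acts as the elementary move described above. Second, I would pass to the tensor product. The subtlety is that the coproduct on the horizontal subalgebra is \emph{not} the Drinfeld coproduct $\Delta_D$ but the standard coproduct $\Delta$ from Section 2, which for $x_{i,0}^\pm$ on $V(\ffbox{1}_{a_1}) \otimes \cdots \otimes V(\ffbox{1}_{a_k})$ is the usual iterated coproduct — so the horizontal action is just the tensor-product $\U_q(\hat{sl}_{n+1})$-module structure. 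Hence $\mathrm{Res}(V) \cong V(\varpi_1)^{\otimes k}$ (up to spectral shifts), and $u_0 = v_{\varpi_1} \otimes \cdots \otimes v_{\varpi_1}$. Third, I would invoke the general fact (from \cite{kashiwara_crystal_1994}) that in any integrable module a tensor product of extremal vectors of dominant weights is extremal: more precisely, if each $V(\ffbox{1}_{a_i})$ restricts to a module with crystal basis and $u_0$ corresponds to the tensor product of the highest-weight (hence extremal) crystal elements, then $u_0$ generates an integrable submodule isomorphic to the cyclic submodule, and the string of $S_i$'s applied to $u_0$ stays within $\{\text{tensor products of single extremal boxes}\}$, on each of which $i$-extremality is checked componentwise using the $\ffbox{j}_a$ computation. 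Concretely: $S_{i_1} \circ \cdots \circ S_{i_l}(u_0)$ is always of the form $\ffbox{j_1}_{a_1} \otimes \cdots \otimes \ffbox{j_k}_{a_k}$ (up to a nonzero scalar), and for such a vector of weight $\sum_p \varpi_{\overline{j_p}}$ one checks directly from (\ref{actrv}) and the coproduct $\Delta$ that $x_{i,0}^+$ (resp. $x_{i,0}^-$) annihilates it when the $h_i$-weight is $\geq 0$ (resp. $\leq 0$); the point being that the $h_i$-weight is $\#\{p : \overline{j_p} = i\} - \#\{p : \overline{j_p} = i+1\}$ (read off from the $\phi$-action), and the extremality condition forces the corresponding raising/lowering to act by permuting the relevant boxes in a single direction, which the normalization scalars never kill.

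\textbf{Expected obstacle.} The main technical point is verifying that no cancellation occurs: when $x_{i,0}^\pm$ acts via the iterated coproduct $\Delta$ on a tensor product of boxes, the result is a sum of several terms (one for each box of the appropriate residue), and one must check that when we are in the extremal situation this sum is either genuinely zero (when it should be, by the weight condition) or reduces — after $S_i$ normalization — to a single box-move with nonzero coefficient. Here the hypothesis $\frac{a_i}{a_j} \notin q^{(n+1)\ZZ} \cup q^{\pm 2 + (n+1)\ZZ}$ is essential: it guarantees (as in Lemma \ref{lemirrmod}, via the nonvanishing of the $\psi$-factors $A_u$) that the relevant structure constants are nonzero, so that $S_i$ applied to a box tensor yields another box tensor and the induction on $l$ closes. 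I would organize this as an induction on $l$, the inductive hypothesis being that $S_{i_1} \circ \cdots \circ S_{i_l}(u_0)$ equals a nonzero scalar times a pure tensor of boxes whose weight lies in the $W$-orbit of $k\varpi_1$; the base case $l=0$ is $u_0$ itself, and the inductive step is the componentwise $i$-extremality check together with the box-move description of $S_i$. Alternatively, and perhaps more cleanly, one can argue abstractly: $\mathrm{Res}(V)$ has a crystal basis (being a tensor product of modules with crystal bases, each $\cong \mathcal{B}(\varpi_1)$ up to shift), $u_0$ maps to a highest weight element of $\mathcal{B}(\varpi_1)^{\otimes k}$, which is extremal in the crystal, and extremality of the crystal element lifts to extremality of the vector since all relevant weight spaces along the orbit are one-dimensional (the thin property). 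I expect the explicit box-bookkeeping route to be the one that generalizes to the later theorems, so I would carry it out in that form.
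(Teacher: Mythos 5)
Your overall skeleton is the right one — the $S_i$-orbit of $\ffbox{1}_{a_1}\otimes\cdots\otimes\ffbox{1}_{a_k}$ consists only of the ``diagonal'' vectors $\ffbox{j}_{a_1}\otimes\cdots\otimes\ffbox{j}_{a_k}$ (all residues equal, since the weight must stay in the $W$-orbit of $k\varpi_1$), each such vector is $i$-extremal by inspection of which residues $x_i^{\pm}(z)$ can act on, and $S_{\overline{j}}$, $S_{\overline{j-1}}$ move all $k$ boxes simultaneously. This is exactly how the paper argues. But there are two genuine problems in your justification. First, the horizontal subalgebra does \emph{not} act through the standard coproduct $\Delta$: the module structure on $V$ is defined by $\Delta_D$, so $x_{i,0}^{\pm}$ acts by the $z^0$-coefficient of $\Delta_D(x_i^{\pm}(z))$, whose cross terms carry the series $\phi_i^{\mp}(z)$ rather than $k_i^{\pm 1}$. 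This is precisely why the coefficients $A_u$ of (\ref{formref}) are products of $\psi(a_s/a_u\, q^{\cdots})$ and not powers of $q$; the identification $\mathrm{Res}(V)\cong V(\varpi_1)^{\otimes k}$ with the usual tensor-product structure is unjustified (and your later appeal to the $\psi$-factors contradicts it). Relatedly, there is no general theorem that a tensor product of level-zero extremal vectors is extremal — $\varpi_1$ is not dominant — so that route cannot replace the computation.

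Second, and more seriously, your ``no cancellation'' step is a real gap. Applying (\ref{formref}) $k$ times, the coefficient of $\ffbox{j+1}_{a_1}\otimes\cdots\otimes\ffbox{j+1}_{a_k}$ in $(x_{\overline{j},0}^{-})^{(k)}\cdot\bigl(\ffbox{j}_{a_1}\otimes\cdots\otimes\ffbox{j}_{a_k}\bigr)$ is
$\frac{1}{[k]_q}\sum_{1\leq i\leq k}\prod_{j\neq i}\psi(a_j/a_i)$,
a sum of $k$ individually nonzero terms which could a priori cancel; the genericity hypothesis on the $a_i$ only gives nonvanishing of each $A_u$, not of the sum. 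The paper closes this with the Lagrange-interpolation identity
$\sum_{1\leq i\leq k}\prod_{j\neq i}\frac{QX_i-Q^{-1}X_j}{X_i-X_j}=[k]_Q$
(Macdonald), which shows the coefficient is exactly $1$, independent of the $a_i$. Without this (or some substitute), your induction does not close: you cannot assert that $S_i$ of a diagonal vector is a nonzero multiple of the next diagonal vector. Also note that your intermediate claim that any pure tensor $\ffbox{j_1}_{a_1}\otimes\cdots\otimes\ffbox{j_k}_{a_k}$ of weight $\sum_p\varpi_{\overline{j_p}}$ is $i$-extremal is false for non-diagonal tensors (e.g.\ $h_i$-weight $0$ with both $x_{i,0}^{\pm}$ acting nontrivially); it is only the restriction to the diagonal orbit that saves the argument, and this should be made explicit.
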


\begin{proof}
Applying $k$ times formula (\ref{formref}), we get
\begin{eqnarray*}
(x_{1,0}^-)^{(k)} \cdot \left( \ffbox{1}_{a_1} \otimes \cdots \otimes \ffbox{1}_{a_k} \right) = \frac{1}{[k]_q} \left( \sum_{1 \leq i \leq k} \prod_{j \neq i} \psi(a_j / a_i) \right) \cdot \ffbox{2}_{a_1} \otimes \cdots \otimes \ffbox{2}_{a_k}
\end{eqnarray*}
By the equality (see Example 2.(a) on p.319 of \cite{macdonald_symmetric_1995}) into the field $\CC(X_1, \cdots, X_k, Q)$
\begin{equation*}
\sum_{1 \leq i \leq k} \prod_{j \neq i} \dfrac{Q X_i - Q^{-1}X_j}{X_i-X_j} = \dfrac{Q^{k} - Q^{-k}}{Q-Q^{-1}}= [k]_{Q},
\end{equation*}
we obtain
$$(x_{1,0}^-)^{(k)} \cdot \left( \ffbox{1}_{a_1} \otimes \cdots \otimes \ffbox{1}_{a_k} \right) = \ffbox{2}_{a_1} \otimes \cdots \otimes \ffbox{2}_{a_k}.$$
One can show in the same way that for all $ j \in \ZZ$,
\begin{eqnarray*}
\begin{array}{rcl}
(x_{\overline{j-1},0}^{+})^{(k)} \cdot \left( \ffbox{j}_{a_1} \otimes \cdots \otimes \ffbox{j}_{a_k} \right) &=& \ffbox{j-1}_{a_1} \otimes \cdots \otimes \ffbox{j-1}_{a_k},\\
(x_{\overline{j},0}^{-})^{(k)} \cdot \left( \ffbox{j}_{a_1} \otimes \cdots \otimes \ffbox{j}_{a_k} \right) &=& \ffbox{j+1}_{a_1} \otimes \cdots \otimes \ffbox{j+1}_{a_k},\\
x_{i,0}^{\pm} \cdot \left( \ffbox{j}_{a_1} \otimes \cdots \otimes \ffbox{j}_{a_k} \right) &=& 0 \text{ otherwise.}
\end{array}
\end{eqnarray*}
The extremality of $\ffbox{1}_{a_1} \otimes \cdots \otimes \ffbox{1}_{a_k}$ follows.
\end{proof}

\subsection{The non-generic case}

Consider the tensor product of vector representations
$$V(\ffbox{1}_a) \otimes V(\ffbox{1}_{aq^{-2}}) \otimes \cdots  \otimes V(\ffbox{1}_{aq^{-2(k-1)}}).$$
Denote by $\tilde{V} = \tilde{V} \left(\begin{array}{c} \ffbox{1}_{\ } \\ \vdots_{\ } \\ \ffbox{k}_{a} \end{array} \right)$ its subvector space spanned by elements $$\ffbox{i_1}_a \otimes \ffbox{i_2}_{aq^{-2}} \otimes \cdots \otimes \ffbox{i_k}_{aq^{-2(k-1)}} \text{ with } i_1 < i_2 < \cdots < i_k$$
and $V(\ffbox{1} \cdots \ffbox{1}_a)$ its subvector space spanned by elements
$$\ffbox{j_1}_a \otimes \ffbox{j_2}_{aq^{-2}} \otimes \cdots \otimes \ffbox{j_k}_{aq^{-2(k-1)}} \text{ with } j_1 \geq j_2 \geq \cdots \geq j_k.$$

\subsubsection{Existence of the action}

\begin{prop}\label{propactex}
\begin{enumerate}
\item[(i)] The coproduct $\Delta_D$ endows $V(\ffbox{1}_a) \otimes \cdots \otimes V(\ffbox{1}_{aq^{-2(k-1)}})$ with a structure of $\U_q(sl_{n+1}^{tor})$-module if and only if 
\begin{align}
(n \text{ is even and } k \leq n+1) \text{ or } \left(n \text{ is odd and } k \leq \frac{n+1}{2} \right).
\tag{E}
\end{align}
\item[(ii)] The coproduct $\Delta_D$ endows $\tilde{V} \left(\begin{array}{c} \ffbox{1}_{\ } \\ \vdots_{\ } \\ \ffbox{k}_{a} \end{array} \right)$ with a structure of $\U_q(sl_{n+1}^{tor})$-module if and only if $k$ satisfies (E)\footnote{These representations were considered in \cite{feigin_representations_2013} when $d \neq 1$.}.
\item[(iii)] The coproduct $\Delta_D$ endows $V(\ffbox{1} \cdots \ffbox{1}_a)$ with a structure of $\U_q(sl_{n+1}^{tor})$-module for all $k \in \NN^{\ast}$.
\end{enumerate}
\end{prop}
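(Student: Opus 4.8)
The plan is to analyze when the infinite sums appearing in $\Delta_D(x_i^\pm(z))$ converge on the relevant spaces by exactly the method used in the proof of Theorem \ref{tpexist}: reduce to pairwise tensor factors via Lemma \ref{lemsubm2}, and for each pair control the poles of the rational fractions $\gamma_{k,l,\lambda_-,\lambda_+}(u)$ coming from Proposition \ref{actxphi}. For a pair $V(\ffbox{1}_{aq^{-2(r-1)}}) \otimes V(\ffbox{1}_{aq^{-2(s-1)}})$ with $r<s$, the ratio of spectral parameters is $q^{2(s-r)}$, and by Proposition \ref{proptpdef}(i) the action on this pair is well-defined precisely when $q^{2(s-r)} \notin q^{(n+1)\ZZ}$, i.e. when $2(s-r) \not\equiv 0 \pmod{n+1}$. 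Since $1 \leq s-r \leq k-1$, this holds for all admissible pairs exactly when $2j \not\equiv 0 \pmod{n+1}$ for all $1 \leq j \leq k-1$. When $n$ is even, $n+1$ is odd, so $2j \equiv 0 \pmod{n+1}$ forces $j \equiv 0 \pmod{n+1}$, which is excluded iff $k-1 \leq n$, i.e. $k \leq n+1$. When $n$ is odd, $n+1$ is even and $2j \equiv 0 \pmod{n+1}$ first occurs at $j = \frac{n+1}{2}$, which is excluded iff $k-1 < \frac{n+1}{2}$, i.e. $k \leq \frac{n+1}{2}$. This gives condition (E), and together with Lemma \ref{lemsubm2} proves (i).

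For part (ii), I would observe that $\tilde{V}\left(\begin{array}{c}\ffbox{1}_{\ }\\ \vdots_{\ }\\ \ffbox{k}_a\end{array}\right)$ is the image of an iterated use of Proposition \ref{proptpdef}(iii): indeed $\frac{aq^{-2(r-1)}}{aq^{-2(s-1)}} = q^{2(s-r)} \in q^{2+(n+1)\ZZ}$ fails in general, so one cannot directly quote it for each pair, but under (E) the inequality $2(s-r) \not\equiv \pm 2 \pmod{n+1}$ may not hold either; instead I would argue directly that the subspace spanned by strictly increasing tableaux is stable under $\mathrm{pr} \circ \Delta_D(x_i^\pm(z))$ whenever the ambient action exists, using the explicit formulas (\ref{actrv}) to see that raising an entry that violates strict increase produces a vector supported outside $\tilde V$, whose projection is killed — this is the setup of Lemma \ref{lemquo}. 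Thus $\tilde V$ inherits a module structure whenever the full tensor product does, i.e. under (E); conversely, if (E) fails then already a pairwise summation diverges on a strictly-increasing vector (the offending coefficient $\gamma_{k,l,\lambda_-,\lambda_+}(u)$ has a pole at $u=1$ because the product of eigenvalues hits the forbidden value), so the action cannot be defined on $\tilde V$ either.

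For part (iii), the key point is that the subspace $V(\ffbox{1}\cdots\ffbox{1}_a)$ spanned by \emph{weakly decreasing} tableaux $j_1 \geq j_2 \geq \cdots \geq j_k$ is closed under $\Delta_D(x_i^\pm(z))$ with only finite sums, regardless of $k$ and $n$. Here the heuristic is that the dangerous infinite sums in Theorem \ref{tpexist} came from the tail $\sum_{r\geq 1}\phi_{i,-r}^-(v)\otimes x_{i,m+r}^+(w)$; but on weakly decreasing tableaux, $x_i^+(z)$ acting on the right factor $\ffbox{j_s}$ only lowers it, and for the result to remain weakly decreasing one needs $j_s-1 \geq j_{s+1}$ — the constraint bounds how much the left factors' $\phi^-$-tails can contribute, and in fact by the $\delta$-function formulas (\ref{actrv}) each $x_i^\pm(z)$ acts on each tensor factor by a single $\delta$-function, so $\Delta_D(x_i^\pm(z))$ applied to any basis vector of $V(\ffbox{1}\cdots\ffbox{1}_a)$ is a finite linear combination of $\delta$-functions times basis vectors, hence well-defined by Lemma \ref{lemsubm}. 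I would make this precise by induction on $k$, peeling off the last factor and checking the compatibility of $\Delta_D(\phi_i^\pm(z))\otimes$-action on weakly decreasing tableaux.

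The main obstacle I anticipate is part (ii): the cleanest route would be to invoke Proposition \ref{proptpdef} factor-by-factor, but the parameters $q^{-2(r-1)}$ do not pairwise satisfy the hypothesis of case (iii) of that proposition in general, so one genuinely has to redo the convergence analysis for the specific subspace $\tilde V$ and verify that the projection $\mathrm{pr}_{\tilde V}$ kills exactly the problematic terms — i.e. check the hypotheses of Lemma \ref{lemquo} by hand. Establishing the ``only if'' direction (that (E) is necessary) also requires exhibiting a concrete vector on which a summation diverges when (E) fails; the natural candidate is $\ffbox{1}_a \otimes \ffbox{2}_{aq^{-2}} \otimes \cdots$, and one traces through which pair $(r,s)$ with $2(s-r) \equiv 0 \pmod{n+1}$ produces the pole, appealing to Proposition \ref{proptpdef}(i) for that pair together with the fact that a subquotient of a well-defined module is well-defined while an ill-defined pairwise action propagates.
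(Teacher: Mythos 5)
Your treatment of the sufficiency in (i) is correct and is exactly the paper's argument: Lemma \ref{lemsubm2} reduces to pairs, Proposition \ref{proptpdef}(i) applied to the ratio $q^{2(s-r)}$ gives the condition $2j\not\equiv 0 \pmod{n+1}$ for $1\le j\le k-1$, and the parity discussion yields (E). The rest of the proposal has genuine gaps. For the necessity, your candidate vector $\ffbox{1}_a\otimes\ffbox{2}_{aq^{-2}}\otimes\cdots$ cannot work: on it the arguments of the $\psi$-factors in (\ref{formref}) are $q^{j_s-j_u-2(s-u)}=q^{-(s-u)}\neq 1$, so no coefficient diverges --- this is in fact the extremal vector on which the action is always fine. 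A divergence requires $j_s-j_u=2(s-u)$ (or $2(s-u)+1$) together with the residue conditions, and one must also arrange that none of the other $k-2$ factors contributes a zero of $\psi$ cancelling the pole; this is why the paper exhibits carefully engineered witness vectors rather than arguing that an ill-defined pairwise action automatically propagates.

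For (ii) you invoke the wrong mechanism: the hypothesis of Lemma \ref{lemquo} that $\mathrm{pr}_{\tilde V}\circ\Delta_D(x_i^\pm(z))$ annihilates the complement of $\tilde V$ is false (apply $x_{\overline{1}}^-(z)$ to $\ffbox{1}\otimes\ffbox{1}\otimes\ffbox{3}$: the component on $\ffbox{1}\otimes\ffbox{2}\otimes\ffbox{3}\in\tilde V$ has coefficient an empty product, hence $1$). What actually happens is that $\tilde V$ is a genuine submodule: under (E) the coefficient $A_u$ in (\ref{formref}) vanishes precisely when raising position $u$ would violate strict increase, so Lemma \ref{lemsubm} suffices. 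For (iii) the roles are reversed and your argument breaks down twice. First, the weakly decreasing span is not closed under $\Delta_D(x_i^\pm(z))$ (raise $j_u$ when $j_{u-1}=j_u$: the coefficient $A_u$ only involves positions $s>u$ and is nonzero), so Lemma \ref{lemsubm} does not apply; one genuinely needs Lemma \ref{lemquo}, whose complement-annihilation hypothesis must then be verified (the paper does: the only dangerous transitions have $A_u=0$ by (\ref{formau})). Second, and more seriously, ``a finite linear combination of $\delta$-functions'' is never the issue: for any vector of any such tensor product the sum over positions has at most $k$ terms, so your argument would prove well-definedness for all $k$ on the full tensor product, contradicting (i). The actual obstruction is that the scalar coefficients $A_u$ involve $\psi$ evaluated at points which may hit the pole at $1$; on weakly decreasing tableaux the exponents $2(u-s)+j_s-j_u\le 2(u-s)<0$ keep every evaluation away from the pole, and this inequality --- absent from your proposal --- is the entire content of part (iii).
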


\begin{proof}
The easy following lemma shows that (E) is needed in (i) and (ii).

\begin{lem}
\begin{itemize}
\item[(i)] Assume that $k > n+1$. The action of $x_{1,0}^{-}$ is not defined on the vector 
\begin{center}
\begin{tiny}
$\ffbox{1}_a \otimes $ \nolinebreak \renewcommand{\ffbox}[1]{\setbox9=\hbox{$\scriptstyle\overline{1}$}
\framebox[40pt][c]{\rule{0mm}{\ht9}${\scriptstyle #1}$}
} $ \ffbox{2+n+1}_{aq^{-2}} \otimes \cdots \otimes \ffbox{1+2(n+1)}_{aq^{-2(n+1)}} \otimes \ffbox{(n+1)^{n+3}}_{aq^{-2(n+2)}} \otimes \cdots \otimes \ffbox{(n+1)^{k}}_{aq^{-2(k-1)}}.$
\end{tiny}
\end{center}
\renewcommand{\ffbox}[1]{
\setbox9=\hbox{$\scriptstyle\overline{1}$}
\framebox[20pt][c]{\rule{0mm}{\ht9}${\scriptstyle #1}$}
}
\item[(ii)] Assume that  $n$ is odd and $k > \frac{n+1}{2} = l$. The action of $x_{1,0}^{-}$ is not defined on 

\begin{center}
\begin{tiny}
$\ffbox{1}_a \otimes \ffbox{3}_{aq^{-2}} \otimes \cdots \otimes \ffbox{1+l}_{aq^{-2(l-1)}} \otimes \ffbox{1+2l}_{aq^{-2l}}$ \nolinebreak \renewcommand{\ffbox}[1]{
\setbox9=\hbox{$\scriptstyle\overline{1}$}
\framebox[40pt][c]{\rule{0mm}{\ht9}${\scriptstyle #1}$}
} $ \otimes \ffbox{(n+1)^{l+2}}_{aq^{-2(l+1)}} \otimes \cdots \otimes \ffbox{(n+1)^{k}}_{aq^{-2(k-1)}}.$
\end{tiny}
\end{center}

\renewcommand{\ffbox}[1]{
\setbox9=\hbox{$\scriptstyle\overline{1}$}
\framebox[20pt][c]{\rule{0mm}{\ht9}${\scriptstyle #1}$}
}
\end{itemize}
\end{lem}

Now assume that $k$ satisfies (E) and let us show $(i)$ and $(ii)$. By Lemma \ref{lemsubm2} and Proposition \ref{proptpdef}, the action is well-defined on $V = V(\ffbox{1}_a) \otimes \cdots \otimes V(\ffbox{1}_{aq^{-2(k-1)}})$. Set $v = \ffbox{i_1} \otimes \ffbox{i_2} \otimes \cdots \ffbox{i_k}$ with $i_1 < i_2 < \cdots < i_k$. Let us show that $x_i^-(z) \cdot v \in \tilde{V}$ for any $i \in I$. From (\ref{formref}), this holds except perhaps if there exists $1 \leq u \leq k-1$ such that $\overline{i_u} = i$ and $\overline{i_{u+1}} = i+1$. But in this case $A_u = 0$. The proof is the same for the operators $x_i^+(z)$.

Fix $k \in \NN^{\ast}$ and let us prove (iii). The action of $x_i^-(z)$ is well-defined on all vectors \linebreak $\ffbox{j_1} \otimes \cdots \otimes \ffbox{j_k} \in V(\ffbox{1} \cdots \ffbox{1}_a)$: indeed in (\ref{formref}) the coefficients $A_u$ become
\begin{equation}\label{formau}
\displaystyle \prod_{\begin{tiny} \begin{array}{c} u < s \leq k\\ \overline{j_s} = i \end{array} \end{tiny}} \psi \left( q^{2(u-s) + j_s-j_u} \right) \prod_{\begin{tiny} \begin{array}{c} u < s \leq k\\ \overline{j_s} = i+1 \end{array} \end{tiny}} \psi \left(q^{2(u-s) + j_s-j_u+1} \right)^{-1}
\end{equation}
which are well-defined because $j_u \geq j_s$. Fix $v = \ffbox{i_1} \otimes \cdots \otimes \ffbox{i_k} \in V$, $v \notin V(\ffbox{1} \cdots \ffbox{1}_a)$. Let us show that the coefficient of $x_i^-(z) \cdot v$ computed on each vector in $V(\ffbox{1} \cdots \ffbox{1}_a)$ is zero: this is trivially the case except perhaps if there exists $1 \leq u \leq k-1$ such that $i_1 \geq \cdots \geq i_u+1 = i_{u+1} \geq \cdots \geq i_k$ and $\overline{i_u}=i$. But from (\ref{formau}) $A_u$ is equal to $0$ in this case. The result follows by Lemma \ref{lemquo} (the proof being the same for the $x_i^+(z)$).
\end{proof}

\begin{lem}
The $\U_q(sl_{n+1}^{tor})$-modules hence obtained are thin and integrable. Furthermore the action of the vertical quantum affine subalgebras are locally finite-dimensional.
\end{lem}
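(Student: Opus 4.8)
The plan is to establish the three properties at once for every module produced by Proposition~\ref{propactex}: the full tensor product $V=V(\ffbox{1}_a)\otimes\cdots\otimes V(\ffbox{1}_{aq^{-2(k-1)}})$ (when (E) holds), its submodule $\tilde{V}$ of strictly increasing tableaux, and the module $V(\ffbox{1} \cdots \ffbox{1}_a)$ of weakly decreasing tableaux (for every $k$). First I would check that all three are weighted: by (\ref{deltacart}) the iterated coproduct $\Delta_D(\phi_i^\pm(z))$ acts diagonally on a tensor $\ffbox{j_1}_{a_1}\otimes\cdots\otimes\ffbox{j_k}_{a_k}$ of basis vectors, multiplying it by the product of the scalar series coming from (\ref{actrv}); and for $V(\ffbox{1} \cdots \ffbox{1}_a)$ the operators $\mathrm{pr}_U\circ\Delta_D(\phi_i^\pm(z))$ furnished by Lemma~\ref{lemquo} stay diagonal, since $\mathrm{pr}_U$ fixes each basis vector of $U$. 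Hence each basis vector spans an $\ell$-weight line, and the thin property reduces to the assertion that the $\ell$-weight monomial $\prod_r Y_{\overline{j_r},a_rq^{j_r-1}}Y_{\overline{j_r-1},a_rq^{j_r}}^{-1}$ determines the sequence $(j_1,\dots,j_k)$, as in the generic case.

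Integrability then only requires the nilpotency of the series $x_i^\pm(z)$, the weight-space decomposition being already available. By (\ref{actrv}) each coefficient $x_{i,s}^\pm$ acts on a vector representation by moving a single box $\ffbox{j}$ to $\ffbox{j\mp1}$, so in the tensor product --- and in $V(\ffbox{1} \cdots \ffbox{1}_a)$, where the extra projection only deletes summands --- each $x_{i,s}^\pm$ changes exactly one of the $k$ tensor factors, after which that factor can no longer be affected by the same operator; thus $(x_{i,s}^\pm)^{k+1}$ kills every basis vector. For local finite-dimensionality of the vertical subalgebras I would argue uniformly and directly (for $V$ and $\tilde{V}$ one could instead invoke Lemma~\ref{tpalgv}, the $V(\ffbox{1}_{a_r})$ being $\ell$-integrable extremal loop weight modules, but the direct argument also covers $V(\ffbox{1} \cdots \ffbox{1}_a)$ for all $k$): fixing $j\in I$ and conjugating by $\theta^{(j)}$ we may take $j=0$, so $\U_q^{v,0}(sl_{n+1}^{tor})=\U_q^{v}(sl_{n+1}^{tor})$ is generated only by the $x_{i,s}^\pm$ with $i\in I_0$; by (\ref{actrv}) such an operator sends $\ffbox{j}$ to $\ffbox{j+1}$ only when $\overline{j}\neq0$ and to $\ffbox{j-1}$ only when $\overline{j-1}\neq0$, so every box reached from $\ffbox{m}$ under $\U_q^{v}(sl_{n+1}^{tor})$ remains in the block of $n+1$ consecutive integers bounded by the two multiples of $n+1$ around $m$. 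Consequently $\U_q^{v}(sl_{n+1}^{tor})\cdot w$ lies in the span of at most $(n+1)^k$ basis vectors for every basis vector $w$, which gives the desired local finiteness for $\U_q^{v,j}(sl_{n+1}^{tor})$.

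The step I expect to be the real obstacle is the injectivity claim behind the thin property. Contrary to the generic case, the $q$-segment parameters $a_r=aq^{-2(r-1)}$ produce coincidences among the $Y$-factors of different boxes, so one must verify that no two distinct admissible sequences yield the same reduced $\ell$-weight monomial; it is here that the constraints (strictly increasing for $\tilde{V}$, weakly decreasing for $V(\ffbox{1} \cdots \ffbox{1}_a)$, and condition (E) for the full tensor product) are used.
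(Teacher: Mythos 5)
Your proof is correct and establishes all three claims, but it takes a partly different route from the paper. For integrability and the local finite-dimensionality of the vertical subalgebras the paper simply invokes Lemma \ref{tpalgv}, using that the factors $V(\ffbox{1}_{a_r})$ are $\ell$-integrable with locally finite vertical action; you instead argue directly by tracking how the coefficients $x_{i,s}^{\pm}$ move boxes (the residue of a box changes after each application, so $(x_{i,s}^{\pm})^{k+1}$ kills every basis vector, and omitting the node $j$ traps each box in a window of $n+1$ consecutive integers). Your direct argument buys a little extra robustness: it applies verbatim to $V(\ffbox{1}\cdots\ffbox{1}_a)$, whose module structure is $\mathrm{pr}_U\circ\Delta_D$ furnished by Lemma \ref{lemquo} rather than a $\Delta_D$-stable subspace, whereas quoting Lemma \ref{tpalgv} there requires a word of justification; the paper's route is shorter and uniform with the rest of Section 4. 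For thinness both you and the paper reduce to the injectivity of the map sending a basis tensor $\ffbox{j_1}_{a}\otimes\cdots\otimes\ffbox{j_k}_{aq^{-2(k-1)}}$ to its $\ell$-weight monomial; the paper declares this ``easy to show'' under (E) or the monotonicity constraint and you flag it as the one remaining combinatorial verification without carrying it out. Since for the $q$-segment parameters cancellations between the $Y^{\pm 1}$ factors of different tensor slots genuinely occur, that short check is worth writing down, but your identification of exactly where (E) and the ordering constraints enter is the right one, and on this point your proposal is at the same level of detail as the paper itself.
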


\begin{proof}
It is easy to show that the map
$$\ffbox{i_1}_a \otimes \cdots \otimes \ffbox{i_k}_{aq^{-2(k-1)}} \mapsto \prod_{1 \leq j \leq k} Y_{\overline{i_j},aq^{i_j-1-2(j-1)}}Y_{\overline{i_j-1},aq^{i_j-2(j-1)}}^{-1}$$
which associates to a vector $\ffbox{i_1}_a \otimes \cdots \ffbox{i_k}_{aq^{-2(k-1)}}$ its $\ell$-weight is bijective when $k$ satisfies (E) or when we assume in addition that $i_1 \geq \cdots \geq i_k$. Lemma \ref{tpalgv} gives the remaining statements.
\end{proof}

\subsubsection{Study of the tensor product module}

\begin{thm}\label{thmexng}
Assume that $k$ satisfies (E). Then the $\U_q(sl_{n+1}^{tor})$-module \linebreak $V = V(\ffbox{1}_a) \otimes \cdots \otimes V(\ffbox{1}_{aq^{-2(k-1)}})$ is an extremal loop weight module of $\ell$-weight $$e^{k \varpi_1}Y_{1,a}Y_{1,aq^{-2}} \cdots Y_{1, aq^{-2(k-1)}}Y_{0,aq}^{-1}Y_{0,aq^{-1}}^{-1} \cdots Y_{0, aq^{-2(k-1)+1}}^{-1}.$$
\end{thm}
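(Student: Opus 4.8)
The plan is to exhibit
$$v = \ffbox{1}_a \otimes \ffbox{1}_{aq^{-2}} \otimes \cdots \otimes \ffbox{1}_{aq^{-2(k-1)}} \in V$$
as a cyclic extremal vector and to check against it the three conditions of Definition \ref{defelm}. It is already known (using that $k$ satisfies (E)) that $V$ is a thin, integrable $\U_q(sl_{n+1}^{tor})$-module on which every $\U_q^{v,j}(sl_{n+1}^{tor})$ acts locally finite-dimensionally; thus condition (iii) holds, and it remains to establish (i) and (ii). Using the bijection between basis vectors and $\ell$-weights recalled just above, one reads off directly that the $\ell$-weight of $v$ is the monomial $e^{k\varpi_1}Y_{1,a}Y_{1,aq^{-2}}\cdots Y_{1,aq^{-2(k-1)}}Y_{0,aq}^{-1}Y_{0,aq^{-1}}^{-1}\cdots Y_{0,aq^{-2(k-1)+1}}^{-1}$ of the statement.

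First I would check condition (ii): that $v$ is extremal for $\U_q^{h}(sl_{n+1}^{tor})$. Write $\ffbox{j}^{\otimes} = \ffbox{j}_a \otimes \ffbox{j}_{aq^{-2}} \otimes \cdots \otimes \ffbox{j}_{aq^{-2(k-1)}}$ for the ``diagonal'' vectors; this is a weight vector of weight $k(\Lambda_{\overline{j}} - \Lambda_{\overline{j-1}})$. From the explicit action (\ref{actrv}) one gets $x_{i,0}^{\pm} \cdot \ffbox{j}^{\otimes} = 0$ for $i \notin \{\overline{j-1}, \overline{j}\}$, while applying formula (\ref{formref}) (and its counterpart for $x_i^{+}(z)$) $k$ times and invoking the Macdonald identity used in the proof of Lemma \ref{extvectgen} yields $(x_{\overline{j},0}^{-})^{(k)} \cdot \ffbox{j}^{\otimes} = \ffbox{j+1}^{\otimes}$ and $(x_{\overline{j-1},0}^{+})^{(k)} \cdot \ffbox{j}^{\otimes} = \ffbox{j-1}^{\otimes}$; the computation is the same as there, the only inputs being that the spectral parameters $aq^{-2(i-1)}$ are pairwise distinct and that the intermediate structure constants have no pole, both immediate here. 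These identities say precisely that $\ffbox{j}^{\otimes}$ is $i$-extremal for every $i \in I$ and that each $S_i$ either fixes $\ffbox{j}^{\otimes}$ or maps it to $\ffbox{j\pm 1}^{\otimes}$. Consequently the set $\{ S_{i_1} \circ \cdots \circ S_{i_l}(v) \}$ equals $\{ \ffbox{j}^{\otimes} \mid j \in \ZZ \}$, all of whose members are $i$-extremal, so $v$ is extremal.

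Then I would check condition (i): $\U_q(sl_{n+1}^{tor}) \cdot v = V$. Since $\U_q(sl_{n+1}^{tor}) \cdot v$ is stable under $\U_q(\hat{\Hlie})$ and $V$ is thin, this submodule is a direct sum of one-dimensional $\ell$-weight spaces, i.e.\ of the lines spanned by the basis vectors $\ffbox{i_1}_a \otimes \cdots \otimes \ffbox{i_k}_{aq^{-2(k-1)}}$, which are indexed by all of $\ZZ^{k}$. The key point is that if such a basis vector $b$ lies in $\U_q(sl_{n+1}^{tor}) \cdot v$ and $x_i^{\pm}(z) \cdot b$ has a nonzero coefficient on another basis vector $b'$, then, extracting a suitable mode $x_{i,r}^{\pm}$ and using thinness, $b'$ lies in $\U_q(sl_{n+1}^{tor}) \cdot v$ as well. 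By the orbit computation above every diagonal $\ffbox{j}^{\otimes}$ already lies in $\U_q(sl_{n+1}^{tor}) \cdot v$, so it suffices to connect an arbitrary $(p_1, \dots, p_k) \in \ZZ^{k}$ to some $\ffbox{p}^{\otimes}$ through a chain of single-coordinate moves with nonzero coefficient. I would do this by setting $p = \min_i p_i$ and raising the coordinates one at a time, from left to right, from $p$ up to their targets: at the moment coordinate $u$ is being raised, the coordinates to its right all still equal $p$ and coordinate $u$ equals some value $w$ with $p \leq w < p_u$, so by (\ref{formref}) the relevant coefficient is a product of factors $\psi(q^{-2(s-u)+p-w})$ and $\psi(q^{-2(s-u)+p-w+1})^{-1}$ over $u < s \leq k$; since $p - w \leq 0 < 2(s-u)$, none of these arguments equals $1$ or $q^{2}$, so the coefficient is a nonzero complex number. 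Hence every basis vector is reached and $\U_q(sl_{n+1}^{tor}) \cdot v = V$.

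I expect condition (i) to be the main obstacle: one must order the elementary raising moves so that no structure constant in (\ref{formref}) degenerates, and the point of raising left-to-right from the minimal diagonal $\ffbox{p}^{\otimes}$ is exactly that it keeps the coordinates to the right of the one being moved as small as possible, enforcing $p - w \leq 0$. The extremality (ii) and the identification of the $\ell$-weight are routine once the explicit formulas (\ref{actrv}) and (\ref{formref}) are in hand.
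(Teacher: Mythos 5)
Your proof is correct and follows essentially the same route as the paper's: extremality of the diagonal vector via the Macdonald identity underlying Lemma \ref{extvectgen}, and cyclicity by raising coordinates one at a time from left to right, using the non-vanishing of the coefficients $A_u$ in (\ref{formref}) together with thinness to extract the required operator. The only cosmetic difference is that you start the raising chain from the minimal diagonal vector $\ffbox{p}^{\otimes}$, whereas the paper first reduces to the case $j_s\geq 1$ via extremality and then raises with the right-hand tail fixed at $1$ (Lemma \ref{lemrefirred}); the two computations coincide up to a shift.
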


\begin{proof}
The fact that the vector $$v = \ffbox{1}_a \otimes \ffbox{1}_{aq^{-2}} \otimes \cdots \otimes \ffbox{1}_{aq^{-2(k-1)}}$$ 
is extremal follows by Lemma \ref{extvectgen} (by setting $a_i = aq^{-2(i-1)}$).

Let us prove that $V$ is cyclic, i.e. for all $\ffbox{j_1} \otimes \cdots \otimes \ffbox{j_k} \in V$ there is $x \in \U_q(sl_{n+1}^{tor})$ sending $v$ on it. As $v$ is extremal, it suffices to show that when $j_s \geq 1$ for all $1 \leq s \leq k$. This is an easy consequence of

\begin{lem}\label{lemrefirred}
For all $1 \leq s \leq k$ and $t \geq 1$, there exists $y \in \U_q(sl_{n+1}^{tor})$ such that
$$y \cdot \left(\ffbox{j_1} \otimes \cdots \ffbox{j_{s-1}} \otimes \ffbox{t} \otimes \ffbox{1} \otimes \cdots \otimes \ffbox{1} \right) = \ffbox{j_1} \otimes \cdots \ffbox{j_{s-1}} \otimes \ffbox{t+1} \otimes \ffbox{1} \otimes \cdots \otimes \ffbox{1}.$$
\end{lem}

\medskip

\noindent Its proof is similar to the one of Lemma \ref{lemirrmod}: by (\ref{formref}), the vector $$\ffbox{j_1} \otimes \cdots \ffbox{j_{s-1}} \otimes \ffbox{t+1} \otimes \ffbox{1} \otimes \cdots \otimes \ffbox{1}$$ occurs in $$x_{\overline{t}}^-(z) \cdot \left( \ffbox{j_1} \otimes \cdots \otimes \ffbox{j_{s-1}} \otimes \ffbox{t} \otimes \ffbox{1} \otimes \cdots \otimes \ffbox{1} \right)$$
with the coefficient $A_s \neq 0$. As $V$ is thin, we get the existence of the desired $y \in \U_q(sl_{n+1}^{tor})$.
\end{proof}

\subsubsection{Study of $V(\overline{\underline{\vert \ {\tiny 1} \ \vert}} \cdots \overline{\underline{\vert \ {\tiny 1} \ \vert}}_a)$}

\begin{prop}\label{propexmod}
The $\U_q(sl_{n+1}^{tor})$-module $V(\ffbox{1} \cdots \ffbox{1}_a)$ associated to $a \in \CC^{\ast}$ and $k \in \NN^{\ast}$ is an irreducible extremal loop weight module of $\ell$-weight $$e^{k \varpi_1}Y_{1,a}Y_{1,aq^{-2}} \cdots Y_{1, aq^{-2(k-1)}}Y_{0,aq}^{-1}Y_{0,aq^{-1}}^{-1} \cdots Y_{0, aq^{-2(k-1)+1}}^{-1}.$$
\end{prop}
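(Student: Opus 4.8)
The strategy is to establish the three defining properties of an extremal loop weight module (Definition \ref{defelm}) for $V(\ffbox{1}\cdots\ffbox{1}_a)$, together with irreducibility, reusing as much of the machinery of the $q$-segment case as possible. First I would record that by Proposition \ref{propactex}(iii) the Drinfeld coproduct endows $V(\ffbox{1}\cdots\ffbox{1}_a)$ with a $\U_q(sl_{n+1}^{tor})$-module structure for \emph{every} $k\in\NN^\ast$, and that by the last Lemma of the previous subsection it is thin and integrable with locally finite-dimensional action of the vertical quantum affine subalgebras. The bijectivity of the $\ell$-weight map on the ordered vectors $\ffbox{j_1}\otimes\cdots\otimes\ffbox{j_k}$ with $j_1\geq\cdots\geq j_k$ also shows at once that the $\ell$-weight of $v=\ffbox{1}_a\otimes\ffbox{1}_{aq^{-2}}\otimes\cdots\otimes\ffbox{1}_{aq^{-2(k-1)}}$ is the claimed monomial $e^{k\varpi_1}Y_{1,a}\cdots Y_{1,aq^{-2(k-1)}}Y_{0,aq}^{-1}\cdots Y_{0,aq^{-2(k-1)+1}}^{-1}$, so property (iii) of Definition \ref{defelm} and the identification of the $\ell$-weight are done.

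Next I would prove that $v$ is extremal for the horizontal quantum affine subalgebra. This is literally the content of Lemma \ref{extvectgen} applied with $a_i=aq^{-2(i-1)}$: the computation there shows $(x_{1,0}^-)^{(k)}\cdot v=\ffbox{2}_a\otimes\cdots\otimes\ffbox{2}_{aq^{-2(k-1)}}$, that $(x_{\overline{j-1},0}^+)^{(k)}$ and $(x_{\overline{j},0}^-)^{(k)}$ shift all entries of the ``constant'' vector $\ffbox{j}_{a}\otimes\cdots\otimes\ffbox{j}_{aq^{-2(k-1)}}$ down resp.\ up by one, and that all other $x_{i,0}^\pm$ annihilate it. One only needs to check that the intermediate vectors all stay inside $V(\ffbox{1}\cdots\ffbox{1}_a)$, which is clear since they are constant (hence weakly decreasing) sequences and the projection $\mathrm{pr}_U$ of Lemma \ref{lemquo} acts as the identity on them. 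Hence the whole $S$-orbit of $v$ consists of these constant vectors and each is $i$-extremal for all $i\in I$, so $v$ is extremal.

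It remains to prove cyclicity and irreducibility, which together give property (i) and the ``irreducible'' assertion. For cyclicity: since $v$ is extremal, its $W$-orbit under the $S_i$ contains all constant vectors $\ffbox{j}_a\otimes\cdots\otimes\ffbox{j}_{aq^{-2(k-1)}}$, so it suffices to raise an arbitrary weakly decreasing vector $\ffbox{j_1}\otimes\cdots\otimes\ffbox{j_k}$ with $j_1\geq\cdots\geq j_k$ from some constant vector $\ffbox{j_k}\otimes\cdots\otimes\ffbox{j_k}$. I would do this by a downward induction on $s$ (increasing a prefix one box at a time), exactly as in Lemma \ref{lemrefirred}: from formula (\ref{formref}) the vector $\ffbox{j_1}\otimes\cdots\otimes\ffbox{j_{s-1}}\otimes\ffbox{t+1}\otimes\ffbox{j_{s+1}}\otimes\cdots\otimes\ffbox{j_k}$ (with $t=j_s$) occurs in $x_{\overline{t}}^-(z)$ acting on $\ffbox{j_1}\otimes\cdots\otimes\ffbox{t}\otimes\cdots\otimes\ffbox{j_k}$ with coefficient $A_s$ given by (\ref{formau}); since the sequence stays weakly decreasing, $j_u\geq j_s$ throughout and $A_s\neq 0$, and thinness then produces the required element of $\U_q(sl_{n+1}^{tor})$. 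Applying this box by box builds any weakly decreasing vector, so $V(\ffbox{1}\cdots\ffbox{1}_a)$ is cyclic on $v$; the symmetric argument with the $x_i^+(z)$ shows every weakly decreasing vector generates, so the module is irreducible. The main obstacle I anticipate is bookkeeping: one must be careful that all intermediate vectors remain weakly decreasing (so that they lie in $V(\ffbox{1}\cdots\ffbox{1}_a)$ and the coefficients (\ref{formau}) are regular and nonzero), and that the projection $\mathrm{pr}_U$ from Lemma \ref{lemquo} never kills the target term — both of which follow from the computation that the offending coefficient $A_u$ vanishes precisely when a strict ascent $i_u+1=i_{u+1}$ would be created, as already used in the proof of Proposition \ref{propactex}.
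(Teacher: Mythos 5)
Your proposal is correct and follows essentially the same route as the paper: extremality of $\ffbox{1}_a \otimes \cdots \otimes \ffbox{1}_{aq^{-2(k-1)}}$ is obtained from Lemma \ref{extvectgen} with $a_i = aq^{-2(i-1)}$, and irreducibility (hence cyclicity) from the observation that for weakly decreasing sequences $j_1 \geq \cdots \geq j_k$ the coefficients $A_u$ of (\ref{formau}) are all non-zero, combined with the thinness argument of Lemma \ref{lemirrmod}. Your extra bookkeeping — checking that the constant intermediate vectors and the incremented weakly decreasing targets stay in the subspace, so that $\mathrm{pr}_U$ acts as the identity on them — is a detail the paper leaves implicit but is handled exactly as you describe.
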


\begin{proof}
The fact that
$$\ffbox{1}_a \otimes \ffbox{1}_{aq^{-2}} \otimes \cdots \otimes \ffbox{1}_{aq^{-2(k-1)}}$$ 
is extremal still follows by Lemma \ref{extvectgen}.

Assume in (\ref{formref}) that $j_1 \geq j_2 \geq \cdots \geq j_k$. From (\ref{formau}) the coefficients $A_u$ occurring in it are non zero for each $u$. Then by Lemma \ref{lemirrmod} $V(\ffbox{1} \cdots \ffbox{1}_a)$ is irreducible.
\end{proof}

We recover the extremal loop weight $\U_q(sl_{4}^{tor})$-module constructed in \cite[Section 5]{mansuy_quantum_2012}.

\begin{prop}\label{proprecrep}
The $\U_q(sl_{4}^{tor})$-modules $V(e^{2\varpi_1}Y_{1,a}Y_{1,aq^{-2}}Y_{0,aq}^{-1}Y_{0,aq^{-1}}^{-1})$ and \linebreak $V(\ffbox{1} \ffbox{1}_a)$ are isomorphic.
\end{prop}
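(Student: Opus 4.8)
The plan is to exhibit an explicit isomorphism of $\U_q(sl_4^{tor})$-modules between the representation $V(\ffbox{1}\ffbox{1}_a)$ constructed here and the module $V(e^{2\varpi_1}Y_{1,a}Y_{1,aq^{-2}}Y_{0,aq}^{-1}Y_{0,aq^{-1}}^{-1})$ obtained in \cite[Section 5]{mansuy_quantum_2012}. By Proposition \ref{propexmod} (with $n=3$, $k=2$) the module $V(\ffbox{1}\ffbox{1}_a)$ is an \emph{irreducible} extremal loop weight module of $\ell$-weight $e^{2\varpi_1}Y_{1,a}Y_{1,aq^{-2}}Y_{0,aq}^{-1}Y_{0,aq^{-1}}^{-1}$, and by Proposition \ref{proprecrep}'s target (the stated result of \cite{mansuy_quantum_2012}, recalled in the excerpt) the module $V(e^{2\varpi_1}Y_{1,a}Y_{1,aq^{-2}}Y_{0,aq}^{-1}Y_{0,aq^{-1}}^{-1})$ is also irreducible and of the same $\ell$-weight. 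Since both modules are generated by an extremal vector of the same $\ell$-weight, it suffices to produce a nonzero intertwiner one way, and irreducibility on both sides will upgrade it to an isomorphism.

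First I would pin down the extremal vector on each side: in $V(\ffbox{1}\ffbox{1}_a)$ it is $v := \ffbox{1}_a \otimes \ffbox{1}_{aq^{-2}}$ (extremal by Lemma \ref{extvectgen}), and in $V(e^{2\varpi_1}Y_{1,a}Y_{1,aq^{-2}}Y_{0,aq}^{-1}Y_{0,aq^{-1}}^{-1})$ it is the distinguished generator $v'$ from the monomial-crystal construction of \cite{mansuy_quantum_2012}. Because both modules are generated over $\U_q(sl_4^{tor})$ by an extremal weight vector of the same weight $2\varpi_1$ and the same Drinfeld data, the defining relations of the extremal loop weight module of $\ell$-weight $e^{2\varpi_1}Y_{1,a}Y_{1,aq^{-2}}Y_{0,aq}^{-1}Y_{0,aq^{-1}}^{-1}$ give a surjective $\U_q(sl_4^{tor})$-morphism from the universal such module onto each of $V(\ffbox{1}\ffbox{1}_a)$ and $V(e^{2\varpi_1}\cdots)$; one then checks that the map $v \mapsto v'$ extends to a module homomorphism. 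Concretely I would compare the explicit action formulas: the action of $\U_q(sl_4^{tor})$ on the basis vectors $\ffbox{j_1}_a \otimes \ffbox{j_2}_{aq^{-2}}$ (with $j_1 \geq j_2$) of $V(\ffbox{1}\ffbox{1}_a)$ is computed from (\ref{actrv}) and (\ref{formref})--(\ref{formau}), while the action on the monomial basis of $V(e^{2\varpi_1}\cdots)$ is given by \cite[Theorem 4.11 / Section 5]{mansuy_quantum_2012}; one matches basis vectors via their common labelling by $\ell$-weights (the labelling map of the preceding lemma is a bijection onto $\mathcal{M}(V(\ffbox{1}\ffbox{1}_a))$, and the target module is thin with the same $q$-character), and verifies the Drinfeld currents $x_i^\pm(z)$, $\phi_i^\pm(z)$ act identically.

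The main obstacle I anticipate is purely bookkeeping: reconciling the two combinatorial models. On the tensor-product side the basis is indexed by pairs $(j_1,j_2)$ with $j_1 \geq j_2$ and the scalars are products of $\psi$-factors coming from (\ref{formau}); on the \cite{mansuy_quantum_2012} side the basis is indexed by monomials in the extremal crystal $\mathcal{B}(2\varpi_1)$-realization and the scalars come from the monomial formulas there. One must check these match up to an overall rescaling of the basis (which can be absorbed by the freedom in choosing the generator $v'$), paying attention to the spectral-parameter shifts $aq^{-2(j-1)}$ built into the vector-representation conventions. Once the two sets of structure constants are seen to agree, the map $\ffbox{j_1}_a \otimes \ffbox{j_2}_{aq^{-2}} \mapsto$ (corresponding monomial basis vector) is the desired $\U_q(sl_4^{tor})$-isomorphism; alternatively, invoking irreducibility of both sides, it is enough to produce any nonzero morphism in either direction and conclude by Schur's lemma.
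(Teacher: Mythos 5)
Your proposal is correct and its core — matching the two explicit sets of action formulas through the common labelling of basis vectors by $\ell$-weights — is exactly what the paper does; the paper's proof is simply the one-line observation that the explicitly known action on $V(e^{2\varpi_1}Y_{1,a}Y_{1,aq^{-2}}Y_{0,aq}^{-1}Y_{0,aq^{-1}}^{-1})$ from \cite{mansuy_quantum_2012} coincides with that on $V(\ffbox{1}\,\ffbox{1}_a)$. The additional scaffolding you propose (universal extremal module, irreducibility plus Schur's lemma) is a valid alternative safety net but is not needed once the structure constants are seen to agree.
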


\begin{proof}
The action of $\U_q(sl_{4}^{tor})$ on $V(e^{2\varpi_1}Y_{1,a}Y_{1,aq^{-2}}Y_{0,aq}^{-1}Y_{0,aq^{-1}}^{-1})$ is explicitly known (see \cite{mansuy_quantum_2012}) and is the same than on $V(\ffbox{1} \ffbox{1}_a)$.
\end{proof}


\subsubsection{Study of $\tilde{V}$}

\begin{prop}
Assume that
\begin{align}
(n \text{ is even and } k < n+1) \text{ or } \left(n \text{ is odd and } k < \frac{n+1}{2} \right).
\tag{E'}
\end{align}
Then $\tilde{V} = \tilde{V} \left(\begin{array}{c} \ffbox{1}_{\ } \\ \vdots_{\ } \\ \ffbox{k}_a \end{array} \right)$ is an irreducible $\U_q(sl_{n+1}^{tor})$-module.
\end{prop}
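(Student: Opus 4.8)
The plan is to show that $\tilde{V}$ is generated by its extremal vector, as the ambient tensor product module is thin and its structure is explicitly controlled by formula (\ref{formref}). First I would identify a natural candidate cyclic vector: the vector $v = \ffbox{1}_a \otimes \ffbox{2}_{aq^{-2}} \otimes \cdots \otimes \ffbox{k}_{aq^{-2(k-1)}}$, which lies in $\tilde{V}$ since $1 < 2 < \cdots < k$, and which I expect is extremal for the horizontal quantum affine subalgebra by the same computation as in Lemma \ref{extvectgen} applied entrywise (the columns $\ffbox{i}$ are the successive tableaux of the vector representation). Since we already know from Proposition \ref{propactex}(ii) that $\Delta_D$ does endow $\tilde{V}$ with a module structure under (E), and since (E$'$) is a sub-case of (E), the action is well-defined, and it suffices to prove that $\U_q(sl_{n+1}^{tor}) \cdot v = \tilde{V}$.

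The key step is a reachability argument: I would show that any basis vector $\ffbox{i_1}_a \otimes \cdots \otimes \ffbox{i_k}_{aq^{-2(k-1)}}$ with $i_1 < i_2 < \cdots < i_k$ can be obtained from $v$ by applying suitable elements of $\U_q(sl_{n+1}^{tor})$. Using formula (\ref{formref}) for $x_i^\pm(z)$ acting on a basis vector of the tensor product, together with the thin property, each single-step move $\ffbox{i_s} \mapsto \ffbox{i_s \pm 1}$ in one tensor factor can be realized whenever the corresponding coefficient $A_u$ (resp. its $x^+$ analogue) is non-zero. The strict inequalities (E$'$) guarantee that whenever we try to move a box $\ffbox{i_s}$ up or down while staying inside the strictly-increasing region, no pair of adjacent labels collides in the obstructing configuration $\overline{i_u} = i, \overline{i_{u+1}} = i+1$; and crucially, the strict inequality $k < n+1$ (or $k < \frac{n+1}{2}$) ensures that the $q$-power arguments of the $\psi$-factors never hit the zero of $\psi$, so all relevant $A_u$ are non-zero — exactly as in the proof of Lemma \ref{lemrefirred} and Lemma \ref{lemirrmod}. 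One orders the moves, e.g. adjusting the boxes from right to left (or from the largest discrepancy inward), so that each intermediate vector stays in $\tilde{V}$. This shows every basis vector of $\tilde{V}$ is in $\U_q(sl_{n+1}^{tor}) \cdot v$, hence $\tilde{V}$ is cyclic; combined with the analogous argument producing a lowering operator, irreducibility follows as in Lemma \ref{lemirrmod}.

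The main obstacle I anticipate is bookkeeping the non-vanishing of the coefficients $A_u$ under the strict bound (E$'$) rather than (E): one must check that the difference between (E) and (E$'$) is precisely what rules out the boundary configurations where a $\psi$-factor vanishes and a move becomes unachievable (this is why the proposition needs (E$'$) and not merely (E) — under (E) with equality, $\tilde{V}$ acquires a nontrivial submodule/quotient structure analogous to Proposition \ref{proptpdef}(ii)--(iii)). Concretely, when $k = n+1$ (or $k = \frac{n+1}{2}$) the extremal configurations wrap around the cycle $\ZZ/(n+1)\ZZ$ and the strict-increasing constraint forces some pair $i_{u+1} = i_u + 1$ with labels in the forbidden adjacency, so the corresponding transition coefficient is zero and reducibility occurs. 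I would phrase the argument so that the non-collision is manifest from $k < n+1$, and otherwise the computations are entirely parallel to those already carried out in this section.
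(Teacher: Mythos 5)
Your proposal is correct and follows essentially the same route as the paper: one observes from formula (\ref{formref}) that under (E$'$) a coefficient $A_u$ vanishes only in the configuration $j_{u+1}=j_u+1$ (i.e.\ precisely when the move would leave the strictly increasing region), and then irreducibility follows by the single-step reachability argument of Lemma \ref{lemirrmod}, using thinness. One side-remark in your plan is false, though not load-bearing: the vector $\ffbox{1}_a\otimes\ffbox{2}_{aq^{-2}}\otimes\cdots\otimes\ffbox{k}_{aq^{-2(k-1)}}$ is \emph{not} extremal for the horizontal quantum affine subalgebra (the paper points this out immediately after the proposition), but extremality is not needed anywhere in the irreducibility argument.
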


\begin{proof}
Assume in (\ref{formref}) that $j_1 < j_2 < \cdots < j_k$. One checks easily that if $(E')$ is satisfied, the coefficients $A_u$ are equal to $0$ if and only if $j_{u+1} = j_u + 1$. Then we prove that $\tilde{V}$ is irreducible as in Lemma \ref{lemirrmod}.
\end{proof}

We would like to construct extremal loop weight modules of $\ell$-weight $e^{\varpi_k}Y_{k,a}Y_{0,aq^{d_k}}^{-1}$ for $1 \leq k \leq n$. From $\tilde{V}$ it is natural to consider the vector
$$v=\ffbox{1}_a \otimes \ffbox{2}_{aq^{-2}} \otimes \cdots \otimes \ffbox{k}_{aq^{-2(k-1)}} \in \tilde{V}.$$
But when $k$ satisfies $(E')$, one shows by straightforward computations that $v$ is not extremal for the horizontal quantum affine subalgebra.
We study the case $n=2r+1$ odd and $k = r+1$ in the rest of this section.

 Denote by $V \left(\begin{array}{c} \ffbox{1}_{\ } \\ \vdots_{\ } \\ \ffbox{r+1}_a \end{array} \right)$ the subvector space of $\tilde{V} \left(\begin{array}{c} \ffbox{1}_{\ } \\ \vdots_{\ } \\ \ffbox{r+1}_a \end{array} \right)$ generated by
$$\ffbox{i_1}_a \otimes \ffbox{i_2}_{aq^{-2}} \otimes \cdots \otimes \ffbox{i_{r+1}}_{aq^{-2r}} \text{ with } i_1 < i_2 < \cdots < i_{r+1}$$
and $i_{r+1}-i_1 \leq n$.

\begin{prop}
The coproduct $\Delta_D$ endows $V = V \left(\begin{array}{c} \ffbox{1}_{\ } \\ \vdots_{\ } \\ \ffbox{r+1}_a \end{array} \right)$ with a structure of irreducible and thin $\U_q(sl_{n+1}^{tor})$-module.
\end{prop}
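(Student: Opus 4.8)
The plan is to realise $V$ (in the notation of the statement and its preamble) as a quotient of the module $\tilde{V}$ of the previous subsection. Since $n=2r+1$ is odd and $k=r+1$ satisfies (E), Proposition~\ref{propactex}(ii) already endows $\tilde{V}$ with a $\U_q(sl_{n+1}^{tor})$-module structure. For a strictly increasing tuple $\mathbf{i}=(i_1<\cdots<i_{r+1})$ call $\mathrm{sp}(\mathbf{i})=i_{r+1}-i_1$ the \emph{spread} of the basis vector $\ffbox{i_1}_a\otimes\cdots\otimes\ffbox{i_{r+1}}_{aq^{-2r}}$. Then $V$ is spanned by the tableaux with $\mathrm{sp}(\mathbf{i})\le n$, and I claim the complementary subspace $\tilde{V}'$, spanned by those with $\mathrm{sp}(\mathbf{i})>n$, is a $\U_q(sl_{n+1}^{tor})$-submodule of $\tilde{V}$. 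Granting this, $V\simeq\tilde{V}/\tilde{V}'$ is a $\U_q(sl_{n+1}^{tor})$-module; and since $\Delta_D(\phi_i^\pm(z))=\phi_i^\pm(z)\otimes\cdots\otimes\phi_i^\pm(z)$ is diagonal in the tableau basis (each factor acts diagonally by (\ref{actrv})), it preserves both $V$ and $\tilde{V}'$, so by Lemma~\ref{lemquo} this structure is carried by the subspace $V$ with $x_i^\pm(z)$ and $\phi_i^\pm(z)$ acting through $\mathrm{pr}_V\circ\Delta_D$.

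To prove $\tilde{V}'$ is a submodule I would combine formula (\ref{formref}) for $x_i^-(z)$ with its counterpart for $x_i^+(z)$, deduced from (\ref{deltaxplus}) in the same way (its coefficients $B_u$ being products of the $\psi$-factors that the $\phi_i^-(z)$ sitting to the \emph{left} of position $u$ contribute after the relevant delta-substitution). Acting by $x_i^\pm(z)$ modifies one entry of $\mathbf{i}$ by $\pm1$; the spread strictly decreases, and then by exactly $1$, only when the modified entry is $i_1$ in the case of $x_i^-(z)$, or $i_{r+1}$ in the case of $x_i^+(z)$. As $x_i^-(z)$ moves $i_u$ only when $\overline{i_u}=i$ and $x_i^+(z)$ only when $\overline{i_u-1}=i$, such a move forces $\overline{i_1}=\overline{i_{r+1}}$, hence $\mathrm{sp}(\mathbf{i})\in(n+1)\NN^{\ast}$ and in particular $\mathrm{sp}(\mathbf{i})\ge n+1$, so the new spread is $\ge n$. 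If $\mathrm{sp}(\mathbf{i})\ge n+2$ the new spread is still $>n$. If $\mathrm{sp}(\mathbf{i})=n+1$, then using $a_s/a_t=q^{-2(s-t)}$ a short computation exhibits inside the coefficient of the spread-decreasing term the factor $\psi(q^2)=0$ (in $A_1$, for $x_i^-(z)$), respectively $\psi(1)^{-1}=0$ (in $B_{r+1}$, for $x_i^+(z)$), so that term does not occur. All other terms keep the spread $>n$: an interior move leaves it unchanged, while the two remaining boundary moves ($x_i^-(z)$ on $i_{r+1}$, $x_i^+(z)$ on $i_1$) increase it; and any term yielding a non-strictly-increasing tuple has coefficient $0$ (vanishing of $A_u$, resp. $B_u$, when $i_{u+1}=i_u+1$, resp. $i_u=i_{u-1}+1$). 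Hence $\Delta_D(x_i^\pm(z))\cdot\tilde{V}'\subseteq\tilde{V}'$. I expect this boundary bookkeeping — checking that the spread-decreasing term dies precisely at $\mathrm{sp}=n+1$ through $\psi(q^2)=\psi(1)^{-1}=0$ — to be the main obstacle.

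Thinness of $V$ is then immediate: $\U_q(\hat{\Hlie})$ acts diagonally on its tableau basis, and the map sending a strictly increasing tuple to its $\ell$-weight $\prod_{1\le j\le r+1}Y_{\overline{i_j},\,aq^{i_j-1-2(j-1)}}Y_{\overline{i_j-1},\,aq^{i_j-2(j-1)}}^{-1}$ is injective (the bijectivity statement recorded just before Theorem~\ref{thmexng}, here with $k=r+1$), so the joint spectrum is simple.

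For irreducibility I would argue as in Lemma~\ref{lemirrmod}. By thinness it suffices that the tableau graph on $V$ — with an edge $\mathbf{i}\to\mathbf{i}'$ whenever $\mathbf{i}'$ appears with nonzero coefficient in $\mathrm{pr}_V\circ\Delta_D(x_i^\pm(z))\cdot\mathbf{i}$ — be connected, for then every basis vector generates $V$, and an arbitrary nonzero vector generates $V$ once projected onto one of its (distinct) $\ell$-weight lines. Inside $V$ the coefficients $A_u$ and $B_u$ are nonzero as soon as the target is a genuine strictly increasing tuple (the only surviving vanishing being the repeated-entry case, which is not a legal target), so the available moves are exactly: slide one entry by $\pm1$ while staying strictly increasing and of spread $\le n$. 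It then remains to check this combinatorial graph is connected, which I would do by showing that any tuple of spread $n$ can have its spread strictly decreased — there is a gap $i_{u+1}-i_u\ge2$; if $u=1$ raise $i_1$, if $u=r$ lower $i_{r+1}$, otherwise slide interior entries one step at a time to push a gap of size $\ge2$ against an end and then raise $i_1$ or lower $i_{r+1}$ — so that every tuple connects to one of minimal spread $r$, and all minimal-spread tuples $\{t,\dots,t+r\}$ are joined by one-step translations. Carrying out this last connectedness verification, while keeping track that the intermediate tuples stay in $V$, is the remaining routine part.
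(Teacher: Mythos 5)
Your proposal is correct and follows essentially the same route as the paper: $V$ is realized as the quotient of $\tilde V$ by the span of the tableaux of spread $>n$ via Lemma \ref{lemquo}, the only nontrivial point being the vanishing of the spread-decreasing coefficient when the spread equals $n+1$ (your factors $\psi(q^2)=0$ and $\psi(1)^{-1}=0$ are exactly the paper's ``$A_1=0$'' check from (\ref{formau})), after which thinness is inherited from $\tilde V$ and irreducibility is proved as in Lemma \ref{lemirrmod}. The one quibble is that the sentence ``such a move forces $\overline{i_1}=\overline{i_{r+1}}$'' is stated backwards --- it is the requirement of landing in $V$ that forces the initial spread to equal exactly $n+1$, which in turn gives $\overline{i_1}=\overline{i_{r+1}}$ --- but your explicit case split on $\mathrm{sp}\geq n+2$ versus $\mathrm{sp}=n+1$ already makes the argument complete, so this is an expository slip rather than a gap.
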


\begin{proof}
Let $i_1=i < i_2 < \cdots < i_r < i_{r+1} = i+1+(n+1)$ and $v$ the corresponding vector in $\tilde{V}$. From (\ref{formau}) the coefficient $A_1$ in $x_{\overline{i}}^{-}(z) \cdot v$ is equal to $0$. Then by Lemma \ref{lemquo} the subvector space $V$ can be endowed with a structure of $\U_q(sl_{n+1}^{tor})$-module (the proof is the same for the operators $x_{\overline{i}}^+(z)$). Furthermore as $\tilde{V}$ is thin, $V$ is thin also.

We prove that $V$ is irreducible as in Lemma \ref{lemirrmod}: indeed if $j_1 < \cdots < j_{r+1}$ and $j_{r+1}-j_1 \leq n$, the coefficients $A_u$ in (\ref{formref}) are equal to zero only if $j_{u+1}=j_u+1$.
\end{proof}

Fix a sequence $(i_1 < i_2 < \cdots < i_{r+1})$ with $i_{r+1}-i_1 \leq n$. Let $k \in \ZZ$ and $1\leq t \leq r+1$ defined such that
$$(k-1)(n+1) \leq i_1 < \cdots < i_t \leq k(n+1) < i_{t+1} < \cdots < i_{r+1}.$$
We associate to this sequence a couple $(s,T)$ of an integer $s = -t+k(r+1)$ and a Young tableaux $T$ of shape ($r+1$)
$$(i_{t+1} -k(n+1) < \cdots < i_{r+1}-k(n+1) < i_1 - (k-1)(n+1) < \cdots < i_t - (k-1)(n+1)).$$
This correspondence is bijective. Then we set
$$T_{aq^{2s}} = \ffbox{i_1}_a \otimes \ffbox{i_2}_{aq^{-2}} \otimes \cdots \otimes \ffbox{i_{r+1}}_{aq^{-2r}} \in V \left(\begin{array}{c} \ffbox{1}_{\ } \\ \vdots_{\ } \\ \ffbox{r+1}_a \end{array} \right),$$
where $(i_1 < \cdots < i_{r+1})$ is the sequence associated to $(s,T)$. The family $\{T_{aq^{2s}}\}_{T,s}$ is a basis of $V \left(\begin{array}{c} \ffbox{1}_{\ } \\ \vdots_{\ } \\ \ffbox{r+1}_a \end{array} \right)$.

\begin{prop}\label{propacttaby}
The action of $\U_{q}(sl_{n+1}^{tor})$ on $V \left(\begin{array}{c} \ffbox{1}_{\ } \\ \vdots_{\ } \\ \ffbox{r+1}_a \end{array} \right)$ is given by
\begin{eqnarray}\label{actonpm}
\begin{array}{ccl}
x_{i}^+(z) \cdot T_b &=& \displaystyle \sum_{\begin{tiny}
\begin{array}{c}
1 \leq p \leq r+1\\
i_p = i+1
\end{array}
\end{tiny}} \delta(bq^{i-2p}z) (\tilde{e}_i \cdot T)_{bq^{-2 \delta_{i,0}}},\\
 & & \\
x_{i}^-(z) \cdot T_b &=& \displaystyle \sum_{\begin{tiny}
\begin{array}{c}
1 \leq p \leq r+1\\
i_p = i
\end{array}
\end{tiny}} \delta(bq^{i-2p}z) (\tilde{f}_i \cdot T)_{bq^{2 \delta_{i,0}}},
\end{array}
\end{eqnarray}
\begin{eqnarray}\label{actoncart}
\phi_{i}^\pm(z) \cdot T_b &=& \left( \displaystyle \prod_{{\tiny \begin{array}{c} p \\ i_p = i+1 \\ i_{p-1} \neq i \end{array}}} \psi(bq^{i -2p+2}z)^{-1} \prod_{{\tiny \begin{array}{c} p \\ i_p = i \\ i_{p+1} \neq i+1 \end{array}}} \psi(bq^{i-2p}z) \right) \cdot T_b
\end{eqnarray}
where $T = (1\leq i_1 < i_2 < \cdots < i_{r+1} \leq n+1)$ is a Young tableau of shape ($r+1$) and $i \in I, b \in aq^{2 \ZZ}$.
\end{prop}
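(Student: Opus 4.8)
The plan is to establish Proposition \ref{propacttaby} by unwinding the definitions: the module $V\left(\begin{smallmatrix} \ffbox{1} \\ \vdots \\ \ffbox{r+1}_a \end{smallmatrix}\right)$ was obtained in the previous proposition as a submodule of $\tilde V$ via Lemma \ref{lemquo}, so its action is the restriction of $\Delta_D$ applied to tensor products of vector representations, with the action of the series $x_i^\pm(z)$ given by the formula (\ref{formref}) for the $\U_q(sl_{n+1}^{tor})$-module structure on $V(\ffbox{1}_a)\otimes\cdots\otimes V(\ffbox{1}_{aq^{-2r}})$. So the first step is purely bookkeeping: given a basis vector $T_b = \ffbox{i_1}_a\otimes\cdots\otimes\ffbox{i_{r+1}}_{aq^{-2r}}$ with $i_1<\cdots<i_{r+1}$ and $i_{r+1}-i_1\le n$, rewrite (\ref{formref}) in terms of the Young-tableau/integer-shift parametrization $(s,T)$. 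Since $a_j = aq^{-2(j-1)}$, the spectral parameter attached to box $p$ is $a_p q^{i_p} = a q^{i_p - 2(p-1)}$; one checks this equals $b q^{i-2p+2}$ when $b=aq^{2s}$ is the spectral parameter read off from $(s,T)$, and similarly matches the $bq^{i-2p}z$ appearing in the delta functions — this is where the precise definition of the bijection $(i_1<\cdots<i_{r+1})\leftrightarrow(s,T)$ has to be used carefully.

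Next I would handle the crystal operators. Applying $x_{\overline{i}}^-(z)$ to $T_b$ via (\ref{formref}) sends some box $\ffbox{i_p}$ with $\overline{i_p}=i$ to $\ffbox{i_p+1}$; I must show that (a) the resulting sequence $(i_1<\cdots<i_p+1<\cdots)$ either still satisfies the constraints $i_1<\cdots<i_{r+1}$, $i_{r+1}-i_1\le n$ — in which case it corresponds under the bijection to $(\tilde f_i\cdot T)$ up to the predicted shift $b\mapsto bq^{2\delta_{i,0}}$ — or (b) it violates a constraint, in which case the coefficient $A_p$ in (\ref{formref}) vanishes by the same computation with $\psi$ used in the proof that $\Delta_D$ descends to $V$. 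Concretely: if $i_{p+1}=i_p+1$ then $\overline{i_{p+1}}=i+1$ and the $\psi(\cdots)^{-1}$ factor in $A_p$ has a zero; if $i_p = i_1 + n$ and $p=r+1$, incrementing it violates $i_{r+1}-i_1\le n$ and again one sees $A_p=0$ from (\ref{formau})-type considerations wrapped around mod $(n+1)$. The nontrivial content is matching the surviving terms with the Kashiwara operators $\tilde f_i$ on Young tableaux of shape $(r+1)$ for $sl_{n+1}$ and tracking that the spectral-parameter shift is exactly $q^{\pm 2\delta_{i,0}}$, which comes from the $i=0$ box "wrapping around" the column and thereby changing which copy $V(\ffbox{1}_{aq^{-2(j-1)}})$ it belongs to.

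For the Cartan currents the argument is more direct: $\Delta_D(\phi_i^\pm(z))$ acts diagonally as the tensor product $\phi_i^\pm(z)\otimes\cdots\otimes\phi_i^\pm(z)$, and on the vector-representation basis each $\phi_i^\pm(z)$ acts on $\ffbox{j}$ by the scalar $\psi(\cdot)^{\pm 1}$ or $1$ according to (\ref{actrv}). Multiplying these scalars over the $r+1$ factors and collecting: a factor $\psi(bq^{i-2p+2}z)^{-1}$ appears from box $p$ when $i_p=i+1$, and a factor $\psi(bq^{i-2p}z)$ appears from box $p$ when $i_p=i$, but whenever two consecutive boxes satisfy $i_{p-1}=i,\ i_p=i+1$ the two contributions $\psi(bq^{i-2p+2}z)\cdot\psi(bq^{i-2p+2}z)^{-1}$ cancel (here I use that $i_{p-1}=i$ contributes, via $p-1$, exactly $\psi(bq^{i-2(p-1)}z)=\psi(bq^{i-2p+2}z)$). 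This explains the side conditions $i_{p-1}\neq i$ and $i_{p+1}\neq i+1$ in (\ref{actoncart}), and one must just check the edge cases $p=1$ and $p=r+1$ (no neighbour), which is consistent with reading indices for a genuine column of length $r+1$ rather than cyclically.

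The main obstacle will be the exact combinatorial matching in the $x_i^\pm(z)$ formulas: verifying that incrementing/decrementing the unique "active" box $i_p$ produces precisely the Kashiwara operator $\tilde f_i$ (resp. $\tilde e_i$) on the semistandard Young tableau $T$ — including the sign-rule/bracketing that decides which box in $T$ is moved — and that this coincides with which box in the \emph{sorted} sequence $(i_1<\cdots<i_{r+1})$ is moved under (\ref{formref}), together with correctly propagating the $q^{\pm 2\delta_{i,0}}$ spectral shift through the bijection $(i_1<\cdots<i_{r+1})\leftrightarrow(s,T)$ in the boundary case where the index $0$ is involved and the column "rotates". Everything else is a finite, if slightly tedious, verification of vanishing coefficients via the identity $\psi(q^2)^{-1}=0$ (equivalently $\psi$ having a zero) that was already exploited in establishing the $\U_q(sl_{n+1}^{tor})$-module structure on $V$.
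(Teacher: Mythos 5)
Your proposal is correct and is essentially the paper's own proof: the paper simply states that the formulas are ``a rewriting of (\ref{formref}) for this subquotient module,'' and your plan spells out exactly that rewriting (transporting the spectral parameters through the bijection with $(s,T)$, identifying the vanishing coefficients $A_u$ with the forbidden Kashiwara moves, and cancelling adjacent $\psi$-factors in the Cartan currents). The only caveat is to carry out the exponent bookkeeping carefully, as you yourself flag.
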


The operators $\tilde{e}_i$ and $\tilde{f}_i$ in (\ref{actonpm}) are the Kashiwara operators in the crystal of semi-standard Young tableaux of shape $(r+1)$. Their action is such that: for $i\neq 0$ we have $\tilde{e}_i \cdot T = T'$ where $T'$ is obtained from $T$ by replacing $i+1$ by $i$. If it is not possible (i.e. when we have both $i+1$ and $i$ in $T$ or when $i+1$ does not occur in $T$), then it is zero. Similarly $\tilde{f}_i \cdot T= T''$ or $0$, where $T''$ is given by replacing $i$ by $i+1$. For the action of $\tilde{e}_0$ and $\tilde{f}_0$, we have
\begin{equation*}
\begin{split}
\tilde{e}_0 \cdot T &=
        \begin{cases}
        0 &\text{ if $i_1\neq 1$ or $i_{r+1}=n+1$},
                \\  (i_2,\cdots,i_{r+1},n+1) &\text{ if $i_1=1$ and $i_{r+1}\neq n+1$,}
  \end{cases}
\\
\tilde{f}_0 \cdot T &=
        \begin{cases}
        0 &\text{ if $i_1 = 1$ or $i_{r+1} \neq n+1$},
                \\  (1,i_1,\cdots,i_{r})&\text{ if $i_1 \neq 1$ and $i_{r+1} = n+1$.}
  \end{cases} 
\end{split}
\end{equation*}

\begin{proof}
This is a rewriting of (\ref{formref}) for this subquotient module.
\end{proof}

\begin{thm}\label{tpeflwm}
The representation $V \left(\begin{array}{c} \ffbox{1}_{\ } \\ \vdots_{\ } \\ \ffbox{r+1}_a \end{array} \right)$ is isomorphic to the extremal fundamental loop weight module $V(e^{\varpi_{r+1}}Y_{r+1, aq^{-r-2}}Y_{0, aq^{-1}}^{-1})$.

In particular, $V \left(\begin{array}{c} \ffbox{1}_{\ } \\ \vdots_{\ } \\ \ffbox{r+1}_a \end{array} \right)$ is $\ell$-extremal of $\ell$-weight $e^{\varpi_{r+1}}Y_{r+1, aq^{-r-2}}Y_{0, aq^{-1}}^{-1}$.
\end{thm}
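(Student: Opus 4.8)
The plan is to identify $V \left(\begin{array}{c} \ffbox{1}_{\ } \\ \vdots_{\ } \\ \ffbox{r+1}_a \end{array} \right)$ with $V(e^{\varpi_{r+1}}Y_{r+1, aq^{-r-2}}Y_{0, aq^{-1}}^{-1})$ by exhibiting in it a generating extremal vector of the correct $\ell$-weight and then invoking uniqueness. First I would single out the vector $w = T^0_a$ corresponding under the bijection $(s,T) \leftrightarrow (i_1 < \cdots < i_{r+1})$ to the tableau $T^0 = (1, 2, \ldots, r+1)$ with $s = 0$, i.e. $w = \ffbox{1}_a \otimes \ffbox{2}_{aq^{-2}} \otimes \cdots \otimes \ffbox{r+1}_{aq^{-2r}}$. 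Using the explicit formula (\ref{actoncart}) from Proposition \ref{propacttaby}, I would compute $\phi_i^\pm(z) \cdot w$: for $1 \le i \le r$ the tableau $T^0$ contains both $i$ and $i+1$ with $i_i = i$, $i_{i+1} = i+1$, so the $\psi$-factors cancel in pairs and only the single contribution from $i = 0$ (where $i_{r+1} = r+1 \neq n+1$ and $i_1 = 1$) survives, giving $\phi_0^\pm(z) \cdot w = \psi(aq^{-2(r+1)}\cdot q z)^{-1} w$; more precisely one reads off that the $\ell$-weight of $w$ is $e^{\varpi_{r+1}} Y_{r+1, aq^{-r-2}} Y_{0, aq^{-1}}^{-1}$ after matching powers of $q$ with the convention $\psi(cqz) \leftrightarrow Y_c$. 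This is a finite bookkeeping step.

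Next I would show $w$ is extremal for the horizontal quantum affine subalgebra $\U_q^h(sl_{n+1}^{tor})$. Here the key point is that the action of $x_{i,0}^\pm$ on the basis $\{T_b\}$ is, by the specialization at $z^0$ of (\ref{actonpm}), governed by the Kashiwara operators $\tilde e_i, \tilde f_i$ on the crystal $\mathcal B(\varpi_{r+1})$ of semistandard Young tableaux of shape $(r+1)$ (a single column of height $r+1$, equivalently $\Lambda_{r+1}$ for $sl_{n+1}$, twisted by the affine node via the rotation). Since $\mathcal B(\varpi_{r+1})$ is exactly the crystal of the extremal fundamental weight module $V(\varpi_{r+1})$ for $\U_q(\hat{sl}_{n+1})$ (Theorem \ref{thmkas} and the theorem of Kashiwara on level-zero fundamental modules), and $w$ sits at the ``highest'' tableau $T^0$, the orbit of $w$ under the $S_i$'s matches the Weyl-group orbit of the extremal vector; concretely, $S_i$ sends $T_b$ to $T'_{b'}$ with $T'$ the image of $T$ under the appropriate power of $\tilde e_i$ or $\tilde f_i$, and all such images are again single basis vectors, so every $S_{i_1} \circ \cdots \circ S_{i_l}(w)$ is $i$-extremal for all $i$. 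I would phrase this cleanly by noting that the assignment $T_b \mapsto (b, T)$ intertwines the $\U_q^h$-action on weight-$0$ (for $d$) pieces with the crystal $\mathcal B(\varpi_{r+1})$, whence $w$ is extremal of weight $\varpi_{r+1}$.

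Then I would check that $V \left(\begin{array}{c} \ffbox{1}_{\ } \\ \vdots_{\ } \\ \ffbox{r+1}_a \end{array} \right)$ is cyclic on $w$: given an arbitrary basis vector $T_b$, since $w$ is extremal I can first move $b$ to $a$ by applying an element of the horizontal subalgebra realizing the appropriate $S_i$'s (as in the proof of Theorem \ref{thmexng}), reducing to tableaux with $s = 0$, i.e. to the crystal $\mathcal B(\varpi_{r+1})$, which is connected under $\tilde e_i, \tilde f_i$; irreducibility (already proved in the preceding proposition) then gives cyclicity. Combined with integrability and local finite-dimensionality of the vertical subalgebras (from the preceding lemma and Lemma \ref{tpalgv}), this shows $V \left(\begin{array}{c} \ffbox{1}_{\ } \\ \vdots_{\ } \\ \ffbox{r+1}_a \end{array} \right)$ is an extremal loop weight module of $\ell$-weight $e^{\varpi_{r+1}}Y_{r+1, aq^{-r-2}}Y_{0, aq^{-1}}^{-1}$. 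Finally, to pin down the isomorphism with $V(e^{\varpi_{r+1}}Y_{r+1, aq^{-r-2}}Y_{0, aq^{-1}}^{-1})$ rather than merely ``a'' module with that $\ell$-weight, I would appeal to the characterization in \cite{mansuy_quantum_2012} of the extremal fundamental loop weight module as the unique irreducible thin such module (or directly compare $q$-characters / explicit actions as in Proposition \ref{proprecrep}), matching the action formulas (\ref{actonpm})--(\ref{actoncart}) with the monomial-crystal formulas of \cite[Theorem 4.11]{mansuy_quantum_2012}.

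I expect the main obstacle to be the bookkeeping in the second step: correctly tracking how the affine node ($i = 0$) and the spectral-parameter shifts $b \mapsto bq^{\pm 2\delta_{i,0}}$ interact with the crystal structure, so that the $S_i$-orbit of $w$ really is a single basis vector at each stage and the resulting weight is genuinely $\varpi_{r+1}$ and not a $W$-translate with an unexpected spectral shift. Once the dictionary ``basis vectors $\leftrightarrow$ affine crystal $\mathcal B(\varpi_{r+1})$ with spectral grading'' is set up carefully, extremality and cyclicity follow formally from the known properties of $\mathcal B(\varpi_{r+1})$.
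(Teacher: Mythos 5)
Your proposal and the paper's proof end up in the same place only via the remark you relegate to a parenthesis. The paper's entire argument is the direct comparison: the action of $\U_q(sl_{n+1}^{tor})$ on $V(e^{\varpi_{r+1}}Y_{r+1, aq^{-r-2}}Y_{0, aq^{-1}}^{-1})$ is already known explicitly from \cite[Theorem 4.12]{mansuy_quantum_2012} in terms of monomial crystals, and under the dictionary $\ffbox{j}_b = Y_{j-1, bq^{j}}^{-1}Y_{j, bq^{j-1}}$ those formulas become exactly (\ref{actonpm}) and (\ref{actoncart}); the bijection of bases intertwining the two explicit actions \emph{is} the isomorphism, and the ``$\ell$-extremal'' conclusion is then inherited from the known properties of $V(e^{\varpi_{r+1}}Y_{r+1, aq^{-r-2}}Y_{0, aq^{-1}}^{-1})$ rather than re-proved. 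Your main line runs in the opposite direction (prove extremality, cyclicity, thinness, irreducibility intrinsically, then ``invoke uniqueness''), and the final step is a genuine gap: nowhere in this paper, nor in the results quoted from \cite{mansuy_quantum_2012}, is it established that an irreducible thin extremal loop weight module of a given $\ell$-weight is unique up to isomorphism. Unlike the quantum affine case, where Kashiwara's theorem states that any nonzero integrable module generated by an extremal vector of weight $\varpi_\ell$ is $V(\varpi_\ell)$, no such characterization exists for extremal loop weight modules; the notation $V(m)$ for non-dominant $m$ refers to a specific construction, not to a module defined by a universal property. So ``extremal generator of the right $\ell$-weight $+$ irreducible $+$ thin $\Rightarrow$ isomorphic to $V(m)$'' does not follow from anything available.

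Two further cautions on the intrinsic part of your argument. First, the extremality of $w = \ffbox{1}_a \otimes \ffbox{2}_{aq^{-2}} \otimes \cdots \otimes \ffbox{r+1}_{aq^{-2r}}$ cannot be deduced ``formally from the crystal $\mathcal{B}(\varpi_{r+1})$'': the paper explicitly observes, just before introducing $V \left(\begin{array}{c} \ffbox{1}_{\ } \\ \vdots_{\ } \\ \ffbox{k}_{a} \end{array} \right)$, that for $k$ satisfying $(E')$ the analogous vector is \emph{not} extremal, even though the same crystal combinatorics is present. Extremality here hinges on the precise vanishing and cancellation of the coefficients $A_u$ in (\ref{formref}) in the quotient module for the specific value $k=r+1$, so it has to be verified by computation, not read off from the crystal. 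Second, your $\ell$-weight bookkeeping for $w$ is not consistent as written ($\psi(aq^{-2(r+1)}qz)^{-1}$ would correspond to $Y_{0,aq^{-2r-2}}^{-1}$, not $Y_{0,aq^{-1}}^{-1}$); this step needs to be done carefully against (\ref{actoncart}). The cleanest fix is to promote your parenthetical alternative to the main argument: match (\ref{actonpm})--(\ref{actoncart}) term by term with the monomial-crystal formulas of \cite{mansuy_quantum_2012}, as in Proposition \ref{proprecrep}, and drop the uniqueness appeal entirely.
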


\begin{proof}
We got formulas of the action of $\U_q(sl_{n+1}^{tor})$ on $V(e^{\varpi_{r+1}}Y_{r+1, aq^{-r-2}}Y_{0, aq^{-1}}^{-1})$ in \cite[Theorem 4.12]{mansuy_quantum_2012} in terms of monomial crystals. They can be rewritten in Young tableaux notations by setting for all $b \in \CC^{\ast}$ and $j \in \ZZ$
$$\ffbox{j}_b = Y_{j-1, bq^{j}}^{-1}Y_{j, bq^{j-1}}.$$
We recover in this way formulas (\ref{actonpm}) and (\ref{actoncart}).
\end{proof}

\subsection{Finite-dimensional representations at roots of unity} Set $L \geq 1$ and let $\epsilon$ be a primitive $[(n+1)L]$-root of unity. Recall that $\U_\epsilon(sl_{n+1}^{tor})'$ is the algebra defined as $\U_q(sl_{n+1}^{tor})$ with $\epsilon$ instead of $q$ (without divided power and derivation element).

For $a \in \CC^{\ast}$, let $V(\ffbox{1}_a)_\epsilon$ be the $\U_\epsilon(sl_{n+1}^{tor})'$-module obtained from $V(\ffbox{1}_a)$ by specializing $q$ at $\epsilon$ in (\ref{actrv}) and by taking a quotient (see \cite{mansuy_quantum_2012}): it is irreducible of dimension $[(n+1)L]$.

\begin{thm}\label{thmrootunit1}
Let $k \in \NN^{\ast}$ and $a_1, \cdots , a_k \in \CC^{\ast}$ be such that $$\frac{a_i}{a_j} \notin \{1, \epsilon^{n+1}, \cdots , \epsilon^{(n+1)(L-1)}\} \quad \text{ and } \quad \frac{a_i}{a_j} \notin \{\epsilon^{\pm 2}, \epsilon^{\pm 2 +(n+1)}, \cdots , \epsilon^{\pm 2 + (n+1)(L-1)} \}$$ for all $1 \leq i,j \leq k$. Then $\Delta_D$ endows the tensor product
$$V(\ffbox{1}_{a_1})_\epsilon \otimes V(\ffbox{1}_{a_2})_\epsilon \otimes \cdots \otimes V(\ffbox{1}_{a_k})_\epsilon$$
with a well-defined structure of $\U_\epsilon(sl_{n+1}^{tor})'$-module. Furthermore it is irreducible and finite-dimensional of dimension $[(n+1)L]^{k}$.
\end{thm}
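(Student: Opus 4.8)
The plan is to transport to the root of unity $\epsilon$ the three ingredients that produced Theorem \ref{thmexgen} in the generic case --- existence of the $\Delta_D$-action, irreducibility, finite-dimensionality --- the point being that at $q=\epsilon$ the "bad" sets of ratios $\frac{a}{b}\in q^{(n+1)\ZZ}$ and $\frac{a}{b}\in q^{\pm 2+(n+1)\ZZ}$ from Proposition \ref{proptpdef} collapse to the \emph{finite} sets $\{1,\epsilon^{n+1},\dots,\epsilon^{(n+1)(L-1)}\}$ and $\{\epsilon^{\pm 2+(n+1)m}\mid 0\le m\le L-1\}$ appearing in the hypotheses, because $\epsilon$ has order $[(n+1)L]$ and hence $\epsilon^{n+1}$ is a primitive $L$-th root of unity.

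First I would establish the action. Each $V(\ffbox{1}_{a_i})_\epsilon$ is finite-dimensional of dimension $[(n+1)L]$ and thin, with the specialization of (\ref{actrv}) as explicit action, the basis vectors $\ffbox{j}_{a_i}$ being now indexed by $j\in\ZZ/[(n+1)L]\ZZ$. By Lemma \ref{lemsubm2} it then suffices to check that $\Delta_D(x_i^\pm(z))\cdot(\ffbox{j}_{a_r}\otimes\ffbox{j'}_{a_s})$ is a well-defined vector of $V(\ffbox{1}_{a_r})_\epsilon\otimes V(\ffbox{1}_{a_s})_\epsilon$ for every $r<s$. This is the root-of-unity analogue of Proposition \ref{proptpdef}(i): inserting the specialized formulas into (\ref{deltaxplus})--(\ref{deltaxmoins}), the coefficient of $z^m$ is a geometric series whose only possible pole lies at a point controlled by $\frac{a_r}{a_s}\epsilon^{c}$ with $c\equiv 0\pmod{n+1}$, and the first family of hypotheses excludes exactly such a ratio. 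Hence $\Delta_D$ endows $V(\ffbox{1}_{a_1})_\epsilon\otimes\cdots\otimes V(\ffbox{1}_{a_k})_\epsilon$ with a $\U_\epsilon(sl_{n+1}^{tor})'$-module structure. It is thin --- the monomial $\prod_t Y_{\overline{j_t},a_t\epsilon^{j_t-1}}Y_{\overline{j_t-1},a_t\epsilon^{j_t}}^{-1}$ of a basis vector recovers the tuple $(j_1,\dots,j_k)\in(\ZZ/[(n+1)L]\ZZ)^k$ under the stated conditions on the $\frac{a_i}{a_j}$ --- and of dimension $[(n+1)L]^k$, which already gives the finite-dimensionality claim.

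For irreducibility I would run the argument of Lemma \ref{lemirrmod} essentially verbatim, so the task reduces to checking that in the specialization of (\ref{formref}) every coefficient $A_u$ is nonzero. These $A_u$ are products of factors $\psi(\frac{a_s}{a_u}\epsilon^{\star})^{\pm 1}$, and at $q=\epsilon$ one has $\psi$ vanishing only at $\epsilon^2$ and having a pole only at $1$; inspecting the residues modulo $n+1$ of the relevant exponents $\star$ shows $A_u\ne 0$ for all $u$ precisely because $\frac{a_i}{a_j}$ avoids the first family and --- and this is where it enters --- the second family of forbidden ratios, i.e. pairwise we are in case (iv) of Proposition \ref{proptpdef}. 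The Cartan part of $\U_\epsilon(sl_{n+1}^{tor})'$ then acts diagonally with simple joint spectrum, so a nonzero submodule contains a basis vector; since each tensor factor is indexed by the \emph{cyclic} group $\ZZ/[(n+1)L]\ZZ$, which is connected under the moves $\ffbox{j_u}\mapsto\ffbox{j_u\pm 1}$, thinness together with $A_u\ne 0$ lets one pass between any two basis vectors by the action of $\U_\epsilon(sl_{n+1}^{tor})'$, whence irreducibility.

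The hard part is the first step: one must actually carry out the root-of-unity version of the delicate analysis behind Proposition \ref{proptpdef}, verifying that the unique obstruction to resumming the infinite sums defining $\Delta_D$ is a simple pole whose location is governed exactly by the stated congruences on the $\frac{a_i}{a_j}$, and that passing to $q=\epsilon$ and to the quotient defining the finite-dimensional $V(\ffbox{1}_a)_\epsilon$ introduces no further degeneracy (in particular that no $\epsilon$-integer required to be invertible vanishes, and that the identification of $\ffbox{j}_a$ with $\ffbox{j+[(n+1)L]}_a$ is compatible with the pairwise computations). Once this is in place, the irreducibility step and the dimension count are routine adaptations of Lemmas \ref{lemirrmod} and \ref{tpalgv}.
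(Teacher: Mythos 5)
Your proposal is correct and follows exactly the route the paper intends: the paper's entire proof of Theorem \ref{thmrootunit1} is the single sentence ``The proof is the same than when $q$ is not a root of unity,'' i.e.\ one transports the existence argument of Proposition \ref{proptpdef} together with Lemmas \ref{lemsubm2} and \ref{lemirrmod} to $q=\epsilon$, observing that the hypotheses on the $\frac{a_i}{a_j}$ are precisely the specializations at $\epsilon$ of the forbidden sets $q^{(n+1)\ZZ}$ and $q^{\pm 2+(n+1)\ZZ}$. Your write-up in fact supplies more detail than the paper does, and correctly identifies where each hypothesis is used.
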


\begin{proof}
The proof is the same than when $q$ is not a root of unity.
\end{proof}

\section{Tensor products of $\U_{q}(sl_{2r+2}^{tor})$-modules $V(Y_{r+1,a}Y_{0,aq^{r+1}}^{-1})$}

In the whole section we assume that $n=2r+1$ is odd with $r \geq 1$. We study tensor products of representations $V \left(\begin{array}{c} \ffbox{1}_{\ } \\ \vdots_{\ } \\ \ffbox{r+1}_a \end{array} \right)$. We obtain new extremal loop weight modules for $\U_q(sl_{n+1}^{tor})$.

In the first part we study a tensor product with two factors (Proposition \ref{proptpdef2}). The case of tensor products with generic complex parameters is considered in the second part (Theorem \ref{thmexgen2}). In the last part we get new irreducible finite-dimensional modules over $\U_\epsilon(sl_{n+1}^{tor})'$ by specializing $q$ at a root of unity $\epsilon$.

\subsection{Existence conditions}

\begin{prop}\label{proptpdef2}
The action of $\U_q(sl_{n+1}^{tor})$ on the tensor product $$V \left(\begin{array}{c} \ffbox{1}_{\ } \\ \vdots_{\ } \\ \ffbox{r+1}_a \end{array} \right) \otimes V \left(\begin{array}{c} \ffbox{1}_{\ } \\ \vdots_{\ } \\ \ffbox{r+1}_b \end{array} \right)$$
is well-defined if and only if $\dfrac{a}{b} \notin q^{2 \ZZ}$.
\end{prop}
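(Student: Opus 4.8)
The plan is to determine precisely when the infinite sums appearing in $\Delta_D(x_i^{\pm}(z))$ converge on the tensor product, using the spectral-parameter analysis developed for Theorem \ref{tpexist} and Proposition \ref{propactex}. First I would recall, via Proposition \ref{propacttaby}, that each factor $V\left(\begin{array}{c}\ffbox{1}_{\ }\\ \vdots_{\ }\\ \ffbox{r+1}_a\end{array}\right)$ is thin with its $\ell$-weights explicitly labelled by pairs $(s,T)$; in particular the spectral parameters occurring in the $\phi_i^{\pm}(z)$-action on the first factor lie in $a q^{2\ZZ}$ and those occurring in the $x_i^{\pm}(z)$-action on the second factor lie in $b q^{2\ZZ}$. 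By Lemma \ref{lemsubm2} it suffices to check well-definedness of $\Delta_D(x_i^{\pm}(z))\cdot(v\otimes w)$ for $v\otimes w$ in a single tensor product of two factors.

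The key computation, as in the proof of Theorem \ref{tpexist}, is to expand the coefficient of $z^m$ in $(\phi_i^-(z)\cdot v)\otimes(x_i^+(uz)\cdot w)$ (and symmetrically for $x_i^-(z)$) as a sum of geometric-type series $\sum_{r\geq 1}(\lambda_-\lambda_+)^r r^{k+l} u^{m+r}$; each such series is a rational fraction in $u$ with a pole exactly at $u=(\lambda_-\lambda_+)^{-1}$. The action on the undeformed tensor product ($u=1$) is well-defined precisely when no such pole hits $u=1$, i.e. when $\lambda_-\lambda_+\neq 1$ for all eigenvalues $\lambda_-$ of $\Phi_-$ on the first factor and $\lambda_+$ of $\Phi_+$ on the second. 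By the Lemma preceding Lemma \ref{indvp}, these eigenvalues have the form $(\lambda_\pm(m)-\lambda_\pm(m'))/[2]_q$ with $m,m'$ running over $\mathcal{M}$ of the respective factor; since the monomials occurring are products of $Y_{j,b'}^{\pm1}$ with $b'\in aq^{2\ZZ}$ (resp. $bq^{2\ZZ}$), the condition $\lambda_-\lambda_+=1$ forces $a/b\in q^{2\ZZ}$. Conversely, when $a/b\in q^{2\ZZ}$ I would exhibit an explicit vector $v\otimes w$ on which the relevant sum diverges — the natural candidate is a highest-type tensor $T_{aq^{2s}}\otimes T'_{bq^{2s'}}$ chosen so that a $\phi^-$-eigenvalue on the left and an $x^+$-eigenvalue on the right multiply to $1$, making $\gamma_{k,l,\lambda_-,\lambda_+}(u)$ have a pole at $u=1$ and no cancellation occurring because the module is thin; this mirrors the divergence lemma in the proof of Proposition \ref{propactex}.

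The main obstacle I anticipate is the ``only if'' direction: converting a pole of an individual $\gamma_{k,l,\lambda_-,\lambda_+}(u)$ at $u=1$ into an actual divergence of the full sum requires ruling out cancellation between different $(k,l,\lambda_-,\lambda_+)$-terms. Here the thinness of $V\left(\begin{array}{c}\ffbox{1}_{\ }\\ \vdots_{\ }\\ \ffbox{r+1}_a\end{array}\right)$ (established in the previous proposition) is essential: because all $\ell$-weight spaces are one-dimensional, each operator $C^-_{k,\lambda_-}(v)\otimes A^+_{l,\lambda_+}(w)$ lands in a distinct one-dimensional $\ell$-weight space of the tensor product, so no cancellation is possible and a single divergent coefficient genuinely obstructs the action. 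With this observation in hand, choosing the $i=\overline{i}$ such that a box of value $i$ sits at the bottom of the left tableau and a box of value $i+1$ at the top of the right tableau produces the required divergence exactly when $a/b\in q^{2\ZZ}$, completing the equivalence.
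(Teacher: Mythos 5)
Your overall strategy (reduce to two factors via Lemma \ref{lemsubm2}, then locate the poles of the coefficient series) is reasonable, but the central step has a genuine gap. You propose to detect the poles through the eigenvalues $\lambda_\pm$ of $\Phi_\pm=\frac{1}{[2]_q}\mathrm{ad}(h_{i,\pm1})$, which by the lemma preceding Lemma \ref{indvp} are of the form $(\lambda_\pm(m)-\lambda_\pm(m'))/[2]_q$. Since the spectral parameters in the $i$-th direction of the two factors lie in $aq^{i+2\ZZ}$ and $bq^{i+2\ZZ}$ respectively, this only tells you that $\lambda_-\lambda_+$ belongs to $\frac{b}{a}\,[2]_q^{-2}\,\ZZ[q^{2},q^{-2}]$. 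The condition $\lambda_-\lambda_+=1$ therefore does \emph{not} force $a/b\in q^{2\ZZ}$: for instance $\lambda_-=a^{-1}q^{-i}/[2]_q$ and $\lambda_+=bq^{i}/[2]_q$ are admissible values of the stated form, and their product equals $1$ when $b/a=[2]_q^{2}=q^{2}+2+q^{-2}$, which is not in $q^{2\ZZ}$. Consequently your argument establishes neither direction of the equivalence: for the ``if'' direction the a priori eigenvalue set would only yield well-definedness under a hypothesis stronger than $a/b\notin q^{2\ZZ}$, and for the ``only if'' direction a pole of some a priori admissible $\gamma_{k,l,\lambda_-,\lambda_+}$ at $u=1$ does not imply that this pair $(\lambda_-,\lambda_+)$ actually occurs in these particular modules.

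The way to close the gap --- and this is what the paper does --- is to bypass the eigenvalue bound entirely and use the explicit formulas (\ref{actonpm}) and (\ref{actoncart}). Because both factors are thin and the currents $x_i^{\pm}(z)$ act by finitely many delta functions $\delta(\alpha q^{i-2p}z)$ while $\phi_i^{\pm}(z)$ acts diagonally by an explicit product of $\psi^{\pm1}$'s, the vector $\Delta_D(x_i^{-}(z))\cdot\bigl(T^{(1)}_{\alpha}\otimes T^{(2)}_{\beta}\bigr)$ is a \emph{finite} sum whose only possibly ill-defined coefficients are the values $\psi\bigl(\frac{\beta}{\alpha}q^{2(u-v)}\bigr)$ and $\psi\bigl(\frac{\beta}{\alpha}q^{2(u-v)+2}\bigr)^{-1}$ obtained by evaluating the eigenvalue of $\phi_i^{+}(z)$ on the second factor at the support of the delta functions coming from the first. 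Since $\psi$ and $\psi^{-1}$ have their unique poles at $1$ and $q^{2}$ respectively, these values exist for all tableaux and all $i\in I$ exactly when $a/b\notin q^{2\ZZ}$, which gives both implications at once and requires no $u$-deformation or $\mathcal{A}$-form machinery. Your closing remark that thinness prevents cancellation is the right idea for turning a singular coefficient into an actual obstruction, but you would also need to verify that $\tilde f_i\cdot T^{(1)}$ is nonzero for your chosen tableaux so that the offending term is genuinely present.
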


\begin{proof}
Let us consider $$T^{(1)} = (1 \leq i_1^{(1)} < \cdots < i_\ell^{(1)} \leq n+1) \text{ and } T^{(2)} = (1 \leq i_1^{(2)} < \cdots < i_\ell^{(2)} \leq n+1)$$ two Young tableaux of shape $(r+1)$ and $\alpha \in aq^{2\ZZ}, \beta \in bq^{2\ZZ}$. Consider the vector $x_i^-(z) \cdot \left(T_\alpha^{(1)} \otimes T_\beta^{(2)} \right)$. It is a linear combination of $T_\alpha^{(1)} \otimes(\tilde{f}_i \cdot T^{(2)})_{\beta q^{2 \delta_{i,0}}}$ and $(\tilde{f}_i \cdot T^{(1)})_{\alpha q^{2\delta_{i,0}}} \otimes T_\beta^{(2)}$. The coefficient on the first vector is well-defined. For the second one, it is equal to
$$\sum_{\begin{tiny}
\begin{array}{c}
1 \leq u \leq r+1\\
i_u^{(1)} = i
\end{array}
\end{tiny}} \delta(\alpha q^{i-2u}z) \left( \displaystyle \prod_{{\tiny \begin{array}{c} v \\ i_v^{(2)} = i+1 \\ i_{v-1}^{(2)} \neq i \end{array}}} \psi \left(\frac{\beta}{\alpha} q^{2(u-v)+2} \right)^{-1} \prod_{{\tiny \begin{array}{c} v \\ i_v^{(2)} = i \\ i_{v+1}^{(2)} \neq i+1 \end{array}}} \psi \left(\frac{\beta}{\alpha} q^{2(u-v)} \right) \right)$$
which is well-defined if and only if $\dfrac{\alpha}{\beta} q^{2(v-u)} \neq 1$. Then a necessary and sufficient condition is that $\dfrac{a}{b} \notin q^{2\ZZ}$. Similar computations for $x_i^+(z)$ lead to the same condition. Lemma \ref{lemsubm} gives the result.
\end{proof}

\subsection{The generic case}

\begin{thm}\label{thmexgen2}
Let $k \in \NN^{\ast}$ and $a_1, \cdots, a_k \in \CC^{\ast}$ be such that $\frac{a_i}{a_j} \notin q^{2\ZZ}$ for each $i < j$. Then the tensor product
 $$V = V \left(\begin{array}{c} \ffbox{1}_{\ \ } \\ \vdots_{\ \ } \\ \ffbox{r+1}_{a_1} \end{array} \right) \otimes V \left(\begin{array}{c} \ffbox{1}_{\ \ } \\ \vdots_{\ \ } \\ \ffbox{r+1}_{a_2} \end{array} \right) \otimes \cdots \otimes V \left(\begin{array}{c} \ffbox{1}_{\ \ } \\ \vdots_{\ \ } \\ \ffbox{r+1}_{a_k} \end{array} \right)$$
is an irreducible and thin extremal loop weight $\U_q(sl_{n+1}^{tor})$-module of $\ell$-weight $$e^{k \varpi_{r+1}}Y_{r+1,a_1}\cdots Y_{r+1, a_k}Y_{0,a_1 q^{r+1}}^{-1} \cdots Y_{0, a_k q^{r+1}}^{-1}.$$
\end{thm}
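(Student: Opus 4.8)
The plan is to run the same three-step scheme as in the proof of Theorem \ref{thmexgen}, with the vector representations replaced by the modules of Theorem \ref{tpeflwm} and with Proposition \ref{proptpdef2} playing the role of Proposition \ref{proptpdef}.

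\textbf{Existence of the action, thinness, integrability.} Each factor $V\!\left(\begin{array}{c}\ffbox{1}\\ \vdots\\ \ffbox{r+1}_{a_i}\end{array}\right)$ is thin, and by Proposition \ref{proptpdef2} the action of $\U_q(sl_{n+1}^{tor})$ is well-defined on the tensor product of any two of them since $\frac{a_r}{a_s}\notin q^{2\ZZ}$ for $r<s$; Lemma \ref{lemsubm2} then endows $V$ with a $\U_q(sl_{n+1}^{tor})$-module structure. The module $V$ is thin because the assignment sending a basis vector $T^{(1)}_{b_1}\otimes\cdots\otimes T^{(k)}_{b_k}$ to its $\ell$-weight monomial, computed from (\ref{actoncart}), is injective. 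Since each factor is $\ell$-integrable, Lemma \ref{tpalgv} shows $V$ is integrable and each vertical quantum affine subalgebra $\U_q^{v,j}(sl_{n+1}^{tor})$ acts locally finite-dimensionally on $V$; this is condition (iii) of Definition \ref{defelm}.

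\textbf{Irreducibility.} As in the proof of Lemma \ref{lemirrmod}, I would iterate the formulas (\ref{actonpm})--(\ref{actoncart}) through $\Delta_D$ to compute $x_i^\pm(z)\cdot(T^{(1)}_{b_1}\otimes\cdots\otimes T^{(k)}_{b_k})$; exactly as in the proof of Proposition \ref{proptpdef2}, the result is a sum over the factors $u$, the $u$-th term being $\delta(b_uq^{\bullet}z)$ times a product of values $\psi\!\left(\tfrac{b_s}{b_u}q^{2\bullet}\right)^{\pm1}$ over $s>u$, applied to the vector in which $T^{(u)}$ has been modified by $\tilde f_i$ (resp. $\tilde e_i$) together with a possible shift $b_u\mapsto b_uq^{\pm2}$. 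The hypothesis $\frac{a_i}{a_j}\notin q^{2\ZZ}$ makes every such scalar nonzero, and thinness lets one isolate each $\delta$-summand, so from any basis vector the action of $\U_q(sl_{n+1}^{tor})$ reaches the effect of applying $\tilde e_i$ or $\tilde f_i$ in a single factor. Since the affine $\varpi_{r+1}$-crystal carried by each factor is connected, $\U_q(sl_{n+1}^{tor})\cdot w=V$ for every nonzero $w$, hence $V$ is irreducible.

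\textbf{Loop extremality.} Let $v_i$ be the extremal vector of weight $\varpi_{r+1}$ of the $i$-th factor (Theorem \ref{tpeflwm}) and set $v=v_1\otimes\cdots\otimes v_k$. Because $\Delta_D(\phi_i^\pm(z))=\phi_i^\pm(z)\otimes\phi_i^\pm(z)$, the $\ell$-weight of $v$ is the product of the $\ell$-weights of the $v_i$, i.e. the monomial $e^{k\varpi_{r+1}}Y_{r+1,a_1}\cdots Y_{r+1,a_k}Y_{0,a_1q^{r+1}}^{-1}\cdots Y_{0,a_kq^{r+1}}^{-1}$ of the statement. I would then show by induction on $l$ that every $S_{i_1}\circ\cdots\circ S_{i_l}(v)$ is a \emph{coherent} tensor $w_1\otimes\cdots\otimes w_k$ in which all $w_i$ are the same element $U$ of the affine $\varpi_{r+1}$-crystal, all of whose $i$-strings ($i\in I$) have length at most one. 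The induction step mimics Lemma \ref{extvectgen}: if $\wt(U)(h_i)=0$ then $\tilde e_iU=\tilde f_iU=0$ and the tensor is already $i$-extremal; if $\wt(U)(h_i)\ne0$ then $(x_{i,0}^\mp)^{(k)}$ can act only once in each factor, thinness forces its image to be a scalar multiple of the unique basis vector of weight $k(\wt(U)\mp\alpha_i)$ — again coherent — and the scalar is computed by the identity $\sum_{1\le i\le k}\prod_{j\ne i}\frac{QX_i-Q^{-1}X_j}{X_i-X_j}=[k]_Q$ (Example 2.(a), p.319 of \cite{macdonald_symmetric_1995}) to be $1$, exactly as in Lemma \ref{extvectgen}. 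Hence $v$ is extremal for the horizontal quantum affine subalgebra, and together with the irreducibility of the previous step (so $\U_q(sl_{n+1}^{tor})\cdot v=V$) and the local finite-dimensionality of the vertical subalgebras, $V$ is an extremal loop weight module of the claimed $\ell$-weight.

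\textbf{Main obstacle.} The delicate part is the last step, and within it the coherence claim: one must verify that $S_i$ applied to $w_1\otimes\cdots\otimes w_k$ keeps all components equal rather than producing a genuine mixture of crystal elements, and that the Macdonald-type scalar never vanishes. For $i=0$ this also requires tracking the cyclic shift $\tilde e_0,\tilde f_0$ on the affine crystal and the spectral-parameter shifts $b\mapsto bq^{\pm2}$ in (\ref{actonpm}); this is once more where $\frac{a_i}{a_j}\notin q^{2\ZZ}$ enters, keeping the various $\delta$-summands separated so that thinness can be used.
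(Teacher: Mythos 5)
Your proposal is correct and follows essentially the same route as the paper, whose own proof is a one-liner deferring everything to Section 4: it observes that $V$ is a subquotient of a tensor product of vector representations and invokes Proposition \ref{proptpdef2}, Lemma \ref{lemsubm2}, and the arguments of Lemma \ref{lemirrmod} and Lemma \ref{extvectgen} (including the same Macdonald identity) without further detail. Your version simply carries out those steps directly on the tableau bases of Proposition \ref{propacttaby} rather than passing through the vector-representation picture, and correctly identifies the coherence of the extremal orbit as the only point needing genuine verification.
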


\begin{proof}
The existence of the action follows by Proposition \ref{proptpdef2} and Lemma \ref{lemsubm2}. The representation $V$ hence obtained is a subquotient of tensor products of vector representations and we are led to similar studies as the above section. Then we prove in the same way the remaining statements of Theorem \ref{thmexgen2}. We do not detail these points.

%
%
%
\end{proof}

\subsection{Finite-dimensional representations at roots of unity}

Set $L > 1$ and let $\epsilon$ be a primitive $[2L]$-root of unity. For all $a \in \CC^{\ast}$, let $V \left(\begin{array}{c} \ffbox{1}_{\ } \\ \vdots_{\ } \\ \ffbox{r+1}_a \end{array} \right)_\epsilon$ be the $\U_\epsilon(sl_{n+1}^{tor})'$-module obtained from $V \left(\begin{array}{c} \ffbox{1}_{\ } \\ \vdots_{\ } \\ \ffbox{r+1}_a \end{array} \right)$ by specializing $q$ at $\epsilon$ in (\ref{actonpm}) and (\ref{actoncart}) and by taking a quotient (see \cite{mansuy_quantum_2012}): it is an irreducible and finite-dimensional $\U_\epsilon(sl_{n+1}^{tor})'$-module of dimension $L \cdot \left( \begin{array}{c} n+1 \\ r+1 \end{array} \right)$. We prove as in the generic case that

\begin{thm}\label{thmrootunit2}
Let $k \in \NN^{\ast}$ and $a_1, \cdots, a_k \in \CC^{\ast}$ be such that $\frac{a_i}{a_j} \notin \epsilon^{2\ZZ}$ for all $i < j$. Then $\Delta_D$ endows the tensor product
$$V \left(\begin{array}{c} \ffbox{1}_{\ \ } \\ \vdots_{\ \ } \\ \ffbox{r+1}_{a_1} \end{array} \right)_\epsilon \otimes V \left(\begin{array}{c} \ffbox{1}_{\ \ } \\ \vdots_{\ \ } \\ \ffbox{r+1}_{a_2} \end{array} \right)_\epsilon \otimes \cdots \otimes V \left(\begin{array}{c} \ffbox{1}_{\ \ } \\ \vdots_{\ \ } \\ \ffbox{r+1}_{a_k} \end{array} \right)_\epsilon$$
with a well-defined structure of $\U_\epsilon(sl_{n+1}^{tor})'$-module. Furthermore it is irreducible and finite-dimensional of dimension $\left[ L \cdot \left( \begin{array}{c} n+1 \\ r+1 \end{array} \right) \right]^{k}$.
\end{thm}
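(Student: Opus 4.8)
The plan is to follow, essentially line by line, the proof of Theorem~\ref{thmexgen2} (hence ultimately of Theorem~\ref{thmexgen}), replacing $q$ by the root of unity $\epsilon$ throughout and each tensor factor by its finite-dimensional specialization $V\left(\begin{array}{c}\ffbox{1}_{\ }\\ \vdots_{\ }\\ \ffbox{r+1}_{a_j}\end{array}\right)_\epsilon$. The statement then splits into three points: that $\Delta_D$ is well-defined on the $k$-fold tensor product $V$; that $V$ is finite-dimensional of the stated dimension; and that $V$ is irreducible.

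First I would prove existence of the action. By Lemma~\ref{lemsubm2} it suffices to treat two factors, i.e.\ to establish the $\epsilon$-version of Proposition~\ref{proptpdef2}. After specializing $q$ to $\epsilon$ the function $\psi$ becomes $\psi(w)=\frac{\epsilon-\epsilon^{-1}w}{1-w}$, and the explicit computation in the proof of Proposition~\ref{proptpdef2} shows that every coefficient produced by $\Delta_D(x_i^\pm(z))$ on a pure tensor $T^{(1)}_\alpha\otimes T^{(2)}_\beta$ --- with $\alpha\in a_r\epsilon^{2\ZZ}$, $\beta\in a_s\epsilon^{2\ZZ}$ and $r<s$ --- is a finite product of factors $\psi\!\left(\frac{\beta}{\alpha}\epsilon^{2m}\right)^{\pm1}$, $m\in\ZZ$, the infinite sums in $\Delta_D$ reducing (exactly as there) to such products because each factor is built out of the vector representation, so that the relevant currents act by $\delta$-type series. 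Since $\psi$ has its only pole at $w=1$ and $\frac{\beta}{\alpha}\in\frac{a_s}{a_r}\epsilon^{2\ZZ}$, the hypothesis $\frac{a_i}{a_j}\notin\epsilon^{2\ZZ}$ gives $\frac{\beta}{\alpha}\epsilon^{2m}\neq 1$ for every $m$, so all of these coefficients are well-defined in $\CC$; hence, by Lemma~\ref{lemsubm2}, $\Delta_D$ equips $V$ with a $\U_\epsilon(sl_{n+1}^{tor})'$-module structure. As a vector space $V$ is the tensor product of the $k$ factors, so it is finite-dimensional of dimension $\left[L\binom{n+1}{r+1}\right]^{k}$.

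For irreducibility I would mimic Lemma~\ref{lemirrmod}. First $V$ is thin: the $\ell$-weight of a pure tensor of basis vectors is the product of the $\ell$-weights of its factors, and one checks, again using $\frac{a_i}{a_j}\notin\epsilon^{2\ZZ}$, that this product determines the tuple of (tableau, shift) labels, so the joint spectrum is simple. Next, in formula~(\ref{formref}) for $x_i^\pm(z)$ acting on a basis vector of $V$, the coefficients $A_u$ are products of values of $\psi$: those internal to a single factor are nonzero because each factor $V(\ldots)_\epsilon$ is already irreducible, and those of the form $\psi\!\left(\frac{a_s}{a_r}\epsilon^{2m}\right)$ are nonzero since $\psi$ vanishes only at argument $\epsilon^2$ while $\frac{a_r}{a_s}\notin\epsilon^{2\ZZ}$. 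Exactly as in Lemma~\ref{lemirrmod} this lets one pass, inside the thin module $V$, from any basis vector to any other, whence $V$ is irreducible.

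The step I expect to be the only genuine obstacle is the first one: one must verify that specializing $q\mapsto\epsilon$ introduces no new vanishing of a denominator $1-w$ and no new coincidence among the finitely many powers $\epsilon^{2m}$ that the generic argument did not have to confront --- in other words, that $\frac{a_i}{a_j}\notin\epsilon^{2\ZZ}$ is exactly the correct specialization of $\frac{a_i}{a_j}\notin q^{2\ZZ}$. Once this bookkeeping is carried out, everything else is a verbatim transcription of the corresponding steps in the proofs of Theorem~\ref{thmexgen2} and Theorem~\ref{thmexgen}, which is why the proof amounts to the remark that it is ``the same as in the generic case''.
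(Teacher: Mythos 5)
Your proposal is correct and matches the paper's approach: the paper itself disposes of Theorem \ref{thmrootunit2} with the single remark that one ``proves as in the generic case,'' and your fleshed-out version — reducing to two factors via Lemma \ref{lemsubm2}, checking that the hypothesis $\frac{a_i}{a_j}\notin\epsilon^{2\ZZ}$ keeps every $\psi$-factor away from its pole at $1$ and its zero at $\epsilon^2$, and then rerunning the thinness/irreducibility argument of Lemma \ref{lemirrmod} — is exactly the intended bookkeeping. The dimension count $\left[L\binom{n+1}{r+1}\right]^{k}$ also agrees with the paper's description of the specialized factors.
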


\section{Further possible developments}

In a further paper we expect to construct extremal loop weight modules for quantum toroidal algebras of any type. We give here an example of such construction for the quantum toroidal algebra $\U_q(\Glie^{tor})$ of type $D_4$.

\newcommand{\eq}[1][r]
   {\ar@<-0pt>@{-}[#1]}

\begin{center} \begin{tikzpicture}
\tikzstyle{point}=[circle,draw]
\tikzstyle{ligne}=[thick]
\tikzstyle{pointille}=[thick,dotted]

\node (1) at (-2, 0) [point]{1};
\node (2) at (0,0) [point] {2};
\node (3) at (2,0) [point] {3};
\node (4) at (0,2) [point] {4};
\node (5) at (0,-2) [point] {0};

\draw [ligne] (1) -- (2);
\draw [ligne] (2) -- (3);
\draw [ligne] (2) -- (4);
\draw [ligne] (2) -- (5);

\end{tikzpicture}\end{center}

Let us consider the tensor product $V(e^{\Lambda_1}Y_{1,a}) \otimes V(e^{-\Lambda_0}Y_{0,aq^{2}}^{-1})$.

\begin{prop}
The coproduct $\Delta_D$ endows $V(e^{\Lambda_1}Y_{1,a}) \otimes V(e^{-\Lambda_0}Y_{0,aq^{2}}^{-1})$ with a structure of $\U_q(\Glie^{tor})$-module.
\end{prop}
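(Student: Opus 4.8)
The plan is to mimic the $\mathcal{A}$-form argument used in the proof of Theorem \ref{tpexist}, now for the quantum toroidal algebra of type $D_4$. First I would introduce the $u$-deformation $\Delta_D^u$ of the Drinfeld coproduct and set $\tilde{V} = \left( V(e^{\Lambda_1}Y_{1,a}) \otimes_\CC V(e^{-\Lambda_0}Y_{0,aq^2}^{-1}) \right) \otimes_\CC \mathcal{A}$, aiming to show it is an $\mathcal{A}$-form of $V = \left( V(e^{\Lambda_1}Y_{1,a}) \otimes_\CC V(e^{-\Lambda_0}Y_{0,aq^2}^{-1}) \right) \otimes_\CC \CC(u)$. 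Conditions (ii) and (iii) of the definition of $\mathcal{A}$-form are immediate, so the whole point is condition (i): the stability of $\tilde{V}$ under $\U_q^u(\Glie^{tor})$, which reduces to checking that the coefficients of $\Delta_D^u(x_i^\pm(z)) \cdot (v \otimes w)$, when expanded using Proposition \ref{actxphi} applied to the $\hat{\U}_i$-submodules generated by $v$ and $w$, are rational fractions in $u$ regular at $u=1$. These coefficients are the $\gamma_{k,l,\lambda_-,\lambda_+}(u)$ with pole at $u = (\lambda_-\lambda_+)^{-1}$, so regularity at $u=1$ amounts to $\lambda_-\lambda_+ \neq 1$ for all eigenvalues $\lambda_\pm$ that occur.

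Next I would establish the type-$D_4$ analogues of Lemma \ref{indvp} and the Remark following it. The key input is the $q$-character of the simple $\ell$-highest weight module $V(e^{\Lambda_1}Y_{1,a})$: the variable $Y_{i,aq^k}$ first occurs at $k = d(i,1)$ where $d$ is the distance in the Dynkin diagram of type $D_4$ (so $d(1,1)=0$, $d(2,1)=1$, $d(3,1)=d(4,1)=d(0,1)=2$). This gives, for $W = \hat{\U}_i \cdot v$ with $v$ a weight vector of $V(e^{\Lambda_1}Y_{1,a})$, that $[2]_q \lambda_- \in a^{-1}q^{-d(i,1)}(\ZZ + q^{-1}\ZZ[q^{-1}])$; symmetrically, for $W = \hat{\U}_i \cdot w$ with $w \in V(e^{-\Lambda_0}Y_{0,aq^2}^{-1})$, one gets $[2]_q \lambda_+ \in aq^2 q^{-d(i,0)}(\ZZ + q^{-1}\ZZ[q^{-1}])$. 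Hence $([2]_q)^2 \lambda_-\lambda_+ \in q^{2-d(i,1)-d(i,0)}(\ZZ + q^{-1}\ZZ[q^{-1}])$. Here the shift is $b/a = q^2$ (matching the spectral parameter $aq^2$ in the second tensor factor), so the exponent $2 - d(i,1) - d(i,0)$ is the relevant quantity; running over $i \in \{0,1,2,3,4\}$ one checks $d(i,1)+d(i,0) \in \{2,3,4\}$ for $D_4$, so $2 - d(i,1) - d(i,0) \leq 0$, which keeps $([2]_q)^2\lambda_-\lambda_+$ inside $q^{\leq 0}(\ZZ + q^{-1}\ZZ[q^{-1}])$ and, crucially, away from $([2]_q)^2 = q^2 + 2 + q^{-2}$, forcing $\lambda_-\lambda_+ \neq 1$.

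Given these estimates, I would conclude exactly as in the proof of Theorem \ref{tpexist}: each $\gamma_{k,l,\lambda_-,\lambda_+}(u)$ lies in $\mathcal{A}$, so $\tilde{V}$ is stable under $\U_q^u(\Glie^{tor})$ and is therefore an $\mathcal{A}$-form of $V$; by the specialization proposition of Hernandez \cite{hernandez_drinfeld_2007}, $\tilde{V}/((u-1)\tilde{V}) \simeq V(e^{\Lambda_1}Y_{1,a}) \otimes_\CC V(e^{-\Lambda_0}Y_{0,aq^2}^{-1})$ acquires a structure of $\U_q(\Glie^{tor})$-module via $\Delta_D$. The main obstacle I anticipate is purely combinatorial and type-specific: verifying that the Drinfeld-polynomial/$q$-character bookkeeping gives the right first-occurrence degrees $d(i,1)$ and $d(i,0)$ for $D_4$ (in particular that $Y_{2,\cdot}$ genuinely appears before $Y_{3,\cdot}, Y_{4,\cdot}, Y_{0,\cdot}$, reflecting that node $2$ is the trivalent center), and then confirming that the case analysis over the five nodes $i$ never produces $\lambda_-\lambda_+ = 1$. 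Everything else transfers verbatim from the type-$A$ argument since the formal structure of $\Delta_D^u$, Proposition \ref{actxphi}, and the $\mathcal{A}$-form machinery are type-independent.
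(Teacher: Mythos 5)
Your proposal is correct and follows essentially the same route as the paper, whose entire proof is the one-line remark that the argument of Theorem \ref{tpexist} goes through in type $D_4$; you have simply supplied the details that this deferral presupposes (the $\mathcal{A}$-form argument, the $D_4$ analogue of Lemma \ref{indvp}, and the check that $d(i,1)+d(i,0)\geq 2$ for every node $i$, which together with $b/a=q^{2}=q^{d(0,1)}\notin q^{2+d(0,1)+\NN}$ forces $\lambda_-\lambda_+\neq 1$). Your anticipated ``obstacle'' about first-occurrence degrees is not one: the $q$-character statement underlying Lemma \ref{indvp} is type-independent for quantum affinizations, so the verification is exactly the distance bookkeeping you carried out.
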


\begin{proof}
This follows by Theorem \ref{tpexist}, the proof holding in type $D_4$.
\end{proof}

Let $T(e^{\varpi_1}Y_{1,a}Y_{0,aq^{2}}^{-1})$ be the sub-$\U_q(\Glie^{tor})$-module of $V(e^{\Lambda_1}Y_{1,a}) \otimes V(e^{-\Lambda_0}Y_{0,aq^{2}}^{-1})$ generated by $v^+ \otimes v^-$ where $v^+$ (resp. $v^-$) is a $\ell$-highest weight vector of $V(e^{\Lambda_1}Y_{1,a})$ (resp. a $\ell$-lowest weight vector of $V(e^{-\Lambda_0}Y_{0,aq^{2}})$).

\begin{thm}\label{extypd}
$T(e^{\varpi_1}Y_{1,a}Y_{0,aq^{2}}^{-1})$ is an extremal loop weight  $\U_q(\Glie^{tor})$-module of $\ell$-weight $e^{\varpi_1}Y_{1,a}Y_{0,aq^{2}}^{-1}$.
\end{thm}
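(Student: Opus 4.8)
The strategy is to mimic exactly the argument used for type $A$ in Section 3 (Proposition \ref{repvect} and Theorem \ref{repvect2}), since by the preceding proposition the Drinfeld coproduct already gives a well-defined $\U_q(\Glie^{tor})$-action on $V(e^{\Lambda_1}Y_{1,a}) \otimes V(e^{-\Lambda_0}Y_{0,aq^{2}}^{-1})$ and hence on the submodule $T(e^{\varpi_1}Y_{1,a}Y_{0,aq^{2}}^{-1})$ generated by $v = v^+ \otimes v^-$. There are three things to check, matching the three conditions in Definition \ref{defelm}: integrability, extremality of $v$ for the horizontal quantum affine subalgebra $\U_q^{h}(\Glie^{tor})$, and local finite-dimensionality of the vertical subalgebras $\U_q^{v,j}(\Glie^{tor})$.

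\textbf{Step 1: integrability and local finiteness.} First I would invoke Lemma \ref{tpalgv}: the fundamental modules $V(e^{\Lambda_1}Y_{1,a})$ and $V(e^{-\Lambda_0}Y_{0,aq^{2}}^{-1})$ are $\ell$-integrable and the action of each $\U_q^{v,j}(\Glie^{tor})$ on them is locally finite-dimensional (this is Proposition \ref{delm} applied in type $D_4$, which holds verbatim since its proof only uses that $V(m)$ is in category $\mathcal{O}$ and the Dynkin-diagram symmetry $\theta$ — both available in type $D_4$). Hence the tensor product, and therefore $T(e^{\varpi_1}Y_{1,a}Y_{0,aq^{2}}^{-1})$, is integrable and all vertical subalgebras act locally finite-dimensionally on it. This disposes of conditions (i) and (iii) of Definition \ref{defelm}.

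\textbf{Step 2: extremality of $v$.} This is the heart of the matter and the only place where the specific value $b = aq^{2}$ and the specific node $\ell = 1$ enter. Here $\varpi_1 = \Lambda_1 - \Lambda_0$ has $\varpi_1(h_1) = 1$, $\varpi_1(h_0) = -1$, $\varpi_1(h_j) = 0$ for $j = 2,3,4$, so $v$ is automatically $j$-extremal for $j \neq 0, 1$. For the nodes $0$ and $1$ one computes, exactly as in the proof of Proposition \ref{propnecon}, the action of $x_{1,0}^{-}$ and $x_{0,0}^{+}$ on $v$ and on the successive $S_i$-images; the crucial identity is that a factor of the shape $\psi\!\bigl(\tfrac{b}{a}q^{-2}\bigr)^{-1}$ vanishes precisely when $\tfrac{b}{a} = q^{2}$, which kills the "wrong" term and forces the $S_i$-orbit of $v$ to stay inside the subspace $\bigl(\U_q^{h}(\Glie^{tor})\cdot v^+\bigr) \otimes v^-$ on the relevant generators, so that $v$ behaves like an extremal vector of weight $\varpi_1$. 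One then needs to see that the whole $W$-orbit closure under the $S_i$ consists of $i$-extremal vectors; since $\varpi_1$ is a level-zero minuscule-type weight for the finite part (its $W_{\mathrm{fin}}$-orbit is the set of $\varpi_j$ up to $\delta$-shifts, or rather the relevant small orbit in $D_4$), the orbit is finite and the check reduces to a finite bookkeeping of the same $\psi$-factor cancellations — this is where I expect the real work, because type $D_4$ has a trivalent node and the combinatorics of the $S_i$-action is less transparent than in type $A$. Alternatively, and more cleanly, one shows $\U_q^{h}(\Glie^{tor})\cdot v \cong V(\varpi_1)$ as in the proof of Proposition \ref{delm}: $v$ is a level-zero weight vector satisfying $x_{i,0}^{+}\cdot v = 0$ for $i$ with $\varpi_1(h_i) \geq 0$ and $x_{i,0}^{-}\cdot v = 0$ for $i$ with $\varpi_1(h_i) \leq 0$, so there is a surjection from the extremal weight module $V(\varpi_1)$ onto $\U_q^{h}(\Glie^{tor})\cdot v$; by the irreducibility of $V(\varpi_1)$ for $\ell=1$ (Theorem \ref{thmkas} and the level-zero fundamental case, which holds in type $D$), this map is an isomorphism and $v$ is extremal of weight $\varpi_1$.

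\textbf{Main obstacle.} The delicate point is verifying that $v$ is genuinely extremal — i.e. that all iterated $S_i$-images are $i$-extremal — rather than merely $i$-extremal for each single $i$; this requires either the explicit $\psi$-factor computation propagated along the finite $W$-orbit of $\varpi_1$ in type $D_4$, or the cleaner route of identifying $\U_q^{h}(\Glie^{tor})\cdot v$ with the irreducible module $V(\varpi_1)$. I would pursue the latter, noting that it reduces the whole statement to (a) the coproduct computation already quoted from Theorem \ref{tpexist}, (b) the annihilation relations $x_{i,0}^{\pm}\cdot v = 0$ on the appropriate nodes, which follow from the $\ell$-highest/$\ell$-lowest weight property of $v^{\pm}$ together with the single non-trivial $\psi$-cancellation at node $0$ forced by $b = aq^{2}$, and (c) the known irreducibility of the level-zero fundamental extremal module $V(\varpi_1)$ in finite type $D_4$ (Kashiwara–Nakashima). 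With (a)--(c) in hand the proof is complete.
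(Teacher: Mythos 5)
Your Step 1 (integrability and local finiteness via Lemma \ref{tpalgv}) is fine and is exactly what the paper uses. The problem is in Step 2, in the route you say you would actually pursue. You propose to deduce extremality of $v=v^+\otimes v^-$ from a surjection $V(\varpi_1)\twoheadrightarrow \U_q^{h}(\Glie^{tor})\cdot v$ obtained from the annihilation relations $x_{i,0}^{\pm}\cdot v=0$. This is circular: unlike the dominant case used in Proposition \ref{delm}, where $V(\omega(m))$ is the simple highest weight module and has the explicit Lusztig presentation by highest-weight and integrability relations, the level-zero module $V(\varpi_1)$ is \emph{defined} as the module generated by a vector subject to the extremality relations, i.e.\ the condition that every iterated $S_{i_1}\circ\cdots\circ S_{i_l}(v)$ is $i$-extremal. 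So the existence of the surjection is logically equivalent to the extremality of $v$, which is precisely what you are trying to establish; the single-node annihilation conditions do not give it to you. A secondary inaccuracy: the affine Weyl group orbit of the level-zero weight $\varpi_1$ is \emph{not} finite (it contains shifts by multiples of $\delta$, which is why $T(e^{\varpi_1}Y_{1,a}Y_{0,aq^{2}}^{-1})$ is infinite-dimensional), so "finite bookkeeping along the orbit" is not available; any direct verification must be organized as an induction, as in the formulas (\ref{forminter}) of the Appendix.

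The paper's actual argument is the one you list first but then set aside: it computes the action of $\U_q(\Glie^{tor})$ on $T(e^{\varpi_1}Y_{1,a}Y_{0,aq^{2}}^{-1})$ explicitly, exactly as in the proof of Proposition \ref{repvect}, producing a basis on which the action is given by the monomial realization $\mathcal{M}(e^{\varpi_1}Y_{1,1}Y_{0,q^{2}}^{-1})$ of the crystal $\mathcal{B}(\varpi_1)$ together with the formulas of \cite[Theorem 4.12]{mansuy_quantum_2012}; extremality (and cyclicity) is then read off from these explicit formulas, since the module so described was already shown there to be an extremal loop weight module. To repair your proof you would need either to carry out this explicit computation (tracking the $\psi$-factor cancellations forced by $b=aq^{2}$ inductively along the infinite family of $\ell$-weight vectors, including at the trivalent node $2$ of $D_4$), or to supply an independent presentation of $V(\varpi_1)$ by finitely many relations checkable on $v$ — which the paper does not claim and which you cannot simply assert.
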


\begin{proof}

One can compute the action of $\U_q(\Glie^{tor})$ on $T(e^{\varpi_1}Y_{1,a}Y_{0,aq^{2}}^{-1})$ as in the proof of Proposition \ref{repvect}: it is given by the monomial realization $\mathcal{M}(e^{\varpi_1}Y_{1,1}Y_{0,q^{2}}^{-1})$ of $\mathcal{B}(\varpi_1)$ and formulas in \cite[Theorem 4.12]{mansuy_quantum_2012}. We do not detail these calculations. The result follows.
\end{proof}

As in type $A$, $T(e^{\varpi_1}Y_{1,a}Y_{0,aq^{2}}^{-1})$ is irreducible and $\mathrm{Res}(T(e^{\varpi_1}Y_{1,a}Y_{0,aq^{2}}^{-1}))$ is isomorphic to the extremal fundamental weight $\U_q(\hat{\Glie})$-module $V(\varpi_1)$. Furthermore by specializing $q$ at roots of unity, we have

\begin{prop}
Set $L \geq 1$ and assume that $\epsilon$ is a primitive $[6L]$-root of unity. There exists an irreducible $\U_\epsilon(\Glie^{tor})'$-module of dimension $8L$.
\end{prop}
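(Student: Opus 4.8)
The plan is to obtain the desired module by specializing the extremal loop weight $\U_q(\Glie^{tor})$-module $T := T(e^{\varpi_1}Y_{1,a}Y_{0,aq^2}^{-1})$ of Theorem \ref{extypd} at a root of unity, in the same way as $V(\ffbox{1}_a)_\epsilon$ was obtained from $V(\ffbox{1}_a)$ in type $A$. First I would record the structure of $T$ already described before Theorem \ref{extypd}: it has a basis indexed by the monomials of the monomial realization $\mathcal{M}(e^{\varpi_1}Y_{1,1}Y_{0,q^2}^{-1})$ of the level-$0$ extremal crystal $\mathcal{B}(\varpi_1)$ of type $D_4^{(1)}$, with $\U_q(\Glie^{tor})$ acting through the explicit $\delta$- and $\psi$-valued formulas of \cite[Theorem 4.12]{mansuy_quantum_2012}. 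Since $\mathrm{Res}(T)\cong V(\varpi_1)$ and the $D_4$ vector representation is minuscule, all $\ell$-weight spaces of $T$ are one-dimensional, so $T$ is thin and this monomial crystal is a bijective labelling of the basis.

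Next I would make the ``tube'' structure of this basis explicit: $\mathcal{M}(e^{\varpi_1}Y_{1,1}Y_{0,q^2}^{-1})$ is graded by $\ZZ$ (the $\delta$-degree), each graded piece consisting of the $8$ monomials coming from the finite $D_4$ vector crystal, and one checks from \cite[Theorem 4.12]{mansuy_quantum_2012} that moving from one level to the next multiplies every spectral parameter by $q^{6}$ (with $6$ the Coxeter number of $D_4$) --- equivalently, the automorphism $t_{q^6}$ of $\U_q(\Glie^{tor})$ carries each level isomorphically onto the next. I would then substitute $q\mapsto\epsilon$ and check, as in \cite{mansuy_quantum_2012}, that no $\psi$-value occurring in these formulas acquires a pole, so that they still define an action of $\U_\epsilon(\Glie^{tor})'$ on the specialized space.

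For the specialization step: as $\epsilon$ is a primitive $[6L]$-root of unity, $\epsilon^{6L}=1$, so after the substitution levels $k$ and $k+L$ of the tube carry identical spectral parameters, and identifying basis vectors that differ by $L$ levels is compatible with the action. Passing to this quotient --- as in the construction of $V(\ffbox{1}_a)_\epsilon$ --- yields a $\U_\epsilon(\Glie^{tor})'$-module $T_\epsilon$ spanned by $L$ consecutive levels, hence of dimension at most $8L$; and since $\epsilon$ is \emph{primitive} $[6L]$-th, the $8L$ specialized $\ell$-weights stay pairwise distinct, so in fact $\dim T_\epsilon = 8L$ and $T_\epsilon$ is still thin.

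Finally, irreducibility follows by the argument in the proof of Proposition \ref{repvect} and of Lemma \ref{lemirrmod}: being thin, $T_\epsilon$ is irreducible as soon as every basis vector can be reached from any other one by the action of some $x_i^{\pm}(z)$, and by the explicit formulas this reduces to the non-vanishing at $\epsilon$ of certain products of values $\psi(\epsilon^{c})^{\pm1}$, which again follows from primitivity of $\epsilon$. The main obstacle is purely computational: fixing the two $D_4$-specific numbers --- the period $q^6$ of the tube (which forces the $[6L]$ hypothesis) and the level size $8$ (which gives $\dim = 8L$) --- and verifying that at $q=\epsilon$ the specialized formulas neither develop poles nor lose the non-vanishing needed for irreducibility; all of this parallels the type $A$ treatment in \cite{mansuy_quantum_2012}.
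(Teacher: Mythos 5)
Your proposal follows exactly the paper's route: the paper's proof is the one-line statement that, as in \cite{mansuy_quantum_2012}, one checks that the $\U_\epsilon(\Glie^{tor})'$-module obtained from $T(e^{\varpi_1}Y_{1,a}Y_{0,aq^{2}}^{-1})$ by specializing $q$ at $\epsilon$ admits an irreducible finite-dimensional quotient, which is precisely your plan. You simply make explicit the bookkeeping (period $q^{6}$ of the monomial crystal, level size $8$, non-vanishing of the $\psi$-values at $\epsilon$) that the paper delegates to the reference.
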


\begin{proof}
As in \cite{mansuy_quantum_2012} one checks that the $\U_\epsilon(\Glie^{tor})'$-module obtained from $T(e^{\varpi_1}Y_{1,a}Y_{0,aq^{2}}^{-1})$ by specializing $q$ at $\epsilon$ has an irreducible and finite-dimensional quotient.
\end{proof}

\section*{Appendix. Action on the $\U_q(sl_{n+1}^{tor})$-module $T(e^{\varpi_1}Y_{1,a}Y_{0,aq}^{-1})$ }

The Appendix is devoted to the proof of Proposition \ref{repvect}. We assume in this section that $a$ is equal to one. Then the result will follow by twisting the action by $t_{a}$. To simplify the notations, we denote by $Y_{i,l}$ the variable $Y_{i,q^{l}}$ for $i \in I$ and $l\in \ZZ$.

So let us consider the $\U_q(sl_{n+1}^{tor})$-module
$$T(e^{\varpi_1}Y_{1,0}Y_{0,1}^{-1}) = \U_q(sl_{n+1}^{tor}) \cdot \left( v_{Y_{1,0}} \otimes w_{Y_{0,1}^{-1}} \right) \subseteq V(e^{\Lambda_1}Y_{1,0}) \otimes V(e^{-\Lambda_0}Y_{0,1}^{-1})$$
where $v_{Y_{1,0}}$ (resp. $w_{Y_{0,1}^{-1}}$) is a $\ell$-highest weight vector of $V(e^{\Lambda_1}Y_{1,0})$ (resp. a $\ell$-lowest weight vector of $V(e^{-\Lambda_0}Y_{0,1}^{-1})$).

To study the action of $\U_q(sl_{n+1}^{tor})$ on $T(e^{\varpi_1}Y_{1,0}Y_{0,1}^{-1})$, we determine the action in each $\U_q(\hat{sl}_{2})'$-direction on the met vectors. In particular, we will have to study $\U_q(\hat{sl}_{2})'$-modules generated by $\ell$-highest weight vectors of $\ell$-highest weight of the form $Y_{l}$ or $Y_{l}Y_{m}$ ($l, m \in \ZZ, l \neq m$). By the theory of Weyl modules \cite{chari_integrable_2001}, these representations are known: they are isomorphic to the simple $\ell$-highest weight $\U_q(\hat{sl}_2)'$-modules of $\ell$-highest weights $Y_{l}$ and $Y_{l}Y_{m}$ respectively. This will be used in the whole computations.

Consider the action on the vector $v_{Y_{1,0}} \otimes w_{Y_{0,1}^{-1}}$: we have
\begin{eqnarray*}
x_{0}^{+}(z) \cdot \left( v_{Y_{1,0}} \otimes w_{Y_{0,1}^{-1}} \right) &=& \delta(z) \cdot v_{Y_{1,0}} \otimes w_{Y_{1,0}^{-1}Y_{n,0}^{-1}Y_{0,-1}},\\
x_{1}^{-}(z) \cdot \left( v_{Y_{1,0}} \otimes w_{Y_{0,1}^{-1}} \right) &=& \delta(qz) \cdot v_{Y_{0,1}Y_{1,2}^{-1}Y_{2,1}} \otimes w_{Y_{0,1}^{-1}},\\
x_{i}^{\pm}(z) \cdot \left( v_{Y_{1,0}} \otimes w_{Y_{0,1}^{-1}} \right) &=& 0 \text{ in all the other cases,}
\end{eqnarray*}
where $v_{Y_{0,1}Y_{1,2}^{-1}Y_{2,1}} = x_{1,0}^{-} \cdot v_{Y_{1,0}}$ and $w_{Y_{1,0}^{-1}Y_{3,0}^{-1}Y_{0,-1}} = x_{0,0}^{+} \cdot w_{Y_{0,1}^{-1}}$ are vectors of $\ell$-weight $Y_{0,1}Y_{1,2}^{-1}Y_{2,1}$ and $Y_{1,0}^{-1}Y_{3,0}^{-1}Y_{0,-1}$ respectively.\\


Let us show by induction that for all $j \geq 1$,
\begin{align}\label{forminter}
x_{j}^{+}(z) & \cdot \left( v_{Y_{0,1}Y_{j,j+1}^{-1}Y_{j+1,j}} \otimes w_{Y_{0,1}^{-1}} \right) = \delta(q^{j}z) v_{Y_{0,1}Y_{j-1,j}^{-1}Y_{j,j-1}} \otimes w_{Y_{0,1}^{-1}}, \nonumber \\ 
x_{j+1}^{-}(z) & \cdot \left( v_{Y_{0,1}Y_{j,j+1}^{-1}Y_{j+1,j}} \otimes w_{Y_{0,1}^{-1}} \right) = \delta(q^{j+1}z) v_{Y_{0,1}Y_{j+1,j+2}^{-1}Y_{j+2,j+1}} \otimes w_{Y_{0,1}^{-1}},\\ 
x_{j}^{\pm}(z) & \cdot \left( v_{Y_{0,1}Y_{j,j+1}^{-1}Y_{j+1,j}} \otimes w_{Y_{0,1}^{-1}} \right) = 0 \text{ in all the other cases,} \nonumber
\end{align}
where $v_{Y_{0,1}Y_{j+1,j+2}^{-1}Y_{j+2,j+1}}$ is a $\ell$-weight vector of weight $Y_{0,1}Y_{j+1,j+2}^{-1}Y_{j+2,j+1}$.

For $j \neq n, 0 \mod (n+1)$, formulas (\ref{forminter}) can be obtained by easy computations, by setting $v_{Y_{0,1}Y_{j+1,j+2}^{-1}Y_{j+2,j+1}} = x_{\overline{j+1},0}^{-} \cdot v_{Y_{0,1}Y_{j,j+1}^{-1}Y_{j+1,j}}$.

We improve what it is happening for the vectors
 $$v_{Y_{0,1}Y_{n, k(n+1)}^{-1} Y_{0,k(n+1)-1}} \text{ and } v_{Y_{0,1}Y_{0,k(n+1)+1}^{-1} Y_{1,k(n+1)}} \; \text{ with } k \geq 1.$$

The particular case $n=3$ and $k=1$ is easy to compute: the $\hat{\U}_0$-module \linebreak $\hat{\U}_0 \cdot v_{Y_{0,1} Y_{3,4}^{-1} Y_{0,3}}$ is isomorphic to the Kirillov-Reshetikhin $\U_q(\hat{sl}_2)'$-module of $\ell$-highest weight $Y_{0,3}Y_{0,1}$ on which the action is well-known. We set in this case $v_{Y_{0,1} Y_{0,5}^{-1}Y_{1,4}} = \psi(q^{-2})^{-1} x_{0,0}^{-} \cdot v_{Y_{0,1} Y_{3,4}^{-1} Y_{0,3}}$ and $v_{Y_{0,3}^{-1} Y_{0,5}^{-1}Y_{1,4}Y_{1,2}Y_{3,2}} = x_{0,0}^- \cdot v_{Y_{0,1} Y_{0,5}^{-1}Y_{1,4}}$ (of $\ell$-weight $Y_{0,1} Y_{0,5}^{-1}Y_{1,4}$ and $Y_{0,3}^{-1} Y_{0,5}^{-1}Y_{1,4}Y_{1,2}Y_{3,2}$ respectively) and we show that formulas (\ref{forminter}) hold for them.

Assume that $n \neq 3$ or $k \neq 1$ and set $m = Y_{0,1}Y_{n, k(n+1)}^{-1} Y_{0,k(n+1)-1}$. Then $\hat{\U}_0 \cdot v_{m}$ is isomorphic to the simple $\ell$-highest weight $\U_q(\hat{sl}_2)'$-module of $\ell$-highest weight $Y_{0,k(n+1)-1}Y_{0,1}$. The action on it is computed in \cite{hernandez_simple_2010, mansuy_quantum_2012}: let us denote 
\begin{align*}
m_1 = Y_{0,3}^{-1}Y_{1,2}Y_{n,2}Y_{n, k(n+1)}^{-1} Y_{0,k(n+1)-1} \quad \text{and} \quad m_2 = Y_{0,1} Y_{0,k(n+1)+1}^{-1}Y_{1,k(n+1)}.
\end{align*}
Then there is vectors $v_{m_1}$ and $v_{m_2}$ of $\ell$-weight $m_1$ and $m_2$ respectively such that
\begin{eqnarray*}
x_{0}^{-}(z) \cdot v_{m} &=& \psi(q^{k(n+1)-2}) \delta(q^{2}z) \psi(q^2z)^{-1} \cdot v_{m_1}\\
& & + \psi(q^{2-k(n+1)}) \delta(q^{k(n+1)}z) \psi(q^2z)^{-1} \cdot v_{m_2}.
\end{eqnarray*}

%

The action on $v_{m} \otimes w_{Y_{0,1}^{-1}}$ is:
\begin{eqnarray*}
x_{0}^{+}(z) \cdot \left( v_{m} \otimes w_{Y_{0,1}^{-1}} \right) &=& 0, \\
x_{0}^{-}(z) \cdot \left( v_{m} \otimes w_{Y_{0,1}^{-1}} \right) &=& \psi(q^{k(n+1)-2}) \delta(q^{2}z) \psi(1)^{-1} \cdot v_{m_1} \otimes w_{Y_{0,1}^{-1}}\\
& & + \psi(q^{2-k(n+1)}) \delta(q^{k(n+1)}z) \psi(q^{2-k(n+1)})^{-1} \cdot v_{m_2} \otimes w_{Y_{0,1}^{-1}}\\
&=& \delta(q^{k(n+1)}z) \cdot v_{m_2} \otimes w_{Y_{0,1}^{-1}},\\
x_{n}^{+}(z) \cdot \left( v_{m} \otimes w_{Y_{0,1}^{-1}} \right) &=& \delta(q^{k(n+1)-1}z) \cdot v_{Y_{0,1}Y_{n-1, k(n+1)-1}^{-1} Y_{n,k(n+1)-2}} \otimes w_{Y_{0,1}^{-1}},\\
x_{i}^{\pm}(z) \cdot \left( v_{m} \otimes w_{Y_{0,1}^{-1}} \right) &=& 0 \text{ in all the other cases.}
\end{eqnarray*}


Then we obtain formulas (\ref{forminter}) for all $j \geq 1$. By the same process we get similar formulas for vectors of the form $v_{Y_{1,0}} \otimes w_{Y_{1,0}^{-1}Y_{n+1-j,1-j}^{-1}Y_{n+2-j,-j}}$ with $j \geq 1$. Proposition \ref{repvect} follows by setting
$$\ffbox{j}_1 = v_{Y_{0,1}Y_{j,j+1}^{-1}Y_{j+1,j}} \otimes w_{Y_{0,1}^{-1}} \text{ for } j \geq 0$$
and
$$\ffbox{j}_1 = v_{Y_{1,0}} \otimes w_{Y_{1,0}^{-1}Y_{n+1-j,1-j}^{-1}Y_{n+2-j,-j}} \text{ for } j < 0.$$

\end{document}